\documentclass[11pt,a4paper]{article}
\usepackage{empheq}
\usepackage{amssymb,amsbsy,amsmath,amsfonts,amssymb,amscd,amsthm, mathrsfs, bbm}
\textheight 22.3cm
\textwidth 15cm
\voffset=-1.6cm
\hoffset=-1.0cm
\hfuzz=1pt
\usepackage{amssymb}
\usepackage{color}
\newcommand{\isEquivTo}[1]{\underset{#1}{\sim}}

\usepackage{hyperref}

\newcommand{\1}{\mathbbm{1}}
\newcommand{\CC}{\mathbb{C}}
\newcommand{\NN}{\mathbb{N}}
\newcommand{\RR}{\mathbb{R}}

\newcommand{\ZZ}{\mathbb{Z}}
\newtheorem{theo}{Theorem}
\newtheorem{prop}[theo]{Proposition}
\newtheorem{lem}[theo]{Lemma}
\newtheorem{cor}[theo]{Corollary}
\newtheorem{rem}[theo]{Remark}

\newcommand{\beqn}{\begin{equation}}
\newcommand{\eeqn}{\end{equation}}
\newcommand{\bear}{\begin{eqnarray}}
\newcommand{\eear}{\end{eqnarray}}
\newcommand{\bean}{\begin{eqnarray*}}
\newcommand{\eean}{\end{eqnarray*}}

\date{}
\begin{document} 
\centerline{\huge Classical approximation of a linearized}
\vskip 0.3cm
\centerline{ \huge three waves  kinetic equation.}
\vskip 0.5cm
\begin{center}
{\large Miguel  Escobedo}\\
\null
{\small Departamento de Matem\'aticas,} \\
{\small Universidad del
Pa{\'\i}s Vasco,} \\
{\small Apartado 644, E--48080 Bilbao, Spain.}\\
{\small {\tt miguel.escobedo@ehu.es}}
\end{center}

\noindent
{\bf Abstract}: The fundamental solution of the classical approximation of a  three waves kinetic equation that happens in the kinetic theory of a  condensed  gas of bosons near the critical temperature is obtained. It is also  proved to be unique in a suitable space of distributions, and several of its  properties  are  described. The fundamental solution is used  to solve the initial value problem for a general class of initial data.
\\
\noindent
Subject classification: 45K05, 45A05, 45M05, 82C40, 82C05, 82C22.\\

\noindent
Keywords: Bose gas, three wave collisions, classical approximation, wave turbulence, fundamental solution, Cauchy problem, Mellin transform.


\section{Introduction}
\setcounter{equation}{0}
\setcounter{theo}{0}

Our purpose in this article is to study the classical approximation of the linearized version of a three wave kinetic equation, around one of its equilibria,
\begin{align}
&\frac {\partial u} {\partial \tau }(\tau , x)=\int _0^\infty (u(\tau , y)-u(\tau , x)) K(x, y) dy,\,\,\tau >0,\,x>0 \label{S3EMK2}\\
&K(x, y)=\left(\frac {1} {|x^2-y^2|}-\frac {1} {x^2+y^2} \right)\frac {y} {x},\,\,\forall x>0,\,\,\forall y>0,\,\,x\not =y. \label{S3E2359B}
\end{align}

In a condensed Bose gas,   correlations arise between the superfluid component and the normal fluid part corresponding to the excitations. This causes number-changing processes and, as a consequence, in the hydrodynamic regime, a collision integral $C _{ 1, 2 }$  describing  the  splitting of an excitation into two others in the presence of the condensate is needed. 

A  kinetic equation  which includes these processes in a uniform Bose gas  was first deduced in a series of papers by  Kirkpatrick and Dorfman \cite{Kirkpatrick}.
More recently, Zaremba \& al. extended the treatment to a trapped Bose gas by including Hartree--Fock corrections to the energy of the excitations, and derived coupled kinetic equations for the distribution functions of the normal and superfluid components, sometimes called ZNG system (\cite{Za}).

In order  to simplify the notations it may be assumed without any loss of generality, that the mass of the particles  and the so called  interaction coupling constant are equal to one. Then, under the conditions of spatial homogeneity and isotropy, in the limit of  temperature below but  close to the critical temperature, the following system of equations  was  first deduced in \cite{Eckern} and  \cite{Kirkpatrick},
\begin{empheq}[left=\empheqlbrace]{align}
&\frac {\partial n} {\partial t}(t, p)=C _{ 1, 2 }(n_c(t), n(t))(p)\qquad t>0,\; p\in \RR^3, \label{PA}\\
&n_c'(t)=-\int _{ \RR^3 }C _{ 1, 2 }(n_c(t), n(t))(p)dp \qquad t>0,\label{PB}
\end{empheq}
where $C _{ 1, 2 }(n_c, n)$ is  the three waves collision integral,
\begin{align}
C _{ 1, 2 }(n_c(t), n(t))=&\,\,n_c(t) I_3(n(t))(p)\label{E1C12}\\
I_3(n(t))(p)=&\!\!\iint _{(\RR^3)^2}\!\!\big[R(p, p_1, p_2)\!-\!R(p_1, p, p_2)\!-\!R(p_2, p_1, p) \big]dp_1dp_2, \label{E1BCD} 
\end{align}
\begin{align}
R(p, p_1, p_2)\,=&\left[\delta (|p|^2-|p_1|^2-|p_2|^2)  \delta (p-p_1-p_2)\right]\times \nonumber \\
&\qquad \times \left[ n_1n_2(1+n)-(1+n_1)(1+n_2)n \right].  \label{S1EA4BC}
\end{align}
In these notations $n_\ell=n(t, p _{ \ell })$,  $n(t, p)$ denotes the density of particles in the normal gas that at time $t>0$ have momentum $p$ and $n_c(t)$ the density of the condensate at time $t$.

It is well known that the equation (\ref{PA})  has  a  family of non trivial equilibria $n_0$,
\begin{align}
&n_0(p)=\nu _{0}(|p|^2) \label{S1EEq1}\\
&\nu_0(\omega )=\left(e^{\beta \omega}-1\right)^{-1},\,\, \forall \omega  >0.\label{S1EEq21}
\end{align}
The parameter $\beta $ may be any positive constant and is related to  the temperature $T>0$  of the gas at  equilibrium $n_0$ through the formula,
$ \beta =1/(k_B T)$ where $k_B$ is the Boltzmann's constant. It is easily checked that  
 $R(p, p_k, p_\ell)\equiv 0$ in  (\ref{S1EA4BC})  for $n=n_0$.
 
It was proved in \cite{CE}  that for all constants $\rho >0$ and all non negative measures $n _{ in }$ with a finite first  moment,  the system (\ref{PA})--(\ref{S1EA4BC}) has a weak solution  $(n(t), n_c(t))$ with initial data  $(n _{ in }, \rho )$. For all $t>0$,  $n(t)$ is  a non negative measure with finite first moment that does not charge the origin,  and $n_c(t)>0$. System (\ref{PA})-(\ref{S1EA4BC}) was also  treated in \cite{AN, ST}.

One basic aspect of the non equilibrium behavior of the system condensate--normal fluid  is the growth of the condensate after its formation (cf. \cite{Bi}).  Although the relation of $n_c$  with the condensate amplitude is not straightforward, it seems nevertheless very closely related to the  total number of particles of the system having an energy less than some arbitrarily small, but fixed, value (cf. \cite{J}).
It turns out that the evolution of $n_c(t)$ crucially depends  on the behavior of $n(t, p)$ as $|p|\to 0$ (cf.  for example Proposition 2 in \cite{S} and  Theorem 1.7 in  \cite{CE}); when  the measure $n(t)$  is written as $n(t, p)=|p|^{-1}g(t, |p|^2)$, if $g(t)$ has no atomic part and has an algebraic behavior as $|p|\to 0$ then,
\begin{equation}
\label{S1EASN}
n(t, p) \isEquivTo{|p|\to 0} a(t)|p|^{-2}.
\end{equation} 
for some function $a(t)$, (cf. \cite{CE}).  However, these properties  have not been proved to hold  for general  solutions $n$  of the system (\ref{PA})--(\ref{S1EA4BC}).

\subsection{Small perturbation of a Planck distribution.}
\label{S1smallPert}
In order to prove the existence of solutions to (\ref{PA})-(\ref{S1EA4BC})  satisfying (\ref{S1EASN}), we adopt a very classical strategy. First linearize the equation (\ref{PA}) around an equilibrium $n_0$ and derive precise estimates on the solutions of the resulting equation.  Then, use the properties of the  linearized equation to solve (\ref{PA})-(\ref{S1EA4BC}) for suitable initial data. 

The linearized equation was  essentially obtained  in \cite{EPV} where it was  seen that some care  in the linearization  procedure is necessary.  A new dependent variable  $\Omega $ is defined as,
\begin{align}
n(t, p)&=n_0(p)+n_0(p)(1+n_0(p))\Omega  (t, |p|)=n_0(p)+\frac {\Omega (t, |p|)} {4 \, \sinh^2 \left(\frac{\beta |p|^2}{2}\right)}.\label{S2Elinzn2}
\end{align}
Under the change of variables
\begin{align}
\label{S2CcV}
&x=\frac {\sqrt \beta } {2}|p|,\,\,\tau =\int _0^t \frac {mc_0(s) \pi } {3}\left(\frac {2} {\beta } \right)^{\frac {3} {2}} ds, \,\,\,u(\tau , x)=\frac {\Omega (t, |p|)} {|p|^2},
\end{align}
the linearized equation for $u$ reads (cf. \cite{EPV} and the Appendix)
\begin{align}
&\frac {\partial u} {\partial \tau }=\int _0^\infty (u(\tau , y)-u(\tau , x)) M(x, y) dy\label{S3E23590}\\
&M(x, y)= \left(\frac {1} {\sinh|x^2-y^2|}-\frac {1} {\sinh (x^2+y^2) } \right) \frac {y^3\sinh x^2} {x^3\sinh y^2}, \label{S3E2359M}
\end{align}
To solve the Cauchy problem for equation (\ref{S3E23590}), where the term $(u(y)-u(x))/\sinh|x^2-y^2|$ introduces a differential operator, is not  straightforward  yet,  and we have  adopted what seems to be  the  shortest way to reach that purpose. It is not based  in a detailed study of the spectral properties of the operator at the right hand side of (\ref{S3E23590}), as it is done  in \cite{CIP} for the linearized homogeneous Boltzmann equation for classical particles.
We rather consider in a first step,  and that is the content of this work,  a simplified version of  the equation  (\ref{S3E23590}),  that preserves some its mean features. It is obtained by only keeping in  $M$ the leading terms of the hyperbolic sine functions for small values of their arguments. This reminds the classical limit, where the particle's energy $\hbar p$ is sent to zero and gives the equation (\ref{S3EMK2}), (\ref{S3E2359B}), that we recall here,
\begin{align*}
&\frac {\partial u} {\partial \tau }=\int _0^\infty (u(\tau , y)-u(\tau , x)) K(x, y) dy=: L(u(t)) \\
&K(x, y)=\left(\frac {1} {|x^2-y^2|}-\frac {1} {x^2+y^2} \right)\frac {y} {x}.
\end{align*}
The Cauchy problem for  equation (\ref{S3E2359M}) is solved in a forthcoming article, using  the properties of equation (\ref{S3EMK2}), (\ref{S3E2359B}) obtained here, seeing (\ref{S3E2359M}) as a perturbation of  (\ref{S3E2359B}).

For $u$ a regular function, equation  (\ref{S3EMK2}) may be written as (cf. (\ref{S4EHL007}) in the Appendix),
\begin{align}
&\frac {\partial u } {\partial \tau }=\int _0^\infty H\left( \frac {x} {y}\right)\frac {\partial u } {\partial y}(\tau , y) \frac {dy} {y} \label{S4E1}\\
H (r)&=\1 _{ 0<r<1 }\frac {1} {r}\log\left(\frac {1+r^2} {1-r^2} \right)+\1 _{ r>1 }\frac {1} {r}\log\left(1-\frac {1} {r^4} \right).\label{S4EHL}
\end{align}
Equation  (\ref{S4E1}) may be solved using the Mellin transform and classical arguments.

 Similar questions were considered in  \cite{EV}, \cite{EV2}  for  ``post gelation'' solutions of a coagulation equation. Some of the technical results in the last Section of  \cite{EV} will be of some use in this work. The equation (\ref{PA}) may actually be written as a  coagulation-fragmentation equation, with nonlinear fragmentation, in terms of the energy $\omega =|p|^2$ as independent variable  for a new dependent variable $g$ defined as $|p|n(t, p)=g(t,\omega )$ (cf  \cite{EV3}).

{\bf Remark.}{\it
The same  linear equation (\ref{S3EMK2})  follows if, first only the quadratic terms are kept in (\ref{E1BCD}), (\ref{S1EA4BC}), and then linearization is performed around the equilibrium $\omega^{-1} (p)=|p|^{-2}$. The first step  yields a wave turbulence type equation,  considered by several authors \cite{D, EV3, AV},
and  (\ref{S4E1}) is  the linearization of that equation  around the equilibrium $\omega^{-1} (p)$.}

\subsection{Main results.}
The  fundamental solution  of (\ref{S3EMK2}) is  obtained as a weak solution of (\ref{S4E1}) in the sense of distributions on $(0, \infty)$, and  is proved afterwards to satisfy (\ref{S3EMK2}). 

 The use of the Mellin transform makes the spaces $E' _{ p,\,q }$ for $p<q$, presented for example in Chapter 11 of \cite{ML}, very suitable. They are  defined as the dual of the spaces $E _{ p,\,q }$ of all the functions $\phi \in \mathscr{C}^\infty (0, \infty)$ such that:
\begin{eqnarray*}
&&N _{ p, q, k }(\phi )=\sup _{ x>0 }\left(k _{ p, q }(x)x^{k+1}\left|\phi ^{k}(x) \right|\right)<\infty\\
&&k _{ p, q }(x)=
\left\{
\begin{split}
&x^{-p},\,\,\hbox{if}\,\,0<x\le 1\\
&x^{-q},\,\,\hbox{if}\,\,x> 1
\end{split}
\right.
\end{eqnarray*}
with the topology defined by the numerable set of seminorms $\left\{N _{ p, q, k } \right\} _{ k\in \NN }$. It follows that $E' _{ p,\,q }$ is a subspace of $\mathscr{D}'([0, \infty))$.  As indicated  in \cite{ML}, these are the spaces of Mellin transformable distributions. Let us denote, for $p\in \RR, q\in \RR$, $p<q$,
\begin{equation}
\mathcal S _{ p, q }=\{s\in \CC; \mathscr Re s\in (p, q)\}.
\end{equation}

\begin{theo}
\label{TheoMain1}
There exists a unique function $\Lambda \in C(0,\infty); L^1(0, \infty)$ weak solution of (\ref{S4E1})) in $\mathscr D'((0, \infty)\times (0, \infty))$,  such that for all $T>0$, $\Lambda(t)\in  E' _{ 0, 2 }$ and $\mathscr M(\Lambda(t))$ is bounded on $\mathcal S _{ 0, 2 }$ for all $t\in (0, T)$.  That function is such that
\begin{align}
&(\log x)   \Lambda\in C((0 , \infty)\times  [0, \infty))\label{S5CInvME0300}\\
&(\log x)  \frac {\partial ^m \Lambda} {\partial  t^m }\in C((0 , \infty)\times  (0, \infty))\,\,\,\forall m\in \NN\setminus \{0\}, \label{S5CInvME03}\\
&(\log x)^2\frac  {\partial^{1+m} \Lambda} {\partial t^m\, \partial x}\in C((0, \infty)\times (0, \infty)), \, \forall m\in \NN,\label{S5CInvM2E1}\\
&\forall k\in \NN,\,\, \Lambda \in C^m\left(\left(\frac {k+1} {2}, \infty\right); C^k( 0, \infty)\right), \forall m\in \NN. \label{S5CInvME3B}\\
& \forall r\in (0, 1),\,\forall \alpha \in [0, r);\,\, \frac {\log x} {(x-1)^{\alpha} }\,\Lambda \in C \left(\left( \frac {r} {2}, \frac {1} {2}\right) ; H _{ {\rm loc} }^{r-\alpha } (0, \infty)\right), \label{S5CAlph1}\\
&\hbox{and,}\,\,\,\,\forall t\in (r/2, 1/2),\,\forall r' \in (r, 2), \exists C _{ r'} >0; \nonumber \\
&\qquad \quad  \,\left|\frac {(\log x)\Lambda(t, x)} {(x-1)^{\alpha} }-\frac {(\log y )\Lambda(t, y)  } {(y-1)^{\alpha} } \right|\le \frac {C _{ r' }|x-y|^{r-\alpha }} {x^{r'}}\label{S5CAlph10}
\end{align}

and satisfies   (\ref{S3EMK2}) for all $t>0, x>0$, $x\not =1$. The function $\Lambda$ also satisfies,
\begin{align}
&\lim _{ t\to 0 }\Lambda (t)=\delta _1,\,\,\hbox{weakly in}\,\,\mathscr D'((0, \infty), \label{S5DataL}\\
&\lim _{ t\to 0 }\, t^{-1}\left|e^{-1/t} Y\right|^{1-2t}\Lambda \left(t, 1+e^{-1/t}Y\right)=1 \label{S3Cor17E1-7}
\end{align}
uniformly  for $Y$ on bounded subsets of $\RR$.
\end{theo}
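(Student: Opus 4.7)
The plan is to attack (\ref{S4E1}) via the Mellin transform in $x$, exploiting the ratio structure of $H(x/y)$: the integral operator acts as a Mellin convolution, while the factor $\partial_y$ introduces a shift by $1$ of the Mellin argument. Using $\mathscr{M}(\partial_y u)(s) = -(s-1)\mathscr{M}(u)(s-1)$ together with $\mathscr{M}[\int H(x/y) f(y) \tfrac{dy}{y}](s) = \hat H(s)\,\mathscr{M}(f)(s)$, where $\hat H(s) = \int_0^\infty H(r) r^{s-1}\, dr$, equation (\ref{S4E1}) becomes the difference-differential equation
\[
\partial_t \mathscr{M}\Lambda(t)(s) \;=\; \mu(s)\,\mathscr{M}\Lambda(t)(s-1), \qquad \mu(s) = -(s-1)\hat H(s),
\]
with initial condition $\mathscr{M}\Lambda(0)(s) = \mathscr{M}(\delta_1)(s) \equiv 1$. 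The first step is to analyse $\mu$: $\hat H$ is holomorphic on a strip strictly containing $\overline{\mathcal{S}_{0,2}}$ (using $H\sim 2r$ at $0$, the logarithmic singularity at $r=1$ that is visible in (\ref{S4EHL}), and $H = O(r^{-5})$ at $\infty$), and $\mu(1) = 0$ reflects the mass conservation of equation (\ref{S3EMK2}).

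The next step is to solve the shift equation. Duhamel iteration gives the formal series
\[
\mathscr{M}\Lambda(t)(s) \;=\; \sum_{n=0}^{\infty} \frac{t^n}{n!}\prod_{k=0}^{n-1}\mu(s-k),
\]
whose structure I would analyse either directly or via a Laplace transform in $t$: setting $\widehat U(z,s) = \int_0^\infty e^{-zt}\mathscr{M}\Lambda(t)(s)\, dt$ turns the evolution into the first-order linear difference equation $z\widehat U(z,s) - \mu(s)\widehat U(z,s-1) = 1$, admitting an explicit infinite-product/Gamma-type solution whose inverse Laplace recovers $\mathscr{M}\Lambda(t)$. The analytic outputs needed are: $\mathscr{M}\Lambda(t)$ is bounded on $\mathcal{S}_{0,2}$ uniformly on compacts in $t$, and $|\mathscr{M}\Lambda(t)(1+i\xi)| \sim C_t |\xi|^{-2t}$ as $|\xi|\to\infty$ --- the exponent $-2t$ being exactly what the singular profile (\ref{S3Cor17E1-7}) requires via Fourier duality between the additive group $\log x$ near $0$ and the Mellin line.

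I then define $\Lambda$ by Mellin inversion
\[
\Lambda(t,x) = \frac{1}{2\pi i}\int_{c-i\infty}^{c+i\infty}\mathscr{M}\Lambda(t)(s)\, x^{-s}\,ds, \qquad c\in(0,2),
\]
independent of $c$ by Cauchy. The $|\xi|^{-2t}$ decay provides $\Lambda(t)\in L^1(0,\infty)\cap E'_{0,2}$ and the continuity $t\mapsto\Lambda(t)$ into $L^1$. That $\Lambda$ is a weak solution of (\ref{S4E1}) follows by pairing against a test function and running the Mellin calculation backwards using Fubini; the pointwise form (\ref{S3EMK2}) for $x\neq 1$ comes from the smoothness away from $x=1$ established below. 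The regularity properties (\ref{S5CInvME0300})--(\ref{S5CInvME3B}) all follow from one template: $\partial_x^k$ inserts a polynomial in $s$ and $\partial_t^m$ inserts $m$ factors of $\mu(s)$ under the contour integral, and the $|\xi|^{-2t}$ decay then absorbs polynomial growth as soon as $t > (k+1)/2$, yielding (\ref{S5CInvME3B}); the $\log x$ prefactors in (\ref{S5CInvME0300})--(\ref{S5CInvM2E1}) cancel a local $|x-1|^{-1}$-type singularity of $\Lambda$ at $x=1$ and, under the integral, correspond to one integration by parts in $s$. The H\"older/Sobolev statements (\ref{S5CAlph1})--(\ref{S5CAlph10}) follow by a Plancherel argument on the line $\mathscr{Re}\, s = 1$ after dividing by $(x-1)^\alpha$, matching the local exponent of $\Lambda$ near $x=1$.

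The initial limit (\ref{S5DataL}) is immediate from $\mathscr{M}\Lambda(t)(s)\to 1 = \mathscr{M}(\delta_1)(s)$ uniformly on compact subsets of $\mathcal{S}_{0,2}$ as $t\to 0$. The pointwise asymptotic (\ref{S3Cor17E1-7}) is the main technical obstacle: I would write $\Lambda(t, 1+e^{-1/t}Y)$ by Mellin inversion on $\mathscr{Re}\, s = 1$, substitute $\xi = e^{1/t}\eta$ so that the natural $e^{-1/t}$ scale becomes order one, and use the precise asymptotic of $\mathscr{M}\Lambda(t)(1+i\xi)$ (with explicit subleading corrections, not just order of magnitude) to identify the limit by dominated convergence; making the error bound uniform in $Y$ on bounded subsets is the delicate point. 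Finally, uniqueness follows from injectivity of the Mellin transform on $E'_{0,2}$: for any admissible weak solution $v$, $\mathscr{M}v(t,\cdot)$ satisfies the same shift equation and the same initial limit on $\mathcal{S}_{0,2}$ (forced by applying (\ref{S5DataL}) to $v$), hence $\mathscr{M}v = \mathscr{M}\Lambda$ and $v = \Lambda$.
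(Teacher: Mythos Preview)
Your overall strategy coincides with the paper's: Mellin in $x$ reduces (\ref{S4E1}) to the shift equation $\partial_t U(t,s)=W(s-1)U(t,s-1)$ with $W(s-1)=\mu(s)$, a Laplace transform in $t$ converts this to the linear difference equation $zV(z,s)=W(s-1)V(z,s-1)+\text{const}$, and the solution is built from an auxiliary function $B$ satisfying $B(s)=-W(s-1)B(s-1)$ (your ``infinite-product/Gamma-type'' object), after which the key decay $|U(t,s)|\le C|s|^{-2t}$ drives all the regularity and the $x\to 1$ asymptotic. So the architecture is right.

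The genuine gap is in your uniqueness argument. You write that any admissible $v$ has $\mathscr{M}v$ satisfying the same shift equation and the same initial limit, ``hence $\mathscr{M}v=\mathscr{M}\Lambda$''. This does not follow: the relation $\partial_t U(t,s)=W(s-1)U(t,s-1)$, for $U$ analytic only on $\mathcal{S}_{0,2}$, makes sense only for $\mathscr{Re}\,s\in(1,2)$, since $s-1$ must also lie in the strip. On the sub-strip $\mathscr{Re}\,s\in(0,1)$ the equation imposes nothing directly, so the Cauchy data $U(0,\cdot)\equiv 1$ together with the equation leave a strip of width $1$ undetermined. Equivalently, your Duhamel series $\sum_n\frac{t^n}{n!}\prod_{k=0}^{n-1}\mu(s-k)$ requires values of $\mu$ (hence of $W$) arbitrarily far to the left, outside the strip and across poles of $W$, so it cannot be invoked as the unique bounded solution on $\mathcal{S}_{0,2}$ without further structure. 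What actually pins down $U$ is the boundedness hypothesis on the whole strip together with analyticity: after Laplace-transforming the difference of two solutions and peeling off a particular solution built from $B$, the homogeneous remainder $\tilde V_h$ yields, via $\tilde h(z,\zeta)=e^{s\log(-z)}B(s)^{-1}\tilde V_h(z,s)$ with $\zeta=e^{2i\pi(s-\beta)}$, an entire function of $\zeta$ of sublinear growth, which Liouville forces to vanish. This is the content of the paper's Proposition~\ref{S3Euniq}, and it is exactly the step your proposal skips.
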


As shown by (\ref{S5CInvME0300}), the Dirac measure at $x=1$ is instantly regularized to a function $\Lambda(t)$, whose smoothness  is given by (\ref{S5CInvM2E1}), (\ref{S5CInvME3B}). Property, (\ref{S3Cor17E1-7}) shows that, for small values of $t>0$, $\Lambda (t)$  still has a singularity  at $x=1$, of order $|x-1|^{2t-1}$. The regularity properties of $\Lambda (t)$ that are  proved in Theorem \ref{TheoMain1} improve as the value of $t$ increases,  as seen  in  (\ref{S5CInvME3B}).  By  (\ref{S5CAlph1}), (\ref{S5CAlph10}) it may be said that for all $t\in (0, 1/2)$ the function $((\log x) \Lambda (t))/(|x-1|^\alpha )$ is H\"older continuous of order $r-\alpha $ for any $r<2t$ and $\alpha \in (0, r)$. For $t>1$ it follows from (\ref{S5CInvME3B}) that $\Lambda (t)\in C^1(0, \infty)$.  Probably  $\Lambda (t)$ is  H\"older of order $2t-1$ for $t\in (1/2, 1)$ although we did not pursue in that direction. The regularity properties of $\Lambda(t)$ are important in order to prove that it satisfies (\ref{S3EMK2}) for all $t>0, x>0$, $x\not =1$, once it is  has been shown to be a weak solution of (\ref{S4E1})). Detailed asymptotics of $\Lambda (t, x)$ as $x\to 0$ and $x\to \infty$ are given in the  Sectsion below.

The fundamental solution is  used to solve the homogeneous initial value  problem.
\begin{theo}
\label{TheoMain2}
Suppose that $f_0\in L^1(0, \infty)$ and define,
\begin{align}
\label{SISolu}
u(t, x)=\int _0^\infty f_0(y)\Lambda  \left(\frac {t} {y}, \frac {x} {y}\right)\frac {dy} {y},\,\,\,\,\forall t>0, \,\,\forall x>0.
\end{align}
Then, $u\in L^\infty ((0, \infty); L^1 (0, \infty))\cap C((0, \infty); L^1(0, \infty))$ is a weak solution of (\ref{S4E1}). There exists $C>0$ such that
\begin{align}
&||u(t)||_1\le C||f_0||_1,\,\,\forall t>0 \label{TheoMain2-1}
\end{align}
and
\begin{equation}
\label{TheoMain2-3}
u(t)\underset{t\to 0}{\rightharpoonup} f_0,\,\, \hbox{in}\,\, \mathscr D'(0, \infty).
\end{equation}
If $f_0\in L^1(0, \infty)\cap L^\infty(0, \infty)$ then $u(t)\in L^\infty(0, \infty)$ for all $t>0$, there exists a constant $C _{ \infty}>0$ such that,
\begin{equation}
\label{TheoMain2-4}
||u(t)||_\infty \le C _{ \infty }||f_0|| _{ \infty },\,\,\forall t>0. 
\end{equation}
If $f_0\in L^1(0, \infty)\cap L^\infty_{loc}(0, \infty)$, 
\begin{equation}
L(u)\in L^\infty ((0, \infty)\times (0, \infty)),\label{TheoMain2-2}
\end{equation}
there exists a constant $C>0$ and, for $\varepsilon >0$ arbitrarily small, there exists a constant $C_\varepsilon >0$ such that, for all $t>0$ and $x>0$,
\begin{align}
\label{S5FE21-2}
\left|L(u(t))(x)\right|\le C\left(\frac {||f_0|| _{ L^1(0, t) }} {\max\{t^2, x^4\}}+
 ||f_0||_1 xt^{-3}\1 _{ 2x<t }+||f_0|| _{ L^\infty(2x/3, 2x) }\1 _{ t<2x }\right)+\nonumber \\
+C_\varepsilon ||f_0||_1 t^{1-\varepsilon }x^{-3+\varepsilon }\1 _{ t<2x/3 }
\end{align}
and $u$ satisfies  (\ref{S3EMK2})  for all $t>0, x>0$; furthermore $u(t)\in C(0, \infty)$ if $f_0\in C(0, \infty)$.
\end{theo}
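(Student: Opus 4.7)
The plan is to exploit the scaling invariance of (\ref{S4E1}). The kernel (\ref{S3E2359B}) is homogeneous of degree $-2$, $K(\lambda x,\lambda y)=\lambda^{-2}K(x,y)$, so whenever $v(t,x)$ solves (\ref{S4E1}), so does $v(t/c,x/c)/c$ for any $c>0$; since $\delta_1(\cdot/c)/c=\delta_c$ as distributions, the function $(t,x)\mapsto y^{-1}\Lambda(t/y,x/y)$ is the fundamental solution with initial datum $\delta_y$. Formula (\ref{SISolu}) then realizes $u$ as the superposition of these fundamental solutions weighted by $f_0(y)$, so linearity of (\ref{S4E1}) combined with Fubini immediately gives that $u$ is a weak solution in $\mathscr{D}'((0,\infty)\times(0,\infty))$, provided the defining integral and the relevant Fubini interchange are justified.

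The integrability is secured by the uniform bound $\|\Lambda(s)\|_1\equiv 1$. Formally integrating (\ref{S4E1}) against the constant function and substituting $r=x/y$ gives $\tfrac{d}{ds}\!\int\!\Lambda(s,x)\,dx=\bigl(\!\int\! H(r)\,dr\bigr)\!\int\!\partial_y\Lambda(s,y)\,dy=0$; combined with (\ref{S5DataL}) and non-negativity of $\Lambda$ (inherited from the scattering form $\partial_\tau\Lambda(x)=\int\Lambda(y)K(x,y)\,dy-\Lambda(x)\!\int\! K(x,y)\,dy$ with $K\ge 0$), this gives $\int\Lambda(s,x)\,dx=1$. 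Fubini then yields (\ref{TheoMain2-1}), continuity of $u\colon(0,\infty)\to L^1$ follows by dominated convergence using $\Lambda\in C((0,\infty);L^1)$ from Theorem \ref{TheoMain1}, and (\ref{TheoMain2-3}) by testing against $\varphi\in\mathscr{D}(0,\infty)$ and applying (\ref{S5DataL}) inside the integral. For (\ref{TheoMain2-4}) I substitute $r=x/y$ in (\ref{SISolu}) to get $|u(t,x)|\le\|f_0\|_\infty\!\int_0^\infty\!\Lambda(tr/x,r)\,dr/r$, and reduce to a uniform bound of the last integral in the parameter $\sigma=t/x$, extracted from the $r\to 0$ and $r\to\infty$ asymptotics of $\Lambda(\sigma,r)$ established in the analysis preceding this statement.

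The main obstacle is the refined pointwise estimate (\ref{S5FE21-2}) on $L(u)$. Combining the $1/y$ factor in (\ref{SISolu}) with the scaling identity $L(v(\cdot/c))(x)=c^{-1}L(v)(x/c)$ (immediate from the homogeneity of $K$) and interchanging $L$ with the $y$-integration (valid away from $y=x$ by the pointwise validity of (\ref{S3EMK2}) for $\Lambda$) gives
\[
L(u(t))(x)=\int_0^\infty f_0(y)\,y^{-2}\,\partial_\tau\Lambda(t/y,\,x/y)\,dy.
\]
I would then split the $y$-integral along the four regimes $\{y<t\}$, $\{t<y<2x/3\}$, $\{2x/3<y<2x\}$, $\{y>2x\}$ matching the four terms in (\ref{S5FE21-2}). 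In each regime $\partial_\tau\Lambda(\sigma,z)$ is controlled via the regularity properties (\ref{S5CInvME03})--(\ref{S5CInvME3B}) together with the asymptotic expansions of $\Lambda(\sigma,z)$ at $z\to 0$, $z\to\infty$, and with its residual $|z-1|^{2\sigma-1}$ singularity quantified by (\ref{S3Cor17E1-7}). The term $\|f_0\|_{L^\infty(2x/3,2x)}\1_{t<2x}$ arises precisely from the resonant regime $y\sim x$ where $\Lambda(t/y,\cdot)$ is still concentrated near $1$; the term $t^{1-\varepsilon}x^{-3+\varepsilon}$ encodes the algebraic large-$z$ decay of $\Lambda(\sigma,z)$ picked up with an $\varepsilon$-loss to absorb a logarithmic factor; the other two terms reflect short-range bounds on $\partial_\tau\Lambda(\sigma,z)$ that scale like $\sigma^{-2}$ (when $\sigma\gtrsim z$) and $z^{-4}$ (when $z\gtrsim\sigma$).

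Once (\ref{S5FE21-2}) is in hand, (\ref{TheoMain2-2}) is immediate, and the pointwise validity of (\ref{S3EMK2}) for $u$ at every $t>0,x>0$ follows from the weak form (\ref{S4E1}) and an approximation argument using the $L^\infty_{\rm loc}$ control of $u(t,\cdot)$ on $(0,\infty)$. Continuity $u(t)\in C(0,\infty)$ when $f_0\in C(0,\infty)$ is dominated convergence in (\ref{SISolu}), using the continuity (\ref{S5CInvME0300}) of $(\log x)\Lambda$ together with the local $L^\infty$ control of $\Lambda(t/y,x/y)$ away from $y=x$ coming from (\ref{S5CInvM2E1})--(\ref{S5CInvME3B}).
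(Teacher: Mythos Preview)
Your overall architecture matches the paper's: the scaling reduction, the identity $L(u(t))(x)=\int f_0(y)\,y^{-2}\partial_\tau\Lambda(t/y,x/y)\,dy$, and the four-regime splitting for (\ref{S5FE21-2}) are exactly what the paper does (see (\ref{S5FE21E2}) and the estimates (\ref{S5MO1})--(\ref{S5MO4}) derived from Propositions~\ref{S3PDL}, \ref{S4P9}, \ref{S4P2}).

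There is, however, a genuine gap in your treatment of the $L^1$ bound. You assert $\|\Lambda(s)\|_1\equiv 1$ by combining a formal conservation law with non-negativity ``inherited from the scattering form $\partial_\tau\Lambda(x)=\int\Lambda(y)K(x,y)\,dy-\Lambda(x)\int K(x,y)\,dy$ with $K\ge 0$''. This step fails: the kernel $K(x,y)$ behaves like $(2x|x-y|)^{-1}$ near $y=x$, so $\int K(x,y)\,dy=+\infty$ and the equation is \emph{not} a bounded gain--loss process; no maximum principle is available this way, and the paper nowhere claims $\Lambda\ge 0$. Without positivity, even the identity $\int\Lambda(s,x)\,dx=1$ (were it true) would not yield (\ref{TheoMain2-1}), since one needs a bound on $\int|\Lambda(s,x)|\,dx$. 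The paper obtains this instead from Corollary~\ref{S4C426}, which proves $\|\Lambda(t)\|_1\le C(1+t^2)^{-1}$ directly from the pointwise estimates of Sections~\ref{S4larget}--\ref{t0x1}.

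A second point where you understate the work: for (\ref{TheoMain2-4}) you reduce to a uniform bound on $\int_0^\infty|\Lambda(\sigma r,r)|\,dr/r$ and say it is ``extracted from the $r\to 0$ and $r\to\infty$ asymptotics''. This is exactly the content of Proposition~\ref{S3C2P1}, but it is not a two-line consequence of asymptotics: the delicate contribution comes from the resonant window $|r-1|\lesssim\delta(1/\sigma)$ where, for small $\sigma$, $\Lambda(\sigma r,r)$ carries the $|r-1|^{2\sigma r-1}$ singularity. The paper handles this with a careful case analysis (Lemmas~\ref{S4LG1}--\ref{S4LFIPS}) separating the regimes $|r-1|\gtrless\delta$ via the auxiliary functions $H_1,H_2$ and the exponentially shrinking window (\ref{S4Edelta}); the integrals $\Psi_1,\Psi_3,\Psi_4$ encode precisely the balance between the blow-up of $|r-1|^{-1+2t/y}$ and the smallness of the window. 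Your sketch does not address this.
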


The solution $u$ also satisfies the following property,
\begin{prop}
\label{S4EM1PN}
Suppose $f_0\in L^1(0, \infty)$ and $u$ is given by (\ref{SISolu}). Then, for all $t>0$,
\begin{align}
\lim _{ x \to 0 }u(t, x)&=\ell(f_0; t) \in (-\infty, \infty) \label{S4L25E1}\\
\ell(f_0; t)&=A_1t^{-3}\int _0^tf_0(y)y^2dy+A_2t^{-4}\int _0^t f_0(y)y^3dy +\int _0^t f_0(y)\,b_1\!\!\left(\frac {t} {y} \right)\frac {dy} {y}\label{S4L25E2}
\end{align}
where, $A_1$ and $A_2$ are constants given in (\ref{S4Ecata})   and $b_1$ is the function given in (\ref{S3EBB1}) that satisfies $b_1(r)=\mathcal O(r^{-8})$ as $r\to \infty$.
\end{prop}

The existence and uniqueness  of the fundamental solution $\Lambda$ and some of its regularity  properties are proved in Section 2. In Section 3, further properties of $\Lambda$ are obtained like point wise estimates in different regions of the $(t, x)$ plane, and integrability. The  Cauchy problem is solved in Section 4 where Proposition \ref{S4EM1PN} is also proved. Several  quite  technical proofs of some of the auxiliary results are given in the Appendix. 
\section{The fundamental solution $\Lambda$. First properties.}
\setcounter{equation}{0}
\setcounter{theo}{0}

Following the arguments of  \cite{BZ} (cf. \cite{EV, EV2} for two other examples),  the  fundamental solution of (\ref{S4E1}) is obtained  as the inverse Mellin and inverse Laplace  transforms of a solution $V$ of the equation,
\begin{align}
&zV(z, s)=W(s-1)V(z, s-1)+\frac {1} {\sqrt{2\pi} },\,\,z\in \CC,\,\,  {\mathscr Re}(z)>0,\,\,s\in \mathcal S _{ 0, 2 } \label{S4EV1}\\
&W(s)=-2\gamma_e -2\psi \left(\frac {s} {2} \right)-\pi \cot \left(\frac {\pi s} {4} \right),\,\,\,s\in \mathcal S _{ -2, 4 }  \label{S4E7}
\end{align}
where $\gamma _e$ is the Euler constant and $\psi (z)=\Gamma '(z)/\Gamma (z)$ is the Digamma function.  The function $W$ in (\ref{S4E7}) is related with the Mellin transform of the function $H$ in (\ref{S4EHL}) as,
\begin{equation}
\label{S2EHW}
W(s)=-s \int _0^\infty r^{s}H(r)dr,\,\,\forall s\in \mathcal S _{ -2, 4 } 
\end{equation}

\begin{prop}
\label{S3PW}
The function $W$ is meromorphic on $\CC$, analytic on the domain ${\mathscr Re}(s)\in (-2, 4)$ and is such that $W(0)=W(2)=0$. It has actually  a sequence  of zeros and a sequence of poles distributed as follows.  

1.- Poles. The poles of the function $W$ are located at $\{s_n=4n,\, n=1, 2, 3, 4, \cdots\}$ (the residue at these points is $4$) and at $\{s^*_n=-2(2n+1),\,\,n=0, 1, 2, 3\cdots\}$ (the residues at these points is $-4$).

2.- Zeros. The zeros of the function $W$, different from $0$ and $2$, are located at two series of points that we denote $\{\sigma _n,\,\, n=1, 2, 3, \cdots \}$ and $\{\sigma _n^*, \,\,n=0, 1, 2, 3, \cdots\}$. These points are such that $\sigma _n\in (s _{ n +1}-1, s _{ n+1 })$ and $\sigma _n^*\in(s_n^*, s _{ n }^*+1)$.
\end{prop}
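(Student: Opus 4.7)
The plan is to treat $W$ as a sum of three classically meromorphic objects on $\CC$: the constant $-2\gamma_e$, the digamma piece $-2\psi(s/2)$, and the cotangent piece $-\pi\cot(\pi s/4)$. The first contributes nothing singular; the second has simple poles at $\{0,-2,-4,\ldots\}$ (the poles of $\psi$ pulled back by $s\mapsto s/2$); the third has simple poles on $4\ZZ$ (the zeros of $\sin(\pi s/4)$). Meromorphicity of $W$ on $\CC$ follows immediately. Using $\psi(z)\sim -1/(z+k)$ near $z=-k$ and $\cot(x)\sim 1/(x-k\pi)$ near $x=k\pi$, I would compute the residues at each pole location directly. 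The pole sets overlap exactly on $\{0,-4,-8,\ldots\}$, and there a short calculation shows the residues cancel, so $W$ extends analytically across those points; what survives are simple poles at $s=4n$ ($n\ge1$) and $s=-2(2n+1)$ ($n\ge0$), with the values of the residues given in the statement. Since none of the surviving poles lies in the strip $\mathscr{Re}(s)\in(-2,4)$, analyticity on that strip is then automatic.

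The vanishing $W(0)=0$ is the one delicate evaluation. I would expand $-2\psi(s/2)=4/s+2\gamma_e+O(s)$ from $\psi(z)=-1/z-\gamma_e+O(z)$, and $-\pi\cot(\pi s/4)=-4/s+O(s)$ from $\cot(x)=1/x-x/3+O(x^3)$. Added to $-2\gamma_e$, the $4/s$ singular parts cancel and the $\pm2\gamma_e$ constants telescope, yielding $W(0)=0$. At $s=2$ direct evaluation is enough: $\psi(1)=-\gamma_e$ and $\cot(\pi/2)=0$, so $W(2)=-2\gamma_e-2(-\gamma_e)-0=0$.

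For the zero locations I would work on the real axis, where $W$ is real analytic off its real poles. Between two consecutive real poles, $W$ runs from $\pm\infty$ to $\mp\infty$ (the signs read off from the residues), so the intermediate value theorem produces at least one real zero there. To confine the zero to the specific length-one subinterval claimed, the asserted endpoints $s_{n+1}-1,s_{n+1}$ and $s_n^*,s_n^*+1$ are adjacent integers at which $\cot(\pi s/4)\in\{0,\pm1\}$ and $\psi(s/2)$ reduces to a closed-form expression in $\gamma_e$ and $\log 2$ via the recurrence $\psi(z+1)=\psi(z)+1/z$; a sign check of $W$ at each endpoint localizes the zero precisely. A sign inspection in the leftover real sub-intervals (including $(0,2)$ and $(2,4)$, where $W$ vanishes at the endpoints) rules out further real zeros there.

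The main obstacle, as I see it, is combining this real-line analysis with a genuine count of all complex zeros. Schwarz reflection $\overline{W(\bar s)}=W(s)$ forces any off-axis zero to come in a conjugate pair, but to exclude off-axis zeros altogether I would use the integral representation (\ref{S2EHW}) together with the asymptotics of $\psi$ and $\cot$ in vertical strips to run an argument-principle count on a large rectangle, balancing the enclosed zeros against the enclosed poles and matching the total against the two real sequences $\{\sigma_n\}$ and $\{\sigma_n^*\}$. The sign bookkeeping required to identify the correct subinterval, together with this global zero count, is where the argument demands the most care.
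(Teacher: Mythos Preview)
The paper states this proposition without proof, so there is nothing to compare against directly. Your plan is the natural one and is sound as a sketch: the pole structure and the cancellations at $s\in\{0,-4,-8,\ldots\}$ follow exactly as you describe from the residues of $\psi$ and $\cot$, the evaluations $W(0)=W(2)=0$ are correct, and the intermediate-value argument on the real line yields at least one real zero in each pole-to-pole interval.

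The two places needing more than you sketch are (i) confining each real zero to the claimed length-one subinterval and excluding further real zeros---sign checks at isolated integers do not by themselves rule out additional zeros between the checked points, so you need either a monotonicity argument (comparing $\psi'(s/2)$ against $(\pi^2/4)\csc^2(\pi s/4)$) or a local Rouch\'e/argument-principle count on each subinterval---and (ii) the exclusion of non-real zeros. For the latter your proposed argument-principle count on large rectangles is the right tool; the asymptotics of Proposition~\ref{S3PW17} give the needed control of $\arg(-W)$ along the vertical sides, and the pole/zero balance then matches the two real sequences. You have correctly identified this as where the real work lies.
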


\begin{prop}
\label{S3PW17}
The winding number of $W(s)$ is zero for ${\mathscr Re}(s)\in (0, 2)$ and 
\begin{align}
&W(s)= - 2\log |s/2|-\gamma _e+O\left(\frac {1} {|s|^2} \right),\,\,\,s=u+iv,\,|v|\to \infty.\label{S3PW17E1}\\
&W'(s)= \frac {2i} {s}+\mathcal O\left(\frac {1} {v^2} \right),\,\,|v|\to \infty \label{S3PW17E3}
\end{align}
\end{prop}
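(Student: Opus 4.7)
The plan is to derive the two asymptotic expansions (\ref{S3PW17E1}) and (\ref{S3PW17E3}) directly from the explicit formula defining $W$, and then obtain the winding number statement almost for free from Proposition \ref{S3PW}, which already asserts that $W$ is analytic and zero-free on the strip $\mathcal S_{0,2}$.

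For the asymptotics I would invoke the classical Stirling-type expansions of the Digamma function and its derivative,
\begin{equation*}
\psi(z) = \log z - \frac{1}{2z} + O(|z|^{-2}), \qquad \psi'(z) = \frac{1}{z} + O(|z|^{-2}),
\end{equation*}
which hold uniformly in sectors $|\arg z|\le \pi -\delta$ as $|z|\to\infty$, and apply them with $z=s/2$ for $\mathscr{Re}(s)$ bounded. In parallel, writing $\cot(\pi s/4)=i(e^{i\pi s/4}+e^{-i\pi s/4})/(e^{i\pi s/4}-e^{-i\pi s/4})$ and estimating numerator and denominator separately, one sees that as $v=\mathscr{Im}(s)\to\pm\infty$,
\begin{equation*}
\cot(\pi s/4) = \mp i + O\bigl(e^{-\pi|v|/2}\bigr), \qquad \csc^2(\pi s/4) = O\bigl(e^{-\pi|v|/2}\bigr).
\end{equation*}
Plugging these into $W(s)=-2\gamma_e-2\psi(s/2)-\pi\cot(\pi s/4)$ and splitting $\log(s/2)=\log|s/2|+i\arg(s/2)$, the essential mechanism is that the constant $\mp i\pi$ produced by $-2i\arg(s/2)$ as $v\to\pm\infty$ cancels exactly the $\pm i\pi$ coming from $-\pi\cot(\pi s/4)$; what remains is real to leading order and gives (\ref{S3PW17E1}). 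Differentiating yields $W'(s)=-\psi'(s/2)+(\pi^2/4)\csc^2(\pi s/4)$, and applying the expansion of $\psi'$ together with the exponential decay of $\csc^2$ produces (\ref{S3PW17E3}).

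For the winding number I would use that $\mathcal S_{0,2}$ is simply connected and, by Proposition \ref{S3PW}, $W$ is analytic and nowhere vanishing on it, so a single-valued holomorphic branch of $\log W$ exists on $\mathcal S_{0,2}$ and the variation of $\arg W$ around any closed loop in the strip is zero. For a vertical line $\mathscr{Re}(s)=c$, $c\in(0,2)$, one applies the argument principle to the rectangle $R_M=[c_1,c_2]\times[-M,M]\subset \mathcal S_{0,2}$ and lets $M\to\infty$: by the asymptotic in the preceding paragraph, along the horizontal sides $v=\pm M$ the image curve under $W$ is confined to a shrinking neighbourhood of the large negative real number $-2\log(M/2)$, so the argument change on each horizontal side tends to zero. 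Since $R_M$ contains no zero of $W$, the total variation around $\partial R_M$ is zero, and combining with the location of the image near the negative real ray at infinity one concludes the winding number on the vertical line is zero.

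The main obstacle will be the detailed bookkeeping of the imaginary parts in the asymptotic computation: both $-2\log(s/2)$ and $-\pi\cot(\pi s/4)$ contribute $O(1)$ imaginary pieces that must be matched exactly before the polynomial remainders $O(1/v)$ and $O(1/v^2)$ become visible, and this cancellation is also what places the image of the vertical line close to the negative real axis and thereby rules out nontrivial winding. Once these estimates are pinned down, the topological conclusion is essentially immediate from Proposition \ref{S3PW}.
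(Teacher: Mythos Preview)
Your approach is essentially identical to the paper's: expand $\psi(s/2)$ via its Stirling asymptotic, note the exponential decay of $\cot(\pi s/4)$ and $\csc^2(\pi s/4)$, and observe that the $\mp i\pi$ from $-2i\arg(s/2)$ cancels the $\pm i\pi$ from $-\pi\cot(\pi s/4)$ as $v\to\pm\infty$. Your treatment of the winding number via Proposition~\ref{S3PW} and the asymptotic control near the negative real axis is in fact more explicit than what the paper writes, which essentially leaves that part implicit.
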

\begin{proof}
If for all $z\in \CC$,  ${\rm arg} (z)$ denotes the principal argument of $z$ (i.e. $-\pi <{\rm arg} (z)\le \pi $),
\begin{align*}
2\psi \left( \frac {s} {2}\right)=&2\log \left( \left|\frac {s} {2}\right|\right)+2i{\rm arg}\left( \frac {s} {2}\right)+\mathcal O (|v|^{-1}),\,\,v\to \infty\\
=&2\log \left( \left|\frac {s} {2}\right|\right)+i\pi +\mathcal O (|v|^{-1}),\,\,v\to \infty\\
&\pi \cot \left(\frac {\pi s} {4} \right)=-i\pi +\mathcal O\left( e^{-2v}\right),\,\,v\to \infty.
\end{align*}
It follows that
\begin{align*}
W(s)=-2\gamma _e-2\log \left( \left|\frac {s} {2}\right|\right)-i\pi +i\pi +\mathcal O\left( e^{-2v}\right),\,\,v\to \infty
\end{align*}
When $v\to -\infty$,
\begin{align*}
2\psi \left( \frac {s} {2}\right)=&2\log \left( \left|\frac {s} {2}\right|\right)+2i {\rm arg} \left( \frac {s} {2}\right)+\mathcal O (|v|^{-1}),\,\,v\to -\infty\\
=&2\log \left( \left|\frac {s} {2}\right|\right)-i\pi +\mathcal O (|v|^{-1}),\,\,v\to -\infty\\
\pi \cot \left(\frac {\pi s} {4} \right)=& i\pi +\mathcal O\left( e^{2v}\right),\,\,v\to -\infty.
\end{align*}
and (\ref{S3PW17E1}) follows. Similar arguments give (\ref{S3PW17E3}) using
\begin{equation*}
W'(s)= \frac {\pi ^2} {4}\left(\csc\left( \frac {\pi \rho } {4}\right)\right)^2-{ \rm PolyGamma} \left(1, \frac {s} {2}\right)
\end{equation*}
\end{proof}
 
As a first step to solve  (\ref{S4EV1}), (\ref{S4E7}) we consider the ``stationary and homogeneous'' case.
\begin{prop}
\label{S3PB}
For any $\beta \in (0, 2)$ fixed, the problem
\begin{align}
\label{S3PBE1}
B(s)=-W(s-1)B(s-1),\,\,\,\forall s\in \CC; {\mathscr Re}(s)\in (\beta , \beta +1)
\end{align}
admits the following solution,
\begin{align}
\label{S3PBE0}
B(s)=\exp\left(\int  _{ {\mathscr Re} (\rho) =\beta  }  \log (-W(\rho ))\left( \frac {1} {1-e^{2i\pi (s-\rho) } }-\frac {1} {1+e^{-2i\pi \rho }}\right)d\rho\right).
\end{align}
\end{prop}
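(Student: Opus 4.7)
The plan is to verify that (\ref{S3PBE0}) defines a holomorphic function of $s$ on the strip $\mathscr Re s \in (\beta,\beta+1)$ satisfying (\ref{S3PBE1}), proceeding in three steps.

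\emph{Convergence and analyticity.} The first observation is that $1/(1+e^{-2i\pi\rho}) = 1/(1-e^{2i\pi(1/2-\rho)})$, so the bracketed kernel in (\ref{S3PBE0}) can be written as $K(s,\rho) - K(1/2,\rho)$ with $K(\sigma,\rho) := (1 - e^{2i\pi(\sigma-\rho)})^{-1}$. A short direct calculation shows that this difference decays exponentially as $|\mathscr Im \rho|\to\infty$, uniformly for $s$ in compact subsets of the strip. Combined with the very slow growth $\log(-W(\rho)) = O(\log|\rho|)$ obtained from Proposition \ref{S3PW17} (and with the non-vanishing of $W$ on $\mathscr Re \rho\in(0,2)$, Proposition \ref{S3PW}, making a single-valued branch of $\log(-W)$ available on that strip), the integral converges absolutely and uniformly on compact subsets of $\mathscr Re s \in (\beta,\beta+1)$. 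Differentiating under the integral then gives holomorphy of $B$.

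\emph{Functional equation via analytic continuation.} Since $K(s,\rho)$ is $1$-periodic in $s$, plugging $s-1$ directly into (\ref{S3PBE0}) returns the same value as $B(s)$; the correct interpretation of $B(s-1)$ in (\ref{S3PBE1}) is therefore as the analytic continuation of $B$ from $\mathscr Re s\in(\beta,\beta+1)$ into the adjacent strip $\mathscr Re s\in(\beta-1,\beta)$. As $\mathscr Re s$ decreases across the line $\mathscr Re s = \beta$, the pole of $K(s,\rho)$ at $\rho = s$ crosses the contour $\mathscr Re \rho = \beta$ from right to left. By the Sokhotski--Plemelj formula adapted to the vertical contour, the straight-contour formula jumps by $+\log(-W(s))$ across this line, while the analytic continuation is by definition continuous. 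For $s \in (\beta-1, \beta)$ this yields $\log B^{\rm ac}(s) = \log B_{\rm straight}(s) - \log(-W(s)) = \log B(s+1) - \log(-W(s))$, where the second equality uses the periodicity of the straight-contour formula in $s$. Replacing $s$ by $s-1$ gives $B^{\rm ac}(s-1) = B(s)/(-W(s-1))$, i.e.\ $B(s) = -W(s-1)\,B^{\rm ac}(s-1)$, which is precisely (\ref{S3PBE1}).

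\emph{Main obstacle.} The most delicate point is getting the Plemelj sign and orientation right. One must translate the real-axis convention ``$f_+ - f_- = 2i\pi g(x)$'' to the vertical contour (going upward), where ``above/below the real axis'' becomes ``left/right of the contour'', yielding $V_{\rm left} - V_{\rm right} = 2i\pi\,\text{Res}$; the signs then propagate cleanly through the rest of the argument. A secondary issue is choosing a single-valued branch of $\log(-W)$ consistently on the larger strip $\mathscr Re \rho \in (-1,2)$ that is relevant to the continuation: since $W$ vanishes at $\rho = 0$, a branch cut must be chosen (for example descending vertically from $\rho = 0$, so as to avoid both of the contours $\mathscr Re \rho = \beta$ and $\mathscr Re \rho = \beta - 1$), and the decay estimates from the first step must be checked to remain valid with this choice.
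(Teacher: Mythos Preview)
Your argument is correct and is, at bottom, the same Plemelj/residue mechanism as the paper's, carried out directly in the $s$ variable instead of through the exponential change of variables $\zeta=e^{2i\pi(s-\beta)}$. The paper maps the strip $\mathscr Re s\in(\beta,\beta+1)$ to $\CC\setminus[0,\infty)$, writes down the Cauchy integral $\psi(\zeta)$, and reads off the additive jump $b(s+1)-b(s)=\log(-W(s))$ from the Sokhotski--Plemelj relation on the cut; you observe the $1$-periodicity of the kernel, compute the residue as the pole $\rho=s$ crosses the vertical contour, and obtain the same relation. Your route is a clean \emph{verification} of the stated formula, while the paper's is more of a \emph{derivation} (it shows where the formula comes from); neither costs more than the other analytically. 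One minor point: your worry about a branch of $\log(-W)$ on the larger strip $\mathscr Re\rho\in(-1,2)$ is unnecessary here --- the residue step only needs $\log(-W)$ in a neighborhood of the contour $\mathscr Re\rho=\beta$ and of the crossing pole $\rho=s$ with $\mathscr Re s$ near $\beta$, and on that region $W$ does not vanish.
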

\begin{proof} In order to solve  (\ref{S3PBE1}) we notice that, if logarithms may be taken to both sides of the equation the following identity would follow:
\begin{align}
\label{S5EBL}
\log (B(s+1))=\log (B(s))+\log(-W(s)).
\end{align}
Then, for any $\beta \in (0, 1)$ fixed, we define the new variables,
\begin{align}
&\forall s\in \CC;\, {\mathscr Re}(s)\in (\beta , \beta +1),\,\,\,\zeta  =e^{2i\pi (s-\beta )} \label{S5EPsi1} \\
& Q(\zeta )  =\log (-W(s)) \label{S5EPsi2}
\end{align}
In order for the change of variable (\ref{S5EPsi2}) to be uniquely defined it is necessary to fix the argument of the function $\log (-W(s))$.
Since $W(s)$ is analytic on the strip ${\mathscr Re}(s)\in (0, 3)$, the function $Q$ is analytic on $\CC$. By (\ref{S3PW17E1}),
\begin{align*}
-W(s)&=2\log\left(\frac {|v|} {2} \right)+\gamma _e+\mathcal O\left(\frac {1} {|v|} \right),\,\,\,s=u+iv,\,\,\,|v|\to \infty\\
\log (-W(s))&=\log\left(2\log\left(\frac {|v|} {2} \right)+\gamma _e+\mathcal O\left(\frac {1} {|v|} \right) \right)=\log (\log |v|)+\mathcal O(1),\,\,\,|v|\to \infty.
\end{align*}
Since by definition $
|\zeta |=e^{-2\pi v}$, $|v|=\frac {\left|\log |\zeta | \right|} {2\pi }$ and
\begin{align}
Q(\zeta )&\underset{|v|\to \infty}{=}\log (\log |v|)+\mathcal O(1)\underset{|\log |\zeta ||\to \infty}{=}\log (\log \left|\log |\zeta | \right|)+\mathcal O(1).\label{S5EQX1}
\end{align}
The function $Q$ is then very slowly divergent as $|\zeta |\to \infty$ or $|\zeta |\to 0$. 

On the other hand, let us write $s=u+iv$ with $u\in \RR$ and $v\in \RR$ and consider the limits of the variable $\zeta =\zeta (s)$ defined in (\ref{S5EPsi1}) when $u\to \beta^+$ and $u\to (\beta +1)^-$ for $v\in \RR$ fixed,
\begin{align*}
\forall v\in \RR:\qquad \quad \lim _{ u\to \beta^+ }\zeta =e^{-2\pi v}\lim _{ \theta\to 0 }e^{i\theta},\quad \lim _{ u\to (\beta+1)^- }\zeta =e^{-2\pi v}\lim _{ \theta\to 2\pi }e^{i\theta}  
\end{align*}
By (\ref{S5EQX1}), the following Cauchy's integral:
\begin{equation}
\psi (\zeta )=\frac {1} {2i\pi }\int _0^\infty Q(r) \left( \frac {1} {r-\zeta }-\frac {1} {r+1}\right)dr,\,\,\, \forall \zeta \in \CC\setminus [0, \infty) \label{S5PsiDef}
\end{equation}
is  absolutely convergent  for all $\zeta \in \CC\setminus [0, \infty)$. If we  denote,
\begin{align}
\forall r\in \RR: \qquad \quad \psi(r+i0)=\lim _{ \theta\to 0 }\psi (re^{i\theta}) ,\quad \psi(r-i0)=\lim _{ \theta\to 2\pi  }\psi (re^{i\theta}),  \label{S5Psi3}
\end{align}
then, 
\begin{equation}
\label{S5EBL2}
\psi (r-i0)=\psi (r+i0 )+Q(r),\,\,\,\forall r>0.
\end{equation}
  
The function  $b(s)=\psi (\zeta )$,  defined, for $s\in \CC;\, {\mathscr Re}(s)\in (\beta , \beta +1)$ as,
\begin{align*}
b(s)=&\int _0^\infty Q(r) \left( \frac {1} {r-\zeta }-\frac {1} {r+1}\right)dr,\,\,\, r=e^{2i\pi (\rho -\beta )},\,\,\, dr=2i\pi rd\rho \\
=&\int  _{ {\mathscr Re} (\rho) =\beta  } \log (-W(\rho ))\left( \frac {1} {1-e^{2i\pi (s-\rho )} }-\frac {1} {1+e^{-2i\pi (\rho -\beta )}}\right)d\rho
\end{align*}
satisfies,
\begin{align*}
b(s+1)=b(s)+\log(-W(s)), \forall s\in \CC; {\mathscr Re}(s)\in (\beta , \beta +1)
\end{align*}
and  the function  $B(s)=\exp(b(s))$,
\begin{equation}
\label{S5EtheB}
B(s)=\exp\left(\int _{ {\mathscr Re} (\rho) =\beta  } \log (-W(\rho ))\left( \frac {1} {1-e^{2i\pi (s-\rho) } }-\frac {1} {1+e^{-2i\pi \rho }}\right)d\rho\right).
\end{equation}
satisfies (\ref{S3PBE1}).\end{proof}
By classical arguments of complex variables it is  straightforward to check that the function $B$ obtained in Proposition \ref{S3PB} satisfies the following,
\begin{prop}
\label{S3PBP}
The function $B$ is  analytic on the domain $ s\in \mathcal S _{\, 0, 2 }$ where it is given by the integral in (\ref{S5EtheB}) for some $\beta \in (0, 1)$ such that $\beta <\mathscr Re s<\beta +1$. It is extended to a meromorphic on the complex plane by the following relation, 
\begin{equation}
B(s)=-W(s-1)B(s-1),\forall s\in \CC. \label{S5EBX5}
\end{equation}
It has a sequence of poles and a sequence of zeros, determined by the zeros and poles of the function $W$ as follows.

1.-Poles. The poles of the function $B$  are located at $s=0$, $s=-1$,  at $\{4n+1, n=1, 2, 3, \cdots\}$ and at  $\{\sigma _n^*,\,\,n=1, 2, 3, \cdots \}$.

2.-Zeros. The zeros of the function $B$ are at $s=3$, $s=4$ at $\{-n, \,\,\,n=6, 7, 8, \cdots\}$  and at $\{\sigma _n+1,\,\,\,n=1, 2, \cdots\}$.
\end{prop}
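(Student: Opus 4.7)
The plan is three steps: prove $B$ is analytic on $\mathcal{S}_{0,2}$, extend it meromorphically to $\CC$ via the functional equation, then catalog zeros and poles by iterating. For analyticity, I would fix $s$ with $\Re s \in (0,2)$ and pick $\beta \in (0,1)$ with $\beta < \Re s < \beta + 1$. In the integral representation (\ref{S5EtheB}), the integrand is holomorphic in $s$ because the denominators $1 - e^{2i\pi(s-\rho)}$ do not vanish on the contour $\Re \rho = \beta$ when $\Re(s-\rho) \in (0,1)$. Absolute convergence of the $\rho$-integral comes from the bound $|\log(-W(\rho))| = O(\log\log|\Im \rho|)$ from (\ref{S5EQX1}), combined with $|1 - e^{2i\pi(s-\rho)}|^{-1} \sim e^{-2\pi|\Im\rho|}$ as $|\Im \rho| \to \infty$. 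Analyticity in $s$ then follows from differentiation under the integral sign. Independence of the choice of $\beta$ (when several are valid) is a Cauchy's theorem argument in $\rho$: between two valid contours the integrand is holomorphic because $W$ is analytic and nonzero on the interior of $\mathcal{S}_{0,2}$ (by Prop \ref{S3PW} and Prop \ref{S3PW17}) and the denominators never vanish on the enclosed region.

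For the meromorphic extension, the functional equation $B(s+1) = -W(s)B(s)$ on $\mathcal{S}_{0,2}$ is already established in the proof of Prop \ref{S3PB} via the jump relation $\psi(r-i0) = \psi(r+i0) + Q(r)$. I would use it as a recursive definition, setting $B(s) = -W(s-1)B(s-1)$ on $\{s : \Re s \in (k, k+1)\}$ for each integer $k \geq 2$, and $B(s) = -B(s+1)/W(s)$ on $\{s : \Re s \in (-k, -k+1)\}$ for each $k \geq 1$. Since $W$ is meromorphic on $\CC$, iterating extends $B$ to a meromorphic function on $\CC$ obeying (\ref{S5EBX5}) identically; consistency on overlaps between consecutive strips follows from the identity principle.

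For the locations of zeros and poles, I would introduce the order function $\mathrm{ord}_s(B) \in \ZZ$ (positive for zeros, negative for poles), which by (\ref{S5EBX5}) satisfies $\mathrm{ord}_{s+1}(B) = \mathrm{ord}_s(B) + \mathrm{ord}_s(W)$, with initial datum $\mathrm{ord}_s(B) = 0$ on $\mathcal{S}_{0,2}$ since $B = e^b$ is analytic and nonzero there. Iterating the telescoping identity $\mathrm{ord}_{s_0 + k}(B) = \mathrm{ord}_{s_0}(B) + \sum_{j=0}^{k-1}\mathrm{ord}_{s_0 + j}(W)$ and plugging in the zeros and poles of $W$ from Prop \ref{S3PW} produces the claimed zeros of $B$ at $3$, $4$ and $\sigma_n + 1$ and the claimed poles at $\{4n+1\}$ in the forward direction, together with poles at $0$, $-1$ and $\sigma_n^*$ and zeros at $\{-n : n \geq 6\}$ in the backward direction (using $\mathrm{ord}_s(B) = \mathrm{ord}_{s+1}(B) - \mathrm{ord}_s(W)$, which swaps the roles of zeros and poles of $W$). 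The main obstacle is the careful bookkeeping of cancellations: wherever an accumulated zero of $B$ coincides with a pole of $W$ at a later shift (or vice versa), the net order is the algebraic sum rather than either term alone, so the surviving pattern is obtained only after tracing these interactions through enough shifts and checking that no unlisted points survive.
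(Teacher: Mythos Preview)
Your approach is essentially what the paper has in mind: it states only that the result follows ``by classical arguments of complex variables'' and gives no further details, and your three-step plan (analyticity on $\mathcal S_{0,2}$ from the integral, recursive meromorphic extension via (\ref{S5EBX5}), then bookkeeping of orders using $\mathrm{ord}_{s+1}(B)=\mathrm{ord}_s(B)+\mathrm{ord}_s(W)$) is exactly that classical argument.

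One imprecision to flag: the claim that $|1-e^{2i\pi(s-\rho)}|^{-1}\sim e^{-2\pi|\Im\rho|}$ as $|\Im\rho|\to\infty$ is only correct in one direction; in the other direction this factor tends to $1$, not $0$. The absolute convergence of the $\rho$-integral in (\ref{S5EtheB}) actually comes from the \emph{difference} of the two kernel terms, which in the original $\zeta$-variable is $\frac{1}{r-\zeta}-\frac{1}{r+1}=\frac{\zeta+1}{(r-\zeta)(r+1)}=O(r^{-2})$ (cf.\ (\ref{S5PsiDef}) and (\ref{S5EQX1})). This is what makes the slowly growing factor $\log(-W(\rho))=O(\log\log|\Im\rho|)$ integrable. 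The rest of your outline is sound, including the acknowledgment that the main labor in step~3 is tracking cancellations between accumulated zeros of $B$ and poles of $W$ (and vice versa) under successive shifts.
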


\begin{prop}
\label{S3PBP24}
Let $B$ the function defined by  (\ref{S5EtheB}). Then, for  all $R>0$ there exist two positive constants $C_1$ and $C_2$ such that
$$
C_1\le |B(s)|\le C_2.
$$
for all $\mathscr Re(s)\in (0, 2)$  and $|\mathscr Im (s)|>R$.
\end{prop}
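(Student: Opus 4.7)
The plan is to show that $\log|B(s)|=\mathscr{Re}(b(s))$ is uniformly bounded on the region, where $b(s)$ denotes the exponent in (\ref{S5EtheB}). For $s$ with $\mathscr{Re}(s)\in(0,2)$ and $|\mathscr{Im}(s)|>R$, first choose $\beta\in(0,1)$ so that $\mathscr{Re}(s)\in(\beta,\beta+1)$ and $\mathscr{Re}(s)$ is kept away from both endpoints of this strip. Parametrize the contour by $\rho=\beta+it$, $t\in\RR$, and rewrite the kernel using the elementary identity
\[
K(s,\rho)\ :=\ \frac{1}{1-e^{2i\pi(s-\rho)}}-\frac{1}{1+e^{-2i\pi\rho}}\ =\ \frac{i}{2}\bigl[\cot(\pi(s-\rho))-\tan(\pi\rho)\bigr].
\]

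By Proposition \ref{S3PW17}, $\log(-W(\beta+it))$ is asymptotically real (with an imaginary correction of size $O(|t|^{-2}/\log|t|)$) and grows only doubly logarithmically. Writing $b(s)=i\int\log(-W(\rho))\,K(s,\rho)\,dt$, its real part decomposes as
\[
\mathscr{Re}(b(s))=-\int\mathscr{Re}(\log(-W))\,\mathscr{Im}(K)\,dt-\int\mathscr{Im}(\log(-W))\,\mathscr{Re}(K)\,dt.
\]
The second integral contributes $O(1)$ uniformly, thanks to the rapid decay of $\mathscr{Im}(\log(-W))$ together with the uniform boundedness of $|\mathscr{Re}(K)|$ guaranteed by the choice of $\beta$ keeping the arguments of $\cot$ and $\tan$ away from their singularities. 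The main task is to bound the first integral.

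Split the $t$-axis into three regions: the \emph{outer tails} where $|t|\gg 1$ and $|t-\mathscr{Im}(s)|\gg 1$, in which $\cot(\pi(s-\rho))\to\mp i$ and $\tan(\pi\rho)\to\pm i$ exponentially fast and their difference decays exponentially; the two \emph{transition neighbourhoods} of bounded width around $t=0$ and $t=\mathscr{Im}(s)$, where $|K|$ is bounded; and the \emph{middle slab} lying between them. The outer tails contribute $O(1)$ because their exponential decay dominates the doubly logarithmic growth of $\log(-W)$. Inside the slab the leading behaviour $K\to 1$ is \emph{real}, so the leading integrand $i\log(-W)\cdot 1$ is purely imaginary and contributes only to $\mathscr{Im}(b(s))$; the real-part contribution from the slab comes only from the subleading corrections involving $e^{2i\pi(s-\rho)}$ and $e^{-2i\pi\rho}$, which decay exponentially as one moves away from the respective slab boundary.

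The main obstacle is to show that these subleading contributions, together with those from the transition neighbourhoods, sum to a quantity bounded uniformly in $|\mathscr{Im}(s)|$ rather than growing like $\log\log|\mathscr{Im}(s)|$ as a naive absolute-value estimate would give. The expected mechanism is an exact cancellation between the contributions originating at the two slab boundaries, where the oscillating phases $e^{2i\pi(u-\beta)}$ and $e^{2i\pi\beta}$ balance one another in a way that is stable under the choice of $\beta$, combined with an integration by parts exploiting the slow variation of $\log(-W)$ to turn the residual doubly logarithmic factor into an $O(1)$ boundary term. Once $|\mathscr{Re}(b(s))|\le C(R)$ is established uniformly on the region, the proposition follows by exponentiation with $C_1=e^{-C(R)}$ and $C_2=e^{C(R)}$.
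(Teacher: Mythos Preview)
The paper's proof does not attempt a direct kernel analysis in the $s$-variable. It returns to the variable $\zeta=e^{2i\pi(s-\beta)}$, in which the exponent $b(s)=\psi(\zeta)$ is exactly the Cauchy integral (\ref{S5PsiDef}) on $(0,\infty)$ with density $Q=\log(-W)$; noting that $\arg(-W)\to 0$ at both endpoints, it invokes Lemma~C.2 of \cite{ESV} to conclude $|e^{\psi(\zeta)}|\to 1$ as $|\zeta|\to 0,\infty$, i.e.\ as $|\mathscr{Im}\,s|\to\infty$. No slab decomposition is carried out; the analytic burden is entirely delegated to that cited result.

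Your plan has a genuine gap exactly where you flag the ``main obstacle,'' and the mechanism you propose is not the right one. The second term of the kernel in (\ref{S5EtheB}) is independent of $s$, so the transition near $t\approx 0$ produces only an $s$-independent constant (and $\log(-W)$ is bounded there anyway). All the $s$-dependence concentrates at the \emph{single} transition $t\approx v:=\mathscr{Im}\,s$, where an explicit computation gives
\[
\int_{\RR}\mathscr{Im}\!\left(\frac{1}{1-e^{2i\pi(s-\rho)}}\right)dt\;=\;\tfrac12-\bigl(\mathscr{Re}\,s-\beta\bigr),
\]
which is generically nonzero. Multiplying by $\log|W|\sim\log\log|v|$ therefore yields a contribution of exact order $\log\log|v|$ to $\mathscr{Re}(b(s))$, with no compensating term from the other boundary and nothing for an integration by parts to remove (this is a zeroth moment, not a derivative). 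Note, moreover, that the functional equation $|B(s+1)|=|W(s)|\,|B(s)|$ combined with $|W(s)|\sim 2\log|v|$ already shows that $|B|$ cannot be bounded above \emph{and} below by positive constants uniformly on any vertical strip of width $\ge 1$; the range $\mathscr{Re}\,s\in(0,2)$ as literally stated is thus too wide. Before investing further in the direct argument, pin down which strip or vertical line is actually required downstream --- the obstruction you are encountering is not merely technical.
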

\begin{proof}
The function $\log (-W(s))$ is,
$$
\log(-W(s))=\log(|W(s)|)+i Arg (-W(s)).
$$
Since, by Proposition (\ref{S3PW17}), $ {\rm arg} (-W(s))\to 0$ as $\mathscr Im(s)\to \pm \infty$, we may take the principal branch of  the  function $\log(-W(s))$ and,
\begin{align*}
\lim _{ \zeta \to 0 } {\rm arg} (-W(\zeta ))=0,  \qquad \lim _{ \zeta \to \infty } {\rm arg} (-W(\zeta ))=0 
\end{align*}
It follows from Lemma C.2 in \cite{ESV} that the function $\psi $ defined in (\ref{S5PsiDef}) satisfies,
\begin{align*}
&\psi (\zeta )=i\Theta(\zeta )+o(\log |\zeta |),\,\,\,\zeta \to 0\\
&\psi (\zeta )=i\Theta(\zeta )+o(\log |\zeta |),\,\,\,|\zeta| \to \infty\\
&\Theta(\zeta )=-\frac {1} {2\pi }\int _0^\infty \log (|W(s)|) \left( \frac {1} {s-\zeta }-\frac {1} {s+1}\right)ds.
\end{align*}
We deduce that 
$$
\lim _{\mathscr Im (s)\to \infty  }|B(s)|=\lim _{\mathscr Im (s)\to -\infty  }|B(s)|=1
$$
and the result follows.
\end{proof}

\begin{prop}
\label{S3PBP24B}
For all $M>0$ and $R>0$, there exists two positive constants $C _{ 1, M } $ and $C _{ 2, M }$ such that, for all $s\in \CC$, $|\mathscr Re(s)|\le M$, and $|\mathscr Im(s)|>R$,
\begin{equation}
\label{S3PBP24B-1}
C _{ 1, M }\log |\mathscr Im s|\le B(s)\le C _{ 2, M }\log |\mathscr Im s|.
\end{equation}
\end{prop}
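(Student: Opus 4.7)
The strategy is to reduce to the strip $\mathscr{Re}(s)\in(0,2)$ already treated in Proposition \ref{S3PBP24} through repeated use of the functional equation (\ref{S5EBX5}), and to quantify the contribution of the shifts via the asymptotic expansion of $W$ given in Proposition \ref{S3PW17}. Given $s$ with $|\mathscr{Re}(s)|\le M$, I would pick the smallest integer $k=k(s)$, with $|k|\le \lceil M\rceil+1$, such that $\mathscr{Re}(s-k)\in(0,2)$, and iterate (\ref{S5EBX5}) in the appropriate direction. For $k\ge 0$ this yields
$$B(s)=(-1)^{k}\Bigl(\prod_{j=1}^{k}W(s-j)\Bigr)B(s-k),$$
and the analogous identity using $B(s)=-B(s+1)/W(s)$ when $k<0$. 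Proposition \ref{S3PBP24} then pinches $|B(s-k)|$ between two positive constants, uniformly in $|\mathscr{Im}(s)|>R$.

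The shift factors would be treated with Proposition \ref{S3PW17}, which gives
$$W(\sigma)=-2\log|\sigma/2|-\gamma_{e}+O\bigl(|\mathscr{Im}\sigma|^{-2}\bigr),\qquad |\mathscr{Im}\sigma|\to\infty.$$
Since $\mathscr{Im}(s-j)=\mathscr{Im}(s)$ and $|\mathscr{Re}(s-j)|$ stays bounded by a constant depending only on $M$, I would deduce that $\log|(s-j)/2|=\log|\mathscr{Im}(s)|+O_{M}(1)$ uniformly in $j=1,\dots,|k|$. After choosing $R=R(M)$ large enough so that the remainder in the asymptotic expansion is dominated by the leading logarithm and that each $W(s-j)$ stays bounded away from zero (again a consequence of Proposition \ref{S3PW17} on vertical lines far from the real axis), each factor $|W(s-j)|$ is comparable from above and from below to $\log|\mathscr{Im}(s)|$. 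Multiplying these bounds together with the control of $|B(s-k)|$ furnished by Proposition \ref{S3PBP24} would then give (\ref{S3PBP24B-1}).

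The genuinely delicate point is to make every estimate uniform in the chosen integer $k$ and in $|\mathscr{Im}(s)|$: one must exclude a neighbourhood of the zeros $\sigma_{n},\sigma_{n}^{*}$ of $W$ near the real axis (this enters the lower bound), check that the principal branch of $\log(-W(s))$ used here is compatible with the one implicit in the construction of $B$ in Proposition \ref{S3PB}, and keep the $O(|\mathscr{Im}\sigma|^{-2})$ remainder small relative to $\log|\mathscr{Im}(s)|$. All of these can be arranged by enlarging $R$ in a way that depends on $M$ but not on the particular $s$, so the bookkeeping, though routine, must be done with care; this is what I would expect to be the main source of technical friction in writing the argument out fully.
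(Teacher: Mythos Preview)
Your approach is exactly the paper's: iterate the functional equation (\ref{S5EBX5}) to land in the strip $\mathscr{Re}(s)\in(0,2)$, apply Proposition~\ref{S3PBP24} there, and control the accumulated $W$-factors via the logarithmic asymptotics of Proposition~\ref{S3PW17}. The paper only spells out the single-shift case $\mathscr{Re}(s)\in(2,3)$ and leaves the general iteration implicit, so your write-up is in fact more detailed than the original.
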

\begin{proof}
If $0<\mathscr Re(s)\le 2$ we may apply Proposition (\ref{S3PBP24B}). If for example $\mathscr Re(s)\in (2, 3)$, we use (\ref{S5EBX5}) to write: 
$$
B(s)=-W(s-1)B(s-1)
$$
where now $\mathscr Re(s-1)\in (0, 2)$. We deduce,
$$
C_1|W(s-1)|\le |B(s)|\le C_2|W(s-1)|.
$$
and (\ref{S3PBP24B-1}) follows by Proposition \ref{S3PW17}.
\end{proof}

\begin{rem}
The function $B$ given in (\ref{S5EtheB}) is not the only that satisfies (\ref{S5EBX5}). Indeed  many others are obtained by means of
\begin{align}
B_\ell(s)=e^{2i\pi \ell s}\, B(s), \forall \ell \in \ZZ
\end{align}
and linear combinations of them.  
\end{rem}

It follows  easily  from (\ref{S3PBE0})  in Proposition \ref{S3PB}
\begin{cor}
For all $s\in \CC$ and $Y\in \CC$  such that ${\mathscr Re}(s)\in (0, 3)$ and $s+Y\in \mathcal S  _{ 0, 3 }$
\begin{align}
\label{S3ETheta1}
&\frac {B(s)} {B(s+Y)}=
\exp\left(\int  _{ {\mathscr Re}(\rho ) =\beta  } \log (-W(\rho ))\Theta (\rho-s , Y) d\rho\right),\,\,\beta \in (0, 3)\\
\label{S3ETheta}
&\Theta (\sigma , Y)=\frac {1} {1-e^{-2i\pi \sigma }}-\frac {1} {1-e^{2i\pi (-\sigma +Y)}}.
\end{align}
\end{cor}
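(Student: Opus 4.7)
The plan is to turn this into a direct algebraic manipulation of the explicit integral representation (\ref{S5EtheB}) established in Proposition \ref{S3PB}. I would first handle the restricted case in which $s$ and $s+Y$ lie in a common vertical strip $\mathscr{Re}(z)\in(\beta,\beta+1)$, $\beta\in(0,1)$, so that (\ref{S5EtheB}) applies to both $B(s)$ and $B(s+Y)$ along the same contour $\mathscr{Re}(\rho)=\beta$. Then $B(s)/B(s+Y)$ is the exponential of the difference of the two integrals, and the $\beta$-dependent normalization $1/(1+e^{-2i\pi\rho})$ appears identically in both and drops out. What remains as the integrand against $\log(-W(\rho))$ is
$$\frac{1}{1-e^{2i\pi(s-\rho)}}-\frac{1}{1-e^{2i\pi(s+Y-\rho)}}.$$
Writing $\sigma=\rho-s$, the identities $e^{2i\pi(s-\rho)}=e^{-2i\pi\sigma}$ and $e^{2i\pi(s+Y-\rho)}=e^{2i\pi(-\sigma+Y)}$ reproduce (\ref{S3ETheta}) exactly, so $\Theta(\rho-s,Y)$ emerges as claimed.

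To extend the identity to arbitrary $s,Y$ with $\mathscr{Re}(s),\mathscr{Re}(s+Y)\in(0,3)$ and $\beta\in(0,3)$, I would combine the functional equation (\ref{S5EBX5}) with contour deformation. When $s$ and $s+Y$ lie in different unit strips, shifting $s$ or $s+Y$ by $\pm 1$ via (\ref{S5EBX5}) introduces a factor $-W$ on the left-hand side; on the right-hand side, the corresponding shift of the contour $\mathscr{Re}(\rho)=\beta$ by one unit picks up residues of $\log(-W(\rho))\Theta(\rho-s,Y)$ at $\rho=s$ or $\rho=s+Y$ whose contribution reproduces precisely the same $-W$ factor (the logarithmic branch is selected consistently with the choice made for $Q$ in (\ref{S5EPsi2})). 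Verifying this consistency once for a contour shift by one unit suffices to cover the full admissible range.

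The only substantive technical point is absolute convergence of the integral, which is not immediate because $\log(-W(\rho))$ grows like $\log\log|\mathscr{Im}\rho|$ at infinity by (\ref{S3PW17E1}). Here I would exploit a cancellation inside $\Theta$: as $\mathscr{Im}\rho\to+\infty$ each fraction tends to $0$, and as $\mathscr{Im}\rho\to-\infty$ each tends to $1$, so a first-order expansion yields
$$\Theta(\rho-s,Y)=\bigl(1-e^{2i\pi Y}\bigr)e^{-2i\pi(\rho-s)}+O\!\left(e^{-4\pi|\mathscr{Im}\rho|}\right),\qquad\mathscr{Im}\rho\to-\infty,$$
with a symmetric estimate as $\mathscr{Im}\rho\to+\infty$. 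Exponential decay of $\Theta$ against the doubly logarithmic growth of $\log(-W)$ gives absolute convergence, which in turn legitimizes the subtraction of the two integral representations and the interchange of $\log$ and $\int$. This is the only step I expect to require genuine care; the remainder is bookkeeping around the residues and the choice of branch.
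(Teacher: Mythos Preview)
Your proposal is correct and matches the paper's approach: the paper simply states that the corollary ``follows easily from (\ref{S3PBE0}) in Proposition \ref{S3PB}'' and gives no further argument, so what you have written is precisely the computation the paper has in mind. Your additional care with absolute convergence (exploiting the cancellation in $\Theta$ against the $\log\log$ growth of $\log(-W)$) and with the extension beyond a single unit strip via (\ref{S5EBX5}) and contour deformation is more than the paper supplies, and it is sound.
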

The problem (\ref{S4EV1}), (\ref{S4E7}) is  reduced to a simpler one using the  auxiliary function $B(s)$. 
\begin{prop}
\label{PropppS4EBH}
The function defined by the integral
\begin{align}
V(z, s)=\frac {1} {2i\pi }\frac {B(s)} {\sqrt{2\pi }\,z}\int _{\mathscr Re (\sigma )=\beta } \frac {e^{(\sigma-s) \log(-z) }} { B(\sigma )}\frac {d\sigma } {(1-e^{2i\pi (s-\sigma )}) }.
\label{S4E1410}
\end{align}
for $\beta \in (0, 2)$ such that $\beta <\mathscr Re s<\beta +1$, is well defined and analytic for $\mathscr Re(z)>0$ and $ s \in \mathcal S _{0 , 2}$ where it satisfies,
\begin{equation}
zV(z, s)=W(s-1)V(z, s-1)+\frac {1} {\sqrt{2\pi} }.\label{S4E1411}
\end{equation}
\end{prop}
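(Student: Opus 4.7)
The proposition has two claims: that the integral in (\ref{S4E1410}) converges and defines an analytic function of $(z,s)$ on $\{\mathscr Re(z)>0\}\times\mathcal S_{0,2}$, and that this function satisfies the difference equation (\ref{S4E1411}). I would treat these in order, using absolute convergence estimates for the first and a residue computation for the second.

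\textbf{Step 1: convergence and analyticity.} On the contour $\mathscr Re(\sigma)=\beta\in(0,2)$, Proposition \ref{S3PBP24} gives $|B(\sigma)|$ bounded from above and below uniformly in $\mathscr Im(\sigma)$. The factor $1/(1-e^{2i\pi(s-\sigma)})$ is bounded on the half-line where $\mathscr Im(\sigma-s)>0$ and exponentially small on the other half-line, at rate $e^{-2\pi|\mathscr Im(\sigma-s)|}$, since $|e^{2i\pi(s-\sigma)}|=e^{-2\pi\mathscr Im(s-\sigma)}$. Fixing the principal branch of $\log(-z)$ for $\mathscr Re(z)>0$ so that $|\mathscr Im\log(-z)|<\pi$, the modulus of $e^{(\sigma-s)\log(-z)}$ grows at most like $e^{\pi|\mathscr Im(\sigma-s)|}$, strictly slower than the compensating decay. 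This proves absolute integrability; the same estimate, uniform on compact subsets of $\{\mathscr Re(z)>0\}\times\mathcal S_{0,2}$, combined with Morera's theorem, yields joint analyticity. Independence from $\beta$ inside the admissible range $(\mathscr Re(s)-1,\mathscr Re(s))\cap(0,2)$ follows from Cauchy's theorem: the integrand is holomorphic in the strip between any two such vertical lines, as neither zeros of $B$ nor shifts $s+k$ with $k\in\ZZ\setminus\{-1\}$ fall there.

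\textbf{Step 2: the difference equation.} Take the contour for $V(z,s-1)$ at $\beta-1$, which is admissible since $\beta-1<\mathscr Re(s-1)<\beta$. Using $e^{\log(-z)}=-z$, $e^{-2i\pi}=1$, and the relation $-W(s-1)B(s-1)=B(s)$ of Proposition \ref{S3PB}, a direct substitution gives
\begin{align*}
zV(z,s)&=\frac{B(s)}{2i\pi\sqrt{2\pi}}\int_{\mathscr Re(\sigma)=\beta}f(\sigma)\,d\sigma,\\
W(s-1)V(z,s-1)&=\frac{B(s)}{2i\pi\sqrt{2\pi}}\int_{\mathscr Re(\sigma)=\beta-1}f(\sigma)\,d\sigma,
\end{align*}
with the common integrand $f(\sigma)=e^{(\sigma-s)\log(-z)}/[B(\sigma)(1-e^{2i\pi(s-\sigma)})]$. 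Subtracting and closing the contours by horizontal segments whose contribution vanishes by Step 1, the residue theorem produces
\begin{align*}
zV(z,s)-W(s-1)V(z,s-1)=\frac{B(s)}{\sqrt{2\pi}}\,\mathrm{Res}_{\sigma=s-1}f(\sigma).
\end{align*}
By Proposition \ref{S3PBP} the only singularity of $f$ inside the strip $\mathscr Re(\sigma)\in(\beta-1,\beta)\subset(-1,2)$ is the simple pole of $1/(1-e^{2i\pi(s-\sigma)})$ at $\sigma=s-1$. A first order expansion of the denominator at that point identifies the residue, and using $B(s)/B(s-1)=-W(s-1)$ once more reduces the right-hand side to $1/\sqrt{2\pi}$, establishing (\ref{S4E1411}).

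\textbf{Main obstacle.} The only substantive difficulty is to quantify the decay of $f(\sigma)$ as $|\mathscr Im(\sigma)|\to\infty$ strongly enough both to define the integral absolutely and to discard the horizontal portions of the rectangular contour. This reduces to balancing the bounds on $1/B(\sigma)$ from Proposition \ref{S3PBP24}, the exponential $e^{(\sigma-s)\log(-z)}$, and the denominator $1-e^{2i\pi(s-\sigma)}$; a careful choice of the branch of $\log(-z)$ tilts the competition decisively toward decay. With this estimate in hand, the residue computation and the identification with $1/\sqrt{2\pi}$ are routine bookkeeping.
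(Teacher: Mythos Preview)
Your Step 1 contains a real error in the branch choice. With the principal branch one has $\arg(-z)\in(\pi/2,\pi)$ whenever $\mathscr Im(z)<0$. You have also reversed the two half-lines: since $|e^{2i\pi(s-\sigma)}|=e^{2\pi\,\mathscr Im(\sigma-s)}$, the factor $1/(1-e^{2i\pi(s-\sigma)})$ decays like $e^{-2\pi\,\mathscr Im(\sigma-s)}$ as $\mathscr Im(\sigma-s)\to+\infty$ and is merely bounded (tending to $1$) as $\mathscr Im(\sigma-s)\to-\infty$. On this second half-line $|e^{(\sigma-s)\log(-z)}|=e^{-\mathscr Im(\sigma-s)\arg(-z)}$ grows without bound when $\arg(-z)>0$, and nothing compensates; the integral diverges for every $z$ with $\mathscr Im(z)\le 0$. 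The paper fixes this by taking $Arg\in(-2\pi,0]$, so that $Arg(-z)\in(-3\pi/2,-\pi/2)$ for all $\mathscr Re(z)>0$: the exponential then decays on the half-line where the denominator gives no help, and on the other half-line its growth rate $|Arg(-z)|<3\pi/2$ is beaten by the denominator's decay rate $2\pi$. This branch choice is not cosmetic; it is what makes the formula well defined.

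Your Step 2 does not close. Carrying out the residue exactly as you describe, near $\sigma=s-1$ one has $1-e^{2i\pi(s-\sigma)}\sim 2i\pi(\sigma-(s-1))$ and $e^{((s-1)-s)\log(-z)}=e^{-\log(-z)}=-1/z$, hence
\[
\mathrm{Res}_{\sigma=s-1}f(\sigma)=\frac{-1}{2i\pi\,z\,B(s-1)},\qquad
\frac{B(s)}{\sqrt{2\pi}}\,\mathrm{Res}_{\sigma=s-1}f(\sigma)=\frac{W(s-1)}{2i\pi\sqrt{2\pi}\,z},
\]
using $B(s)/B(s-1)=-W(s-1)$. This is not $1/\sqrt{2\pi}$, so the contour-shift argument as written cannot recover (\ref{S4E1411}); the ``routine bookkeeping'' does not work out. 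The paper proceeds differently: rather than verifying (\ref{S4E1411}) a posteriori on the integral formula, it \emph{constructs} the formula by substituting $V(z,s)=e^{-s\log(-z)}B(s)H(z,s)$, which reduces (\ref{S4E1411}) to the additive difference equation $H(z,s)-H(z,s-1)=e^{s\log(-z)}/(\sqrt{2\pi}\,z\,B(s))$, and then solves the latter as a scalar Riemann--Hilbert problem in the variable $\zeta=e^{2i\pi(s-\beta)}$ via the Plemelj jump formula for a Cauchy integral. In that picture the difference equation is encoded as the \emph{jump across the cut} in the $\zeta$-plane rather than as a residue collected when sliding the $\sigma$-contour between two admissible positions.
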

\begin{proof}

Let us define the function $H(z, s)$ as,
\begin{align}
\label{S4EBH}
V(z, s)=e^{-s\log(-z)}B(s)H(z, s).
\end{align}
where  $\log (z)=\log (|z|)+iArg (z)$ and $Arg (z)\in (-2\pi , 0 ]$.

The equation (\ref{S4EV1}) on $V$  yields the following equation for $H$:

\begin{align*}
ze^{-s\log(-z)}B(s)H(z, s)&=e^{-(s-1)\log(-z)}W(s-1)B(s-1)H(z, s-1)+\frac {1} {\sqrt{2\pi }}\\
&=-ze^{-s\log(z)}W(s-1)B(s-1)H(z, s-1)+\frac {1} {\sqrt{2\pi }}\\
B(s)H(z, s)&=-W(s-1)B(s-1)H(z, s-1)+\frac {e^{s\log (-z)}} {\sqrt{2\pi }\, z}\\
B(s)H(z, s)&=B(s)H(z, s-1)+\frac {e^{s\log (-z)}} {\sqrt{2\pi }\, z}
\end{align*}
and then,
\begin{align}
\label{S4E14}
&H(z, s)-H(z, s-1)=\frac {e^{s\log (-z)}} {\sqrt{2\pi }\, z B(s) },\,\,z\in \CC,  {\mathscr Re}(z)>0,\,\,s\in \CC, {\mathscr Re}(s)\in (0, 2)
\end{align}
We may use again the change of variables (\ref{S5EPsi1}) and define,
\begin{align*}
h(z, \zeta )=H(z, s),\,\,\,\widetilde B(\zeta )=B(s)
\end{align*}
and deduce from (\ref{S4E14}) that $h$ has to satisfy
\begin{align}
h(z, r -i0)&=h(z, r +i0)+\frac {e^{2i\pi \beta \alpha (z)}r ^{\alpha (z)}} {\sqrt {2\pi }\,z\widetilde B(r )}\,\,\,\, \forall r>0 \label{S5EhX1}\\ 
\alpha (z)&=\frac {\log (-z)} {2i\pi }.\label{S5EhX2}
\end{align}
It follows that
\begin{align*}
\alpha (z)&=\frac {\log (-z)} {2i\pi }=-i\frac {\log|z|} {2\pi }+\frac {Arg(-z)} {2\pi }
\end{align*}
and the choice of the $\log (z)$ is such that $-1<\mathscr Re(\alpha (z))<0$. By Proposition (\ref{S3PBP24B}) it follows that the  integral 
\begin{align*}
h(z, \zeta )=\frac {1} {2i\pi }\frac {1} {\sqrt{2\pi }}\frac {e^{2i\pi \beta \alpha (z)}} {z}\int _0^\infty \frac {r^{\alpha (z)}} {\widetilde B(r)}\frac {dr} {(r-\zeta) }
\end{align*}
is absolutely convergent and defines a function $h$ analytic on the domain 
$$\left\{(z, s);\,\, z\in \CC,\,\,\mathscr Re (z)>0,\,\,\,s\in\CC\setminus [0, \infty)\right\}$$ 
that satisfies  (\ref{S5EhX1}). Using the original variables we obtain that
\begin{align}
\label{S4E14X}
H(z, s)=\frac {1} {\sqrt{2\pi }}\frac {1} {z}\int _{\mathscr Re (\sigma )=\beta } \frac {e^{ \sigma \log(-z) }} { B(\sigma )}\frac {d\sigma } {(1-e^{2i\pi (s-\sigma )}) }
\end{align}
is well defined, analytic on  $z\in \CC,  {\mathscr Re}(z)>0,\,\,s\in \CC, {\mathscr Re}(s)\in (\beta ,\beta+1)$ where it  satisfies
\begin{align}
\label{S4E1409}
&H(z, s)-H(z, s-1)=\frac {e^{ s\log (-z)}} {\sqrt{2\pi }\, z B(s) }.
\end{align}
Since $\beta \in (0, 2)$ is arbitrary, using a  contour deformation argument  in the integral of the right hand side of (\ref{S4E1409}),  $H$ is  extended as an analytic function  $z\in \CC,  {\mathscr Re}(z)>0$ and  $s\in \CC, {\mathscr Re}(s)\in (0, 2)$.

Using now (\ref{S4EBH}) we recover the function
\begin{align*}
V(z, s)=\frac {1} {2i\pi }\frac {B(s)} {\sqrt{2\pi }\,z}\int _{\mathscr Re (\sigma )=\beta } \frac {e^{(\sigma-s) \log(-z) }} { B(\sigma )}\frac {d\sigma } {(1-e^{2i\pi (s-\sigma )}) }.
\end{align*}
Since $B$ is analytic and non zero on $\mathscr Re(s)\in (0, 2)$ and $\beta \in (0, 2)$ is arbitrary the function $V$ is analytic on  $z\in \CC,  {\mathscr Re}(z)>0$ and  $s\in \CC, {\mathscr Re}(s)\in (0, 2)$ and satisfies the equation (\ref{S4E1411}) for $\mathscr Re s\in (1, 2)$.
\end{proof}

\begin{cor}
\label{S5Cor1U}
The inverse Laplace transform of $V$
\begin{align*}
U(t, s)=\frac {1} {2i\pi }\int  _{ d-i\infty }^{d+i\infty}e^{zt}V(z, s)dz,\,\,\,\,\, \beta -1<d<\beta,
\end{align*}
is well defined for all $t>0$, $\mathscr Re (s)\in (0, 2)$, and satisfies,  
\begin{align}
\label{S5EUX1}
&U(t, s)=\frac {B(s)} {\sqrt{2\pi}}\frac {1} {2i\pi }\int  _{ \mathscr Re(\sigma )=\beta  }\frac {t^{-(\sigma -s)}\Gamma (\sigma-s) } { B(\sigma )}d\sigma,\,\,\,\forall \beta \in (\mathscr Re(s), 2);\\
&\forall t>0,\,U(t, \cdot)\,\hbox{is an analytic function on}\,\,\mathcal S _{ 0, 2 } \label{S5EUX1C}\\
&\forall k\in \NN,\,\,U\in C^k((0, \infty)\times \mathcal S _{ 0, 2 } ) \label{S5EUX1B}\\
&\forall t>0,\,U(t, \cdot)\,\hbox{is analytic on}\,\,\mathcal S _{ 0, 2 } ,\hbox{and  meromorphic in}\,\, \CC \label{S5EUX1D}\\
&\frac {\partial U} {\partial t}(t, s )=W(s-1)U(t, s-1) \forall t>0,\,\,\forall s\in \mathcal S _{ 1, 3 }. \label{S5EUXYp}
\end{align}
\end{cor}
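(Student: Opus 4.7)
The plan is to compute $U(t,s)$ by substituting the representation (\ref{S4E1410}) of $V$ into the Bromwich integral and swapping the two contour integrations. After the swap, the inner integral in $z$ is an explicit inverse Laplace transform of $(-z)^{\sigma-s}/z$, which can be evaluated in closed form. With the branch convention $\log(-z)=\log|z|+i\,Arg(-z)$, $Arg(-z)\in(-2\pi,0]$, one has, for $\mathscr Re(z)>0$,
\[
(-z)^{\sigma-s}=e^{-i\pi(\sigma-s)}z^{\sigma-s},
\]
so that $(-z)^{\sigma-s}/z=e^{-i\pi(\sigma-s)}z^{-(s-\sigma+1)}$. Since $\beta<\mathscr Re s<\beta+1$, we have $\mathscr Re(s-\sigma+1)>0$, and the standard formula $\mathscr L^{-1}[z^{-\nu}](t)=t^{\nu-1}/\Gamma(\nu)$ yields
\[
\frac{1}{2i\pi}\int_{d-i\infty}^{d+i\infty}e^{zt}\frac{(-z)^{\sigma-s}}{z}\,dz=\frac{e^{-i\pi(\sigma-s)}\,t^{s-\sigma}}{\Gamma(s-\sigma+1)}.
\]
Then the reflection formula $\Gamma(\sigma-s)\Gamma(1-(\sigma-s))=\pi/\sin(\pi(\sigma-s))$ together with the identity $1-e^{2i\pi(s-\sigma)}=2i\,e^{-i\pi(\sigma-s)}\sin(\pi(\sigma-s))$ produces the neat cancellation
\[
\frac{e^{-i\pi(\sigma-s)}}{\Gamma(s-\sigma+1)\bigl(1-e^{2i\pi(s-\sigma)}\bigr)}=\frac{\Gamma(\sigma-s)}{2i\pi},
\]
which delivers the formula (\ref{S5EUX1}) announced in the statement.

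To make this rigorous, I first need to justify the swap of integrations. Using Proposition \ref{S3PBP24} on the growth of $B$ along vertical lines in $\mathcal S_{0,2}$ and Stirling's asymptotics for $\Gamma(\sigma-s)\sim|\mathscr Im\sigma|^{\mathscr Re(\sigma-s)-1/2}e^{-\pi|\mathscr Im\sigma|/2}$ as $|\mathscr Im\sigma|\to\infty$, the $\sigma$--integrand in the right-hand side of (\ref{S5EUX1}) is absolutely integrable, uniformly for $t$ and $s$ in compact subsets. This provides the quickest route: one shows directly that the right-hand side of (\ref{S5EUX1}) solves the Bromwich formula by reversing the computation above, and the absolute convergence of the final $\sigma$--integral validates Fubini a posteriori.

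From the explicit formula (\ref{S5EUX1}) the remaining properties follow by differentiation under the integral sign. The factor $t^{-(\sigma-s)}$ produces powers of $\log t$ and $(\sigma-s)$ when differentiated in $t$, and powers of $(\sigma-s)$ when differentiated in $s$; each such differentiation is permissible because it only adds polynomial factors to an integrand whose decay is dominated by $e^{-\pi|\mathscr Im\sigma|/2}$, giving (\ref{S5EUX1B}) and the analyticity statement (\ref{S5EUX1C}). For (\ref{S5EUX1D}), the integral in (\ref{S5EUX1}) extends to an entire function of $s$ once one shifts the $\sigma$-contour past the (countably many) poles of $\Gamma(\sigma-s)$ and $1/B(\sigma)$; the $s$-dependence on the right-hand side then factors through $B(s)$, which by Proposition \ref{S3PBP} is meromorphic on $\CC$. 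The functional equation (\ref{S5EUXYp}) is obtained by taking $\partial/\partial t$ of the Bromwich integral, which multiplies $V(z,s)$ by $z$, and then using (\ref{S4E1411}): the constant term $1/\sqrt{2\pi}$ is killed because its inverse Laplace transform is supported at $t=0$ and vanishes for $t>0$.

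The main obstacle is the swap of the two contour integrals, since $V(z,s)$ decays only like $1/z$ in $z$ along the Bromwich line, making the original form of the integral only conditionally convergent. The cleanest way around this is to work with the reverse direction: start from the absolutely convergent integral (\ref{S5EUX1}), take its Laplace transform in $t$ (which is legitimate by Stirling-type estimates on $\Gamma(\sigma-s)$), carry out the inner $t$-integral explicitly to recover the identity with $V(z,s)$, and then appeal to the uniqueness of the Laplace transform to conclude that $U(t,s)$ as defined by (\ref{S5EUX1}) is indeed the inverse Laplace transform of $V(z,s)$ in the sense stated.
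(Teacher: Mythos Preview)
Your approach is essentially the same as the paper's: both evaluate the inner $z$--integral in closed form so that the factor $(1-e^{2i\pi(s-\sigma)})^{-1}$ combines with the result to produce $\Gamma(\sigma-s)$, yielding (\ref{S5EUX1}); the regularity statements then follow by Stirling bounds and differentiation under the integral, and (\ref{S5EUXYp}) from the functional equation (\ref{S4E1411}).

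Two small points. First, your branch computation $(-z)^{\sigma-s}=e^{-i\pi(\sigma-s)}z^{\sigma-s}$ is not quite right with the paper's convention $Arg\in(-2\pi,0]$: the shift between $Arg(-z)$ and $Arg(z)$ is $+\pi$ on one half of the Bromwich line and $-\pi$ on the other, which is precisely what produces the factor $(e^{2i\pi(\sigma-s)}-1)$ that the paper records for this inner integral. Your ``neat cancellation'' then needs a matching adjustment; the final formula is the same, but the intermediate constants differ. Second, for (\ref{S5EUXYp}) the paper takes a slightly more explicit route: it first deforms the $\sigma$--contour in (\ref{S5EUX1}) across the pole of $\Gamma(\sigma-s)$ at $\sigma=s$ to show $U(0,s)=1/\sqrt{2\pi}$, and then uses $\mathscr L(U_t)=zV-U(0,s)$ together with (\ref{S4E1411}). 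Your argument that the constant $1/\sqrt{2\pi}$ contributes only a Dirac mass at $t=0$ under the inverse Laplace transform is equivalent and perfectly acceptable. Your care about the Fubini step (proposing to verify the identity by taking the forward Laplace transform of the absolutely convergent $\sigma$--integral) is a point the paper leaves implicit.
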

\begin{proof}
For all $\sigma $, $s$ such that $\mathscr Re s<\mathscr Re \sigma $, and $d>0$,

\begin{align*}
\frac {1} {2i\pi }\int  _{ c-i\infty }^{c+i\infty}\frac {e^{zt}} {z}e^{(\sigma -s)\log(-z)}dz=t^{-(\sigma -s)}\Gamma (\sigma  -s)\left(e^{2i\pi (\sigma -s)}-1 \right).
\end{align*}
We use now that Stirling's formula for $\Gamma (z)$ is uniformly valid for ${\rm arg} z\in (-\pi +\varepsilon _0, \pi -\varepsilon _0)$ with $\varepsilon _0>0$, to deduce that, for all $R>0$ and $\beta \in (0, 2)$
\begin{equation}
\label{S5EGamma}
\left|\Gamma (\sigma -s)\right|\le C_R\frac {e^{-\frac {\pi |\sigma |} {2}}} {\sqrt{1+|\sigma |}},\,\,\,\forall s; |s|\le R.
\end{equation}
The integral at the right hand side of (\ref{S5EUX1}) is then absolutely convergent  the identity  (\ref{S5EUX1})  and (\ref{S5EUX1C}) follow for $\beta -1<\mathscr Re s<\beta $. We also deduce from (\ref{S5EGamma}) that for all $k\ge 1$ the integrals 
\begin{equation*}
\int  _{ \mathscr Re(\sigma )=\beta  }\frac {d} {dt} \left(t^{-(\sigma -s)}\right) \frac {\Gamma (\sigma-s) } { B(\sigma )}d\sigma
\end{equation*}
are absolutely convergent and analytic functions of $s$ on the strip $\mathscr Re s \in (0, 2)$. Therefore,
\begin{equation*}
\frac {\partial  ^k} {\partial t^k}U(t, s)=-\frac {B(s)} {\sqrt{2\pi}}\frac {1} {2i\pi }\int  _{ \mathscr Re(\sigma )=\beta  }\frac {d} {dt} \left(t^{-(\sigma -s)}\right) \frac {\Gamma (\sigma-s) } { B(\sigma )}d\sigma.
\end{equation*}
and (\ref{S5EUX1B}) follows.

On the other hand since
\begin{align*}
\frac {1} {2i\pi }\int  _{ d-i\infty }^{d+i\infty}e^{zt}e^{(\sigma -s)\log(-z)}dz=t^{-(\sigma -s)-1}\Gamma (1+\sigma  -s)\left(e^{2i\pi (\sigma -s)}-1 \right)
\end{align*}
the inverse Laplace transform of $zV(z)$ is well defined for all $t>0$ and given by,
\begin{equation*}
\frac {1} {2i\pi }\int  _{ d-i\infty }^{d+i\infty}e^{zt}zV(z, s)dz=-\frac {B(s)} {\sqrt{2\pi}}\int  _{ \mathscr Re(\sigma )=\beta  }\frac {t^{-(\sigma -s)-1}\Gamma (1+\sigma-s) } { B(\sigma )}d\sigma.
\end{equation*}
The expression (\ref{S5EUX1}) indicates that  $U(\cdot, s)\in C((0, \infty))$.  In order to see that  $U(\cdot, s)\in C([0, \infty))$ we first deform the integration contour in (\ref{S5EUX1}) towards lower values of $\beta $ and cross the pole of the function $\Gamma (\sigma -s)$ at $\sigma -s=0$,
\begin{align}
\label{S5EU0} 
U(t, s)=\frac {1} {\sqrt{2\pi }}-\frac {B(s)} {\sqrt{2\pi}}\frac {1} {2i\pi }\int  _{ \mathscr Re(\sigma )=\beta'  }\frac {t^{-(\sigma -s)}\Gamma (\sigma-s) } { B(\sigma )}d\sigma ,\,\,\, \beta '\in (0, \mathscr Re s).
\end{align}
Since  now  $\mathscr Re (\sigma -s)<0$, it follows that $U(\cdot, s)\in C([0, \infty))$ and $U(0, s)=\frac {1} {\sqrt{2\pi }}$. Using 
\begin{align*}
\mathscr L\left(U_t(\cdot, s) \right)(z)=zV(z, s)-U(0, s),
\end{align*}
we deduce
\begin{align*}
\frac {\partial U} {\partial t}(t, s)=\frac {1} {2i\pi }\int  _{ d-i\infty }^{d+i\infty} e^{zt}\left(zV(z, s)-\frac {1} {\sqrt {2\pi }}\right) dz
\end{align*}
We apply now the inverse Laplace transform  to both sides of the equation (\ref{S4E1411}) with $\mathscr Re s\in (1, 2)$, 
 since $U(t)$ is analytic on $\mathcal S _{ 0, 2 }$ and so is $W$ on $\mathcal S _{ -2, 4 }$, (\ref{S5EUXYp}) follows.
\end{proof}

\noindent
The  following decay  property of $U(t)$, makes  possible to invert its Mellin transform.

\begin{prop} 
\label{S3PX1}
For all $s\in \mathcal S$, for $T>0$ and $t\in (0, T)$,
\begin{align}
\left|U(t, s) \right| &\le C_Te^{-2t\log |bs|},\,\,\,\,\,b=\frac {e^{\frac {\gamma _e} {2}}} {2},\label{S3PX1E1}\\
(1+|s|)\left|\frac {\partial  U} {\partial s}(t, s) \right|&+(1+|s|)^{2}\left|\frac {\partial ^2 U} {\partial s^2}(t, s) \right|\le C_Tte^{-2t\log (|bs|)}\label{S3PX1E2}
\end{align}
\end{prop}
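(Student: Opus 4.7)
The plan is to extract the decay $|U(t,s)|\lesssim |bs|^{-2t}$ directly from the Mellin--Barnes representation (\ref{S5EUX1}) by analysing the behaviour of the ratio $B(s)/B(\sigma)$ along the contour as $|\mathscr Im s|\to\infty$. After the substitution $\sigma=s+\eta$, (\ref{S5EUX1}) reads
\begin{align*}
U(t,s)=\frac{1}{\sqrt{2\pi}}\cdot\frac{1}{2i\pi}\int_{\mathscr Re(\eta)=c}t^{-\eta}\,\Gamma(\eta)\,\frac{B(s)}{B(s+\eta)}\,d\eta,\qquad 0<c<2-\mathscr Re(s).
\end{align*}
The key heuristic is that $B(s)/B(s+\eta)\sim (2\log|bs|)^{-\eta}$ as $|\mathscr Im s|\to\infty$. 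Indeed, the recursion $B(\sigma+1)=-W(\sigma)B(\sigma)$ combined with Proposition \ref{S3PW17} gives $-W(\sigma)=2\log|\sigma/2|+\gamma_e+O(|\mathscr Im\sigma|^{-2})=2\log|b\sigma|+O(|\mathscr Im\sigma|^{-2})$ with $b=e^{\gamma_e/2}/2$, hence $B(s+n)/B(s)\sim(2\log|bs|)^n$ for $n\in\NN$, and the extension to complex $\eta$ follows from the integral representation (\ref{S3ETheta1}). Plugging this asymptotic into the integral and invoking the classical identity $(2i\pi)^{-1}\!\int\!\Gamma(\eta)\mu^{-\eta}d\eta=e^{-\mu}$ with $\mu=2t\log|bs|$ yields $U(t,s)\sim\frac{1}{\sqrt{2\pi}}e^{-2t\log|bs|}$, which is exactly (\ref{S3PX1E1}).

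To turn this into a proof I would proceed in three steps: (i) split the $\eta$-contour into a core $|\mathscr Im\eta|\le L$ and its tails; on the tails, use Stirling's bound (\ref{S5EGamma}) together with Proposition \ref{S3PBP24B} to show the contribution is dominated by any required power of $|s|^{-1}$; (ii) on the core, expand $\log(B(s)/B(s+\eta))$ via the integral in (\ref{S3ETheta1}), combining the asymptotic $\log(-W(\rho))=\log(2\log|b\rho|)+o(1)$ with the explicit form of $\Theta(\rho-s,\eta)$ to localise the main contribution near $\rho=s$ and to obtain $\log(B(s)/B(s+\eta))=-\eta\log(2\log|bs|)+R(s,\eta)$ with $R(s,\eta)\to 0$ uniformly on bounded $\eta$-sets as $|\mathscr Im s|\to\infty$; (iii) conclude by dominated convergence and the Mellin inversion formula for $e^{-\mu}$.

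For the derivative bounds (\ref{S3PX1E2}), differentiate (\ref{S5EUX1}) under the integral. The dominant contribution comes from $\partial_s(|bs|^{-2t})=-(2t/(bs))|bs|^{-2t}$, which explains the extra factor $t$ and the $(1+|s|)^{-1}$ decay; the subleading terms arising from the $s$-dependence of $\log(2\log|bs|)$ and of $B(s)$ itself (controlled by $B'(s)/B(s)=O(|s|^{-1}\log|s|^{-1})$ via Propositions \ref{S3PW17}--\ref{S3PBP24B}) are absorbed into $C_T$. The estimate for $\partial_s^2 U$ is analogous, gaining another factor $(1+|s|)^{-1}$.

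The main obstacle is step (ii): controlling the remainder $R(s,\eta)$ uniformly in $\eta$ along the contour requires a delicate dominated-convergence analysis of the integral in (\ref{S3ETheta1}), combining the oscillatory kernel $\Theta(\rho-s,\eta)$ with the doubly logarithmic growth of $\log(-W(\rho))$. This is where the specific analytic structure of $W$ enters most crucially, and it is also what pins down the precise constant $b=e^{\gamma_e/2}/2$ appearing in (\ref{S3PX1E1}).
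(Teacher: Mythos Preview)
Your heuristic $B(s)/B(s+\eta)\sim(2\log|bs|)^{-\eta}$ is correct and indeed underlies the paper's argument. The gap in your implementation is more serious than you indicate, however. On the fixed contour $\mathscr Re\,\eta=c$, the integrand has modulus $\sim(2t\log|bs|)^{-c}|\Gamma(\eta)|$, so its total variation is of order $(2t\log|bs|)^{-c}$, vastly larger than the target $e^{-2t\log|bs|}=|bs|^{-2t}$. The Mellin identity $(2i\pi)^{-1}\!\int\!\Gamma(\eta)\mu^{-\eta}\,d\eta=e^{-\mu}$ relies on oscillatory cancellation; perturbing by a factor $(1+R)$ with $R=o(1)$ in modulus on a bounded $\eta$-core leaves an error of size $o\bigl((2t\log|bs|)^{-c}\bigr)$, not $o(e^{-2t\log|bs|})$. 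Your tail claim is likewise incorrect: for fixed $L$ the tail is $O(e^{-\pi L/2})$ uniformly in $s$ (since $|B(s)/B(s+\eta)|$ is bounded between positive constants on the strip by Proposition~\ref{S3PBP24}), not dominated by any power of $|s|^{-1}$.

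The paper's cure is the saddle-point method: the phase $\Psi(s,Y,t)$ in (\ref{SAE2}) has its critical point at $Y\approx 2t\log|bs|\to\infty$, and deforming the contour through it converts the oscillatory cancellation into a genuine modulus bound. This is the reason for the rescaling $Y=2Z\log|s|$ and for Lemma~\ref{SALemma14.1}, which establishes the needed approximation of $B(s)/B(s+Y)$ uniformly for $|Y|$ up to order $(\log|s|)^2$---a far larger region than the bounded $\eta$-set you envisage in step~(ii). You have identified the right object (the remainder $R$) but neither the region on which it must be controlled nor the mechanism (contour deformation to the moving saddle) that makes the argument close.
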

The proof of Proposition  \ref{S3PX1} is essentially the same as that of Proposition 8.1 in \cite{EV}, only differing in small details,  and is presented in the Appendix. 

As a Corollary, the inverse Mellin transform of $U(t)$ is well defined.
\begin{cor}
\label{S5CInvM}
For every $t>0$ there exists a unique  distribution $\Lambda(t):=\mathscr M^{-1}(U(t))\in  E' _{ 0, 2 }$, the inverse Mellin transform of $U(t)$ such that:
\begin{align}
\label{S5CInvME1}
&\mathscr M \left(\Lambda (t)\right)(s)=U(t, s),\,\, \forall s\in \mathcal S _{ 0, 2 }\\
\label{S5CInvME1B}
& \Lambda\in C((0, \infty); E' _{ 0, 2 }).
\end{align}
For all $t>0$ it is given by the following expression,

\begin{equation}
\label{S5CInvME2}
\Lambda (t, x)=\left(x\frac {\partial } {\partial x} \right)^2\left( \frac {1} {2\pi i}\int  _{ c-i\infty }^{c+i\infty} U(t, s)s^{-2}x^{-s}ds\right),\,\,c\in (0, 2).
\end{equation}
When $t>1/2$,
\begin{equation}
\label{S5CInvME2B}
\Lambda (t, x)= \frac {1} {2\pi i}\int  _{ c-i\infty }^{c+i\infty} U(t, s)x^{-s}ds,\,\,c\in (0, 2).
\end{equation}
\end{cor}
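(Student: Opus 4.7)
The plan is to define $\Lambda(t)$ via the right-hand side of (\ref{S5CInvME2}) and then verify that it lies in $E'_{0,2}$, has Mellin transform $U(t)$, depends continuously on $t$, and coincides with the simpler formula (\ref{S5CInvME2B}) once $2t>1$. The decisive inputs are the decay estimate $|U(t,s)|\le C_T|bs|^{-2t}$ from Proposition \ref{S3PX1}, together with the analyticity of $U(t,\cdot)$ on $\mathcal{S}_{0,2}$ from Corollary \ref{S5Cor1U}.

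First I would set
$$G(t,x)=\frac{1}{2\pi i}\int_{c-i\infty}^{c+i\infty} U(t,s)\,s^{-2}\,x^{-s}\,ds,\qquad c\in(0,2).$$
Proposition \ref{S3PX1} bounds the integrand in modulus by $C_T|s|^{-2-2t}$, uniformly for $t$ in compact subsets of $(0,\infty)$, so the integral converges absolutely, defines a continuous function of $(t,x)\in(0,\infty)\times(0,\infty)$, and is independent of $c\in(0,2)$ by contour shifting inside the strip of analyticity. I then set $\Lambda(t):=(x\partial_x)^2 G(t,\cdot)$ in $\mathscr D'(0,\infty)$, which is precisely (\ref{S5CInvME2}). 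For $t>1/2$ the regularizing factor $s^{-2}$ becomes unnecessary: since $2t>1$, the direct integral in (\ref{S5CInvME2B}) converges absolutely, and applying $(x\partial_x)^2$ under the integral sign in $G$ — justified by the derivative bounds (\ref{S3PX1E2}) — converts $s^{-2}$ into $1$, giving (\ref{S5CInvME2B}) as a pointwise identity.

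Second, I would verify that $\Lambda(t)\in E'_{0,2}$ with $\mathscr M(\Lambda(t))=U(t)$ on $\mathcal{S}_{0,2}$. The continuous function $G(t,\cdot)$ already belongs to $E'_{0,2}$ with Mellin transform $U(t,s)/s^2$, and since $E_{0,2}$ is invariant under the continuous operator $x\partial_x$, so is its dual. Using the Mellin transform rule $\mathscr M\bigl((x\partial_x)^{2}G\bigr)(s)=s^{2}\mathscr M(G)(s)$, obtained by Parseval
$$\langle\Lambda(t),\phi\rangle=\langle G(t),(1+x\partial_x)^{2}\phi\rangle=\frac{1}{2\pi i}\int_{c-i\infty}^{c+i\infty}U(t,s)\,\mathscr M\phi(1-s)\,ds$$
after noting that $(x\partial_x)^{*}=-(1+x\partial_x)$ maps $E_{0,2}$ continuously into itself, one reads off $\mathscr M(\Lambda(t))(s)=U(t,s)$ on $\mathcal{S}_{0,2}$ and simultaneously the continuity of the pairing on $E_{0,2}$, hence $\Lambda(t)\in E'_{0,2}$. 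Uniqueness of $\Lambda(t)$ in $E'_{0,2}$ with prescribed Mellin transform follows from the injectivity of $\mathscr M$ on this space as recalled from \cite{ML}.

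For the continuity $\Lambda\in C((0,\infty);E'_{0,2})$, the bounds of Proposition \ref{S3PX1} hold uniformly on compact $t$-intervals and (\ref{S5EUX1B}) gives pointwise continuity of $U(t,s)$ in $t$; dominated convergence on vertical lines yields continuity of $t\mapsto G(t,\cdot)$ and hence, via the Parseval pairing above, continuity of $t\mapsto\Lambda(t)$ in the weak-$*$ topology of $E'_{0,2}$. The main obstacle I expect is the Fubini/Parseval step at the boundary $t\to 0^{+}$, where the decay $|U(t,s)|\le C_{T}|s|^{-2t}$ degenerates to merely $O(1)$: this is exactly what forces the $s^{-2}$ regularization in (\ref{S5CInvME2}) and restricts the direct representation (\ref{S5CInvME2B}) to the range $t>1/2$. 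The derivative estimates (\ref{S3PX1E2}) are engineered for precisely this purpose.
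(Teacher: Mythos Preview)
Your proposal is correct and follows essentially the same route as the paper: both use the analyticity of $U(t,\cdot)$ on $\mathcal S_{0,2}$ (Corollary \ref{S5Cor1U}) together with the decay $|U(t,s)|\le C_T|bs|^{-2t}$ (Proposition \ref{S3PX1}) to invert the Mellin transform via the $s^{-2}$-regularized formula (\ref{S5CInvME2}). The only difference is cosmetic: the paper invokes Theorem 11.10.1 of \cite{ML} as a black box to obtain existence, uniqueness and the representation (\ref{S5CInvME2}), whereas you unpack that theorem by hand through the auxiliary function $G$ and the Parseval pairing --- a perfectly sound and arguably more self-contained presentation.
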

\begin{proof}
By Corollary \ref{S5Cor1U}, for every $t>0$, the function $U(t)$ is analytic on the strip $\mathscr Re s\in (0, 2)$. By Proposition \ref{S3PX1}
$$
|U(t, s)|\le |bs|^{-2t},\,\,\forall t\in (0, 1).
$$
It follows that, for all $t>0$, the function $s^{-K+2}U(t, s)$  is analytic and bounded on the strip $\mathscr Re s\in (0, 2)$ as $|s|\to \infty$ kor $K=2$. It follows from Theorem 11.10.1 in \cite{ML} that there exists a unique tempered distribution $\Lambda (t)\in E' _{ 0, 2 }$ that satisfies (\ref{S5CInvME1}) and is given by (\ref{S5CInvME2}). 
\
As soon as $t>1/2$, the integral in the right hand side of (\ref{S5CInvME2B}) is absolutely convergent and its Mellin transform is $U(t)$ from where it is equal to $\Lambda(t)$. Property (\ref{S5CInvME1B}) follows from (\ref{S5EUX1B}) and the continuity of the inverse Mellin transform.
\end{proof}

We now obtain the  inverse Mellin transform of both sides of  equation (\ref{S5EUXYp}).
\begin{prop}
\label{S5PInv1} 
\begin{align}
&\Lambda (t) \in C^1(0, \infty; E' _{ 1, 3 }) \label{S5PInv1E00}\\
&\frac {\partial \Lambda} {\partial t}=\left(\frac {\partial \Lambda} {\partial x}\ast H\right)\,\,\hbox{in}\,\,C((0, \infty); E' _{ 1, 3 })\label{S5PInv1EX0}
\end{align}
\end{prop}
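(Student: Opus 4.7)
The strategy is to Mellin-invert the identity (\ref{S5EUXYp}), $\partial_t U(t, s) = W(s-1)\,U(t, s-1)$, within the wider strip $\mathcal{S}_{1,3}$. This requires first extending $\Lambda(t)$ from $E'_{0,2}$ to $E'_{1,3}$ and then identifying the Mellin inverse of the right-hand side as the multiplicative convolution $(\partial_x \Lambda)\ast H$.

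First, I would extend the analyticity of $U(t,\cdot)$ to $\mathcal{S}_{1,3}$. Integrating (\ref{S5EUXYp}) in time from the initial value $U(0,s)=1/\sqrt{2\pi}$ (established in the proof of Corollary~\ref{S5Cor1U}) yields
$$U(t,s)=\frac{1}{\sqrt{2\pi}}+\int_0^t W(s-1)\,U(\tau,s-1)\,d\tau.$$
Since $W$ is analytic on $\mathcal{S}_{-2,4}$ (Proposition~\ref{S3PW}) and $U(\tau,\cdot)$ is analytic on $\mathcal{S}_{0,2}$ (Corollary~\ref{S5Cor1U}), the right-hand side provides an analytic extension of $U(t,\cdot)$ to $\mathcal{S}_{1,3}$. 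Combining the logarithmic estimate $|W(s-1)|=O(\log|s|)$ from Proposition~\ref{S3PW17} with the decay bound $|U(\tau,s-1)|\le C_T|b(s-1)|^{-2\tau}$ from Proposition~\ref{S3PX1}, the time integral is dominated by
$$O(\log|s|)\int_0^t|b(s-1)|^{-2\tau}\,d\tau=O\!\left(\frac{\log|s|}{\log|b(s-1)|}\right)=O(1)$$
as $|\mathscr Im(s)|\to\infty$, uniformly for $t$ in compact subsets of $(0,\infty)$. Differentiating under the integral sign and invoking the $s$-derivative bounds (\ref{S3PX1E2}) produces analogous polynomial estimates for $\partial_s U(t,s)$ and $\partial_s^2 U(t,s)$ on $\mathcal{S}_{1,3}$.

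With these estimates in hand, Theorem~11.10.1 of \cite{ML} produces a unique $\widetilde\Lambda(t)\in E'_{1,3}$ whose Mellin transform on $\mathcal{S}_{1,3}$ equals $U(t,\cdot)$, via a formula analogous to (\ref{S5CInvME2}). Because $U(t,\cdot)$ is now analytic on the combined strip $\mathcal{S}_{0,3}$, the distributions $\Lambda(t)\in E'_{0,2}$ and $\widetilde\Lambda(t)\in E'_{1,3}$ agree on any $\varphi\in\mathscr D(0,\infty)\subset E_{0,2}\cap E_{1,3}$, so they define the same element of $\mathscr D'(0,\infty)$; from now on both are denoted $\Lambda(t)$. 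Joint $C^k$ regularity on $(0,\infty)\times\mathcal{S}_{1,3}$, including for $\partial_t U$, follows from the integrated formula above together with (\ref{S5EUX1B}); the continuity of the inverse Mellin transform then delivers (\ref{S5PInv1E00}) and the intermediate identity $\partial_t\Lambda(t)=\mathscr M^{-1}\bigl(W(s-1)\,U(t,s-1)\bigr)$ in $C((0,\infty);E'_{1,3})$.

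Finally, to identify the right-hand side as the announced convolution, I would use (\ref{S2EHW}) to write $W(s-1)=-(s-1)\,\mathscr M(H)(s)$ on $\mathcal{S}_{1,3}$ (since $\mathscr M(H)$ is analytic on $\mathcal{S}_{-1,5}$) and combine it with the standard Mellin differentiation rule $\mathscr M(\partial_x\Lambda(t))(s)=-(s-1)\,U(t,s-1)$, valid on $\mathcal{S}_{1,3}$ because $\Lambda(t)\in E'_{0,2}$. Then
$$W(s-1)\,U(t,s-1)=\mathscr M(H)(s)\cdot\mathscr M(\partial_x\Lambda(t))(s)\qquad\text{on }\mathcal{S}_{1,3},$$
which by the Mellin convolution theorem is the Mellin transform of $(\partial_x\Lambda(t))\ast H$; inverting gives (\ref{S5PInv1EX0}). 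The main obstacle is Step~1: delicately balancing the $\log|s|$ growth of $W(s-1)$ against the $|b(s-1)|^{-2\tau}$ decay of $U(\tau,s-1)$ across the time integral so that the extended $U(t,\cdot)$ satisfies the polynomial-growth hypothesis of the Misra--Lavoine inversion theorem on $\mathcal{S}_{1,3}$. Once that is in place, everything else is a formal manipulation of Mellin transforms.
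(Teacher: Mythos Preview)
Your proof is correct and follows the same overall strategy as the paper: Mellin-invert the identity $\partial_t U(t,s)=W(s-1)U(t,s-1)$ on $\mathcal S_{1,3}$ and identify the right-hand side as $(\partial_x\Lambda)\ast H$ via the convolution theorem. The identification step is essentially identical in both arguments.

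Where you differ is in how you land in $E'_{1,3}$. You take the scenic route: integrate the equation in time, use the cancellation $\log|s|/\log|b(s-1)|=O(1)$ to bound the extended $U(t,\cdot)$ on $\mathcal S_{1,3}$, and then re-invoke the Misra--Lavoine theorem there. The paper instead exploits the structural shortcut that differentiation in $x$ shifts the strip by one: since $\Lambda(t)\in E'_{0,2}$, automatically $\partial_x\Lambda(t)\in E'_{1,3}$ with $\mathscr M(\partial_x\Lambda(t))(s)=-(s-1)U(t,s-1)$, and the convolution lives in $E'_{1,3}$ without any new growth estimate. For $\partial_t\Lambda$, the paper works on the overlap strip $c\in(1,2)$, differentiates the integrand of the $(x\partial_x)^2$ representation (\ref{S5CInvME2}) under the integral using Proposition~\ref{S3PX1}, and then applies $(x\partial_x)^2$ in the sense of distributions. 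Your approach is more self-contained and explicitly furnishes the membership $\Lambda(t)\in E'_{1,3}$ (which the paper's proof leaves somewhat implicit, relying on the analyticity of $B$ on $\mathcal S_{0,3}$); the paper's is shorter because it never needs the time-integral balancing act you flagged as the main obstacle.
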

\begin{proof} 
By (\ref{S5CInvME1B}),  $\partial _x\Lambda(t)\in C(0, \infty; E' _{ {1, 3} })$ and fo all $s\in  \mathcal S _{ 1, 3 }$,

\begin{equation*}
\mathscr M (\partial _x\Lambda(t))(s)=-(s-1)U(s-1),\,\hbox{and}
\end{equation*}
Since $\mathscr M (H)(s)=-\frac {W(s-1)} {s-1}$, it then follows for all $t>0$,
\begin{equation*}
\mathscr M^{-1}(W(s-1)U(t, s-1))(x)=\left(\frac {\partial \Lambda (t)} {\partial x}\ast H\right)(x)\,\,\hbox{in}\,\,E' _{ 1, 3 } 
\end{equation*}
On the other hand, by (\ref{S5EUXYp}) and Proposition \ref{S3PX1}
\begin{align}
\mathscr M^{-1}\left( \frac {\partial U(t)} {\partial t}\right)(x)&\equiv \mathscr M^{-1}\left(W(s-1)U(t, s-1) \right)=\nonumber \\
&=\left(x\frac {\partial } {\partial x} \right)^2 \left( \frac {1} {2\pi i}\int  _{ c-i\infty }^{c+i\infty}  W(s-1)U(t, s-1)s^{-2}x^{-s}ds\right). \label{S4EUt}
\end{align} 
By  Proposition \ref{S3PX1} again, for all $t>0$ and $x>0$,
\begin{align}
\frac {d} {dt} \left( \frac {1} {2\pi i}\int  _{ c-i\infty }^{c+i\infty} U(t, s)s^{-2}x^{-s}ds\right)=\frac {1} {2\pi i}\int  _{ c-i\infty }^{c+i\infty} W(s-1)U(t, s-1)s^{-2}x^{-s}ds \label{S4EUt}
\end{align} 
and the integral in the right hand side of (\ref{S4EUt}) is absolutely convergent, uniformly for $x$ and $t$  in compacts subsets of $(0, \infty)\times (0, \infty)$. It  is then a continuous function on $(0, \infty)\times (0, \infty)$.  It is then possible to apply  the operator $\left(x \partial _x\right)^2$ to both sides of (\ref{S4EUt})  in the sense of distributions to obtain (\ref{S5PInv1EX0}).
\end{proof}
We prove in the next Proposition,  some first  properties of $\Lambda$.
\begin{prop}
\label{S5CInvM2} 
The function $\Lambda(t)$  defined in Corollary \ref{S5CInvM} satisfies properties (\ref{S5CInvME0300})--(\ref{S5CAlph10}).
\end{prop}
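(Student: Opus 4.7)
The plan is to work from the two Mellin-inversion representations provided in Corollary \ref{S5CInvM}---the regularized form (\ref{S5CInvME2}), valid for all $t>0$, and the absolutely convergent form (\ref{S5CInvME2B}), valid for $t>1/2$---and to read off each of the claimed regularity properties by combining contour shifts in the $s$-variable with the decay estimates of Proposition \ref{S3PX1} for $U(t,s)$, $\partial_s U$, $\partial_s^2 U$, the iterated identity $\partial_t^m U(t,s)=W(s-1)\cdots W(s-m)\,U(t,s-m)$ obtained from (\ref{S5EUXYp}), and the bound $|W(s)|=O(\log|s|)$ of Proposition \ref{S3PW17}.

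First I would establish the pure smoothness statement (\ref{S5CInvME3B}). On the Mellin side, $\partial_x^k$ corresponds, after a contour shift by $k$, to multiplication by the polynomial $(-1)^k s(s+1)\cdots(s+k-1)$ applied to $U(t,s-k)$, while $\partial_t^m$ inserts the factor $W(s-1)\cdots W(s-m)$. Plugging these in, $\partial_t^m\partial_x^k\Lambda(t,x)$ is expressible as an inverse Mellin integral along a contour in $\mathcal S_{0,2}$ (after the shift) whose integrand decays like $(\log|s|)^m |s|^{k-2t}$ in the imaginary direction; this integral converges absolutely and uniformly on compacts of $x$ as soon as $2t>k+1$, yielding (\ref{S5CInvME3B}). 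Next, the ``log-improved'' statements (\ref{S5CInvME0300})--(\ref{S5CInvM2E1}) follow by the Mellin correspondence $(\log x)f\leftrightarrow\partial_s\mathscr M(f)$: an integration by parts in $s$ in (\ref{S5CInvME2}) transfers each factor $\log x$ to a derivative of $U(t,s)s^{-2}$, gaining one power of $|s|^{-1}$ in decay. Using the commutation relation $[x\partial_x,\log x]=1$ to handle the outer $(x\partial_x)^2$ in (\ref{S5CInvME2}), one finds that $(\log x)\partial_t^m\Lambda$ is represented by an absolutely convergent Mellin integral of $\partial_s\partial_t^m U$ for every $t>0$, and $(\log x)^2\partial_t^m\partial_x\Lambda$ by an integral of $\partial_s^2\partial_t^m U$, giving (\ref{S5CInvME03}) and (\ref{S5CInvM2E1}). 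Continuity up to $x=0$ in (\ref{S5CInvME0300}) is obtained by pushing the contour past $s=0$, using that $U(t,\cdot)$ is meromorphic on $\CC$ by (\ref{S5EUX1D}), and collecting residues to extract enough decay of $\Lambda(t,x)$ as $x\to 0$ to dominate $\log x$.

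The main obstacle is the fractional regularity assertions (\ref{S5CAlph1}) and (\ref{S5CAlph10}), which concern the borderline range $t\in(r/2,1/2)$ where $\Lambda(t,\cdot)$ carries the singularity $|x-1|^{2t-1}$ later quantified in (\ref{S3Cor17E1-7}). For (\ref{S5CAlph1}), the plan is to write $(\log x)\Lambda(t,x)/(x-1)^\alpha$ through the integral $(2i\pi)^{-1}\int\partial_s U(t,s)\,x^{-s}\,ds$, localize near $x=1$, and apply Plancherel in the imaginary direction of $s$: the decay $|\partial_s U|\lesssim t|s|^{-1-2t}$ combined with the convolution effect of the $(x-1)^{-\alpha}$ factor gives $H^{r-\alpha}_{\mathrm{loc}}$ regularity whenever $r<2t$ and $\alpha<r$. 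For the pointwise estimate (\ref{S5CAlph10}), I would use the elementary identity $x^{-s}-y^{-s}=-s\int_y^x\xi^{-s-1}\,d\xi$ in the Mellin integral, split the integration in $s$ at $|s|\sim |x-y|^{-1}$, and balance the two resulting bounds against the $x^{-r'}$ weight; the analogous difference $(x-1)^{-\alpha}-(y-1)^{-\alpha}$ is controlled by its own H\"older norm of order $r-\alpha$. The bookkeeping of the exponents $r,\,r',\,\alpha,\,t$ in this last step is the delicate point, and it is here where Proposition \ref{S3PX1} is used in its full sharp form.
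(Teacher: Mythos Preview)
Your treatment of (\ref{S5CInvME0300})--(\ref{S5CInvME3B}) is essentially the paper's own: the Mellin correspondences $\log x \leftrightarrow \partial_s$, $\partial_t^m \leftrightarrow \prod W(s-\ell)$, $\partial_x^k \leftrightarrow (-1)^k(s)_k$, combined with the decay of Proposition~\ref{S3PX1} and contour shifts across the pole at $s=0$, are exactly what the paper does.

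For (\ref{S5CAlph1})--(\ref{S5CAlph10}) your route diverges from the paper's and has a gap. The paper proceeds via the \emph{fractional derivative} of $(\log x)\Lambda(t)$: it writes $\partial_x^r\big((\log x)\Lambda(t)\big)$ as an absolutely convergent Mellin integral with the multiplier $\Gamma(1-s+r)/\Gamma(1-s)$, obtains the pointwise bound $|\partial_x^r((\log x)\Lambda)|\le C_{r'}x^{-r'}$, and then \emph{cites} a standard fractional-calculus result (Samko--Kilbas--Marichev) to convert this into $H^{r}_{\mathrm{loc}}$ membership for $\alpha=0$. For $\alpha>0$ the paper separately establishes the crucial vanishing
\[
\lim_{x\to 1}(\log x)\Lambda(t,x)=\frac{1}{2i\pi}\int_{\mathscr{Re}(s)=c}\partial_s U(t,s)\,ds=0
\]
(integration by parts over the closed contour), and then invokes a classical lemma from Muskhelishvili: a H\"older-$r$ function vanishing at a point, divided by $(x-x_0)^\alpha$ with $\alpha<r$, is H\"older of order $r-\alpha$.

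Your argument is missing both pieces. You never establish $(\log x)\Lambda(t,1)=0$, yet without it the quotient $(\log x)\Lambda/(x-1)^\alpha$ is genuinely singular at $x=1$ and neither $H^{r-\alpha}_{\mathrm{loc}}$ nor the H\"older bound can hold. And your proposed mechanism for (\ref{S5CAlph10})---controlling $(x-1)^{-\alpha}-(y-1)^{-\alpha}$ by ``its own H\"older norm of order $r-\alpha$''---fails outright near $x=1$, since $(x-1)^{-\alpha}$ is unbounded there and has no finite H\"older seminorm on any neighbourhood of $1$. The splitting trick $x^{-s}-y^{-s}=-s\int_y^x\xi^{-s-1}d\xi$ will give you H\"older-$r$ control of $(\log x)\Lambda$ itself (this is fine, and is a more hands-on alternative to the Samko citation), but to pass to the quotient you still need the vanishing at $x=1$ together with the Muskhelishvili-type lemma; your decomposition of the product into two differences does not bypass that.
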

\begin{proof} 
Since $\Lambda (t)\in E' _{ 0, 2 }$, $\mathscr M(((\log x ) \partial ^m_t\Lambda (t))(x))=\partial _s\partial ^m_t U(t, s)$ in $\mathcal S _{ m, 2+m }$. Then,
\begin{equation}
\label{S5CInvMEU}
((\log x ) \partial ^m_t \Lambda (t))(x)=\frac {1} {2\pi i}\int  _{ c'-i\infty }^{c'+i\infty} \partial _s\left(U(t, s-m)\prod _{ \ell=1 }^m W(s-\ell)\right)x^{-s}ds
\end{equation}
with $c'\in (m, 2+m)$, because,  by Proposition \ref{S3PX1}, (\ref{S3PW17E1}) and (\ref{S3PW17E3}), the integral in (\ref{S5CInvMEU}) is absolutely convergent. Since the convergence is  uniform for $x$ and $t$ on compact subsets of $(0, \infty)\times (0, \infty)$, (\ref{S5CInvME03}) follows. A similar argument shows (\ref{S5CInvM2E1}).
On the other hand, when $t>1/2$,  using (\ref{S4L1E1}) if we deform the integration contour in (\ref{S5CInvME2B})  towards lower values of $\mathscr Res$ and cross the pole of $B(s)$ at $s=0$, using ${\rm Res}(B(s), s=0))=-B(1)/W'(0)$
\begin{align*}
\Lambda (t, x)= &  \frac {1} {4 \pi^2\sqrt{2\pi }}\int  _{ {\mathscr Re}(s)=c } x^{-s } \int  _{ \mathscr Re \sigma  =\beta  }\frac {B(s)} {B(\sigma  )}\Gamma (\sigma  -s)t^{-(\sigma -s) }d\sigma  ds\\
= & \frac {B(1)} {2i\pi \sqrt{2\pi }W'(0)}  \int  _{ \mathscr Re \sigma  =\beta  }\frac {\Gamma (\sigma )t^{-\sigma -1 }} {B(\sigma  )} d\sigma+\\
&+ \frac {1} {4 \pi^2\sqrt{2\pi }}\int  _{ {\mathscr Re}(s)=c'' } x^{-s } \int  _{ \mathscr Re \sigma  =\beta  }\frac {B(s)} {B(\sigma  )}\Gamma (\sigma  -s)t^{-(\sigma -s) }d\sigma  ds,\,\,\,c''\in (-1, 0)
\end{align*}
It follows first that $\Lambda \in C([1/2,\infty)\times [0, \infty))$  since both integrals converge uniformly for $x$ and $t$ on compact subsets of $[0, \infty)\times [1/2, \infty)$. 
For $t\in (0, 1/2)$
\begin{equation*}
(\log x )  \Lambda (t, x)=\frac {1} {2\pi i}\int  _{ c-i\infty }^{c+i\infty} \partial _s U(t, s)x^{-s}ds
\end{equation*}
It follows from (\ref{S5EUX1})that  $U(t)$ is meromorphic on the strip $\mathcal S _{ -1, 2 }$ with a simple pole at $s=0$. Then, $\partial _s U(t, s)$ is also meromorphic on  $\mathcal S _{ -1, 2 }$ and has a pole of order 2 at $s=0$. We deduce, for $c''\in (-1, 0)$
\begin{equation*}
(\log x )  \Lambda (t, x)=- \frac {B(1)} {2i\pi \sqrt{2\pi }W'(0)}  \int  _{ \mathscr Re \sigma  =\beta  }\frac {\Gamma (\sigma )t^{-\sigma -1 }} {B(\sigma  )} d\sigma+ \frac {1} {2\pi i}\int  _{ c''-i\infty }^{c''+i\infty} \partial _s U(t, s)x^{-s}ds.
\end{equation*}
We deduce arguing as before that $(\log x )  \Lambda\in C((0, 1/2)\times [0, \infty))$ and (\ref{S5CInvME0300}) follows.
For $t>1$ the identity (\ref{S5CInvME2B}) may be used for $k<2t-1$, and for $c'\in (m+k, 2+m+k)$
\begin{equation}
\label{S5CInvMECK}
\frac {\partial ^{k+m}\Lambda } {\partial x^k\partial t^m}=\frac {(-1)^k} {2\pi i}\int  _{ c'-i\infty }^{c'+i\infty} (s-k)_k \left(U(t, s-m)\prod _{ \ell=1 }^m W(s-\ell)\right)x^{-s-k}ds.
\end{equation}
Property  (\ref{S5CInvME3B}) follows since, by Proposition \ref{S3PX1}, (\ref{S3PW17E1}) and (\ref{S3PW17E3}) 
$$
\left|(s-k)_k U(t, s-m)\prod _{ \ell=1 }^m W(s-\ell)\right |\le C|s|^{k-2t}|\log |s||^{m},\,\, \hbox{for}\,\,|s|>>1,
$$
and therefore, the  integral in (\ref{S5CInvMECK}) converges absolutely in compacts of  $(0, \infty)\times (0, \infty)$.

For  all  $t\in (0, 1/2)$, $r\in(0, 2t)$, and $|s|$ large,
\begin{align}
\label{S3EDr}
\left| \frac {\partial } {\partial s}U(t, s-r)\frac {\Gamma (1-s+r)} {\Gamma (1-s)}x^{1-s} \right|\le |x|^{-\mathscr Re (s+r)} |s|^{-2t-1+r}.
\end{align}
the fractional derivative of order $r$ of $(\log x) \Lambda$, is  then
\begin{equation}
\label{S3EDr2}
\frac {\partial^r (\log x)\Lambda (t)}{\partial  x^r} =
\frac {1} {2\pi i}\int  _{ c'-i\infty }^{c'+i\infty}\frac {\Gamma (1-s+r)} {\Gamma (1-s)} \frac {\partial } {\partial s}U(t, s-r)x^{-s}ds
\end{equation}
$c'\in (r, 2)$,
(cf. \cite{P}, \S 2.10),  where the integral in the right hand side of (\ref{S3EDr2}) converges absolutely for $x$ and $t$ in compact subsets of $(0, \infty)\times (0, \infty)$. For each $t>0$ the function $(\log x)  \Lambda (t)$ has  continuous fractional $x-$derivative of order $r$
on every compact subset of $(0, \infty)$ and by (\ref{S3EDr2}),  for all $t>0$
\begin{equation}
\label{S5CAlph10z}
\forall r'\in (r, 2),\,\exists C _{ r' }>0,\,\,\,\left|\frac {\partial ^r (\log x)\Lambda (t, x)} {\partial x^r} \right|\le C _{ r' }x^{-r'},\,\,\forall x>0.
\end{equation}
By Theorem 3.1 \cite{S},  (\ref{S5CAlph1}) follows for $\alpha =0$ and $r\in (0, 2t)$. 

Since,
\begin{align*}
(\log x)  \Lambda (t, x)=\frac {1} {2i\pi  }\int_{ {\mathscr Re}(s)=c } \frac {\partial U(t, s)} {\partial s}x^{-s}ds,
\end{align*}
by the continuity property (\ref{S5CInvME03}), and an integration by parts,

\begin{equation*}
\lim _{ x\to 1 }(\log x)  \Lambda (t, x)=\frac {1} {2i\pi  }\int_{ {\mathscr Re}(s)=c } \frac {\partial U(t, s)} {\partial s}ds=0.
\end{equation*}
Then,  for $\alpha >0$ such that $\alpha +r<2t$ property  (\ref{S5CAlph1}) is deduced using the result in \cite{M}, p. 14.  Estimate (\ref{S5CAlph10}) follows from the same result in  \cite{M} and  (\ref{S5CAlph10z}).
 \end{proof}

\begin{cor}
\label{S5CSol}
The function $\Lambda$ satisfies
\begin{equation}
\label{S5CSolE1}
\lim _{ t\to 0 }\Lambda (t)=\delta _1,\,\,\,\hbox{in}\,\,\,\,\mathscr D'(0, \infty).
\end{equation}
\end{cor}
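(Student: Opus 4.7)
The plan is to test $\Lambda(t)$ against an arbitrary $\phi\in\mathscr{D}(0,\infty)$ and show $\langle \Lambda(t),\phi\rangle\to\phi(1)$ as $t\to 0^+$, exploiting the explicit Mellin representation of $\Lambda(t)$ provided by Corollary~\ref{S5CInvM}. The whole argument reduces the distributional convergence to a pointwise limit plus dominated convergence on a single vertical Mellin contour.

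Starting from (\ref{S5CInvME2}), I would move the operator $(x\partial_x)^2$ onto $\phi$ through its formal adjoint $(1+x\partial_x)^2$: since $\phi$ is compactly supported in $(0,\infty)$ no boundary terms appear, and $\psi:=(1+x\partial_x)^2\phi$ is again in $\mathscr{D}(0,\infty)$. Then Fubini (justified by the absolute convergence below) gives
\begin{equation*}
\langle \Lambda(t),\phi\rangle=\frac{1}{2\pi i}\int_{c-i\infty}^{c+i\infty} U(t,s)\,s^{-2}\,\tilde\psi(1-s)\,ds,\qquad c\in(0,2),
\end{equation*}
where $\tilde\psi(w)=\int_0^\infty \psi(x)x^{w-1}dx$ is the ordinary Mellin transform of $\psi$. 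Since $\psi\in\mathscr{D}(0,\infty)$, the function $\tilde\psi$ is entire and decays faster than any polynomial on every vertical line.

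To pass to the limit $t\to 0^+$ inside the contour integral I would invoke dominated convergence. Pointwise convergence of the integrand is immediate from the continuity statement (\ref{S5EUX1B}) and the value $U(0,s)=1/\sqrt{2\pi}$ computed in the proof of Corollary~\ref{S5Cor1U}. For the majorant, the bound (\ref{S3PX1E1}) from Proposition~\ref{S3PX1} yields $|U(t,s)|\le C_T|bs|^{-2t}$, which is uniformly bounded for $t$ in a fixed compact neighbourhood of $0$ and $|s|$ large; combined with the $s^{-2}$ factor and the super-polynomial decay of $\tilde\psi(1-s)$, one obtains an integrable majorant independent of $t$.

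It remains to identify the limit with $\phi(1)$. The limiting integrand is the constant $1/\sqrt{2\pi}$ times $s^{-2}\tilde\psi(1-s)$; deforming the contour to the left and picking up the double pole of $s^{-2}$ at $s=0$ reproduces, via the Mellin inversion formula applied to $\psi$ at $x=1$ (and the consistency built into the normalization $1/\sqrt{2\pi}$ of equation (\ref{S4EV1})), exactly the pairing $\langle \delta_1,\phi\rangle=\phi(1)$. The main technical point is the justification of Fubini and of dominated convergence at the threshold $t=0$, and it is precisely there that the sharp decay estimate of Proposition~\ref{S3PX1} is indispensable; everything else is bookkeeping with the adjoint of $(x\partial_x)^2$ and the rapid decay of the Mellin transform of a test function.
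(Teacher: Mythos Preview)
Your approach is correct and ultimately coincides with the paper's, though you reach the key contour integral by a cleaner route. The paper pairs $\Lambda(t)-\delta_1$ with $\varphi$ directly and writes (somewhat formally, since the inner $s$-integral does not converge absolutely)
\[
\langle\Lambda(t),\varphi\rangle-\varphi(1)=\frac{1}{2i\pi}\int_{c-i\infty}^{c+i\infty}\mathscr{M}(\varphi)(1-s)\Bigl(U(t,s)-\tfrac{1}{\sqrt{2\pi}}\Bigr)\,ds,
\]
and then uses the representation (\ref{S5EU0}) to obtain the quantitative bound $|U(t,s)-1/\sqrt{2\pi}|\le C\,t^{\,c-\beta'}\log|s|$, from which the limit follows with an explicit rate. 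Your path through (\ref{S5CInvME2}) and the adjoint $(1+x\partial_x)^2$ gives absolute convergence throughout, and your integral in fact collapses to the paper's: since $\psi=(1+x\partial_x)^2\phi$ one has $\widetilde\psi(1-s)=s^{2}\,\widetilde\phi(1-s)$, hence $s^{-2}\widetilde\psi(1-s)=\widetilde\phi(1-s)=\mathscr{M}(\phi)(1-s)$. Your dominated-convergence argument via Proposition~\ref{S3PX1} is then a legitimate alternative to the paper's quantitative estimate, trading the explicit rate for a slightly softer hypothesis.

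The only place that needs tightening is your identification of the limit. After the simplification above there is \emph{no} pole at $s=0$, so the ``deform and pick up the double pole'' description is misleading and, taken literally, incomplete (the contour cannot be pushed to $\mathscr{Re}(s)\to-\infty$ since $\widetilde\psi(1-s)$ grows there). The limit is simply
\[
\frac{1}{\sqrt{2\pi}}\cdot\frac{1}{2\pi i}\int_{\mathscr{Re}(s)=c}\widetilde\phi(1-s)\,ds=\frac{1}{\sqrt{2\pi}}\,\phi(1)
\]
by ordinary Mellin inversion at $x=1$, with the factor $1/\sqrt{2\pi}$ absorbed by the paper's normalization, as you anticipated.
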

\begin{proof}
Consider  any test function $\varphi \in \mathscr D(0, \infty)$ and suppose that ${\rm supp} (\varphi ) \subset (a, b)$ for some $0<a<b<\infty$. Then
\begin{align*}
\langle \Lambda (t), \varphi \rangle -\varphi (1)&=
\int _0^\infty  \mathscr M^{-1}\left(U(t)-\frac {1} {\sqrt{2\pi }}\right)(x)\, \varphi (x) dx\\
&=\frac {1} {2i\pi }\int _0^\infty \int  _{ c-i\infty }^{c+i\infty}\left(U(t, s)-\frac {1} {\sqrt{2\pi }}\right)x^{-s}ds \varphi (x)dx\\
&=\frac {1} {2i\pi }\int  _{ c-i\infty }^{c+i\infty} \int _0^\infty  x^{-s} \varphi (x) dx\left(U(t, s)-\frac {1} {\sqrt{2\pi }}\right)ds\\
&=\frac {1} {2i\pi }\int  _{ c-i\infty }^{c+i\infty} \mathscr M( \varphi) (1-s)\left(U(t, s)-\frac {1} {\sqrt{2\pi }}\right)ds.
\end{align*}
By definition, for $s=c+iv$, $v\in \RR$, $\mathscr Re s\in (\beta ', 2)$,
\begin{align*}
 \mathscr M( \varphi) (1-s)&=\int _0^\infty \varphi (x)x^{-s}dx=\frac {1} {(1-s)(2-s)}\int _0^\infty \varphi ''(x)x^{2-s}dx \le \frac {C} {1+|s|^2}.
\end{align*}
As we have seen above (cf. (\ref{S5EU0})), for $\mathscr Re s\in (\beta ', 2)$,
\begin{align*}
\left|U(t, s)-\frac {1} {\sqrt{2\pi }}\right|=\frac {|B(s)|} {\sqrt{2\pi}}\left|\int  _{ \mathscr Re(\sigma )=\beta'  }\frac {t^{-(\sigma -s)}\Gamma (\sigma-s) } { B(\sigma )}d\sigma\right| ,\,\,\, \beta '\in (0, \mathscr Re s)\\
\le \frac {|B(s)|} {\sqrt{2\pi}}t^{\mathscr Re (s-\beta ')}\left|\int  _{ \mathscr Re(\sigma )=\beta'  }\frac {t^{-(i\mathscr Im (\sigma -s))}\Gamma (\sigma-s) } { B(\sigma )}d\sigma\right|\\
\le C e^{\mathscr Re (s-\beta ')\log t} \log |s| 
\end{align*}
Then, for $\mathscr Re s=c>\beta '$:
\begin{align*}
& \left|\langle \Lambda (t), \varphi \rangle -\varphi (1)\right|=\frac {1} {2i\pi }\left| \int  _{ c-i\infty }^{c+i\infty}\mathscr M( \varphi) (1-s)\left(U(t, s)-\frac {1} {\sqrt{2\pi }}\right)ds\right|\\
& \le C e^{\mathscr  (c-\beta ')\log t} \int  _{ c-i\infty }^{c+i\infty}|\mathscr M( \varphi) (1-s)|  \log |s||ds|\le C e^{\mathscr  (c-\beta ')\log t} \int  _{ \RR}  \frac {\log |v|\, dv} {1+|v|^2}
\underset{t\to 0}{\to} 0.
\end{align*}
\end{proof}

\section{Further Properties of $\Lambda$}
\setcounter{equation}{0}
\setcounter{theo}{0}

The resolution of the initial value problem for equation (\ref{S4E1}) requires yet  several  estimates on the fundamental solution  $\Lambda$. The following notation will be used,
\begin{align}
&\rho (\sigma )={\rm Res}\left(\frac {1} {B(s)}, s=\sigma  \right),\,\,\, r(\sigma) ={\rm Res} (B(s), s=\sigma ) \label{S·res1}\\
&\tilde r(\sigma) ={\rm Res} (s^{-2}B(s), s=\sigma )\label{S·res3}\\
&P(n)={\rm Res}\left( \frac {\Gamma (\omega )} {B(\omega )}, \omega =-n\right),\,\,
Q(n)={\rm Res}\left( \frac {\Gamma (\omega +1)} {B(\omega )}, \omega =-n\right)=-nP(n). \label{S·res2}
\end{align}
Notice that $-n$ is a simple  pole of $ \frac {\Gamma (\omega )} {B(\omega )}$  for $n\in \{0, \cdots 5\}$ and is a double pole for $n\ge 6$.

\subsection{Behavior of $\Lambda$ for $t>1$.}
\label{S4larget}
\begin{prop}
\label{S4L1} For all $t>1$, 
\begin{align}
&\Lambda(t, x)=t^{-3}Q _1(\theta)+Q_2(t, \theta),\,\,\theta =\frac {x} {t} \label{S4L1E1}\\
&Q_1(\theta)=\frac {c_1} {2i\pi \sqrt{2\pi }}\int  _{ {\mathscr Re}(s)=c } \theta^{-s }B(s)\Gamma (3 -s) ds \label{S4L1E2}\\
&Q_2(t, \theta)=-\frac {1} {4\pi^2 \sqrt{2\pi }}\int  _{ {\mathscr Re}(s)=c } \theta^{-s } \int  _{ {\mathscr Re} (\sigma ) =\beta _2 }\frac {B(s)} {B(\sigma  )}\Gamma (\sigma  -s)t^{-\sigma  }d\sigma  ds \label{S4L1E3}\\
&c_1=-\frac {1} { B(1)W(1)W'(2)},\,\,\,\beta _2>3. \label{S4P1E2}
\end{align}
\end{prop}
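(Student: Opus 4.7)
The plan is to start from the integral representation of $\Lambda$ valid for $t>1/2$ given in Corollary \ref{S5CInvM}, namely
\[
\Lambda(t,x) \;=\; \frac{1}{2\pi i}\int_{\mathscr{Re}(s)=c} U(t,s)\, x^{-s}\, ds, \qquad c\in (0,2),
\]
and substitute the explicit expression (\ref{S5EUX1}) for $U(t,s)$. Choosing $c$ and $\beta$ with $0<c<\beta<2$, this yields the double integral
\[
\Lambda(t,x) \;=\; \frac{1}{(2\pi i)^{2}\sqrt{2\pi}}
\int_{\mathscr{Re}(s)=c} x^{-s}\, B(s)
\int_{\mathscr{Re}(\sigma)=\beta}
\frac{t^{-(\sigma-s)}\Gamma(\sigma-s)}{B(\sigma)}\, d\sigma\, ds.
\]
The idea is then to push the inner $\sigma$-contour to the right, from $\mathscr{Re}(\sigma)=\beta$ to $\mathscr{Re}(\sigma)=\beta_{2}$ with $\beta_{2}\in (3,4)$. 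This moves $\sigma$ past the $t^{s-\sigma}$ factor so as to extract the leading $t^{-3}$ behaviour.

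The second step is to identify which singularities of the inner integrand are crossed in the strip $\beta < \mathscr{Re}(\sigma) < \beta_{2}$. The function $\Gamma(\sigma-s)$ has poles only at $\sigma = s-n$, $n\in\NN$, all lying in $\mathscr{Re}(\sigma)\le c<\beta$, so none are crossed. The only singularities of $1/B(\sigma)$ in this strip come from zeros of $B$, and by Proposition \ref{S3PBP} the only zero of $B$ with $2<\mathscr{Re}(\sigma)<4$ is the simple zero at $\sigma=3$. Using the functional equation $B(\sigma)=-W(\sigma-1)B(\sigma-1)$ together with $W(2)=0$ and $B(2)=-W(1)B(1)$, one computes
\[
\rho(3)\;=\;\mathrm{Res}_{\sigma=3}\frac{1}{B(\sigma)}
\;=\;-\frac{1}{W'(2)\,B(2)}
\;=\;-\frac{1}{B(1)\,W(1)\,W'(2)} \;=\; c_{1}.
\]
By the residue theorem,
\[
\int_{\mathscr{Re}(\sigma)=\beta}\!\!\frac{t^{-(\sigma-s)}\Gamma(\sigma-s)}{B(\sigma)}\,d\sigma
\;=\;\int_{\mathscr{Re}(\sigma)=\beta_{2}}\!\!\frac{t^{-(\sigma-s)}\Gamma(\sigma-s)}{B(\sigma)}\,d\sigma
\;+\;2\pi i\,c_{1}\,t^{s-3}\Gamma(3-s).
\]
Substituting this back and using $\theta=x/t$, so that $x^{-s}t^{s-3}=t^{-3}\theta^{-s}$ and $x^{-s}t^{-(\sigma-s)}=\theta^{-s}t^{-\sigma}$, splits $\Lambda(t,x)$ into the residue contribution $t^{-3}Q_{1}(\theta)$ with $Q_{1}$ given by (\ref{S4L1E2}), and the remaining double integral $Q_{2}(t,\theta)$ given by (\ref{S4L1E3}).

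The main technical step is to justify the contour deformation, i.e.\ to verify that the connecting horizontal segments $\{\mathscr{Re}(\sigma)\in[\beta,\beta_{2}],\;\mathscr{Im}(\sigma)=\pm R\}$ give a vanishing contribution as $R\to\infty$, and that the new double integral over $\mathscr{Re}(\sigma)=\beta_{2}$ is absolutely convergent so that Fubini applies. For the inner integrand on horizontal segments, the Stirling bound (\ref{S5EGamma}) gives $|\Gamma(\sigma-s)|\lesssim e^{-\pi|\mathscr{Im}(\sigma-s)|/2}$ while Propositions \ref{S3PBP24} and \ref{S3PBP24B} give $|B(\sigma)|\gtrsim \log|\mathscr{Im}(\sigma)|$ uniformly on $0<\mathscr{Re}(\sigma)\le\beta_{2}$; the exponential beats any polynomial and allows closing the box as $R\to\infty$. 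For the outer $s$-integral over $\mathscr{Re}(s)=c$, the decay $|U(t,s)|\le C_{T}e^{-2t\log|bs|}$ from Proposition \ref{S3PX1} together with $t>1$ makes both the residue contribution and the new double integral absolutely convergent, so Fubini justifies the exchange of the $s$- and $\sigma$-integrations throughout. Establishing this uniform integrability in a quantitative form is the only delicate point; once it is in hand, the decomposition (\ref{S4L1E1})--(\ref{S4P1E2}) follows immediately from the residue extraction above.
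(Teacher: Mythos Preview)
Your proposal is correct and follows exactly the paper's approach: start from the double integral obtained by combining (\ref{S5CInvME2B}) and (\ref{S5EUX1}), then shift the $\sigma$-contour to the right past the simple zero of $B$ at $\sigma=3$, picking up the residue that gives $t^{-3}Q_1(\theta)$ and leaving $Q_2$ as the remainder. You in fact supply more detail than the paper on justifying the contour shift; one small slip is the sign in the middle step of your residue computation, since $B(2)=-W(1)B(1)$ gives $-\tfrac{1}{W'(2)B(2)}=+\tfrac{1}{B(1)W(1)W'(2)}$, but this is a bookkeeping issue that does not affect the method.
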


\begin{proof}
By  (\ref{S5EUX1})  and   (\ref{S5CInvME2B}) at $x=t\theta$,
\begin{align}
\label{S4L1E1MM}
\Lambda(t, x)=\frac {1} {4 \pi^2\sqrt{2\pi }}\int  _{ {\mathscr Re}(s)=c } \theta^{-s } \int  _{ \mathscr Re \sigma  =\beta  }\frac {B(s)} {B(\sigma  )}\Gamma (\sigma  -s)t^{-\sigma  }d\sigma  ds
\end{align}
We deform the  $\sigma $-integration contour to larger values of ${\mathscr Re}(\sigma ) $ and cross the zero of $B(\sigma  )$ at $\sigma =3$.  Since  $Res\!\left(B(\sigma  )^{-1}; \sigma  =3 \right)= (B(1) W(1)W'(2))^{-1}$, we deduce the Lemma.
\end{proof}

\begin{prop}
\label{S4L2} 
For all $\varepsilon >0$ as small as wished,
\begin{align}
&Q_1(\theta)=\frac {2c_1B(1)} {W'(0)}+\mathcal O_\varepsilon \left(|\theta|^{1-\varepsilon }\right)\,\,\,as\,\,\theta \to 0, \label{S4L2E1}\\
&Q_1(\theta)=c_1\theta^{-3}B(3)+\mathcal O_\varepsilon  \left(|\theta|^{-4+\varepsilon } \right)\,\,\,as\,\,\theta \to \infty, \label{S4L2E2}
\end{align}
\end{prop}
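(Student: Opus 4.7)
\medskip
\noindent\textbf{Proof plan.} The plan is to obtain both asymptotic regimes directly from the integral representation (\ref{S4L1E2}) by shifting the vertical integration contour and collecting residues of the integrand $\theta^{-s}B(s)\Gamma(3-s)$. Since $|\theta^{-s}|=\theta^{-\mathscr Re(s)}$, pushing $\mathscr Re(s)$ to the left makes the integrand small as $\theta\to 0$, and pushing $\mathscr Re(s)$ to the right makes it small as $\theta\to \infty$. So (\ref{S4L2E1}) will come from a leftward deformation and (\ref{S4L2E2}) from a rightward one.

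\medskip
\noindent For (\ref{S4L2E1}), starting from a contour at $\mathscr Re(s)=c\in(0,2)$, I would shift to $\mathscr Re(s)=-1+\varepsilon$. The only singularity of the integrand in the strip traversed is the simple pole of $B$ at $s=0$ (Proposition \ref{S3PBP}); the pole of $B$ at $s=-1$ is avoided by the $\varepsilon$. The residue of $B$ at $s=0$ is extracted from the functional equation (\ref{S5EBX5}) in the form $B(s+1)=-W(s)B(s)$: using $W(0)=0$ and $W'(0)\neq 0$ (Proposition \ref{S3PW}), one finds $\mathrm{Res}_{s=0}B(s)=-B(1)/W'(0)$. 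The residue of the full integrand at $s=0$ is therefore $\Gamma(3)\cdot(-B(1)/W'(0))=-2B(1)/W'(0)$, and multiplying by the prefactor $c_1/(2i\pi\sqrt{2\pi})$ together with the orientation factor $2i\pi$ from the residue theorem produces the leading constant $2c_1 B(1)/W'(0)$ appearing in (\ref{S4L2E1}), up to the sign convention used. The remainder integral on $\mathscr Re(s)=-1+\varepsilon$ contributes exactly the factor $\theta^{1-\varepsilon}$ through $|\theta^{-s}|$, and is absolutely convergent in the imaginary direction thanks to the exponential Stirling decay $|\Gamma(3-s)|\leq Ce^{-\pi|\mathscr Im(s)|/2}$, which easily dominates the logarithmic growth $|B(s)|=O(\log|\mathscr Im(s)|)$ from Proposition \ref{S3PBP24B}.

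\medskip
\noindent For (\ref{S4L2E2}), I would instead push the contour rightward from $\mathscr Re(s)=c$ to $\mathscr Re(s)=4-\varepsilon$. The only points potentially crossed are $s=3$ and $s=4$, where $\Gamma(3-s)$ has simple poles. However, by Proposition \ref{S3PBP}, $B$ has a simple zero at both $s=3$ and $s=4$, so the product $B(s)\Gamma(3-s)$ is in fact regular at these points and no residue contribution is picked up. Indeed, the formal residue contribution at $s=3$, computed as if $B$ were regular there, would be proportional to $c_1\theta^{-3}B(3)$, which vanishes identically because $B(3)=0$; this is the meaning of the explicit but zero term displayed in the statement. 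The true leading behaviour is then the bound $O_\varepsilon(\theta^{-4+\varepsilon})$ for the shifted integral, obtained by the same Stirling plus logarithmic-growth estimate as above, now with $|\theta^{-s}|=\theta^{-4+\varepsilon}$.

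\medskip
\noindent The main technical obstacle I expect is justifying that the horizontal connecting segments of the closing rectangles vanish as $|\mathscr Im(s)|\to\infty$, so that the residue theorem genuinely applies. This requires that the bound of Proposition \ref{S3PBP24B} on $|B(s)|$ hold \emph{uniformly} for $\mathscr Re(s)$ in the bounded strips being traversed, rather than only on single vertical lines; once this uniformity is noted, the overwhelming exponential decay of $|\Gamma(3-s)|$ in $|\mathscr Im(s)|$ closes the argument without further difficulty.
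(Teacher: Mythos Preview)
Your proposal is correct and follows essentially the same contour-deformation strategy as the paper: shift left to $\mathscr Re(s)\in(-1,0)$ for $\theta\to 0$, picking up the residue of $B$ at $s=0$, and shift right past $s=3$ for $\theta\to\infty$; the Stirling decay of $\Gamma(3-s)$ together with Proposition~\ref{S3PBP24B} justifies the contour shifts exactly as you indicate.

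Your treatment of the $\theta\to\infty$ case is in fact slightly cleaner than the paper's. The paper shifts to some $\alpha_3\in(3,4)$ and records the ``residue'' $c_1\theta^{-3}B(3)$ from the pole of $\Gamma(3-s)$ at $s=3$, without commenting that $B(3)=0$ (Proposition~\ref{S3PBP}) makes this term vanish. You correctly observe that the simple zero of $B$ cancels the simple pole of $\Gamma(3-s)$ at both $s=3$ and $s=4$, so the integrand is actually holomorphic across the whole strip $(c,4-\varepsilon)$ and no genuine residue is collected; the displayed leading term in (\ref{S4L2E2}) is identically zero. Both arguments yield the same remainder $\mathcal O_\varepsilon(\theta^{-4+\varepsilon})$, but yours explains why the statement is written that way.
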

\begin{proof}
For $\theta\to 0$ we deform the $s$-integration contour in (\ref{S4L1E2}) towards smaller values of ${\mathscr Re}(s)$ until we cross the first pole of the $B(s)$ located at ${\mathscr Re}(s)=0$. Since  ${\rm Res}(B(s), s=0))=-B(1)/W'(0)$ we deduce
\begin{align*}
Q _1(\theta)=-\frac {c_1\Gamma (3)B(1)} {W'(0)}+\frac {c_1} {2i\pi }\int  _{ {\mathscr Re}(s)=\alpha _2 } \theta^{-s }B(s)\Gamma (3 -s) ds
\end{align*}
where $\alpha _2\in (-1, 0)$ and then,
\begin{align*}
\left|\int  _{ {\mathscr Re}(s)=\alpha _2 } \theta^{-s }B(s)\Gamma (3 -s) ds\right|\le |\theta|^{-\alpha _2}\int  _{ {\mathscr Re}(s)=\alpha _2 }|B(s)|\Gamma (3-s)||ds|.
\end{align*}
Since $\Gamma (3)=2$, (\ref{S4L2E1}) follows. \\
For $\theta\to \infty$ we deform the $s$-integration contour in (\ref{S4L1E2}) towards larger values of ${\mathscr Re}(s)$ until we cross the first pole of $\Gamma (3-s)$ located at ${\mathscr Re}(s)=3$. It follows,
\begin{align*}
Q_1(\theta)=c_1\theta^{-3}B(3)+\frac {c_1} {2i\pi }\int  _{ {\mathscr Re}(s)=\alpha _3 } \theta^{-s }B(s)\Gamma (3 -s) ds
\end{align*}
with $\alpha _3\in (3, 4)$ and then,
\begin{align*}
\left|\int  _{ {\mathscr Re}(s)=\alpha _3 } \theta^{-s }B(s)\Gamma (3 -s) ds\right|\le |\theta|^{-\alpha _3}\int  _{ {\mathscr Re}(s)=\alpha _3 } |B(s)|\Gamma (3 -s) |ds|
\end{align*}
\end{proof}

\begin{prop}
\label{S3L3}
For any $\delta >0$ as small as desired,
\begin{align}
&Q_2(t, \theta)=c_2t^{-4}+b_1(t) +\mathcal O\left(t^{-8}|\theta|^{1-\delta } \right) +\mathcal O\left( |\theta|^{1-\delta } t^{-8-\delta }\right)\,\,\, as\,\,\,\theta  \to 0,\label{S3L3E1}\\
&Q_2(t, \theta)=c_3t^{-4}\theta^{-5}+\mathcal O\left(  |\theta|^{-5-\delta } t^{-4}\right)+\mathcal O\left(  |\theta|^{-5 } t^{-4-\delta }\right)\,\,\, as\,\,\,\theta  \to \infty,\label{S3L3E2}
\end{align}
with
\begin{align*}
b_1(t)=\mathcal O\left(t^{-8 } \right),\,\,t>1;\,\,\,
c_2=-\frac {6\rho(4)} {\sqrt{2\pi }}\frac {B(1)} {W'(0)},\,\,c_3=\frac {B(5)} {\sqrt{2\pi }}\rho(4).
\end{align*}

\end{prop}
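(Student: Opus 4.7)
The argument proceeds by iterated contour deformation in the double Mellin--Barnes integral (\ref{S4L1E3}). The first step, common to both asymptotic regimes, is to deform the inner $\sigma$-contour from ${\mathscr Re}(\sigma)=\beta_2$ to ${\mathscr Re}(\sigma)=4+\delta$ for a small $\delta\in(0,1)$. By Proposition \ref{S3PBP} the only singularity of $1/B(\sigma)$ crossed is the simple pole at $\sigma=4$, with residue $\rho(4)$; the next zero of $B$ lies near $\sigma_1+1\in(8,9)$. This produces the decomposition
\begin{equation*}
Q_2(t,\theta)=C_0\,\rho(4)\,t^{-4}\int_{{\mathscr Re}(s)=c}\theta^{-s}B(s)\Gamma(4-s)\,ds+\tilde b(t,\theta),
\end{equation*}
in which $C_0$ is an explicit numerical constant and $\tilde b(t,\theta)$ is the remaining double integral, whose $\sigma$-contour at ${\mathscr Re}(\sigma)=4+\delta$ makes it of order $t^{-4-\delta}$ uniformly on compact $\theta$-sets.

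For the regime $\theta\to 0$ I would shift the $s$-contour in the main integral leftwards from ${\mathscr Re}(s)=c\in(0,2)$ to ${\mathscr Re}(s)=\delta-1\in(-1,0)$, crossing only the simple pole of $B$ at $s=0$ whose residue is $-B(1)/W'(0)$, as already used in the proof of Proposition \ref{S5CInvM2}. Since $\Gamma(4)=6$, this picks up the constant $-6B(1)/W'(0)$, which together with the prefactor $C_0\rho(4)t^{-4}$ reconstructs exactly $c_2\,t^{-4}$; the leftover integral on ${\mathscr Re}(s)=\delta-1$ is uniformly $\mathcal O(\theta^{1-\delta})$ by Stirling's estimate (\ref{S5EGamma}) and Proposition \ref{S3PBP24B}. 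Performing the same leftward $s$-shift inside $\tilde b$ splits it into a $\theta$-independent contribution $b_1(t)$, namely the residual $\sigma$-integral at ${\mathscr Re}(\sigma)=4+\delta$ evaluated against the residue at $s=0$, which is $\mathcal O(t^{-4-\delta})$, plus a further remainder $\mathcal O(\theta^{1-\delta}t^{-4-\delta})$. This establishes (\ref{S3L3E1}).

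For the regime $\theta\to\infty$ I would shift the same $s$-contour rightwards, to ${\mathscr Re}(s)=5+\delta$. The integrand $\theta^{-s}B(s)\Gamma(4-s)$ passes through the zero of $B$ at $s=3$ (no singularity of $\Gamma$, hence no residue) and at $s=4$ (the zero of $B$ cancels the simple pole of $\Gamma(4-s)$, so this is removable), and then hits a simple pole at $s=5$. The key subtlety is that although $s=5$ is listed among the poles of $B$ in Proposition \ref{S3PBP}, the recursion (\ref{S5EBX5}) combined with $B(4)=0$ and the simple pole of $W$ at $s=4$ in fact makes $B$ regular there, with a finite value $B(5)$ equal (up to sign) to the residue of $W$ at $4$ multiplied by $B'(4)$; the residue of the full integrand at $s=5$ is therefore just $B(5)\theta^{-5}$, \emph{without} a $\log\theta$ correction. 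Restoring the prefactor produces $c_3\,t^{-4}\theta^{-5}$ plus a remainder $\mathcal O(\theta^{-5-\delta}t^{-4})$. To handle $\tilde b$ in the same regime I would first push its $\sigma$-contour beyond ${\mathscr Re}(\sigma)=5$ (no further zero of $B$ is crossed in $(4,8)$), which then permits an analogous rightward shift of its $s$-contour to ${\mathscr Re}(s)=5+\delta$ without meeting any singularity of $B$; the resulting bound is $\mathcal O(\theta^{-5}t^{-4-\delta})$, completing (\ref{S3L3E2}).

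Absolute convergence of every deformed integral and uniformity of all remainders follow from the logarithmic growth of $|B|$ on vertical strips (Proposition \ref{S3PBP24B}), Stirling's exponential decay (\ref{S5EGamma}) for $\Gamma$, and the asymptotics (\ref{S3PW17E1}) for $W$; these estimates are routine. The genuinely delicate point is the removable-pole verification at $s=5$: it is precisely the cancellation, via the functional equation, of the apparent pole of $B$ there against the zero $B(4)=0$ that delivers the clean coefficient $c_3=B(5)\rho(4)/\sqrt{2\pi}$, rather than a parasitic $\log\theta$ factor that would spoil the large-$\theta$ asymptotics.
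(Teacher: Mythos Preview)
Your argument is correct and follows the same iterated contour-deformation strategy as the paper's own proof: extract the residue at $\sigma=4$, then shift the $s$-contour left or right according to whether $\theta\to 0$ or $\theta\to\infty$. Your discussion of the point $s=5$ is in fact more careful than the paper's treatment---the paper lists $s=5$ among the poles of $B$ in Proposition~\ref{S3PBP} yet writes $c_3$ in terms of a finite value $B(5)$, and you correctly resolve this via the functional equation (\ref{S5EBX5}); your handling of $\tilde b$ for large $\theta$ (pushing the $\sigma$-contour beyond $5$ before shifting $s$, so as to keep $\mathscr{Re}(\sigma-s)>0$) cleanly sidesteps the $\Gamma(\sigma-s)$ pole-crossing that the paper's one-line remark glosses over.
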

\begin{proof} 
We deform the $\sigma  $-integration contour to larger values of ${\mathscr Re} (\sigma ) $, cross the zero of $B(\sigma  )$ at $\sigma  =4$ to obtain
\begin{align*}
&Q_2(t, \theta)=\alpha (\theta)t^{-4}+R_1(t, \theta); \,\,\,\,\alpha (\theta)=\frac{\rho(4 )}{2i\pi} \int  _{ {\mathscr Re}(s) =c }\theta^{-s}B(s)\Gamma (4-s)ds,\\
&R_1(t, \theta)=\frac {1} {4\pi^2 }\int  _{ {\mathscr Re}(s)=c } \theta^{-s } \int  _{ {\mathscr Re}( \sigma ) =4+\delta  }
\frac {B(s)} {B(\sigma  )}\Gamma (\sigma  -s)t^{-\sigma  }d\sigma  ds.
\end{align*}
If $\theta\in (0, 1)$, we use the pole of $B(s)$ at $s=0$ and obtain
\begin{align}
\alpha (\theta)=-\frac {\rho(4)\Gamma (4)B(1)} {\sqrt{2\pi }W'(0)}+\mathcal O\left(|\theta|^{1-\delta } \right),\,\,\,\theta\in (0, 1).\label{S3EBB01}
\end{align}
Then,
\begin{align}
&R_1(t, \theta)=b_1 (t)+\mathcal O\left(|\theta|^{1-\delta }t^{-8 } \right),\,\,\theta \in (0, 1),\,\,t>1.\nonumber \\
&b_1 (t)=-\frac {1} {2i\pi } \frac {B(1)} {W'(0)}\int  _{ {\mathscr Re} (\sigma ) =4+\delta  }
\frac { \Gamma (\sigma  )t^{-\sigma  }} {B(\sigma  )}d\sigma ,\,\,\,|b _1(t)|\le C t^{-8},\,\,t>1.\label{S3EBB1}
\end{align}
and (\ref{S3L3E1}) follows.
Suppose now that $\theta >1$. We use the pole of $\Gamma (4-s))$ at $s=5$ (the points $s=4$ is a zero of $B$)  in the expression of  $\alpha (\theta)$, 
\begin{align*}
\alpha (\theta)=\frac {\theta^{-5}B(5)} {\sqrt{2\pi }}\rho(4)+\mathcal O\left( \theta^{-5-\delta }\right)
\end{align*}
The order of the remainder term comes from the  pole at $s=\sigma _1+2$ of the Gamma function.
On the other hand, using the pole of $B(s)$ at $s=5$ in the expression of $R_1$,
\begin{align*}
R_1(t, \theta)=\mathcal O\left(|\theta|^{-5}t^{-4-\delta } \right),\,t>1, \theta>1
\end{align*}
\end{proof}
\begin{prop}
\label{S3PDL}
For $t>1$,
\begin{align}
&\frac {\partial \Lambda (t, x)} {\partial x} = 6c_1r(-1) t^{-4}+\mathcal O\left(t^{-4}\left|\frac {x} {t}\right|^{\delta } \right)  \,\,\, as\,\,\,\frac {x} {t} \to 0,\label{S3L3DE1}\\
& \frac {\partial \Lambda (t, x)} {\partial x} =c_13B(3) x^{-4}+\mathcal O\left(t^{-4}\left|\frac {x} {t}\right|^{-4-\varepsilon  } \right)  \,\,\, as\,\,\,\frac {x} {t} \to \infty ,\label{S3L3DE2}\\
&\left| \frac {\partial \Lambda (t, x)} {\partial t}\right|\le Ct^{-4},\,\,\forall x\in (0, t/2)\label{S3L3DE2X}\\
&\left| \frac {\partial \Lambda (t, x)} {\partial t}\right|\le  Cx^{-4},\,\,\forall x>2t.\label{S3L3DE2Y}
\end{align}
\end{prop}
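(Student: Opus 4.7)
The strategy is to mimic the proofs of Propositions~\ref{S4L1}--\ref{S3L3}, writing each of $\partial_x\Lambda$ and $\partial_t\Lambda$ as a double inverse-Mellin/inverse-Laplace integral with $x=t\theta$, and then extracting the leading term by successive contour shifts, first in the $\sigma$-variable past the zero of $B$ at $\sigma=3$ (and at $\sigma=4$ when needed), and then in the $s$-variable past the first relevant singularity on the correct side.

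For $\partial_x\Lambda$, I would start from the representation (\ref{S5CInvME2B}), valid since $t>1>1/2$, and differentiate under the integral sign; this is justified by the decay (\ref{S3PX1E1}) provided by Proposition~\ref{S3PX1}. Inserting the double integral (\ref{S5EUX1}) for $U(t,s)$ and setting $x=t\theta$ gives
\begin{equation*}
\partial_x\Lambda(t,x)=-\frac{1}{4\pi^2\sqrt{2\pi}}\,t^{-1}\!\int_{\Re s=c}\! s\,\theta^{-s-1}B(s)\!\int_{\Re\sigma=\beta}\!\frac{t^{-\sigma}\Gamma(\sigma-s)}{B(\sigma)}\,d\sigma\,ds .
\end{equation*}
A first shift of the $\sigma$-contour past the zero of $B$ at $\sigma=3$ produces a decomposition $\partial_x\Lambda=t^{-4}\tilde Q_1(\theta)+\tilde R(t,\theta)$ analogous to that of Proposition~\ref{S4L1}, with $\tilde Q_1$ essentially $-Q_1'/$ (appropriate sign). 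For $\theta=x/t\to 0$, I would then shift the $s$-contour leftward past $s=0$ (where $sB(s)$ has a removable singularity and therefore contributes nothing) and past $s=-1$, where the integrand has residue $r(-1)\cdot 1\cdot\Gamma(4)=6r(-1)$; this yields the constant leading term $6c_1 r(-1)\,t^{-4}$ of (\ref{S3L3DE1}), while the leftover contour integral is bounded by $C_\delta t^{-4}\theta^\delta$ thanks to Propositions~\ref{S3PBP24B} and the Stirling bound (\ref{S5EGamma}). The remainder $\tilde R(t,\theta)$ is absorbed into this error by an additional $\sigma$-shift past $\sigma=4$, exactly as in Proposition~\ref{S3L3}. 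For $\theta\to\infty$, the $s$-contour is instead shifted rightward past the first pole of $\Gamma(3-s)$ at $s=3$, producing the $3c_1B(3)x^{-4}$ term of (\ref{S3L3DE2}), with the remainder controlled by a further shift to $\Re s=4+\varepsilon$.

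For $\partial_t\Lambda$, I would use equation (\ref{S5EUXYp}) from Corollary~\ref{S5Cor1U} to write, for $c\in (1,3)$,
\begin{equation*}
\partial_t\Lambda(t,x)=\frac{1}{2\pi i}\int_{c-i\infty}^{c+i\infty}\!W(s-1)U(t,s-1)\,x^{-s}\,ds
\end{equation*}
(justified by Proposition~\ref{S5PInv1} together with (\ref{S3PX1E1}) and the logarithmic growth (\ref{S3PW17E1}) of $W$), and then feed in the integral representation of $U$. The same two-stage shift is applied: one past $\sigma=3$ in the inner integral, and one in $s$ to the direction dictated by $\theta$. For $x\in(0,t/2)$, i.e.\ $\theta<1/2$, the leftward $s$-shift past $s=-1$ yields a term of size $t^{-4}$ uniformly in $x$, proving (\ref{S3L3DE2X}). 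For $x>2t$, the rightward $s$-shift past the first pole of $\Gamma(\sigma-s+1)$ yields a term of size $x^{-4}$, proving (\ref{S3L3DE2Y}); as dictated by the equation $\partial_t\Lambda=L(\Lambda)$, the would-be leading terms of order $x^{-3}$ and $\theta Q_1'(\theta)/t$ cancel, which in the contour picture corresponds to a zero of the residue at the $s=3$ pole.

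The main technical difficulty is bookkeeping the residues at the first singularity on each side. Several sequences of poles interact: the real poles of $B$ at $s=0,-1,4n+1$, the non-real poles $\sigma_n^*$, the zeros of $B$ at $s=3,4$, and the poles of $\Gamma(3-s)$ and $\Gamma(\sigma-s+1)$. Some of these collide (notably $s=4$, which is simultaneously a zero of $B$ and a pole of $\Gamma(3-s)$) and must be handled via the order of the combined singularity, while others (like $\sigma_n^*$) must be shown not to lie between the old and new contours in the shifts we perform. Once the first singular contribution in each direction is correctly identified, the absolute convergence of every remaining integral is routine from (\ref{S5EGamma}), Proposition~\ref{S3PBP24B}, and (\ref{S3PW17E1}).
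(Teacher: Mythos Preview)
Your proposal is correct and follows essentially the same route as the paper: write $\partial_x\Lambda$ and $\partial_t\Lambda$ as double Mellin/Laplace integrals via (\ref{S5CInvME2B}), (\ref{S5EUX1}) and (\ref{S5EUXYp}), shift the $\sigma$-contour past the zero of $B$ at $\sigma=3$, then shift the $s$-contour to the side dictated by $\theta=x/t$. The only cosmetic difference is that for the bound (\ref{S3L3DE2X}) the paper stops the leftward $s$-shift at the pole $s=0$ of $B$ (already giving $x^{-1}t^{-3}\theta=t^{-4}$), whereas you go one step further to $s=-1$; both choices yield the same bound.
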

\begin{proof}
Since $t>1$, by  (\ref{S5EUX1})  and   (\ref{S5CInvME2B}).
\begin{align*}
\frac {\partial \Lambda (t, x)} {\partial x}= \frac {-1} {4 \pi^2\sqrt{2\pi }}\int  _{ {\mathscr Re}(s)=c } \theta^{-s -1} \int  _{ \mathscr Re \sigma  =\beta  }\frac {sB(s)} {B(\sigma  )}\Gamma (\sigma  -s)t^{-\sigma -1 }d\sigma  ds
\end{align*}
Estimates (\ref{S3L3DE1}, (\ref{S3L3DE2}) follow now from exactly the same contour deformation arguments as in the proofs of Propositions \ref{S4L1}, \ref{S4L2} and \ref{S3L3}. On the other hand, by (\ref{S5CInvME2B}) and (\ref{S5EUXYp}),
\begin{align}
\frac {\partial } {\partial t}\Lambda (t, x)&= \frac {1} {2\pi i}\int  _{ c-i\infty }^{c+i\infty} U(t, s-1)W(s-1)x^{-s}ds \nonumber\\
&= \frac {-x^{-1}} {4\pi^2 \sqrt{2\pi }}\int  _{ c-i\infty }^{c+i\infty}\int  _{ \mathscr Re(\sigma )=\beta  }\frac {B(s) \Gamma (\sigma-s+1) } { B(\sigma )}t^{-\sigma}\left(\frac {x} {t} \right)^{-(s-1)}d\sigma ds.\label{S4EPArtT}
\end{align}
When $\theta <1/2$, deformation of the $\sigma $  integration contours towards  larger values of $\mathscr Re(\sigma )$  and of the   $s$ integration contour towards smaller values  of  $\mathscr Re (s)$ give, due to the zero of $B(\sigma )$ at $\sigma =3$ and the pole of $B(s)$ at $s=0$, the existence of a positive constant $C$ such that
\begin{align*}
\left|\frac {\partial } {\partial t}\Lambda (t, x)\right|\le Cx^{-1}t^{-3 }\theta=\frac {C} {t^4},\,\,\forall t>1, \forall x\in (0, t/2).
\end{align*}
For $\theta >1/2$ we   first deform the $\sigma $  integration contour towards  larger values  of $\mathscr Re(\sigma )$ and then  the   $s$ integration contour is deformed towards larger values  of  $\mathscr Re (s)$. In the  first  step we meet again  the pole  of $B(\sigma )$ again at $\sigma =3$. Then, in the second step   the pole of $\Gamma (4-s)$ at  $s=4$ is met from where,
\begin{align*}
\left|\frac {\partial } {\partial t}\Lambda (t, x)\right|\le Cx^{-1}t^{-3 }\theta^{-3}=\frac {C} {x^4},\,\,\forall t>1, \forall x>2t.
\end{align*}
\end{proof}

\subsection{Behavior of $\Lambda$ for $t\in (0, 1)$. }
\label{S4smallt}
For all $t\in (0, 1)$ we split $[0, \infty)$ as,
\begin{equation*}
[0, \infty)\setminus \{1\}=[0, 1/2]\cup \{x>0;\, 0<|x-1|< 1/2 \}\cup [3/2, \infty).
\end{equation*}
By (\ref{S5CInvME03}), $\Lambda $ is continuous and bounded on $(0, 1)\times  [0, 1/2]$.

\subsubsection{Behavior of $\Lambda$ for $0<t<1$ and $|x-1|>1/2$}

\begin{prop}
\label{S4P9}
For $0<t<1$, and $\varepsilon >0$ as small as desired there exists $C_\varepsilon >0$,
\begin{align}
&|\Lambda (t, x)|\le C_\varepsilon x^{-3+\varepsilon }t^{9-\varepsilon }+C_2x^{-5}t^7,\,\,\,\forall x>3/2\label{S4P9E1}\\
&\left| \frac {\partial \Lambda} {\partial t}(t, x)\right|\le C_\varepsilon x^{-3+\varepsilon }t^{8-\varepsilon }+C_2x^{-5}t^6,\,\,\,\forall x>3/2.\label{S4P9E1B}\\
&\left| \frac {\partial \Lambda} {\partial t}(t, x)\right|\le Cxt^4,\,\,\,\forall x \in (0, t/2).\label{S4P9E1C}
\end{align}
\end{prop}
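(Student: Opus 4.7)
The strategy is to use the double Mellin representation of $\Lambda$ and deform both contours.  Starting from
\[
\Lambda(t,x)=\frac{1}{4\pi^{2}\sqrt{2\pi}}\int_{{\mathscr Re}(s)=c}\!B(s)\,x^{-s}\int_{{\mathscr Re}(\sigma)=\beta}\!\frac{t^{s-\sigma}\,\Gamma(\sigma-s)}{B(\sigma)}\,d\sigma\,ds
\]
(valid for $t>1/2$ by (\ref{S5CInvME2B}), (\ref{S5EUX1}); for $t\in(0,1/2]$ one uses the regularized form with the factor $s^{-2}$ and the operator $(x\partial_x)^{2}$ as in (\ref{S5CInvME2})).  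Since $x>3/2$ we want $x^{-s}$ to be as small as possible (push the $s$-contour to the right), and since $t<1$ we want $|t^{s-\sigma}|=t^{{\mathscr Re}(s-\sigma)}$ to be small (push the $\sigma$-contour to the left, so that ${\mathscr Re}(s-\sigma)$ is large and positive).

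\textbf{Proof of (\ref{S4P9E1}).}  Push the $s$-contour to ${\mathscr Re}(s)=3-\varepsilon$; this crosses no singularity since $B$ is analytic on $(0,5)$.  Push the $\sigma$-contour to ${\mathscr Re}(\sigma)=-6+\varepsilon'$, staying just above the $1/B$-pole at $\sigma=-6$.  The residual double integral is bounded, using the estimates on $|B(s)|$ and $|1/B(\sigma)|$ of Propositions \ref{S3PBP24}--\ref{S3PBP24B} together with Stirling's formula for $\Gamma(\sigma-s)$, by
\[
C\,x^{-3+\varepsilon}\,t^{(3-\varepsilon)-(-6+\varepsilon')}\le C_{\varepsilon}\,x^{-3+\varepsilon}\,t^{9-\varepsilon},
\]
which is the first summand of (\ref{S4P9E1}).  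The residues accumulated during the shift form a finite collection, coming from the Gamma poles at $\sigma=s-k$ ($k=1,\ldots,8$) and from the $1/B$-poles at $\sigma=3,4$; each residue is an explicit function of $s$ which we then handle by a further rightward $s$-shift past the pole of $B$ at $s=5$, producing the factor $x^{-5}$.  Crucially, $W(0)=W(2)=0$ (Proposition \ref{S3PW}) forces the $s=5$-residues of the Taylor coefficients $W_{k}(s):=\prod_{j=1}^{k}W(s-j)$ to vanish for $k=3,\ldots,6$, and the surviving contributions combine to yield a bound $C_{2}\,x^{-5}\,t^{7}$, which is the second summand of (\ref{S4P9E1}).

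\textbf{Proof of (\ref{S4P9E1B})--(\ref{S4P9E1C}).}  For (\ref{S4P9E1B}) apply the same scheme to
\[
\frac{\partial\Lambda}{\partial t}(t,x)=\frac{1}{2\pi i}\int_{{\mathscr Re}(s)=c}W(s-1)\,U(t,s-1)\,x^{-s}\,ds,
\]
from (\ref{S5EUXYp}); the effect of replacing $U(t,s)$ by $W(s-1)\,U(t,s-1)$ is to shift every $t$-exponent down by one, producing $t^{8-\varepsilon}$ and $t^{6}$.  For (\ref{S4P9E1C}), in which $x<t/2$ is small, the $s$-contour is instead pushed to the \emph{left} of $c\in(0,2)$ in order to extract positive powers of $x$.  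The would-be residue at the pole of $B$ at $s=0$ corresponds to the value of $\partial_{t}\Lambda(t,\cdot)$ at $x=0$; but the convolution identity (\ref{S5PInv1EX0}), together with $H(0)=0$ (immediate from (\ref{S4EHL})), gives $\partial_{t}\Lambda(t,0)=0$, so this residue vanishes.  The next pole of $B$, at $s=-1$, then supplies the $x^{1}$ factor, while a final leftward $\sigma$-shift to ${\mathscr Re}(\sigma)=-6^{+}$ provides the $t^{4}$ factor.

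\textbf{Main obstacle.}  The technical heart of the proof is the book-keeping at points where Gamma poles in $\sigma$ meet zeros or poles of $B(\sigma)$: at $\sigma=3,4$ one encounters double poles whose Laurent expansion produces $\log t$ contributions that must be tracked through the subsequent $s$-shift, whereas at $\sigma=0,-1$ (and at $\sigma=5$ in the $s$-plane) pole/zero cancellations cause certain residues to vanish.  Demonstrating that these delicate cancellations conspire to leave no contribution of order larger than $x^{-5}t^{7}$, upgrading the naive $O(x^{-5}t)$ from the first nonvanishing Taylor residue, is where the combinatorics of the zeros $W(0)=W(2)=0$ plays the decisive role.
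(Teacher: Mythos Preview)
Your double--contour--shift strategy is the paper's strategy, but there is a genuine gap in your derivation of the $x^{-5}t^{7}$ term in (\ref{S4P9E1}). After your shifts the $\Gamma$--residues at $\sigma=s-k$ produce (up to constants and the outer operator $(x\partial_x)^{2}$)
\[
\sum_{k\ge1}\frac{t^{k}}{k!}\int_{{\mathscr Re}(s)=3-\varepsilon}\frac{W_{k}(s)}{s^{2}}\,x^{-s}\,ds,\qquad W_{k}(s)=\prod_{j=1}^{k}W(s-j),
\]
since $B(s)/B(s-k)=(-1)^{k}W_{k}(s)$. When you push $s$ past $5$, it is true that $W_{k}$ has no pole at $s=5$ for $k\ge3$ (the simple pole of $W(s-1)$ there is killed by the zero $W(2)=0$ from the factor $W(s-3)$). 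But for $k=1$ and $k=2$ the pole at $s=5$ is simple and its residue is nonzero, so those two terms contribute quantities of order $t\,x^{-5}$ and $t^{2}x^{-5}$, which for $t<1$ dominate the claimed $t^{7}x^{-5}$. Your sentence ``the surviving contributions combine to yield a bound $C_{2}x^{-5}t^{7}$'' gives no mechanism for removing these two leading terms. (Two smaller slips: the ``$1/B$--poles at $\sigma=3,4$'' are never crossed by a \emph{downward} $\sigma$--shift that starts below $3$; and in (\ref{S4P9E1C}) the constant produced by the $B$--pole at $s=0$ is annihilated by $(x\partial_x)^{2}$, not by the convolution identity you invoke.)

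The paper avoids exactly this difficulty by a different ordering. It first moves the inner variable $\omega$ \emph{down past} $s$; the residue at $\omega=s$, integrated in $s$, equals $-H(1-x)\log x$ and hence vanishes for $x>1$. Only then is $s$ pushed to the right. In that scheme the $(x/t)^{-5}$ piece arises from the pole of $B(s)$ at $s=5$ and is multiplied by $\nu_{1}(t)=\int\Gamma(\omega-5)B(\omega)^{-1}t^{-\omega}\,d\omega$; since $B$ has poles at $\omega=0,-1$, the first surviving pole of $\Gamma(\omega-5)/B(\omega)$ on the left is at $\omega=-2$, so $\nu_{1}(t)=O(t^{2})$ and $(x/t)^{-5}\nu_{1}(t)=O(x^{-5}t^{7})$. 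The remaining $\Gamma$--pole contributions carry the factor $\mu(t)=\int t^{-\omega}/B(\omega)\,d\omega=O(t^{6})$, the $t^{6}$ coming from the first zero of $B$ at $\omega=-6$. It is these prefactors $\mu(t)$ and $\nu_{1}(t)$---which your ordering never produces---that supply the high powers of $t$.
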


\begin{proof}
When $t\in (0, 1)$ we may start from (\ref{S5CInvME2}), (\ref{S5EUX1}) and consider then the integral,
\begin{align}
I (t, x)&= \frac {1} {4\pi ^2}\int  _{ c-i\infty }^{c+i\infty} 
\frac {B(s)} {\sqrt{2\pi}}\int  _{ \mathscr Re(\sigma )=\beta  }\frac {t^{-(\sigma -s)}\Gamma (\sigma-s) } { B(\sigma )}d\sigma
s^{-2}x^{-s}ds,\,\,\,\,0<c<\beta <2, \nonumber \\
&=\frac {1} {4\pi^2}\frac {1} {\sqrt{2\pi} }\int  _{ c-i\infty }^{c+i\infty} 
\int  _{ \mathscr Re(\omega  )=\beta  }\frac {t^{-\omega  }B(s)\Gamma (\omega -s) } { B(\omega  )}
s^{-2}\left(\frac {x} {t} \right)^{-s}d\omega  ds. \label{S4P9E1}
\end{align}

Since $x/t>1$ and $0<t<1$, in order to estimate the size of the integral in the right hand side of (\ref{S4P9E1}) it is natural to seek for large values of ${\mathscr Re}(s)$ and smaller values of ${\mathscr Re}(\omega )$. Let us then deform, at   $s$ fixed such that  ${\mathscr Re}(s)=c$,  the $\omega $-integration contour towards lower  values of $\omega $. Since we have taken $\beta >c$, the first singularity that is found is at the pole of $\Gamma (\omega -s)$ where  $\omega =s $. 
\begin{equation*}
\frac {1} {2i\pi }\int  _{ {\mathscr Re}(s)=c }s^{-2}x^{-s}ds=-H(1-x)\, \log(x)
\end{equation*}
we obtain, for  $\beta _1'\in (0, c)$ and $x>1$, or $x<1$,
\begin{align}
I(t, x)=\frac {1} {4\pi^2}\frac {1} {\sqrt{2\pi} } \int  _{ {\mathscr Re}(s)=c }\!\int  _{ {\mathscr Re} (\omega) =\beta' _1 }\frac {s^{-2}B(s)} {B(\omega )}\Gamma (\omega -s)\left(\frac {x} {t} \right)^{-s}t^{-\omega }d\omega ds \label{S5I1}
\end{align}

We let now $\beta _1'$ fixed and move $c$ towards larger values in the integral at the right hand side of (\ref{S5I1}). The function under the integral sign  is singular at  two different families of poles,
\begin{align}
s _{ 1, k }&=\beta _1'+k, \,\,k=1, 2, 3, \cdots\,\,\,(\hbox{poles of}\,\,\Gamma (\omega -s)\,\,\hbox{for}\,\,\mathscr Re s>\beta '_1),\label{S5I2}\\
s _{ 2, n }&=4n+1,\,\,n=1, 2, 3, \cdots \,\,\,(\hbox{poles of}\,\,B(s)).\label{S5I3}
\end{align}

\begin{align}
&\Lambda (t, x)=\left(x\frac {\partial } {\partial x} \right)^2\left( \mu(t)\sum _{ k=1 }^\infty \left(\frac {x} {t}\right)^{-k-\beta '_1}A_k+\sum  _{ n=1 }^\infty \left(\frac {x} {t}\right)^{-4n-1}\nu _n(t)\right),\,\, \frac {x} {t}>1 \label{S31.1Lamb1}\\
&= \mu(t)\sum _{ k=1 }^\infty \left(\frac {x} {t}\right)^{-k-\beta '_1}A_k (k+\beta _1')^2+\sum  _{ n=1 }^\infty \left(\frac {x} {t}\right)^{-4n-1} (4n+1)^2\nu _n(t)\\
&A_k(t)=(-1)^k\frac {(\beta '_1 +k)^{-2}B(\beta '_1 +k)} {\sqrt{2\pi }\, k!};  \,\,\, \mu (t)=\frac {1} {2i\pi }\int  _{ {\mathscr Re} (\omega ) =\beta' _1 }\frac {t^{-\omega }} {B(\omega  )}d\omega  \label{S31.1Lamb2}\\
&\nu_n(t)=\frac {\tilde r_{4n+1}} {\sqrt{2\pi }} \frac {1} {2i\pi }\int  _{ {\mathscr Re} (\omega ) =\beta ' _1 }\frac {\Gamma (\omega -4n-1)} {B(\omega  )}t^{-\omega }d\omega  \label{S31.1Lamb3}
\end{align}

In order to  estimate $\mu (t)$ for $0<t<1$ we deform the integration contour $\mathscr Re \omega =\beta '_1$ towards  lower values of $\mathscr Re \omega $. Since $\beta '_1\in (0, c)$, 
the  singularities are the negative  zeros of $B(\omega )$, $s=-n, n=-6, -7, -8,\cdots$ and
\begin{equation}
\label{S5I6}
\mu(t)=\sum _{ n=6 }^\infty \rho (-n)t^n.
\end{equation}
On the other hand, for each $n\in \NN$, the set of poles of $\Gamma (\omega -4n-1)$ such that ${\mathscr Re}(\omega )<\beta_2'$ is $\left\{-1,  -2, -3, -4, \cdots\right\}$, but $-1$ is a pole of $B(\omega )$ too.  The  zeros of $B(\omega )$ are the negative integers  $\left\{-6, -7, -8, \cdots\right\}$. Therefore, the singularities of $\frac {\Gamma (\omega -4n-1)} {B(\omega  )}$ are the simple poles $\left\{-2, -3, -4, -5\right\}$ and the poles 
$\left\{-6, -7, -8, \cdots \right\}$ of multiplicity two,
\begin{align}
&\nu_n(t)=-\frac {\tilde r_{4n+1}} {\sqrt{2\pi }}\sum _{ \ell=2 }^\infty  \gamma   _{ n, \ell } t^{\ell } \label{S3.1.1nu1}\\
&\gamma   _{ n, \ell }=\frac {(-1)^{\ell+4n+1}} {B(-\ell  )(\ell+4n+1) !},\,\,\,\ell=1, 2, \cdots, 5 \label{S3.1.1nu2}\\
&\gamma   _{ n, \ell }={\rm Res}\left(\frac {\Gamma (\omega -4n-1)} {B(\omega  )}; \omega =-\ell \right),\,\,\ell=6, 7, \cdots \label{S3.1.1nu3}
\end{align}
It follows that,
\begin{align*}
|\Lambda (t, x)|\le C_1 \left(\frac {x} {t} \right)^{-1-\beta '_1}t^6+C_2\left(\frac {x} {t} \right)^{-5}t^2,\,\,\,\frac {x} {t}>1, 0<t<1.
\end{align*}
Since $\beta _1'$ is arbitrary in $(0, c)$ and $c$ is arbitrary in $(0, 2)$, $\beta _1'$ may be taken as close to $2$ as desired. The estimate (\ref{S4P9E1B}) follows from similar arguments. Starting from (\ref{S5CInvME2}) and (\ref{S5EUXYp}), we deduce
\begin{align*}
&\frac {\partial } {\partial t}\Lambda (t, x)=\left(x\frac {\partial } {\partial x} \right)^2\left( \frac {1} {2\pi i}\int  _{ c-i\infty }^{c+i\infty} U(t, s-1)W(s-1)s^{-2}x^{-s}ds\right)\\
&=-\left(x\frac {\partial } {\partial x} \right)^2\left(\frac {x^{-1}} {4\pi^2 \sqrt{2\pi }}\int  _{ c-i\infty }^{c+i\infty}\int  _{ \mathscr Re(\sigma )=\beta  }\frac {B(s) \Gamma (\sigma-s+1) } { B(\sigma )}s^{-2}t^{-\sigma}\left(\frac {x} {t} \right)^{-(s-1)}d\sigma ds\right).
\end{align*}
With the same argument as before we deduce,
\begin{align*}
&\frac {\partial } {\partial t}\Lambda (t, x)= x^{-1}\mu(t)\sum _{ k=1 }^\infty \left(\frac {x} {t}\right)^{-k-\beta '_1+1}A_k (k+\beta _1')^2+x^{-1}\sum  _{ n=1 }^\infty \left(\frac {x} {t}\right)^{-4n} (4n+1)^2\nu _n(t)
\end{align*}
and,
\begin{align*}
\left|\frac {\partial } {\partial t}\Lambda (t, x)\right|\le C_1x^{-1} \left(\frac {x} {t} \right)^{-\beta '_1}t^6+C_2x^{-1}\left(\frac {x} {t} \right)^{-4}t^2,\,\,\,\frac {x} {t}>1, 0<t<1.
\end{align*}
If $x\in (0, t/2)$  the $s$  integration contour is moved towards smaller values of $\mathscr Re(s)$. In that  process, the sequence of poles of $B(s)$, with $\mathscr Re (s)\le 0$ is crossed. These are located at  $s=0, -1$ and points $\sigma _n^*$ defined in Proposition (\ref{S3PW}). We deduce, arguing as before

\begin{align*}
&\partial _t\Lambda (t, x)=\left(x\frac {\partial } {\partial x} \right)^2\left( \frac {1} {t} \tilde \mu _1(t)+\tilde \mu _2(t)\left(\frac {x} {t^2}\right)+\sum  _{ n=0 }^\infty \left(\frac {x} {t}\right)^{-\sigma _n^*} \tilde \nu _n(t)\right),\,\,  \\
&=\tilde \mu _2(t)\left(\frac {x} {t^2}\right)+\sum  _{ n=0 }^\infty (\sigma _n^*)^2\left(\frac {x} {t}\right)^{-\sigma _n^*} \tilde \nu _n(t)\\
&\tilde \nu_n(t)=\frac {\tilde r_{\sigma _n^*}} {\sqrt{2\pi }} \frac {1} {2i\pi }\int  _{ {\mathscr Re} (\omega ) =\beta  }\frac {\Gamma (\omega -\sigma _n^*)} {B(\omega  )}t^{-\omega }d\omega\\
& \tilde \mu _2(t)=\frac {\tilde r_{-1}} {\sqrt{2\pi }\, 2i\pi } \int  _{ \mathscr Re(\sigma )=\beta  }\frac { \Gamma (\sigma+2) } { B(\sigma )}t^{-\sigma} d\sigma
\end{align*}
The functions $\tilde \nu_n$ and $\tilde \mu _2$ are now determined by the sequence of zeros of $B(\sigma )$ such that $\mathscr Re(\sigma )\le 0$. Since the the fist one is at $s=6$ (\ref{S4P9E1B}) follows.
\end{proof}

\subsubsection{Behavior of $\Lambda$ for $t\in (0, 1)$ with and $0< |x-1|\le 1/2$.}
\begin{prop}
\label{S3.3.4P1}
There exists a constant $C >0$ such that
\begin{equation*}
 \Lambda (t, x)\le \frac {Ct} {|x-1|},\,\,\,\forall x;\,\,0<|1-x |<1/2,\,\forall t\in (0, 1).
\end{equation*}

\end{prop}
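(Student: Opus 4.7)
The plan is to split into two ranges of $t$ and, for the range $t<1/2$ where the bound is non trivial, to isolate by a residue pickup the leading $t$-linear part of $\Lambda(t,x)$, whose inverse Mellin transform is explicitly computable in terms of $H$ and produces the expected $|x-1|^{-1}$ singularity at $x=1$.

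\emph{Case $t\in[1/2,1)$.} Starting from the identity $(\log x)\Lambda(t,x)=(2i\pi)^{-1}\int_{\mathscr Re s=c}\partial_sU(t,s)x^{-s}ds$ established in the proof of Proposition \ref{S5CInvM2}, I choose $c\in(0,2)$ large enough that $b|s|>1$ on the contour. Proposition \ref{S3PX1} then gives $|\partial_sU(t,s)|\le Ct(1+|s|)^{-1-2t}$, and $\int_{\mathbb R}Ct(1+|\tau|)^{-1-2t}d\tau\le C'$ uniformly for $t\in(0,1)$. Hence $|(\log x)\Lambda(t,x)|\le C'$, so that $|\Lambda(t,x)|\le C''/|x-1|$ for $|x-1|<1/2$. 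Since $t\ge 1/2$ gives $1/|x-1|\le 2t/|x-1|$, this already yields $|\Lambda(t,x)|\le 2C''t/|x-1|$ in this range.

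\emph{Case $t\in(0,1/2)$.} The previous argument loses the factor $t$, so a finer analysis is needed. I take $c\in(1,2)$ and deform, in the representation (\ref{S5EU0}), the $\sigma$-contour from $\beta'\in(0,c)$ down to some $\beta''\in(0,c-1)$. Propositions \ref{S3PW} and \ref{S3PBP} show that the only singularity crossed in the strip $(0,c-1)\subset(0,1)$ is the simple pole of $\Gamma(\sigma-s)$ at $\sigma=s-1$ (no zeros of $B$ lie there), whose residue equals $-1/B(s-1)$. Combining with $B(s)/B(s-1)=-W(s-1)$ from (\ref{S5EBX5}) produces a decomposition of the form
\[
U(t,s)=\frac{1}{\sqrt{2\pi}}+\frac{tW(s-1)}{\sqrt{2\pi}}+R_U(t,s),\qquad |R_U(t,s)|\le C|B(s)|\,t^{c-\beta''},\quad c-\beta''>1.
\]
Inverting Mellin for $x\neq 1$ (the constant $1/\sqrt{2\pi}$ contributes a Dirac at $x=1$ that vanishes on our set), using $W(s-1)=-(s-1)\mathscr M(H)(s)$ from (\ref{S2EHW}) together with the standard rule $\mathscr M^{-1}((s-1)F)(x)=-(xf(x))'$, gives
\[
\Lambda(t,x)=\pm\frac{t}{\sqrt{2\pi}}\bigl(xH(x)\bigr)'+R_\Lambda(t,x).
\]
From the explicit formula (\ref{S4EHL}), $xH(x)\sim-\log|1-x|$ on both sides of $x=1$, so $(xH(x))'=O(|x-1|^{-1})$ for $|x-1|<1/2$, giving the leading bound $Ct/|x-1|$. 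For the remainder, the prefactor $t^{c-\beta''}$ with $c-\beta''>1$ together with one integration by parts in the Mellin contour (to absorb the logarithmic growth $|B(s)|\sim\log|s|$ supplied by Proposition \ref{S3PBP24B} and the Stirling decay of $\Gamma$ as in the proof of Proposition \ref{S3PX1}) yields $|R_\Lambda(t,x)|\le Ct^{1+\varepsilon}/|\log x|$, and since $t\le 1$ and $|x-1|<1/2$ this is majorized by $Ct/|x-1|$.

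\emph{Main obstacle.} The delicate point is the control of $R_\Lambda$: one must check that after shifting $\sigma$ below $s-1$, no further zero of $B$ or pole of the integrand is met, that the double integral in $(s,\sigma)$ remains absolutely convergent after one integration by parts in $s$, and that the order of integration can be exchanged. All required decay estimates are supplied by Propositions \ref{S3PW17}, \ref{S3PBP24B} and the Stirling bound (\ref{S5EGamma}), but the bookkeeping of the two contour deformations (one in $\sigma$ for the residue extraction, one in $s$ for the integration by parts) is where the technical effort concentrates.
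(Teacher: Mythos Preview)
Your route differs from the paper's: the paper passes to $X=\log x$, integrates by parts twice in $s$ to write $\tilde\Lambda(t,X)=X^{-2}\int_{\mathscr Re(s)=c}(e^{-sX}-1)\,\partial_s^2 U(t,s)\,ds$, and then bounds the integral directly using $|\partial_s^2U|\le Ct(1+|s|)^{-2}$ from Proposition~\ref{S3PX1} together with the gain $|e^{-sX}-1|\le C|sX|$ on the portion of the contour where $|sX|$ is small. No residue extraction in $\sigma$ or leading-plus-remainder splitting is performed.

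Your case $t\in[1/2,1)$ is fine, and for $t<1/2$ the residue pickup and the identification of the leading term $\pm\tfrac{t}{\sqrt{2\pi}}(xH(x))'$ with its $|x-1|^{-1}$ singularity are correct. The gap is the remainder bound: one integration by parts in $s$ does not make $\partial_s R_U$ integrable along the vertical line. From your own decomposition one has $\partial_s R_U=\partial_s U\mp\tfrac{t}{\sqrt{2\pi}}W'(s-1)$, and by (\ref{S3PW17E3}) the second term behaves like $2it/(\sqrt{2\pi}\,s)$ for large $|s|$; this $O(t/|s|)$ tail is not in $L^1$ on $\mathscr Re(s)=c$, so the contour integral for $(\log x)R_\Lambda$ is not absolutely convergent. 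Differentiating the integral representation of $R_U$ directly gives the same obstruction, $|\partial_s R_U|\le Ct^{1+\varepsilon}\log|s|$: the extra power $t^{\varepsilon}$ is indeed present, but the $s$-decay is still only logarithmic. Passing to two integrations by parts produces an integrable $\partial_s^2 R_U=\partial_s^2 U\mp\tfrac{t}{\sqrt{2\pi}}W''(s-1)=O(t|s|^{-2})$, but then the resulting bound is only $|R_\Lambda|\le Ct/(\log x)^2\sim Ct/|x-1|^2$, which dominates your leading $Ct/|x-1|$ near $x=1$ and therefore does not close the argument.
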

\begin{proof}
We define the new variables 
\begin{align}
&X=\log x, \,\,\,\tilde \Lambda (t, X)=\Lambda (t, x),\,\,\forall t>0, x>0. \label{S3ELX}
\end{align}
Then, 

\begin{align}
\forall X\in \RR, \,\,\,\tilde \Lambda (t, X)=\frac {1} {2i\pi }\int _{ {\mathscr Re}(s)=c } e^{-s X}U(t, s)ds.  \label{S3ELXZ}
\end{align}
After two integrations by parts:
\begin{align}
\label{S3IBP1}
\tilde \Lambda (t, X)=\frac {1} {X^2}\int_{ {\mathscr Re}(s)=c } \left(e^{-s X}-1\right)\frac {\partial ^2 U} {\partial s^2}(t, s)ds.
\end{align}
When $|s|<1$, it follows that $|sX|<1/2$, $|e^{-sX}-1|=|sX|(1+\mathcal O(|sX|)$ and
$$
|e^{-s X}-1|=|sX|(1+\mathcal O(|sX|)\le C _1 |sX|.
$$
We deduce from (\ref{S3IBP1}) and  Proposition \ref{S3PX1} 
\begin{align*}
\left|\tilde \Lambda (t, X)\right|\le &\frac {t} {|X|} \int\limits_{\substack{ {\mathscr Re}(s)=c  \\ |s|<1 } } \frac {s\, |ds|} {1+|s|^2}+\left|\frac {1} {X^2}\int\limits_{\substack{ {\mathscr Re}(s)=c  \\ |s|>1 } } \left(e^{-s X}-1\right) \frac {\partial ^2 U} {\partial s^2}(t, s)ds\right|.
\end{align*}
But,
\begin{align*}
\int\limits_{\substack{ {\mathscr Re}(s)=c \\ |s|>1 } } \left(e^{-s X}-1\right) \frac {\partial ^2 U} {\partial s^2}(t, s)ds
=\frac {1} {X}\int\limits_{\substack{ {\mathscr Re}(u)=c X \\ |u|>|X| } }\hskip -0.5cm \left(e^{-u}-1\right) \frac {\partial ^2 U} {\partial s^2}\left(t, \frac {u} {X}\right)du
\end{align*}
and by Proposition \ref{S3PX1}
\begin{align*}
&\left|\int\limits_{\substack{ {\mathscr Re}(s)=c  \\ |s|>1} }\frac { \left(e^{-s X}-1\right)} {1+|s|^2}ds\right|
\le \frac {t} {|X|}\int\limits_{\substack{ {\mathscr Re}(u)=c X \\ |u|>|X| } }\frac { \left|e^{-u}-1\right|} {1+|u/X|^2}|du|\\
&=t|X|\int\limits_{\substack{ {\mathscr Re}(u)=c X \\ |u|>|X| } }\frac { \left|e^{-u}-1\right|} {|X|^2+|u|^2}|du|<
t|X|\int\limits_{\substack{ {\mathscr Re}(u)=c X \\ |u|>|X| } }\frac { \left|e^{-u}-1\right|} {|u|^2}|du|.
\end{align*}
If $s=c+iv$, then $e^{-u}=e^{-cX}e^{-iv}$,

\begin{align*}
\left|e^{-u}-1\right|^2=e^{-2cX}\left((\cos^2(vX)-1)+\sin^2 (vX) \right)\le 2e^{c}
\end{align*}

and, if $u=c X+iw$,
\begin{align*}
\int\limits_{\substack{ {\mathscr Re}(u)=c X \\ |u|>|X| } }\frac { \left|e^{-u}-1\right|} {|u|^2}|du|\le 
\sqrt 2e^{c}\hskip -1cm \int\limits_{\substack{ {\mathscr Re}(u)=c X \\ c^2|X|^2+w^2>|X|^2 } }\hskip -0.5cm\frac {dw} {c^2+w^2}\le C\int  _{ \RR }\frac {dw} {c^2+w^2}.
\end{align*}
\end{proof}

\subsection{Behavior of $\Lambda$ as $x\to 1$.}
\label{t0x1}
The following Proposition describes the convergence to the initial data. Its proof, rather long and somewhat technical is given in the Appendix.
\begin{prop}
\label{S4P2}
Uniformly for $X$ in bounded subsets of $\RR$ 
\begin{align}
&\lim _{ t\to 0 }t^{-1} |X|^{1-2t}\tilde \Lambda (t, X)
=1. \label{S3.E46}\\
&\lim _{ t\to 0 }\frac { |X|^{1-2t}} {(1+2t\log|X|)} \frac {\partial \tilde \Lambda } {\partial t}(t, X)=1
=1. \label{S3.E46B}
\end{align}
\end{prop}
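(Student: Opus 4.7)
The plan is to derive both limits from the Mellin representation
\begin{equation*}
\tilde\Lambda(t, X) = \frac{1}{2\pi i}\int_{\mathscr Re(s) = c} e^{-sX} U(t, s)\, ds,
\end{equation*}
valid for $t > 1/2$ directly from (\ref{S5CInvME2B}) and interpreted through two integrations by parts for $t \in (0, 1/2)$ exactly as in the proof of Proposition \ref{S3.3.4P1}. For $t$ small and $X$ in a bounded set, this integral is controlled by the large-$|\mathscr Im\, s|$ behaviour of $U(t, s)$.

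First I would refine the bound of Proposition \ref{S3PX1} into a precise asymptotic of the form
\begin{equation*}
U(t, s) = (b|s|)^{-2t}\bigl(c_0 + c_1(t, s)\bigr), \qquad c_1(t, s) \to 0,
\end{equation*}
with $b = e^{\gamma_e/2}/2$, uniformly on compact subsets of $t \in (0, 1/2)$ as $|\mathscr Im\, s|\to\infty$. This is extracted from (\ref{S5EUX1}) via the change of variables $\omega = \sigma - s$, using Proposition \ref{S3PBP24} (so $B(s+\omega)/B(s) \to 1$ uniformly on bounded $\omega$) together with the asymptotic (\ref{S3PW17E1}), and by a careful evaluation of the resulting Mellin--Barnes integral. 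The constant $c_0$ is pinned down by matching with the normalisation $U(0, s) = 1/\sqrt{2\pi}$ and with Corollary \ref{S5CSol}.

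Inserting this asymptotic into the inverse Mellin integral and deforming the contour to $\mathscr Re\, s = 0^+$ (permissible since $U(t, s)$ is analytic in $\mathcal S_{0, 2}$ and $|bs|^{-2t}$ is locally integrable for $t < 1/2$), the main contribution is computed using the explicit identity
\begin{equation*}
\frac{1}{2\pi}\int_{-\infty}^{\infty} |v|^{-2t} e^{-ivX}\, dv = \frac{\Gamma(1-2t)\sin(\pi t)}{\pi}\,|X|^{2t-1}, \qquad 0 < t < 1/2.
\end{equation*}
Combined with the Taylor expansions $\sin(\pi t) = \pi t + O(t^3)$, $\Gamma(1-2t) = 1 + O(t)$ and $b^{-2t} = 1 + O(t)$, the leading contribution equals $t|X|^{2t-1}(1 + O(t))$ uniformly for $X$ in any bounded set, which yields (\ref{S3.E46}). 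The remainder from $c_1(t, s)$ is controlled, after integration by parts in $s$, using the derivative bounds (\ref{S3PX1E2}), so that it contributes only $o(t)|X|^{2t-1}$.

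For (\ref{S3.E46B}) I would use (\ref{S5EUXYp}) to write
\begin{equation*}
\frac{\partial\tilde\Lambda}{\partial t}(t, X) = \frac{1}{2\pi i}\int_{\mathscr Re(s) = c}e^{-sX}\,W(s-1)\,U(t, s-1)\,ds,
\end{equation*}
and substitute $W(s-1) = -2\log(b|s|) + O(|s|^{-2})$ from (\ref{S3PW17E1}) together with the refined asymptotic of $U(t, s-1)$. The leading integrand becomes $-2\log(b|s|)(b|s|)^{-2t} = \partial_t[(b|s|)^{-2t}]$, so the same inverse Mellin computation reproduces $\partial_t[t|X|^{2t-1}] = |X|^{2t-1}(1 + 2t\log|X|)$ at leading order, yielding (\ref{S3.E46B}). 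The main obstacle throughout is the uniformity in $X$ on bounded sets: when $X$ is not small, the decay from $e^{-sX}$ is weak, so one must split the contour into a bounded-$|s|$ piece (where $U(t, s)$ is close to the constant $1/\sqrt{2\pi}$ and its contribution is regular in $X$, controlled by arguments analogous to those of Proposition \ref{S3.3.4P1}) and an unbounded tail where the refined asymptotic of $U$ applies, and to track all the constants with enough precision to obtain limits exactly equal to $1$ — which is essentially the technical content deferred to the Appendix.
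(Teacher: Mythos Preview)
Your overall strategy---replace $U(t,s)$ by its large-$|s|$ profile $(b|s|)^{-2t}$ and invert explicitly to get $|X|^{2t-1}$---is the right idea, and the Fourier identity you quote is exactly what produces the $t|X|^{2t-1}$ behaviour. But the paper's proof is organised quite differently from what you describe, and the difference matters for the uniformity.

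The paper does not work with $U$ directly. It first integrates by parts once to write $X\tilde\Lambda(t,X)=\frac{1}{2\pi i}\int \partial_s U(t,s)e^{-sX}ds$, then rescales $s=\sigma/\rho(t)$ with $\rho(t)=e^{-1/t}$ (so that $sX=\sigma Y$ with $Y=X/\rho(t)$). Three auxiliary lemmas (\ref{SA2Lemma9.3}, \ref{SA2Lemma9.5}, \ref{SA2Lemma9.6}) then replace $\partial_s U(t,\sigma/\rho(t))$ by an explicit approximant $H(t,\sigma/\rho(t))=-\frac{t\rho(t)}{2\sigma}|b\sigma/\rho(t)|^{-2t}$, whose inverse Mellin is computed in closed form as $-\frac{2t}{\pi}\Gamma(-2t)\sin(\pi t)|X|^{2t}\mathrm{sign}(X)$. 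The final step splits the $\sigma$-contour into three ranges $|\sigma|<\varepsilon_0$, $\varepsilon_0<|\sigma|<M(t)$, $|\sigma|>M(t)$ with $\log M(t)=t^{-3/2}$ and shows each error piece is $o(t\rho(t))$.

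The rescaling is the point you are missing. Your asymptotic $U(t,s)=(b|s|)^{-2t}(c_0+c_1(t,s))$ with $c_1\to 0$ \emph{uniformly on compact subsets of $t\in(0,1/2)$} is not the statement you need: the limit in the Proposition is $t\to 0$, so $t$ leaves every compact subset of $(0,1/2)$. What is actually required is control of the error in a \emph{joint} limit $t\to 0$, $|s|\to\infty$, and the paper's rescaling forces $|s|\sim e^{1/t}$, which is exactly the scale at which the approximation (Lemma \ref{SALemma14.1}, with error $O(Z^2)+O(1/\log|s|)$ in the exponent) becomes sharp enough. Without this coupling, splitting into ``bounded $|s|$'' and ``unbounded $|s|$'' at a fixed cutoff does not obviously give uniform control as $t\to 0$: on the bounded piece $U(t,s)$ is close to the constant $1/\sqrt{2\pi}$ but the integral is then a regularisation of $\delta_0$ in $X$, not manifestly $o(t|X|^{2t-1})$, and on the tail the error $c_1$ has no reason to be $o(1)$ uniformly as $t\downarrow 0$. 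Your last paragraph acknowledges this difficulty but does not resolve it; the rescaling $s\rho(t)=\sigma$ is the missing device.
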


\begin{rem} 
\label{S4LP2}
For any $\varphi  \in C_C(\RR)$,
$$
\lim _{ t\to 0 } t \int  _{ \RR }   |X|^{-1+2t} \varphi (X)dX=\varphi (0).
$$
\end{rem}

\begin{cor}
\label{S3Cor17}
\begin{align}
&\lim _{ t\to 0 }\, t^{-1}\left|e^{-1/t} Y\right|^{1-2t}\Lambda \left(t, 1+e^{-1/t}Y\right)=1 \label{S3Cor17E1}
\end{align}
uniformly  on bounded subsets of $\RR$.
\end{cor}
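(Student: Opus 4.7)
The plan is to deduce this corollary directly from Proposition \ref{S4P2} by the change of variables $x = 1 + e^{-1/t}Y$, i.e.\ $X = \log(1 + e^{-1/t}Y)$, where $\tilde\Lambda(t,X) = \Lambda(t,x)$.

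First I would observe that, for $Y$ in any bounded subset of $\RR$ and $t$ small enough, $e^{-1/t}Y$ is arbitrarily small, so $X = \log(1 + e^{-1/t}Y)$ is well defined, small, and in particular $X$ lies in a bounded subset of $\RR$. Using the Taylor expansion
\begin{equation*}
\log(1+z) = z\bigl(1 + O(z)\bigr), \qquad z \to 0,
\end{equation*}
uniform on bounded sets, we get
\begin{equation*}
\frac{X}{e^{-1/t}Y} = 1 + O(e^{-1/t}Y), \qquad t\to 0,
\end{equation*}
uniformly for $Y$ in a fixed bounded subset of $\RR$ (with $Y \neq 0$; the $Y=0$ case reduces to $X=0$ and is handled by continuity of the statement in Proposition \ref{S4P2}). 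Consequently
\begin{equation*}
\left(\frac{|X|}{|e^{-1/t}Y|}\right)^{1-2t} \underset{t\to 0}{\longrightarrow} 1,
\end{equation*}
again uniformly in $Y$ on bounded sets, since the exponent $1-2t$ is bounded.

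Next I would apply Proposition \ref{S4P2} to the function $\tilde\Lambda$ evaluated at this $X$. Since $X$ ranges in a bounded subset of $\RR$ as $Y$ does, the uniform convergence in that proposition gives
\begin{equation*}
t^{-1}|X|^{1-2t}\, \tilde\Lambda(t,X) \underset{t\to 0}{\longrightarrow} 1,
\end{equation*}
uniformly in $Y$ on bounded subsets. Multiplying and dividing by $|e^{-1/t}Y|^{1-2t}$ and using the ratio above yields
\begin{equation*}
t^{-1}\,|e^{-1/t}Y|^{1-2t}\, \Lambda(t, 1+e^{-1/t}Y) = \left(\frac{|e^{-1/t}Y|}{|X|}\right)^{\!1-2t}\!\cdot t^{-1}|X|^{1-2t}\,\tilde\Lambda(t,X) \underset{t\to 0}{\longrightarrow} 1,
\end{equation*}
uniformly in $Y$ on bounded subsets, which is the claim.

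There is essentially no obstacle: the work has all been done in Proposition \ref{S4P2}, and the only delicate point is checking that the substitution $X = \log(1+e^{-1/t}Y)$ does not destroy the uniformity. This is immediate from the uniform expansion of the logarithm near $0$ and the boundedness of the exponent $1-2t$ as $t\to 0$, so the reduction is routine.
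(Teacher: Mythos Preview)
Your proof is correct and follows essentially the same route as the paper's: set $X=\log(1+e^{-1/t}Y)$, observe that $X/(e^{-1/t}Y)\to 1$ uniformly for $Y$ in bounded sets (the paper phrases this as $\lim (e^X-1)/X=1$), and then transfer the uniform limit from Proposition~\ref{S4P2} via this ratio. The only cosmetic difference is that you invoke the Taylor expansion of $\log(1+z)$ whereas the paper uses the equivalent limit $(e^X-1)/X\to 1$; the handling of $Y=0$ is equally informal in both versions.
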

\begin{proof} For $t>0$ sufficiently small, depending on the bounded set  $K$ of $\RR$ where $Y$ varies, $1+e^{-1/t}Y>0$. Then we define  $1+e^{-1/t}Y=e^X$ and by definition $\Lambda(t, 1+e^{-1/t}Y)=\tilde \Lambda \left(t, X\right)$.  By (\ref{S3.E46}), uniformly for $X$ in bounded subsets of $\RR$, 

\begin{align}
&\lim _{ t\to 0 }t^{-1} |X|^{2t-1}\tilde \Lambda (t, X)=1\label{S3Cor17E1}\\
&\lim _{ t\to 0 }t^{-1} |X|^{2t-1}\tilde \Lambda(t, 1+e^{-1/t}Y)=1\label{S3Cor17E2}
\end{align}
But, since 
$$
\lim _{ t\to 0  }e^{-1/t}Y= 0,\, \hbox{uniformly for $Y$ on}\,\, K,
$$
it follows that 
$$
\lim _{ t\to 0  }e^X=1,\, \hbox{uniformly for $Y$ on}\,\, K.
$$
Then 
$$
\lim _{ t\to 0 }\frac {e^{-1/t}Y} {X}=\lim _{ t\to 0 }\frac {e^X-1} {X}=1
$$
from where
\begin{align}
\lim _{ t\to 0 }t^{-1} |X|^{2t-1}\tilde \Lambda(t, 1+e^{-1/t}Y)=&\lim _{ t\to 0 }t^{-1} |e^{-1/t}Y|^{2t-1}\tilde \Lambda(t, 1+e^{-1/t}Y)=1\label{S3Cor17E3}
\end{align}
uniformly for $Y\in K$.
\end{proof}

\begin{cor}
\label{S3Cor18}
For all bounded subset $K\in \RR$, There exists  $\tau >0$ such that for $t\in (0, \tau )$,
\begin{align}
|\Lambda \left(t, x\right)|\le \frac {2t} {\left|x-1 \right|^{1-2t}} ,\,\,\forall x;\,\,(x-1)e^{1/t}\in K.\label{S3Cor18E1}
\end{align}
\end{cor}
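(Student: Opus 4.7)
The statement is essentially a direct quantitative consequence of the uniform convergence already established in Corollary \ref{S3Cor17}. The plan is to parametrize $x$ by the variable $Y=(x-1)e^{1/t}$, apply the uniform limit, and translate the resulting inequality back in terms of $x$.

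\textbf{Step 1: Change of variables.} Given a bounded set $K\subset\RR$ and any $x$ with $(x-1)e^{1/t}\in K$, set $Y:=(x-1)e^{1/t}$, so that $x=1+e^{-1/t}Y$ and $|x-1|=|e^{-1/t}Y|$. By construction $Y$ ranges over the bounded set $K$ as $x$ varies with $t$ fixed.

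\textbf{Step 2: Use the uniform limit.} By Corollary \ref{S3Cor17}, the function
\begin{equation*}
F(t,Y):=t^{-1}\left|e^{-1/t}Y\right|^{1-2t}\Lambda\!\left(t,\,1+e^{-1/t}Y\right)
\end{equation*}
converges to $1$ as $t\to 0$, uniformly for $Y$ in $K$. Hence there exists $\tau>0$ (depending only on $K$) such that $|F(t,Y)|\le 2$ for every $t\in(0,\tau)$ and every $Y\in K$.

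\textbf{Step 3: Rewrite the inequality.} Multiplying through by $t\,|e^{-1/t}Y|^{-(1-2t)}$ and substituting back $|e^{-1/t}Y|=|x-1|$ yields
\begin{equation*}
|\Lambda(t,x)|=\left|\Lambda\!\left(t,1+e^{-1/t}Y\right)\right|\le\frac{2t}{|e^{-1/t}Y|^{\,1-2t}}=\frac{2t}{|x-1|^{\,1-2t}},
\end{equation*}
for every $t\in(0,\tau)$ and every $x$ with $(x-1)e^{1/t}\in K$, which is the desired bound. There is no real obstacle here: the only substantive input is the uniform convergence of Corollary \ref{S3Cor17}, and the result follows by a routine algebraic manipulation of that limit into a pointwise upper bound.
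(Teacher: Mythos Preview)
Your proof is correct and follows essentially the same approach as the paper: both arguments parametrize $x=1+e^{-1/t}Y$, invoke the uniform limit of Corollary~\ref{S3Cor17} to bound $|F(t,Y)|\le 2$ for $t$ small, and then rewrite in terms of $|x-1|$. The paper's version is slightly more terse (and contains a typo, citing Corollary~\ref{S3Cor18} where it means Corollary~\ref{S3Cor17}), but the logic is identical.
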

\begin{proof}
By Corollary \ref{S3Cor18}, for any bounded $K$ there is $\tau >0$ small enough such that for all $t\in (0, \tau )$,
\begin{align}
|\Lambda \left(t, 1+e^{-1/t}Y\right)|\le \frac {2t} {\left|e^{-1/t} Y\right|^{1-2t}} ,\,\,\forall Y\in K.\label{S3Cor18E2}
\end{align}
In terms of  $x=1+e^{-1/t}Y$, (\ref{S3Cor18E1}) follows.
\end{proof}

\begin{cor}
\label{S4C426}
The function $\Lambda$ satisfies,
\begin{align}
\label{S4C426E1}
&\Lambda \in C((0, \infty), L^1(0, \infty)),
\end{align}
and there exists $C>0$ such that,
\begin{equation}
\label{S4C426E2}
||\Lambda(t)||_1\le \frac {C} {1+t^2},\,\,\forall t>0
\end{equation}
\end{cor}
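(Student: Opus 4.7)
The idea is to collect the pointwise estimates on $\Lambda$ from the preceding Subsections \ref{S4larget} and \ref{S4smallt} and integrate them in $x$, splitting the analysis at $t=1$. For $t>1$, I would change variables $\theta = x/t$ in the decomposition $\Lambda(t,x) = t^{-3}Q_1(\theta) + Q_2(t,\theta)$ of Proposition \ref{S4L1}. Proposition \ref{S4L2} gives $Q_1$ bounded near $\theta = 0$ and $Q_1(\theta) = O(\theta^{-3})$ as $\theta\to\infty$, hence $\|Q_1\|_{L^1(0,\infty)} < \infty$; this contributes $t \cdot t^{-3}\|Q_1\|_1 = O(t^{-2})$. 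Proposition \ref{S3L3} gives $|Q_2(t,\theta)| = O(t^{-4})$ for bounded $\theta$ and $O(t^{-4}\theta^{-5})$ at infinity, contributing $t \cdot O(t^{-4}) = O(t^{-3})$. Summing, $\|\Lambda(t)\|_1 \le C/t^2$ for $t > 1$.

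For $0 < t < 1$, split $(0,\infty) = (0, 1/2) \cup (1/2, 3/2) \cup (3/2, \infty)$. On $(3/2,\infty)$, Proposition \ref{S4P9} gives $|\Lambda(t,x)| \le C_\varepsilon x^{-3+\varepsilon} t^{9-\varepsilon} + C x^{-5} t^7$, integrating to $O(t^7) \le C$. On the middle interval, fix $K = [-1,1]$ in Corollary \ref{S3Cor18}: the bound $|\Lambda| \le 2t|x-1|^{2t-1}$ on $|x-1| < e^{-1/t}$ integrates to $2e^{-2}$, while on $e^{-1/t} \le |x-1| \le 1/2$ Proposition \ref{S3.3.4P1} gives $|\Lambda| \le Ct/|x-1|$, integrating to $Ct\log(1/(2e^{-1/t})) \le C$. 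On $(0,1/2)$, further split at $x = t$: the sub-region $\{t < x < 1/2\}$ still falls within the regime $x/t > 1$ where the expansion (\ref{S31.1Lamb1}) from the proof of Proposition \ref{S4P9} applies (the restriction $x > 3/2$ there was cosmetic), producing the same integrable bound. The sub-region $\{0 < x < t\}$, of length $t$, is controlled by running the contour-deformation in (\ref{S5CInvME2}) in the opposite direction, moving the $s$-contour across the poles of $B(s)$ at $s = 0, -1$ and the $\sigma_n^\ast$; after applying $(x\partial_x)^2$ the log-terms produced by the higher-order residue at $s=0$ either vanish or reduce to a bounded constant in $x$, giving a uniform bound on $\Lambda$ there. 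All contributions sum to $\|\Lambda(t)\|_1 \le C$ for $0 < t < 1$, which together with the previous case yields (\ref{S4C426E2}).

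For the $L^1$-continuity (\ref{S4C426E1}), the pointwise bounds above depend continuously on $t$ and so, on any compact interval $[a,b] \subset (0,\infty)$, provide a uniform $L^1(0,\infty)$-majorant. Combined with the joint continuity of $(\log x)\Lambda$ given by (\ref{S5CInvME0300}) -- which implies continuity of $\Lambda(t,x)$ in $(t,x)$ for $x \neq 1$ -- the dominated convergence theorem yields $\Lambda \in C([a,b]; L^1(0,\infty))$, and since $[a,b]$ is arbitrary, $\Lambda \in C((0,\infty); L^1(0,\infty))$. The main technical obstacle is the sub-region $\{0 < x < t\}$ for small $t$: Propositions \ref{S4P9} and \ref{S3PDL} supply an estimate for $\partial_t\Lambda$ in this regime (see (\ref{S4P9E1C}) and (\ref{S3L3DE2X})) but not for $\Lambda$ itself, so one must re-run the contour-deformation with the $s^{-2}$ factor included and track the multiplicity of the pole produced at $s=0$ by the pole of $B$ combined with $s^{-2}$.
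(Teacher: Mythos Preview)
Your plan is correct and follows the same route as the paper. The large-$t$ argument (change to $\theta=x/t$, use Propositions~\ref{S4L1}--\ref{S3L3}) and the small-$t$ argument on $(1/2,3/2)$ and $(3/2,\infty)$ (Proposition~\ref{S3.3.4P1}, Corollary~\ref{S3Cor18}, Proposition~\ref{S4P9}) match the paper exactly, as does the continuity argument via a uniform $L^1$ majorant and dominated convergence.

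The only place you deviate is on $(0,1/2)$ for $t\in(0,1)$: the paper does \emph{not} split at $x=t$, but simply writes $\int_0^{1/2}|\Lambda(t,x)|\,dx\le t\int_0^{1/2}|x-1|^{-1}\,dx\le t$, invoking the remark (just before Subsection~3.2.1) that $\Lambda$ is continuous and bounded on $(0,1)\times[0,1/2]$. That boundedness already follows from (\ref{S5CInvME0300}) together with (\ref{S3PX1E2}): integrating the bound $|\partial_sU(t,s)|\le C_Tt(1+|s|)^{-1-2t}$ along $\mathscr{Re}\,s=c\in(0,1)$ gives $|(\log x)\Lambda(t,x)|\le Cx^{-c}$ uniformly in $t\in(0,1)$, hence $|\Lambda(t,x)|\le Cx^{-c}/|\log x|$, which is integrable on $(0,1/2)$. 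So your further splitting and contour-deformation across the pole at $s=0$ is valid but unnecessary, and calling $\{0<x<t\}$ the ``main technical obstacle'' overstates its difficulty --- since $|x-1|\ge 1/2$ throughout $(0,1/2)$, this is actually the easiest region.
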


\begin{proof} 
We prove (\ref{S4C426E2}) first. For $t\in (0, 1)$ we use the estimates in Section \ref{S4smallt}
\begin{align*}
\int _0^\infty |\Lambda (t, x)|dx=\int _0^{1/2} |\Lambda (t, x)|dx+\int  _{ |x-1|<1/2 } |\Lambda (t, x)|dx+\int  _{ 3/2 }^\infty |\Lambda (t, x)|dx.
\end{align*}
\begin{align*}
&\int _0^{1/2} |\Lambda (t, x)|dx\le t\int _0^{1/2}\frac {dx} {|x-1|}\le t. \nonumber \\
&\int  _{ 3/2 }^\infty |\Lambda (t, x)|dx\le C_1t^{7+\beta _1' }\int  _{ 3/2 }^\infty
x^{-1-\beta '_1}dx+C_2t^7 \int  _{ 3/2 }^\infty x^{-6}dx. 
\end{align*}

\begin{align*}
\int  _{ |x-1|<1/2 } |\Lambda (t, x)|dx&=\int  _{ 0<|x-1|<e^{-1/t} } |\Lambda (t, x)|dx+\int  _{ e^{-1/t}<|x-1|<1/2 }  |\Lambda (t, x)|\\
&\le Ct\int  _{ 0<|x-1|<e^{-1/t} }\frac {dx} { |x-1|^{1-t}}+Ct\int  _{ e^{-1/t}<|x-1|<1/2 } \frac {dx} { |x-1|}\\
&=2Ct\int _0^{e^{-1/t}} \frac {dz} {z^{1-t}}+2Ct\int _{e^{-1/t}}^{1/2} \frac {dz} {z}=\frac {2C} {e}-2Ct\log 2+2C.
\end{align*}

For $t>1$, we use the estimates in Section \ref{S4larget}. By Proposition (\ref {S4L1})
\begin{align*}
\int _0^\infty |\Lambda(t, x)|dx&=t^{-3}\int _0^\infty |Q _1(\theta)|dx+\int _0^\infty |Q_2(t, \theta)|dx,\,\,\theta =\frac {x} {t} \\
&=t^{-2}\int _0^\infty |Q _1(\theta)|d\theta+t\int _0^\infty |Q_2(t, \theta)|d\theta. 
\end{align*}
Then (\ref{S4C426E2})  follows since, by Proposition (\ref{S4L2}), $Q_1\in L^1(0, \infty)$ and, by Proposition (\ref{S3L3}),
\begin{equation}
\int _0^\infty |Q_2(t, \theta)|d\theta\le C t^{-4} \label{S4C426E4}
\end{equation}
On the other hand if $t_1>0$ and $|t-t_1|<t_1/4$, for any $\varepsilon >0$ small fixed and $R$ large to be fixed, 

\begin{align*}
&\int _0^\infty |\Lambda (t_1, x)-\Lambda (t, x)|=I_1+I_2+I_3+I_4\\
&I_1=\int _0^{1-\varepsilon } |\Lambda (t_1, x)-\Lambda (t, x)|dx\le \sup_ {\substack{x \in [0, 1-\varepsilon ) }} |\Lambda (t_1, x)-\Lambda (t, x)| \\
&I_2=\int _{1-\varepsilon }^{1+\varepsilon } |\Lambda (t_1, x)-\Lambda (t_2, x)|dx\le 
2\varepsilon  \sup_ {\substack{x \in [1-\varepsilon , 1+\varepsilon )\\ t\in \left(\frac {3t_1} {4} \frac {5t_1} {4}\right) }} |\Lambda (t, x)| \\
&I_3=\int _{1+\varepsilon }^R |\Lambda (t_1, x)-\Lambda (t_2, x)|dx \le  \sup_ {\substack{x \in [1+\varepsilon , R) }} |\Lambda (t_1, x)-\Lambda (t, x)| \\
&I_4=\int _R^\infty |\Lambda (t_1, x)-\Lambda (t_2, x)| dx\le  \int _R^\infty |\Lambda (t_1, x)|dx+\int _R^\infty |\Lambda (t_2, x)|dx
\end{align*}
The terms $I_1$, $I_2$ and $I_3$ tend to zero as $t\to t_1$ by the continuity of $(\log x )\Lambda (t, x)$ for $t>0$ and $x\in \RR^+\setminus \{1\}$.  If $0<t_1<1$, we deduce $I_4\le  CR^{-\beta '_1}$ from  an estimate  similar to (\ref{S4C426E1}) written for $R$ instead of $3/2$. For $t>1$, it follows from (\ref{S4L1E1}) and (\ref{S5EGamma}) that $I_4\le CR^{1-c}$ where $c$ may  by chosen in the interval $(0, 2)$. The choice $c\in (1, 2)$ ensures  that  for all $t>0$, $I_4\to 0$  when $R\to \infty$. This proves (\ref{S4C426E1}). 
\end{proof}

In order to check that $\Lambda$ satisfies (\ref{S3EMK2}) let us show first that $L(\Lambda(t))$  is well defined for all $t>0$ and $x>0$. When $t>1$ this follows from the regularity of the function $\Lambda (t)$.
\begin{prop}
\label{S3L314} 
$L(\Lambda) \in C((1, \infty)\times (0, \infty))$. For all $t>1$, there exists a constant $C>0$ such that 
$$L(\Lambda (t))(x) <\frac {C} {x t^2}\min\left(\frac {1} {t}, \frac {1} {x} \right),\,\,\forall x>0.$$
\end{prop}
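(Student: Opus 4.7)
The plan is to identify $L(\Lambda(t))(x)$ with $\partial_t \Lambda(t,x)$ pointwise for $t > 1$, and then deduce the bound from Proposition~\ref{S3PDL} outside the strip $t/2 \le x \le 2t$, supplementing it by a direct Mellin estimate inside that strip.

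By (\ref{S5CInvME3B}) with $k=1$ and $m \in \NN$ arbitrary, $\Lambda \in C^m((1,\infty); C^1(0,\infty))$, so for $t > 1$ the function $\Lambda(t)$ is $C^1$ in $x$ and $\partial_t \Lambda$ is continuous on $(1,\infty)\times(0,\infty)$. Under this regularity, the identity $\partial_t \Lambda = \partial_x \Lambda \ast H$ of Proposition~\ref{S5PInv1}, originally distributional, becomes a pointwise identity, and its right-hand side equals $L(\Lambda(t))(x)$ by the equivalence between (\ref{S3EMK2}) and (\ref{S4E1}) for regular $u$. This simultaneously gives the continuity statement $L(\Lambda)\in C((1,\infty)\times(0,\infty))$.

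For the pointwise bound I would split in three ranges of $x$. In $x \in (0, t/2)$ and $x \in (2t, \infty)$ the estimates (\ref{S3L3DE2X}) and (\ref{S3L3DE2Y}) of Proposition~\ref{S3PDL} yield $|\partial_t \Lambda(t,x)| \le C t^{-4}$ and $|\partial_t \Lambda(t,x)| \le C x^{-4}$, respectively. These are dominated by $\frac{C}{xt^2}\min(1/t, 1/x)$: the first because $Ct^{-4} \le C/(xt^3)$ when $x \le t$, matching the branch $1/t$; the second because $Cx^{-4} \le C/(x^2 t^2)$ when $x \ge t$, matching the branch $1/x$.

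In the intermediate strip $t/2 \le x \le 2t$, I would work from the Mellin representation (\ref{S4EPArtT}) of $\partial_t \Lambda$ and shift the inner $\sigma$-contour from $\mathscr{Re}(\sigma) = \beta$ past the simple zero of $B$ at $\sigma = 3$, and then past the zero at $\sigma = 4$, to $\mathscr{Re}(\sigma) = 4 + \delta$ for some small $\delta > 0$. No other singularity is crossed, since the poles of $\Gamma(\sigma - s + 1)$ lie in $\mathscr{Re}(\sigma) \le \mathscr{Re}(s) - 1 < \beta$. The two residues produce contributions of orders $t^{-3}$ and $t^{-4}$, and the remaining double integral is $O(t^{-4-\delta})$. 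Multiplying by the prefactor $x^{-1} \le 2/t$ valid in the strip and controlling the $s$-integrals uniformly in $\theta = x/t \in [1/2, 2]$ via (\ref{S5EGamma}) and Proposition~\ref{S3PBP24B}, one obtains $|\partial_t \Lambda(t,x)| \le C t^{-4}$ throughout the strip, which is bounded by both $C/(xt^3)$ and $C/(x^2 t^2)$ since $x \asymp t$ there. The one mildly delicate point, though routine in view of the already established growth of $B$, $W$ and $\Gamma$ along vertical lines, is to verify decay of the integrand along the horizontal segments connecting the contours as $|\mathrm{Im}(\sigma)| \to \infty$ in order to justify the contour shifts.
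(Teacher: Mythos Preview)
Your approach is correct but genuinely different from the paper's. The paper never invokes the identity $L(\Lambda)=\partial_t\Lambda$ here; instead it works directly with the integral definition $L(\Lambda(t))(x)=\int_0^\infty(\Lambda(t,y)-\Lambda(t,x))K(x,y)\,dy$, splits the $y$-integration into the three ranges $(0,x/2)$, $(x/2,3x/2)$, $(3x/2,\infty)$, and in each piece applies the pointwise bounds $|\Lambda(t,x)|\le C\min(t^{-3},x^{-3})$ and $|\partial_x\Lambda(t,y)|\le C\min(t^{-4},y^{-4})$ coming from Propositions~\ref{S4L1}--\ref{S3PDL}. Continuity is obtained by dominated convergence on each piece.

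Your route---identify $L(\Lambda)$ with $\partial_t\Lambda$ via (\ref{S5PInv1EX0}) and (\ref{S4EHL007}) once $\Lambda(t)\in C^1(0,\infty)$, then bound $\partial_t\Lambda$ by contour shifts in the Mellin picture---is shorter and recycles the machinery already built. The paper's direct route, on the other hand, establishes from scratch that the defining integral for $L(\Lambda(t))(x)$ converges absolutely; your argument needs this too (the passage from the convolution $\partial_x\Lambda\ast H$ to the $K$-integral in (\ref{S4EHL007}) is a Fubini step), but you are entitled to it from the same decay estimates, so no circularity arises. One small remark on the intermediate strip: shifting past $\sigma=3$ alone already suffices, since the residue there carries the factor $t^{-3}$ and the outer prefactor $x^{-1}\asymp t^{-1}$ in the strip brings you to $t^{-4}$; pushing on to $\sigma=4$ is harmless but unnecessary.
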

\begin{proof}
For $t>2$, $\Lambda(t)\in C^1 (0, \infty)$ and by Propositions \ref{S4L1}--\ref{S3L3} 
$$
|\Lambda (t, x )|\le \min(t^{-3}, x^{-3}).
$$
Therefore, for every $x>0$, and $y\in (0, x/2)$
\begin{align*}
|\Lambda(t, y)-\Lambda(t, x)|K(x, y)\le Cx^{-2}\left(\min(t^{-3}, x^{-3})+\min(t^{-3}, y^{-3})\right)
\end{align*}
Then, if $x\in (x_0-\varepsilon , x_0+\varepsilon )$ for some $x_0>2\varepsilon >0$,
\begin{align*}
|\Lambda(t, y)-\Lambda(t, x)|K(x, y)\1 _{ 0<y<x/2 }\le  \frac{C(x_0-\varepsilon )^{2}\1 _{ 0<y<(x_0+\varepsilon )/2 }}{ \left(\min(t^{-3}, (x_0-\varepsilon )^{-3})+\min(t^{-3}, y^{-3})\right)}
\end{align*}
and since the right hand side belongs to $L^1(0, \infty)$ it follows that
\begin{align*}
&\int _0^{x/2}(\Lambda(t, y)-\Lambda(t, x))K(x, y)dy\in C(0, \infty).
\end{align*}
\begin{align*}
\hskip -1.8cm \hbox{Moreover}\qquad\,\, &\int \limits _{0}^{x/2} |\Lambda(t, y)-\Lambda(t, x)|K(x, y)dy\le  C\min(t^{-3}, x^{-3})x^{-1}+\\
&\hskip -0.3cm\quad +Cx^{-2}\int \limits _{0}^{x/2} \min(t^{-3}, y^{-3})dy
\le  C\min(t^{-3}, x^{-3})x^{-1}+\frac {C} {xt^2}\min(t^{-1}, x^{-1}).
\end{align*}
On the other hand, for $x>0$ and $y\ge 3x/2$,
\begin{align*}
 |\Lambda(t, y)-\Lambda(x)|K(x, y)\le  C\min(t^{-3}, x^{-3})y^{-2}+Cy^{-2} \min(t^{-3}, y^{-3})
\end{align*}
and if $x\in (x_0-\varepsilon , x_0+\varepsilon )$ for some $x_0>2\varepsilon >0$,
\begin{align*}
 |\Lambda(t, y)-\Lambda(x)|K(x, y)\1 _{ y\ge 3x/2 }\le  Cy^{-2}\Big(\min(t^{-3}, (x-x_0)^{-3})+\\
 +C \min(t^{-3}, y^{-3})\Big)\1 _{ y\ge 3(x-x_0)/2 }\in L^1(0, \infty). 
\end{align*}
It follows that 
$$
\int _{3x/2}^\infty(\Lambda(t, y)-\Lambda(t, x))K(x, y)dy\in C(0, \infty)
$$
and,
\begin{align*}
\int \limits _{3x/2}^{\infty }  |\Lambda(t, y)-\Lambda(x)|K(x, y)dy&\le  C\min(t^{-3}, x^{-3})x^{-1}+C\int \limits _{\frac {3x} {2}}^{\infty} \min(t^{-3}, y^{-3})\frac {dy} {y^2}\\
& \le  C\min(t^{-3}, x^{-3})x^{-1}.
\end{align*}
For all $x>0$, $y\in (x/2, 3x/2)$,
\begin{align*}
(\Lambda(t, y)-\Lambda(t, x))K(x, y)\le \sup _{ x/2\le y\le 3x/3  }\left| \frac {\partial \Lambda} {\partial x}(t, y) \right|\frac {1} {y}
\end{align*}
from where, as before we deduce first that $\int \limits _{x}^{3x/2 } (\Lambda(t, y)-\Lambda(t, x))K(x, y) dy\in C(0, \infty)$ and 
\begin{align*}
\int \limits _{x/2}^{3x/2 } (\Lambda(t, y)-\Lambda(t, x))K(x, y) dy&\le C\sup _{ x/2\le y\le 3x/3  }\left| \frac {\partial \Lambda} {\partial x}(t, y) \right|\le C\min(t^{-4}, x^{-4}).
\end{align*}
\end{proof}

For $0<t<2$ we use that, by (\ref{S5CAlph1}),  $\Lambda (t, x)(\log x)^\alpha $ is Holder of order $\rho>0 $ for some $\alpha >0$ and  $\rho $ that depend on $t$. 
\begin{align}
&\int _0^\infty (\Lambda(t, y)-\Lambda(t, x))K(x, y)dy = \Lambda(t, x)(\log x)^\alpha  I_1(x)+I_2(t, x) \label{S3Esplt}\\
&I_1(x)=\int _0^\infty \left(\frac {1} {(\log x)^\alpha } - \frac {1} {(\log y)^\alpha }\right)K(x, y)dy \nonumber\\
&I _2(t, x)=\int  _0^\infty  \left(\frac {\Lambda(t, x)(\log x)^\alpha - \Lambda(t, y)(\log y)^\alpha} {(\log y)^\alpha }\right)K(x, y)dy\nonumber 
\end{align}

\begin{lem}
\label{S3L89I1}
$(\log x ) I_1\in C(0, \infty)$ and for some  constant $C>0$, $|\log x|^\alpha \,| I_1(x)| \le C/x$ 
\end{lem}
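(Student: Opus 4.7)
The natural strategy is to exploit the homogeneity of $K$ via the substitution $y=xz$. A direct computation with $K(x,xz)=x^{-2}\tilde K(z)$, where
\begin{align*}
\tilde K(z)=z\Bigl(\tfrac{1}{|1-z^2|}-\tfrac{1}{1+z^2}\Bigr),
\end{align*}
converts $I_1(x)$ into the one-variable integral
\begin{align*}
x\,I_1(x)=\int_0^\infty \bigl(g(x)-g(xz)\bigr)\,\tilde K(z)\,dz,\qquad g(\xi):=(\log\xi)^{-\alpha}.
\end{align*}
Thus the claimed factor $1/x$ in the estimate $|\log x|^\alpha|I_1(x)|\le C/x$ appears by scaling, and the problem reduces to showing that the rescaled integral is uniformly bounded by $C|\log x|^{-\alpha}$, and that after multiplying by $\log x$ the expression extends continuously to $x=1$.

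For the uniform bound I would split $(0,\infty)_z$ into four pieces and handle them separately: a neighbourhood of $z=1$ (the singularity of $\tilde K$), a neighbourhood of $z=1/x$ (the singularity of $g(xz)$), the tails $z\to0^+$ and $z\to\infty$, and the remaining compact middle. On the tails, $\tilde K(z)=O(z^3)$ near $0$ and $O(z^{-3})$ near $\infty$ by an easy expansion, and $|g(x)-g(xz)|\le|g(x)|+|g(xz)|$ is integrable against these decays (uniformly in $x$ on compacts, and with a clean $|\log x|^{-\alpha}$ factor as the dominant size). Near $z=1$ the difference $g(xz)-g(x)$ is smooth (when $x\neq1$) and vanishes linearly; Taylor expansion gives
\begin{align*}
g(xz)-g(x)=-\alpha(\log x)^{-\alpha-1}\log z+O\!\bigl(|\log x|^{-\alpha-2}(\log z)^2\bigr),
\end{align*}
and the leading odd-in-$(z-1)$ piece integrated against $\tilde K(z)\sim(2|z-1|)^{-1}$ cancels by symmetry, while the quadratic remainder is integrable and yields an $O(|\log x|^{-\alpha-2})$ contribution, safely absorbed. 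Near $z=1/x$ the singularity $g(xz)\sim|\log(xz)|^{-\alpha}$ is integrable since $\alpha<1$, and $\tilde K(z)$ is bounded there whenever $x$ stays away from $1$.

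The continuity statement $(\log x)I_1\in C(0,\infty)$ is immediate at $x\neq 1$ from smoothness of the integrand in $x$ combined with the uniform majorant built above and dominated convergence. The delicate point is continuity at $x=1$, where $g(x)\to\infty$ and, simultaneously, the two singular points $z=1$ and $z=1/x$ collide. Writing
\begin{align*}
(\log x)\,x\,I_1(x)=(\log x)^{1-\alpha}\!\int_0^\infty\!\Bigl(1-\bigl(1+\tfrac{\log z}{\log x}\bigr)^{-\alpha}\Bigr)\tilde K(z)\,dz,
\end{align*}
I would estimate the bracket uniformly by $C\min(1,|\log z/\log x|)$ via the mean value theorem, which against $\tilde K$ yields an integral bounded independently of $x$, and then conclude via $(\log x)^{1-\alpha}\to 0$ (using $\alpha<1$) that $(\log x)I_1(x)\to 0$ as $x\to 1$.

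The main technical obstacle is the coexistence at $x=1$ of the non-integrable singularity of $\tilde K$ at $z=1$ with the blowup of $g$ at the same point: the required $O(|\log x|^{-\alpha})$ bound only emerges after using the antisymmetry of the principal singular part in $z-1$ (rather than the naive absolute-value bound, which would lose an extra $|\log x|^{-1}$), and the continuity at $x=1$ depends crucially on the strict inequality $\alpha<1$ of the hypothesis.
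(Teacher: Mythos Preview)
Your rescaling $y=xz$ is a clean alternative to the paper's direct splitting $\{|y-x|\lessgtr x/2\}$ in the $y$-variable (where the paper uses $\bigl|\,|\log x|^{\alpha}-|\log y|^{\alpha}\,\bigr|\le|\log(y/x)|^{\alpha}$ on the near part). But the bound argument has a genuine gap. The Taylor expansion of $g(xz)-g(x)$ in $\log z$ is valid only for $|\log z|\ll|\log x|$, i.e.\ on a neighbourhood of $z=1$ of radius $\sim|x-1|$; you do not treat the annulus $|x-1|\lesssim|z-1|\lesssim 1/2$. There $|\log x|^{\alpha}\bigl(g(x)-g(xz)\bigr)\approx 1$ with the \emph{same sign} on both sides of $z=1$ (so the odd-symmetry cancellation you invoke does not help), while $\tilde K(z)\sim(2|z-1|)^{-1}$, giving a contribution $\sim\log(1/|x-1|)$ to $|\log x|^{\alpha}\,xI_1(x)$ rather than $O(1)$. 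The later claim $|1-(1+\log z/\log x)^{-\alpha}|\le C\min(1,|\log z|/|\log x|)$ is likewise false near $z=1/x$, where the left side blows up.

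In fact this computation shows that the stated inequality $|\log x|^{\alpha}|I_1(x)|\le C/x$ cannot hold uniformly as $x\to1$: the sharp bound carries an extra factor $1+|\log|x-1||$. The paper's own proof is loose at exactly the same place---its estimate (3.30) asserts $\int_{x/2}^{3x/2}|x-y|^{\alpha-1}|\log y|^{-\alpha}\,dy\le Cx^{\alpha}/(1+|\log x|^{\alpha})$, but for $x=1\pm\varepsilon$ the left side behaves like $\log(1/\varepsilon)$. What \emph{does} hold, and is what the subsequent Corollary actually needs, is the continuity $(\log x)I_1\in C(0,\infty)$: from the corrected bound one gets $(\log x)I_1(x)=O\bigl(|\log x|^{1-\alpha}\log(1/|x-1|)\bigr)\to0$ as $x\to1$, so your final conclusion is right even though the intermediate uniform bound is not.
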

\begin{proof} 
The continuity of $I_1$ only requires a uniform estimate on a small neighborhood of every $x>0$ of the function under the integral sign. The bound on  $(\log x)I_1(x)$ follows from point wise estimates of that same function.  The point wise estimates for  $x>0$ are written in detail below.  The uniform estimates on  small  neighborhood of   $x>0$ are deduced as  in the proof of  Lemma (\ref{S3L314}). For $x>0$,  the domain $(0, \infty)$ is split  in two subdomains,

\begin{align}
&I_1=\int   _{ |x-y|\ge x/2 }\left(\frac {1} {(\log x)^\alpha } - \frac {1} {(\log y)^\alpha }\right)K(x, y)dy+\\
&\qquad +\int   _{ |x-y|\ge x/2 }\left(\frac {1} {(\log x)^\alpha } - \frac {1} {(\log y)^\alpha }\right)K(x, y)dy=I _{ 1, 1 }(x)+I _{ 1, 2 }(t, x)\label{S3WWE01}
\end{align}
\begin{align}
&|I _{ 1, 1 }(x)|\le Cx\int\limits  _{ 3x/2 }^\infty\left(\frac {1} {|\log x|^\alpha }+
\frac {1} {|\log y|^\alpha } \right) \frac {dy}{y^3}+\nonumber\\
&\hskip 5cm +\frac {C} {x^2}\int\limits  _{ 0 }^{x/2} \!\! \left(\frac {1} {|\log x|^\alpha }+\frac {1} {|\log y|^\alpha } \right)dy \le \frac {C} {x|\log x|^\alpha } \label{S3WWE0}
\end{align}
The continuity of  $I _{ 1,2 }(t)$ follows as for  $I _{ 1,1 }$.   The mean value Theorem gives,
\begin{align}
\label{S3WWE02}
|I _{ 1,1 }| &=\left|\int\limits  _{ x/2 }^{3x/2}\!\!\frac {(\log x)^\alpha -(\log y)^\alpha   }{(\log x)^\alpha (\log y)^\alpha }K(x, y)dy\right|
\le \frac {C} {x^{1+\alpha } |\log x|^{\alpha}}\int \limits _{ x/2 }^{3x/2}\!\!\frac {dy} {|x-y|^{1-\alpha   } |\log y|^\alpha }
\end{align}
\begin{equation}
\label{S3WWE1}
\hskip -2.5cm \hbox{If}\,\,x>2,\,\hbox{or}\, 3x<2,\qquad \quad \int \limits _{ x/2 }^{3x/2}\!\!\frac {dy} {|x-y|^{1-\alpha   } |\log y|^\alpha }\le \frac {Cx^\alpha } {|\log x|^\alpha }\le \frac {Cx^\alpha } {1+|\log x|^\alpha }.
\end{equation}
If $x\in (2/3, 1)$,  but  a similar argument works for $x\in (1, 2)$, we use the binomial formula,

\begin{align}
&\int \limits _{ x/2 }^{3x/2}\!\!\frac {dy} {|x-y|^{1-\alpha   } |\log y|^\alpha } =\int\limits  _{ x/2 }^{x}\!\!\!... dy+\int\limits  _{ x }^{1}\!\!\!... dy+\int\limits  _{ 1 }^{\frac {3x} {2}}\!\!\!... dy \label{S3WWE2}\\
&\int \limits _{ x/2 }^{x}\!\!\frac {dy} {(x-y)^{1-\alpha   } |\log y|^\alpha }=\frac {1} {x^{1-\alpha }}\sum _{ n=0 }^\infty \binom{1-\alpha }{n}
\int \limits _{ x/2 }^{x}\frac {dy} { |\log y|^\alpha}\le \frac {C} {x^{1-\alpha }} \label{S3WWE3}
\end{align}
\begin{align}
& \int \limits _{ 1 }^{\frac {3x} {2}}\!\!\frac {dy} {(x-y)^{1-\alpha   } |\log y|^\alpha }\le \sum _{ n=0 }^\infty \binom{1-\alpha }{n}
\int \limits _{1 }^{\frac {3x} {2}}\frac {dy} { |\log y|^\alpha}\le  C \label{S3WWE4}\\
&\int \limits _{ x }^1\!\!\frac {dy} {(x-y)^{1-\alpha   } |\log y|^\alpha }\le \sum _{ n=0 }^\infty \binom{1-\alpha }{n}\int \limits _{ x }^1\!\!\frac {dy} {y^{\alpha -1} |\log y|^\alpha }\le C \label{S3WWE5}
\end{align}
Then, by (\ref{S3WWE1})-(\ref{S3WWE5}), for all $x>0$,
\begin{equation}
\int \limits _{ x/2 }^{3x/2}\!\!\frac {dy} {|x-y|^{1-\alpha   } |\log y|^\alpha }\le\frac {Cx^\alpha } {1+|\log x|^\alpha },\,\,\forall x>0.\label{S3WWE6}
\end{equation}
and Lemma follows  from (\ref{S3WWE0}), (\ref{S3WWE02}). 
\end{proof}

\begin{lem}
\label{S3L89I2}
For all $\alpha \in (0, 1)$, $I_2\in C\left(\left(\frac {1-\alpha } {2}, 1\right)\times (0, \infty)\right)$. For all $t\in \left(\frac {1-\alpha } {2}, 1\right)$  there is $C>0$ and $\varepsilon >0$ as small as wanted  such that $I_2(t, x)\le  \frac {C } {x^{1+\varepsilon }(1+|\log x|^\alpha )}$ for all $x>0$. 
\end{lem}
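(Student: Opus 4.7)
\medskip

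\noindent\textbf{Plan.} I would prove Lemma \ref{S3L89I2} by mirroring the splitting used for $I_1$, but with the crucial difference that the regularity needed in the inner region must now come from the H\"older-type estimates on $\Lambda$ itself. Setting $G(z)=\Lambda(t,z)(\log z)^\alpha$, the integrand of $I_2$ reads $(G(x)-G(y))K(x,y)/(\log y)^\alpha$, and I split $(0,\infty)$ into the outer regions $(0,x/2)\cup (3x/2,\infty)$ and the inner region $(x/2,3x/2)$.

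On the outer regions $K(x,y)$ has no singularity (it behaves like $y/x^3$ for $y\ll x$ and like $x/y^3$ for $y\gg x$), so the relevant bounds come directly from pointwise estimates of $\Lambda(t,y)$. For $t\in[1,\infty)$ these come from Propositions \ref{S4L1}, \ref{S4L2}, \ref{S3L3}; for $t\in(0,1)$ and $y$ bounded away from $1$ they come from Proposition \ref{S4P9}; and for $y$ near $1$ (the delicate range) they come from Proposition \ref{S3.3.4P1} together with the sharp asymptotic $\tilde\Lambda(t,X)\sim t|X|^{2t-1}$ from Proposition \ref{S4P2} (and Corollary \ref{S3Cor18}). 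Putting these together gives the desired bound $C/(x^{1+\varepsilon}(1+|\log x|^\alpha))$ on the outer contribution.

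The heart of the proof is the inner region, where $K(x,y)\sim 1/(x|x-y|)$ near $y=x$. For $t\in((1-\alpha)/2,1/2)$ I would apply (\ref{S5CAlph10}) with $r$ chosen so that $\alpha<r<2t$ (possible precisely because $t>(1-\alpha)/2$), obtaining that the function $A(z):=(\log z)\Lambda(t,z)/(z-1)^\alpha$ is H\"older of order $r-\alpha$ with weight $z^{-r'}$. I would then write
\[
G(z)=A(z)\,\Phi(z),\qquad \Phi(z)=\frac{(z-1)^\alpha(\log z)^\alpha}{\log z},
\]
and estimate $G(x)-G(y)=(A(x)-A(y))\Phi(x)+A(y)(\Phi(x)-\Phi(y))$ by the product rule. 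The H\"older exponent $r-\alpha>0$ absorbs the $1/|x-y|$ singularity of $K$, and the factors $\Phi$ and $A$ are controlled (respectively using the zero of $\Phi$ at $z=1$ and the $x^{-r'}$ decay) to yield the claimed weighted bound. For $t\in[1/2,1)$ the Sobolev-type estimate (\ref{S5CAlph10z}) on the fractional $x$-derivative of $(\log x)\Lambda(t,x)$ gives, via the characterization of H\"older spaces recalled in \cite{M} and used in the proof of Proposition \ref{S5CInvM2}, a H\"older modulus for $G$ of order $2t-1+\alpha-\varepsilon$ which again suffices.

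The main obstacle is the double singularity that appears when $x$ is close to $1$: in the inner region both $y=1$ and $y=x$ can lie inside the integration interval, $(\log y)^\alpha$ in the denominator is small, $K(x,y)\sim 1/|x-y|$, and the H\"older estimate (\ref{S5CAlph10}) is only sharp after the re-scaling $A(z)$. The factorization above is tailored precisely for that situation: the auxiliary factor $\Phi(z)=(z-1)^\alpha(\log z)^{\alpha-1}$ cancels the $(\log y)^{-\alpha}$ from the denominator up to bounded quantities, and the limit $G(z)\to 0$ as $z\to 1$, guaranteed by Proposition \ref{S4P2} whenever $t>(1-\alpha)/2$, prevents the near-diagonal integral from blowing up at $x=1$. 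Once these pointwise bounds have been obtained uniformly on compact subsets of $((1-\alpha)/2,1)\times(0,\infty)$, the continuity of $I_2$ follows by dominated convergence.
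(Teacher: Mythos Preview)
Your outline is essentially the paper's approach: split the $y$-integral at $x/2$ and $3x/2$, bound the outer pieces by the pointwise decay of $G(z)=(\log z)^\alpha\Lambda(t,z)$ (the paper packages this as the single estimate $|G(z)|\le C(1+z|\log z|^\alpha)/(1+z^{4-\varepsilon})$, valid throughout $((1-\alpha)/2,1)$), and in the inner region invoke a H\"older bound on $G$ together with the calculus estimate (\ref{S3WWE6}).

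There is, however, a concrete error in your inner-region argument. You propose to apply (\ref{S5CAlph10}) with its parameter equal to the $\alpha$ of the present lemma, and assert that the condition $\alpha<r<2t$ follows from $t>(1-\alpha)/2$. That implication is false: $t>(1-\alpha)/2$ means $2t>1-\alpha$, not $2t>\alpha$; for instance with $\alpha=0.8$ and $t=0.15$ one has $t>(1-\alpha)/2=0.1$ but $2t=0.3<\alpha$. Correspondingly, your factor $\Phi(z)=(z-1)^\alpha(\log z)^{\alpha-1}\sim(z-1)^{2\alpha-1}$ is unbounded near $z=1$ whenever $\alpha<1/2$, so the product-rule step does not control $G(x)-G(y)$ as written.

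The fix is to apply (\ref{S5CAlph10}) with its parameter taken to be $1-\alpha$ rather than $\alpha$: the admissibility condition becomes $1-\alpha<r<2t$, which is \emph{exactly} the hypothesis $t>(1-\alpha)/2$. The auxiliary function is then
\[
A(z)=\frac{(\log z)\,\Lambda(t,z)}{(z-1)^{1-\alpha}},
\]
which is H\"older of order $r-(1-\alpha)>0$ with the weight $x^{-r'}$, and the leftover factor
\[
\Phi(z)=\Big(\frac{z-1}{\log z}\Big)^{1-\alpha}
\]
is smooth and bounded near $z=1$. With this correction the product-rule estimate goes through and yields the paper's bound
\[
\int_{x/2}^{3x/2}J(t,x,y)\,dy\le \frac{C}{x^{1+\rho'-\rho}(1+|\log x|^\alpha)}.
\]
This is in fact what the paper means by ``the H\"older property of $(\log x)^\alpha\Lambda(t,x)$, (\ref{S5CAlph10})''; it simply does not spell out the exponent swap. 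There is then no need for a separate treatment of $t\in[1/2,1)$.
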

\begin{proof}
 From Proposition \ref{S4P9}, we deduce that if $t\in \left(\frac {1-\alpha } {2}, 1\right)$,  for $\varepsilon >0$ arbitrarily small
\begin{equation}
\label{S3EFG1}
\Lambda (t, x )|\log x|^\alpha \le C \frac {1+x|\log x|^\alpha } {1+x^{4-\varepsilon }}
\end{equation}
Then, if we denote $J(t, x, y)=\left|\frac {\Lambda(t, x)(\log x)^\alpha - \Lambda(t, y)(\log y)^\alpha} {(\log y)^\alpha }\right|K(x, y)$

\begin{align*}
\int \limits _{0}^{x/2}J(t, x, y)dy&\le  C\frac {1+x|\log x|^\alpha } {1+x^{4-\varepsilon }}x^{-2}
\int \limits _{0}^{x/2} \frac {dy} {|\log y|^\alpha }dy+Cx^{-2}\int \limits _{0}^{x/2}  \frac {(1+y|\log y|^\alpha )dy} {(1+y^{4-\varepsilon })|\log y|^\alpha }\\
&\le C\frac {1+x|\log x|^\alpha } {1+x^{4-\varepsilon }}x^{-1}\frac {1} {1+|\log x|^\alpha }+\frac {C} {x(1+x+|\log x|^\alpha )}\\
&\le \frac {C} {x+x^{5-\varepsilon }}+\frac {C} {x(1+x+|\log x|^\alpha )}\le \frac {C} {x+x^2}
\end{align*}
\begin{align*}
&\hskip -0.4cm \int \limits _{\frac {3x} {2}}^{\infty }J(t, x, y)dy\le  C\frac {1+x|\log x|^\alpha } {1+x^{4-\varepsilon }}
\int \limits _{\frac {3x} {2}}^{\infty} \frac {dy} {y^2|\log y|^\alpha }dy+C\int \limits _{\frac {3x} {2}}^{\infty}  \frac {(1+y|\log y|^\alpha )dy} {y^2(1+y^{4-\varepsilon })|\log y|^\alpha }\\
&\quad \le C\frac {1+x|\log x|^\alpha } {1+x^{4-\varepsilon }}\frac {1} {1+x|\log x|^\alpha }+\frac {C} {x(1+|\log x|^\alpha )}\le \frac {C} {x(1+|\log x|^\alpha+x^{4-\varepsilon } )}\\
\end{align*}
By the Holder property of $(\log x)^\alpha \Lambda (t, x)$, \ref{S5CAlph10}) and arguing as in (\ref{S3WWE1})-(\ref{S3WWE5}), 
\begin{align*}
&\int \limits _{x/2}^{3x/2}J(t, x, y) dy\le 
 \frac {C} {x^{1+\rho ' }} \int \limits _{x/2}^{3x/2} \frac {dy} {|x-y|^{1-\rho }|\log y|^\alpha }\le \frac {C } {x^{1+\rho ' -\rho }(1+|\log x|^\alpha )},\,\,\forall x>0
\end{align*}
Using (\ref{S5CAlph1}), the continuity of  $I_2$ follows  with the same argument as for $I_1$ .
\end{proof}
From (\ref{S3Esplt} ),  Lemma  \ref{S3L89I1}   and Lemma \ref{S3L89I2} we obtain,
\begin{cor}
\label{S3CorLL}
For all $\alpha \in (0, 1)$,  $(\log x)^\alpha L(\Lambda)\in C\left(\left(\frac {1-\alpha } {2}, 1\right)\times (0, \infty)\right)$ and for every $t\in \left(\frac {1-\alpha } {2}, 1\right)$ there exists a constant $C>0$ such that
$$
|L(\Lambda (t))(x)|\le \frac {C|\Lambda (t, x)|} {x}+\frac {C} {x^{1+\varepsilon }(1+|\log x|^\alpha )},\,\,\forall x>0.
$$
\end{cor}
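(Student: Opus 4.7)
The plan is to simply assemble the splitting (\ref{S3Esplt}) with the two preceding lemmas. Starting from
\[
L(\Lambda(t))(x)=\Lambda(t,x)(\log x)^\alpha I_1(x)+I_2(t,x),
\]
the pointwise estimate follows at once: Lemma \ref{S3L89I1} yields $|(\log x)^\alpha I_1(x)|\le C/x$, so the first term is bounded by $C|\Lambda(t,x)|/x$, while Lemma \ref{S3L89I2} provides directly the bound $C/(x^{1+\varepsilon}(1+|\log x|^\alpha))$ on the second term. Adding the two gives the estimate in the statement.

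For the continuity claim I would multiply the decomposition by $(\log x)^\alpha$ and examine the two pieces separately. The second piece, $(\log x)^\alpha I_2(t,x)$, is continuous on $\bigl(\tfrac{1-\alpha}{2},1\bigr)\times(0,\infty)$ because Lemma \ref{S3L89I2} gives the continuity of $I_2$ there, and $(\log x)^\alpha$ is continuous on $(0,\infty)$. For the first piece, written as $(\log x)^{2\alpha}\Lambda(t,x)\,I_1(x)$, continuity at any point $(t_0,x_0)$ with $x_0\neq 1$ follows from the continuity of $\Lambda$ given by (\ref{S5CInvME03}) together with the continuity of $I_1$ (which can be read off Lemma \ref{S3L89I1}, whose proof actually shows that $I_1$ is continuous on $(0,\infty)\setminus\{1\}$, with $(\log x)I_1\in C(0,\infty)$).

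The only delicate point is continuity as $x\to 1$, and this is exactly where the constraint $t>(1-\alpha)/2$ enters. Near $x=1$, Proposition \ref{S4P2} gives $|\Lambda(t,x)|\lesssim t|x-1|^{2t-1}$, while $|\log x|^\alpha\sim|x-1|^\alpha$, so from the pointwise bound we infer
\[
\bigl|(\log x)^\alpha L(\Lambda(t))(x)\bigr|\le C|\log x|^\alpha|\Lambda(t,x)|/x+C|\log x|^\alpha/(x^{1+\varepsilon}(1+|\log x|^\alpha)),
\]
whose right-hand side tends to $0$ as $x\to 1$ precisely when $\alpha+2t-1>0$, i.e.\ $t>(1-\alpha)/2$. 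Together with the continuity of $(\log x)^\alpha L(\Lambda)$ away from $x=1$, this yields the desired global continuity on $\bigl(\tfrac{1-\alpha}{2},1\bigr)\times(0,\infty)$. No substantial new obstacle is expected; the corollary is essentially a bookkeeping consequence of the splitting plus the two technical lemmas, the real work having been carried out in their proofs.
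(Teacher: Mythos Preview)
Your proof takes the same route as the paper's (which is literally a one-line reference to the splitting (\ref{S3Esplt}) and Lemmas \ref{S3L89I1}--\ref{S3L89I2}): the pointwise bound is exactly as you say, and the continuity argument is the natural filling-in of what the paper leaves implicit.

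One small correction: when handling continuity at $x=1$ you invoke Proposition~\ref{S4P2} to get $|\Lambda(t,x)|\lesssim t|x-1|^{2t-1}$, but that proposition is an asymptotic statement \emph{as $t\to 0$}, uniform on bounded $X$-sets, not a bound valid for each fixed $t_0\in((1-\alpha)/2,1)$. What you actually need is already in Theorem~\ref{TheoMain1}: for $t\in((1-\alpha)/2,1/2)$ take (\ref{S5CAlph10}) with the parameter there equal to $0$ and $y=1$ to get $|(\log x)\Lambda(t,x)|\le C|x-1|^{r}$ for any $r<2t$; since $t>(1-\alpha)/2$ one may choose $r>1-\alpha$, whence $|\log x|^{\alpha}|\Lambda(t,x)|\lesssim |x-1|^{r+\alpha-1}\to 0$. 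For $t\in[1/2,1)$ property (\ref{S5CInvME3B}) with $k=0$ gives $\Lambda(t,\cdot)\in C(0,\infty)$, so $|\log x|^{\alpha}|\Lambda(t,x)|\to 0$ trivially. These estimates are locally uniform in $t$, which also gives the joint continuity at $(t_0,1)$ that your argument needs but does not quite spell out. With this adjustment the proof is complete and matches the paper's approach.
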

\begin{prop} 
\label{S3ELfg}
\begin{align}
(\log x) \frac {\partial \Lambda } {\partial t}=(\log x) L(\Lambda)\,\,\,\hbox{in}\,\, C((0, \infty )\times (0, \infty))\label{S3ELfgE1}
\end{align}
\end{prop}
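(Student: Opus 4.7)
The plan is to proceed in two stages: first, to verify that both sides of (\ref{S3ELfgE1}) are continuous on $(0,\infty)\times(0,\infty)$, and second, to identify them using the distributional identity from Proposition \ref{S5PInv1} together with the integration-by-parts correspondence between $L$ and the Mellin convolution $u\mapsto\int_0^\infty H(x/y)u'(y)\,dy/y$ that underlies the equivalence of (\ref{S4E1}) and (\ref{S3EMK2}). Continuity of the left-hand side is exactly property (\ref{S5CInvME03}) with $m=1$, already established in Proposition \ref{S5CInvM2}. For the right-hand side I split at $t=1$: for $t>1$, Proposition \ref{S3L314} gives $L(\Lambda)\in C((1,\infty)\times(0,\infty))$; for $t\in(0,1)$, Corollary \ref{S3CorLL} provides $(\log x)^\alpha L(\Lambda)\in C$ near any $(t_0,x_0)$ as soon as $\alpha\in(1-2t_0,1)$, and then the factorization
\[
(\log x)L(\Lambda)=|\log x|^{1-\alpha}\operatorname{sgn}(\log x)\,\cdot\,|\log x|^\alpha L(\Lambda)
\]
completes the argument, since $|\log x|^{1-\alpha}\operatorname{sgn}(\log x)$ is continuous on $(0,\infty)$ with a zero at $x=1$, which absorbs the singular behaviour of $L(\Lambda)$ across $x=1$.

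For the equality, Proposition \ref{S5PInv1} yields $\partial_t\Lambda(t)=(\partial_x\Lambda(t))\ast H$ in $E'_{1,3}$, whose pointwise expression is the right-hand side of (\ref{S4E1}). The Appendix derivation leading from (\ref{S3EMK2}) to (\ref{S4E1}) is an integration by parts in $y$ converting this convolution into $L(\Lambda(t))(x)$. For $t>2$, (\ref{S5CInvME3B}) gives $\Lambda(t)\in C^1(0,\infty)$ and Propositions \ref{S4L1}--\ref{S3L3} furnish polynomial decay, so the integration by parts is classical and produces $\partial_t\Lambda=L(\Lambda)$ pointwise on $(2,\infty)\times(0,\infty)$. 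For $t\in(0,2]$ I would test the distributional identity against an arbitrary $\varphi\in\mathscr D(0,\infty)$, apply Fubini (justified by the estimates of Sections \ref{S4larget}--\ref{S4smallt} on $\Lambda$ together with the behaviour of $H$ near $r=1$ and at infinity), and integrate by parts in $y$ against $\varphi$ to recognise the result as $\int_0^\infty L(\Lambda(t))(x)\varphi(x)\,dx$; since both sides of (\ref{S3ELfgE1}) are already known to be continuous and they agree as distributions on $(0,\infty)$ for every $t>0$, the pointwise equality follows.

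The main obstacle is the integration by parts in the regime $t\in(0,1/2)$, where by (\ref{S5CAlph1})--(\ref{S5CAlph10}) the function $\Lambda(t)$ is only H\"older of order $r-\alpha<2t$ after division by the singular factor $(x-1)^\alpha$, and where $\Lambda(t,\cdot)$ itself is singular at $x=1$ (Proposition \ref{S3.3.4P1}). I would decompose the $y$-integration domain into a neighbourhood of the diagonal $y=x$, a neighbourhood of the singular point $y=1$, and the complement, using the H\"older bound on $(\log y)\Lambda(t,y)/(y-1)^\alpha$ to tame the near-diagonal contribution (mirroring the splitting in Lemmas \ref{S3L89I1}, \ref{S3L89I2}), the estimate of Proposition \ref{S3.3.4P1} to handle the near-singularity contribution, and the decay estimates of Proposition \ref{S4P9} in the far field, so that Fubini and the integration by parts can be justified on each piece separately and then recombined.
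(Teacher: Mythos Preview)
Your continuity argument is fine and matches the paper. The gap is in the identification step for small $t$.

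The identity (\ref{S4EHL007}) that converts $L(u)$ into $(\partial_x u)\ast H$ is obtained by writing $u(y)-u(x)=\int_x^y u'(z)\,dz$ and swapping integrals; it therefore requires $u'$ to be a locally integrable function on the whole interval between $x$ and $y$. For $t<1/2$, Proposition~\ref{S4P2} gives $\Lambda(t,x)\sim t|x-1|^{2t-1}$, so $\partial_x\Lambda(t,\cdot)$ behaves like $|x-1|^{2t-2}$ near $x=1$ and is \emph{not} locally integrable there. Hence (\ref{S4EHL007}) cannot be applied to $\Lambda(t,\cdot)$, even for $x\neq 1$, because the $y$-integration in $L(\Lambda)(x)$ must cross $y=1$. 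Your proposed three-region decomposition controls the size of the separate integrals but does not rescue the integration by parts: the obstruction is not an estimate but the very meaning of $\int_x^y\partial_z\Lambda(t,z)\,dz$ when the path crosses $z=1$. For the same reason your Fubini step fails when $1\in\mathrm{supp}\,\varphi$: near $(x,y)=(1,1)$ with $|x-y|\sim|x-1|$ the integrand $|\Lambda(t,y)-\Lambda(t,x)|K(x,y)$ is of order $|x-1|^{2t-2}$, which is not jointly integrable.

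The paper avoids this by a density argument. It picks smooth $u_n\to(\log x)^\alpha\Lambda(t)$ in $H^\rho_{\mathrm{loc}}$ with uniform $L^\infty$ and weighted-$L^1$ control, sets $v_n=u_n/(\log x)^\alpha$, and uses (\ref{S4EHL007}) \emph{for the smooth $v_n$} so that $L(v_n)=(\partial_x v_n)\ast H$. Then $v_n\to\Lambda(t)$ in $L^1$ forces $H\ast v_n\to H\ast\Lambda(t)$ in $L^1$, hence $(\partial_x v_n)\ast H\to(\partial_x\Lambda(t))\ast H=\partial_t\Lambda(t)$ in $\mathscr D'$; and $L(v_n)\to L(\Lambda(t))$ in $\mathscr D'$ by pointwise dominated convergence together with the uniform bound $|L(v_n)(x)|\le C|\log x|^{-1}$ on compacts proved in the lemma following Proposition~\ref{S3ELfg}. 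The key idea you are missing is this mollification: rather than justify (\ref{S4EHL007}) directly on the singular $\Lambda$, one applies it where it is valid and passes to the limit.
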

\begin{proof} If $t>1$, $\Lambda (t)\in C^1(0, \infty)$ and actually, by (\ref{S5PInv1EX0}) and  (\ref{S4EHL007}), satisfies  (\ref{S3EMK2}) for $t>1$ and $x>0$. For $\tau \in (0, 1)$ fixed and $t\in (\tau , 1)$,  $(\log x)^\alpha  \Lambda(t)\in H _{ {\rm loc} }^{\rho'} (0, \infty)$ by (\ref{S5CAlph1}), for $\alpha $ and $\rho' $ such that 
$0<1-\alpha <\rho' <2\tau$. Since $\Lambda(t)\in L^1(0, \infty)$ we also have $x^{-r }\Lambda(t)\in L^1(0, \infty)$ for any $r \in (0, 1)$.  Therefore, there exist  a sequence of regular functions $u_n\in C^1(0, \infty)$  such that, for  $\rho \in(0, \rho' )$ and $r \in (0, 1)$ both fixed
\begin{align}
&\lim _{ n\to \infty }||u_n-(\log x)^\alpha \Lambda|| _{ H^{\rho }(I) }=0,\,\,\,||u_n||_\infty\le C\label{S3EIL1}\\
&\lim _{ n\to \infty }\left|\left|\left(u_n -(\log x)^\alpha  \Lambda \right)x^{-r }\right|\right| _{ L^1(0, \infty) }=0\label{S3EIL2}
\end{align}
 for all $I\subset (0, \infty)$ compact.
By (\ref{S3EIL2}) there exists a function $h\in L^1$ and a subsequence still denoted $u_n$ such that $x^{-r}u_n(x)\le h(x)$ for a. e. $x>0$ and  $v_n=\frac {u_n} {(\log x)^\alpha } $ satisfies 
\begin{equation}
\lim _{ n\to \infty }\left|\left|v_n - \Lambda\right|\right| _{ L^1(0, \infty) }=0\,\,\hbox{and then}\,\,\,
\lim _{ n\to \infty }||H\ast v_n-H\ast \Lambda ||_1=0.
\end{equation}
since $H\in L^1(0, \infty)$. On the other hand, if $w_n=v_n-\Lambda $, the same splitting as  (\ref{S3Esplt}) gives,
\begin{align*}
\int _0^\infty |w_n(y)-w_n(x)|K(x, y)dy \le |w_n(x)(\log x)^\alpha|  I_1+I _{ 2, n },\\
I _{ 2, n }=\int _0^\infty \left|\frac {w_n(x)(\log x)^\alpha - w_n(y)(\log y)^\alpha} {(\log y)^\alpha }\right|K(x, y)dy
\end{align*}
with
\begin{align*}
\left| w_n(x)(\log x)^\alpha - w_n(y)(\log y)^\alpha \right|& \le \left| u_n(x) - u_n(y) \right|\
+\left| \Lambda (x)(\log x)^\alpha - \Lambda (y) (\log y)^\alpha \right|\\
& \le C|x-y|^{\rho }+\left| \Lambda (x)(\log x)^\alpha - \Lambda (y) (\log y)^\alpha \right|.
\end{align*}
By  the Lebesgue's convergence Theorem, for all $x>0$,
\begin{equation*}
\lim _{ n\to \infty }\int _0^\infty \left(v_n(y)-v_n(x)\right)K(x, y)dy=
\int _0^\infty (\Lambda (y)-\Lambda (x))K(x, y)dy.
\end{equation*}
It  follows from next Lemma that $L(v_n) \underset { n\to \infty }{\longrightarrow} L(\Lambda (t)$ in $\mathscr D'(0, \infty)$.
\end{proof}
\begin{lem}
For all interval $I=[a, b]  \subset (0, \infty)$, there exists a constant $C$ such that, $|L(v_n)(x)|\le C|\log x|^{-1}$ for all $x\in I$.
\end{lem}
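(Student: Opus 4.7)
The plan is to follow exactly the splitting introduced in the proof of Proposition \ref{S3ELfg}, namely to write
\begin{align*}
L(v_n)(x)=-v_n(x)(\log x)^\alpha I_1(x)+J_n(x),\qquad J_n(x)=\int_0^\infty\frac{u_n(y)-u_n(x)}{(\log y)^\alpha}K(x,y)\,dy,
\end{align*}
and to bound each piece uniformly in $n$ on the compact $I=[a,b]$. For the first term, Lemma \ref{S3L89I1} gives $|I_1(x)|\le C/(x|\log x|^\alpha)$, and (\ref{S3EIL1}) gives $\|u_n\|_\infty\le C$, so $|v_n(x)(\log x)^\alpha I_1(x)|=|u_n(x)I_1(x)|\le C/x$, which is controlled uniformly on $I$ and easily absorbed into a bound of the stated form.

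For $J_n(x)$ I would split the integration into the far region $\{|x-y|\ge x/2\}$ and the near‑diagonal region $\{|x-y|<x/2\}$. In the far region the kernel $K(x,y)$ is integrable against $|\log y|^{-\alpha}$ (the possible singularity at $y=1$ is integrable since $\alpha<1$, and the decay at $0$ and $\infty$ is controlled as in (\ref{S3WWE0})), and one uses $\|u_n\|_\infty\le C$ to bound the contribution by a constant times $(x|\log x|^\alpha)^{-1}$, which on $I$ is of the stated form.

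The real work is the near‑diagonal region. Since $t\in(\tau,1)$ with $2\tau>\rho'>\rho+1/2$, the one‑dimensional Sobolev embedding $H^{\rho'}_{\rm loc}\hookrightarrow C^{\rho'-1/2}_{\rm loc}$ combined with (\ref{S3EIL1}) gives a uniform Hölder bound $|u_n(x)-u_n(y)|\le C|x-y|^\rho$ on compact subsets of $(0,\infty)$, with $\rho$ chosen so that $\rho>\alpha$. In the near‑diagonal region the kernel behaves like $|x-y|^{-1}$, so
\begin{align*}
\int_{|x-y|<x/2}\frac{|u_n(y)-u_n(x)|}{(\log y)^\alpha}K(x,y)\,dy\le C\int_{x/2}^{3x/2}\frac{|x-y|^{\rho-1}}{(\log y)^\alpha}\,dy,
\end{align*}
and this last integral is estimated exactly as in the binomial expansion argument (\ref{S3WWE2})--(\ref{S3WWE5}) of Lemma \ref{S3L89I1}, since $\rho-\alpha>0$ makes the combined singularity at $y=1$ and $y=x$ integrable; the resulting bound has the form $C/(1+|\log x|)^\alpha\cdot x^{\rho-\alpha}$, which for $x\in I$ is controlled by $C|\log x|^{-1}$ (the power $\alpha$ can be upgraded to $1$ by choosing $\rho$ and $\alpha$ appropriately close to their endpoints, or more directly by using that on $I$ the factor $|\log x|^{-\alpha}$ and $|\log x|^{-1}$ only differ by a constant once $x$ is bounded away from $0$ and $\infty$).

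The main obstacle is the simultaneous near‑diagonal and $y=1$ singularity: the kernel $K(x,y)\sim|x-y|^{-1}$ and the weight $(\log y)^{-\alpha}\sim|y-1|^{-\alpha}$ reinforce each other when $x$ is close to $1$, and one must exploit $\rho>\alpha$ to compensate. Once that is done, the bound is uniform in $n$ because the Hölder constant and $L^\infty$ bound for $u_n$ are uniform in $n$ by (\ref{S3EIL1}), which is precisely what is needed to pass to the limit in the Lebesgue dominated convergence argument completing the proof of Proposition \ref{S3ELfg}.
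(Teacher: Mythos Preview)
Your proposal is correct and takes the same approach as the paper: the identical splitting $L(v_n)=-u_n(x)I_1(x)+J_n(x)$, the same far-region bounds via $\|u_n\|_\infty$ and Lemma~\ref{S3L89I1}, and the near-diagonal piece reduced via a uniform H\"older bound on $u_n$ to the integral $\int_{x/2}^{3x/2}|x-y|^{\rho-1}|\log y|^{-\alpha}\,dy$ handled as in (\ref{S3WWE2})--(\ref{S3WWE6}). The one caveat is that your Sobolev-embedding justification for the H\"older bound needs $\rho'>1/2$ (hence $\tau>1/4$), whereas the paper simply writes the bound in terms of $\|u_n\|_{H^\rho(K)}$ without further comment; if you need to cover small $\tau$ you should appeal directly to the pointwise H\"older estimate (\ref{S5CAlph10}) rather than go through the embedding.
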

\begin{proof}
We denote $K=[a/3 , 3b]$ and split $L(v_n)$  as in (\ref{S3Esplt}). Then, for some constant $C$,

\begin{align*}
|v_n(x)||\log x|^\alpha=|u_n(x)| \le C&,\,\,\forall x\ge 0,\quad \hbox{and then},\\
\int _0^{x/2}\frac {|u_n(y)-u_n(x)|} {|\log y|^\alpha }K(x, y)dy&\le \frac {C} {x^2}\int _0^{x/2}\frac {dy} {|\log y|^\alpha }\le \frac {C} {x(1+|\log x|^\alpha )}\\
\int _{3x/2}^\infty \frac {|u_n(y)-u_n(x)|} {|\log y|^\alpha }K(x, y)dy&\le 
C\int _{3x/2}^{\infty} \frac {dy} {y^2|\log y|^\alpha }\le \frac {C} {x(1+|\log x|^\alpha }\\
\int _{ x/2 }^{3x/2} \frac {|u_n(y)-u_n(x)|} {|\log y|^\alpha }K(x, y)dy&\le C ||u_n|| _{ H^\rho (I )}x^{-1}\int _{ x/2 }^{3x/2} \frac {dy} {|x-y|^{1-\rho }|\log y|^\alpha }dy\\
&\le C ||u_n|| _{ H^\rho (K )}\frac {1} {x^{1-\rho }(1+|\log x|^\alpha )}.
\end{align*}
Then, 
\begin{align}
|L(v_n)(x)|&\le  \frac {C|u_n(x)|} {x|\log x|^\alpha }+C\frac {1} {x^{1-\rho }(1+|\log x|^\alpha )}\le C,\,\,\forall x\in I.
\end{align}
\end{proof}
\subsection{Uniqueness of $\Lambda$ in $ E' _{ 0, 2 }$.}
\begin{prop}
\label{S3Euniq} 
For any $T>0$, the function $\Lambda$ is the unique weak solution of (\ref{S4E1}) on $0\le t\le T$ such that, for all $0\le t\le T$,   $\Lambda (t)\in  E' _{ 0, 2 }$ and 
$\mathscr M (\Lambda (t))$ is bounded on $\mathcal S$,     and $\Lambda (t)\rightharpoonup \delta _1$ in $\mathscr D'(0, \infty)$ as $t\to 0$.
\end{prop}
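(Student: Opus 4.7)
Let $\tilde\Lambda$ be any other weak solution satisfying the hypotheses, and set $w:=\Lambda-\tilde\Lambda$, $F(t,s):=\mathscr M(w(t))(s)$. By hypothesis $F(t,\cdot)$ is analytic and bounded on $\mathcal S_{0,2}$ uniformly in $t\in[0,T]$, and $w(t)\rightharpoonup 0$ in $\mathscr D'(0,\infty)$ as $t\to 0$. Combining the weak convergence with the uniform $E'_{0,2}$-bound --- split $x^{s-1}=\varphi(x)x^{s-1}+(1-\varphi(x))x^{s-1}$ for $\varphi\in\mathscr D(0,\infty)$ supported on a large compact, apply weak convergence to the first piece and smallness of the $E_{0,2}$-norm of the tails to the second --- one obtains $F(0,s)=0$ pointwise on $\mathcal S_{0,2}$. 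Mellin-transforming (\ref{S4E1}) as in Proposition~\ref{S5PInv1}, using $\mathscr M(H)(s)=-W(s-1)/(s-1)$, gives
\[
\partial_t F(t,s)=W(s-1)\,F(t,s-1),\qquad s\in\mathcal S_{1,2},
\]
distributionally in $t$.

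Set $G(t,s):=F(t,s)/B(s)$. By Propositions~\ref{S3PBP}--\ref{S3PBP24}, $B$ is analytic and bounded below on $\mathcal S_{0,2}$, so $G(t,\cdot)$ is analytic and uniformly bounded on $\mathcal S_{0,2}$, with $G(0,s)=0$. The identity $W(s-1)B(s-1)=-B(s)$ reduces the equation to $\partial_t G(t,s)=-G(t,s-1)$. Integrating in $t$ and iterating this Volterra-type equation extends $G(t,\cdot)$ analytically from $\mathcal S_{0,2}$ to the half-plane $\mathcal S_{0,\infty}$, with
\[
G(t,s)=(-1)^n\int_0^t\frac{(t-\sigma)^{n-1}}{(n-1)!}\,G(\sigma,s-n)\,d\sigma\qquad\text{on }\mathcal S_{n,n+1},\ n\ge 1,
\]
yielding the factorial bound $|G(t,s)|\le M\,t^n/n!$ on each such strip, where $M:=\sup_{[0,T]\times\mathcal S_{0,2}}|G|$. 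In particular $G(t,\cdot)$ is uniformly bounded (by $Me^T$, say) on the full half-plane $\mathcal S_{0,\infty}$.

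Apply the Hadamard three-lines theorem on $\mathcal S_{0,n}$, with boundary bounds $|G|\le M$ on $\Re s=0$ and $|G|\le MT^n/n!$ on $\Re s=n$: for every $s\in\mathcal S_{0,n}$,
\[
|G(t,s)|\le M^{(n-\Re s)/n}\bigl(M\,T^n/n!\bigr)^{\Re s/n}=M\,\frac{T^{\Re s}}{(n!)^{\Re s/n}}.
\]
For every fixed $s\in\mathcal S_{0,2}$, Stirling's formula gives $(n!)^{\Re s/n}\to\infty$ as $n\to\infty$, forcing $G(t,s)=0$. Hence $F\equiv 0$ on $\mathcal S_{0,2}$, and by injectivity of the Mellin transform on $E'_{0,2}$ (cf.~\cite{ML}), $w\equiv 0$. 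The main technical step is the regularity bootstrap: lifting the distributional $t$-equation for $F$ to the classical iterated Volterra representation for $G$ and preserving uniform bounds under iteration. This rests on the continuity $\Lambda\in C([0,T];E'_{0,2})$ implicit in the theorem statement, together with the boundedness of $B$ away from zero given by Propositions~\ref{S3PBP} and~\ref{S3PBP24}; a secondary point requiring care is the behaviour of $G$ at $s=0$, where the pole of $B$ supplies a zero that ensures analytic extendability of $G$ up to the imaginary axis needed by the three-lines argument (alternatively one applies Hadamard on $\mathcal S_{\varepsilon,n}$ and lets $\varepsilon\to 0$).
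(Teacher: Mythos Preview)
Your approach is correct and genuinely different from the paper's. The paper takes the Laplace transform in $t$ of a time-cutoff of $\mathscr M(\Lambda_1-\Lambda_2)$, obtaining a function $\tilde V(z,s)$ satisfying the Carleman-type equation (\ref{S4E1411}); it then splits $\tilde V=\tilde V_p+\tilde V_h$, shows the homogeneous piece vanishes by passing to the $\zeta=e^{2i\pi(s-\beta)}$ variable and invoking Liouville, and uses the exponential decay of $\tilde V_p$ in $\mathscr Re\,z$ to conclude. Your route---divide by $B$, iterate the shift-ODE $\partial_t G(t,s)=-G(t,s-1)$ into a Volterra formula with factorial bounds on successive strips, then kill $G$ on $\mathcal S_{0,2}$ by Hadamard three-lines with $n\to\infty$---is more elementary: it avoids the Laplace transform, the cutoff $\ell(t)$, and the Riemann--Hilbert machinery entirely, exploiting only the shift structure and the lower bound on $|B|$ from Propositions~\ref{S3PBP} and~\ref{S3PBP24}. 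The paper's method, by contrast, stays within the analytic framework already built to construct $\Lambda$, so it reuses existing infrastructure.

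One point to tighten: your tail-splitting sketch for $F(t,s)\to 0$ does not quite work as written, because the $E_{0,2}$-seminorms of $(1-\varphi_R)x^{s-1}$ need not tend to zero (derivatives of the cutoff spoil the higher seminorms). A clean fix is: the uniform bound $|F(t,\cdot)|\le M$ on $\mathcal S_{0,2}$ makes $\{F(t,\cdot)\}_{t}$ a normal family; along any $t_n\to 0$ extract a locally uniform limit $F_0$; then the Mellin--Parseval identity $\langle w(t),\psi\rangle=\frac{1}{2\pi i}\int_{\Re s=c}F(t,s)\,\mathscr M(\psi)(1-s)\,ds$ for $\psi\in\mathscr D(0,\infty)$ (the integral converges absolutely since $\mathscr M(\psi)(1-s)$ decays rapidly on vertical lines) combined with dominated convergence and the weak limit $w(t)\rightharpoonup 0$ forces $\mathscr M^{-1}(F_0)=0$ in $\mathscr D'$, hence $F_0\equiv 0$. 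The paper's proof also asserts this convergence without detail, so this is a shared lacuna rather than a defect of your argument.
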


\begin{proof}
Suppose the existence of two solutions $\Lambda_1$ and $\Lambda_2$ satisfying the properties and call $\Lambda=\Lambda_1-\Lambda_2$. Then $\mathscr M(\Lambda(t))$ is analytic on $\mathcal S _{ 0, 2 }$ for $0\le t\le T$ and satisfies (\ref{S5EUXYp}) on $\mathscr Re (s)\in (1, 2)$, $0\le t\le T$. By Proposition (\ref{S3PX1}), $\mathscr M(\Lambda(t))$ is bounded on $\mathcal S$ for $0\le t\le T$. By the condition on the initial data $\mathscr M(\Lambda(t))\to 0$ uniformly for $s$ on compact subsets of $\mathcal S _{ 0, 2 }$. Let $\ell\in C^\infty (0, \infty)$ be  such that $\ell(t)=1$ for $0\le t \le T/2$ and $\ell(t)=0$ if $t\ge T$, and define
$\overline U(t, s)=\mathscr M(\Lambda(t))(s)\ell(t)$ that satisfies
\begin{align}
&\frac {\partial \overline U} {\partial t}(t, s)=W(s-1)\overline U(t, s-1)+r(t, s) \label{S3PU1}\\
&r(t, s)=\mathscr M(\Lambda(t))(s)\ell'(t)\label{S3PU2}
\end{align}
and the function $r$ is bounded on $(0, T)\times \mathcal S _{ 0, 2 }$, $r(t)\equiv 0$ if $0\le t\le T/2$. We may then Laplace transform both sides of (\ref{S3PU1}) and obtain, for some constant $C>0$,
\begin{align}
&z \tilde V(z, s)=-W(s-1)\tilde V(z, s-1)+\tilde r(z, s),\,\, \mathscr Re z>0,\,  \mathscr Re (s)\in (1, 2)\label{S3PU3}\\
&|\tilde r(z, s)|\le Ce^{-\frac {T} {2} \mathscr Re z},\,\,\forall s\in \mathcal S, \mathscr Re z>0.\label{S3PU5}
\end{align}
The function $\tilde V$ may be split as  $\tilde V=\tilde V_p+\tilde V_h$ where $\tilde V_p$ is the particular solution of (\ref{S3PU3}),
\begin{equation*}
\tilde V_p(z, s)=
\frac {1} {2i\pi }\frac {B(s)} {z}\int _{\mathscr Re (\sigma )=\beta } \frac {e^{(\sigma-s) \log(-z) }} { B(\sigma )}\frac {\tilde r(z, \sigma )\,d\sigma } {(1-e^{2i\pi (s-\sigma )}) }\\
\end{equation*}
and $\tilde V_h$ must satisfy
\begin{equation}
\frac {\partial \tilde V_h} {\partial t}(t, s)=-W(s-1)\tilde V_h(t, s-1), \,\,\,\mathscr Re z>0,\,  \mathscr Re (s)\in (1, 2) \label{S3PU4}
\end{equation}
The function $\tilde V_p(z, s)$ is analytic on  $s\in \mathcal S$ for all $\mathscr Re z>0$,  analytic on $\mathscr Re z>0$   and for all $s\in \mathcal S$.
By (\ref{S3PU5}), and our choice of the branch of the $\log$  function in (\ref{S4E1410}),
\begin{align}
\left|\tilde V_p(z, s) \right|&\le Ce^{-\frac {T} {2} \mathscr Re z}
\frac {1} {|z|}\int _{\mathscr Re (\sigma )=\beta } \frac {\left| e^{(\sigma-s) \log(-z) }\right|} { B(\sigma )}\frac {|d\sigma| } {\left|1-e^{2i\pi (s-\sigma )}\right| } \nonumber\\
&\le C _{ z_0 }e^{-\frac {T} {2} \mathscr Re z},\,\,\forall \mathscr Re z\ge z_0>0. \label{S3PU7}
\end{align}
On the other hand, using the function $\tilde V_h$ we define, following the same rationale as in the definition of (\ref{S4EBH}), in the Proof of Proposition  \ref{PropppS4EBH}

\begin{align*}
&\tilde H(z, s)=\frac { \tilde V_h (z, s)e^{s\log (-z)}}{B(s)}\\
&\tilde h(z, \zeta )=\tilde H(z, s), \,\, \zeta =e^{2i\pi (s-\beta )}.
\end{align*}
For every $z$ such that $\mathscr R e(z)>0$,  the function $h(z, \cdot)$ is then analytic on $\CC\setminus \RR^+$ and, by (\ref{S3PU4}),
\begin{equation*}
\tilde h(z, \zeta +i0)=\tilde h(z, \zeta -i0), \,\,\forall \zeta \in \RR^+.
\end{equation*}
It follows that for all $\mathscr R e(z)>0$, $\tilde h(z, \cdot)$ is analytic on  $\CC\setminus \{0\}$. But since, by Proposition \ref{S3PBP24} and (\ref{S3PU7}), we also have
$$
|\tilde h(z, \zeta )|\le C\left|e^{s\log (-z)} \right|=C e^{c\log z}\left|e^{i(s-\beta )Arg (-z)}\right|=C e^{c\log z}\left|\zeta  \right|^{\frac {Arg (-z)} {2\pi }}=C e^{c\log z}\left|\zeta  \right|^{1/2},
$$
by Liouville's Theorem $\tilde h(z)\equiv 0$. Therefore $\tilde H(z)=\tilde V_h(z)=0$ and $\tilde V =\tilde V_p$. By the inverse Laplace  formula 
\begin{equation*}
\overline U (t, s)=\frac {1} {2i\pi }\int  _{ a-i\infty }^{a+i\infty}\tilde V(z, s)e^{z t}dz,\,\,\,
\end{equation*}
and by (\ref{S3PU5}) we have then $\overline U (t, s)=\mathscr M(\Lambda (t,) (s)=0$ for all $s\in \mathcal S$ and $0\le t\le T/2$ from where the result follows.
\end{proof}
\begin{proof}
[\upshape\bfseries{Proof of Theorem  \ref{TheoMain1}}] All the properties of $\Lambda$ have already been proved in Proposition \ref{S5CInvM2}, Proposition \ref{S4P2}, Corollary \ref{S4C426} and Proposition \ref{S3Euniq}. The function $G$  satisfies   (\ref{S4E1})  and   (\ref{S5CSolE1})
by the scaling properties of the equation and the Dirac's delta. The $L^1$ continuity property follows from that of $\Lambda$.
 
\end{proof}

\section{Solution of the Cauchy problem.}
\setcounter{equation}{0}
\setcounter{theo}{0}
For all $y>0$ we define,
 \begin{align}
G(t, x; y)=y^{-1}\Lambda\left(\frac {t} {y}, \frac {x} {y} \right),\,\,\,\forall t>0, x>0, y>0.
\end{align}
By (\ref{S4C426E1}), $G\in C((0, \infty)\times (0, \infty); L^1(0, \infty; dx))$ and for $y>0$ fixed it  inherits properties form $\Lambda$. For example,
$G(\cdot, \cdot, y)$ is a weak solution to  (\ref{S4E1}) and
\begin{equation}
\label{S5CSolE1}
\lim _{ t\to 0 }G (t, \cdot, y)=\delta _y,\,\,\,\hbox{\rm in the weak sense of}\,\,\,\,\mathscr D'(0, \infty).
\end{equation}
The function $G$ also  satisfies the following important property,

\begin{prop}
\label{S3C2P1}
There exists a positive constant $C_G>0$ such that, for all $t>0, x>0$,
\begin{equation}
I(t, x)=\int _0^\infty \left|G\left(t, x; y\right)\right| dy<C_G.
\end{equation}
\end{prop}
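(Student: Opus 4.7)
\medskip

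\noindent\textbf{Proof plan.} The strategy is to exploit the scaling structure of $G$: the substitution $z = x/y$ gives $dy/y = -dz/z$, $t/y = \tau z$ and $x/y = z$, so that
\begin{equation*}
I(t,x) = \int_0^\infty \bigl|\Lambda(\tau z, z)\bigr|\,\frac{dz}{z},\qquad \tau := t/x.
\end{equation*}
Thus $I$ depends on $(t,x)$ only through $\tau$, and it suffices to show that $I(\tau)\le C_G$ uniformly for $\tau>0$. I would then split the integration according to the regime $\tau z \ge 1$ (large time for $\Lambda$) or $\tau z < 1$ (small time), and in the second regime further split by the position of $z$ relative to the singularity at $z = 1$.

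For $\tau z \ge 1$ the ratio $\theta = z/(\tau z) = 1/\tau$ is constant along the slice, so Propositions~\ref{S4L1}, \ref{S4L2}, \ref{S3L3} give
\begin{equation*}
|\Lambda(\tau z, z)| \;\le\; \frac{C\,z^{-3}}{1+\tau^3} \;+\; \frac{C\tau\,z^{-4}}{1+\tau^5},
\end{equation*}
and dividing by $z$ and integrating on $[1/\tau,\infty)$ yields two contributions of sizes $C\tau^3/(1+\tau^3)$ and $C\tau^5/(1+\tau^5)$, both bounded by constants independent of $\tau$. In the region $\tau z < 1$, the outer slabs are straightforward: on $z > 3/2$ (only possible if $\tau < 2/3$), Proposition~\ref{S4P9} bounds $|\Lambda(\tau z, z)|$ by $C_\varepsilon\tau^{9-\varepsilon}z^{6}+C\tau^{7}z^{2}$, which integrated against $dz/z$ over $(3/2, 1/\tau)$ gives $O(\tau^{3-\varepsilon})+O(\tau^5)$; and on $z < 1/2$ the proof of Proposition~\ref{S3.3.4P1} actually delivers $|\Lambda(t,x)| \le Ct/|\log x|$ for all $x>0$ and $t\in(0,1)$, so $|\Lambda(\tau z, z)|/z \le C\tau/(z\log 2)$ and the integral over $(0,\min(1/2,1/\tau))$ is $O(1)$.

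The core of the argument is the middle slab $z \in (1/2, 3/2)$, where $1/z \le 2$ and I am reduced to bounding $\int_{1/2}^{3/2}|\Lambda(\tau z, z)|\,dz$ uniformly in $\tau \in (0,2)$. Imitating the splitting used in the proof of Corollary~\ref{S4C426}, I would cut at the $z$-dependent threshold $|z-1| = e^{-1/(\tau z)}$: inside this threshold Corollary~\ref{S3Cor18} gives $|\Lambda(\tau z, z)| \le 2\tau z\,|z-1|^{2\tau z-1}$, while outside Proposition~\ref{S3.3.4P1} gives $|\Lambda(\tau z, z)| \le C\tau z/|z-1|$. Since $z\ge 1/2$, the exponent satisfies $2\tau z - 1 \ge \tau - 1 > -1$, so $|z-1|^{2\tau z-1} \le |z-1|^{\tau-1}$ on the inner piece and $\int_{0}^{e^{-1/(\tau z)}} \tau r^{\tau-1}\,dr \le C e^{-2}$, uniformly in $\tau$; while on the outer piece $\tau\int_{e^{-1/(\tau z)}}^{1/2}dr/r$ is $O(1)$ by the same telescoping as in Corollary~\ref{S4C426}. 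The main technical point to verify is that the $z$-dependence of the threshold $e^{-1/(\tau z)}$ does not spoil these uniform bounds; this is controlled by the coarse inequality $e^{-2/\tau}\le e^{-1/(\tau z)}\le e^{-2/(3\tau)}$ valid on $z\in(1/2,3/2)$. Summing the contributions from all regions then gives $I(\tau)\le C_G$ independently of $\tau$, which is the claim.
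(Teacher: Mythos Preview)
Your reduction via $z = x/y$ to a single-parameter integral $I(\tau) = \int_0^\infty |\Lambda(\tau z, z)|\,dz/z$ is a genuine simplification that the paper does not exploit. The paper works directly in the $(t,x,y)$ variables, splits into the cases $t<x$ and $x<t$, introduces auxiliary monotone functions $H_1, H_2$ built from a cut-off $\delta$ with $\delta(u) = e^{1-u}/2$ for $u \ge 1/2$, and produces six intermediate quantities $\Phi_1, \Psi_1, \tilde\Phi_2, \Phi_3, \Psi_3, \Psi_4$ (Lemmas \ref{S4ELtx}, \ref{S4ELtz}), each of which is then bounded separately in Lemma \ref{S4LFIPS}. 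Your route recovers the same ingredients --- Propositions \ref{S4L1}--\ref{S3L3} for $\tau z \ge 1$, Proposition \ref{S4P9} for $z > 3/2$, and the pair Proposition \ref{S3.3.4P1} / Corollary \ref{S3Cor18} near $z = 1$ --- but organizes them more economically; in particular your threshold $e^{-1/(\tau z)}$ is, after the change of variables $\tau z = t/y$, precisely the paper's $\delta(y/t)$ up to a harmless constant.

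Two points need tightening. First, in the $z<1/2$ slab you write $|\Lambda(\tau z,z)|/z \le C\tau/(z\log 2)$, which would diverge when integrated; the correct bound is $C\tau/\log 2$ (no factor $1/z$), since $|\Lambda(\tau z, z)| \le C\tau z$ there, and then $\int_0^{\min(1/2,1/\tau)} C\tau\,dz \le C$. You do not actually need the sharper claim $|\Lambda(t,x)| \le Ct/|\log x|$ for all $x>0$ --- the proof of Proposition \ref{S3.3.4P1} as written only controls bounded $|\log x|$ --- and the paper's own bound $|\Lambda(t,x)| \le Ct/|x-1| \le 2Ct$ for $x<1/2$ (used e.g.\ in \eqref{S4EX8} and in the proof of Corollary \ref{S4C426}) already suffices. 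Second, Corollary \ref{S3Cor18} only furnishes the pointwise bound $|\Lambda(t,x)|\le 2t|x-1|^{2t-1}$ for $t$ below some fixed $\tau_0$ depending on the bounded set $K$; for $\tau z \in [\tau_0, 1)$ and $z\in(1/2,3/2)$ you should instead invoke that $\Lambda(t,\cdot)\in L^1$ uniformly on this compact time-interval (Corollary \ref{S4C426}), which disposes of that strip immediately.
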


The proof of Proposition \ref{S3C2P1} is split in several auxiliary  Lemmas.  Two different cases:
\begin{itemize}
\item If $0<t<x$,
\begin{align}
I(t, x)=\int _0^t \underbrace{(\cdots \cdots)}_{\text{$t/y >1, x/y>1$}}\frac {dy} {y}+
\int _t^x  \underbrace{(\cdots\cdots )}_{\text{$t/y <1, x/y>1$}}\frac {dy} {y}+\int _x^\infty  \underbrace{(\cdots\cdots )}_{\text{$t/y <1, x/y<1$}}\frac {dy} {y}.\label{S3.3E1}
\end{align}
\item For $0<x<t$,
\begin{align}
I(t, x)=\int _0^x \underbrace{(\cdots\cdots )}_{\text{$t/y >1, x/y>1$}}\frac {dy} {y}+
\int _x^t  \underbrace{(\cdots\cdots )}_{\text{$t/y >1, x/y<1$}}\frac {dy} {y}+\int _t^\infty  \underbrace{(\cdots\cdots )}_{\text{$t/y <1, x/y<1$}}\frac {dy} {y}. \label{S3.3E2}
\end{align}
\end{itemize}
\begin{lem}
\label{S4LG1} 
There exists $C>0$ such that,  for all $t>0$ and $x>0$,
\begin{align*}
\int _0^t\left|\Lambda\left(\frac {t} {y}, \frac {x} {y} \right)\right|\frac {dy} {y}\le C
\end{align*}
\end{lem}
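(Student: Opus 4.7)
The plan is to change variables to convert the integral into one where the large-$\tau$ asymptotics of $\Lambda$ from Section \ref{S4larget} apply throughout, then exploit the remarkable fact that the natural self-similar ratio is frozen under the substitution.

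Set $\tau = t/y$, so that $y = t/\tau$, $dy/y = -d\tau/\tau$, and $x/y = x\tau/t$. Since $y \in (0,t)$ corresponds to $\tau \in (1,\infty)$, the integral becomes
\begin{align*}
\int_0^t \left|\Lambda\!\left(\tfrac{t}{y},\tfrac{x}{y}\right)\right|\frac{dy}{y}
= \int_1^\infty \left|\Lambda(\tau,\tau\theta)\right|\,\frac{d\tau}{\tau},\qquad \theta := \frac{x}{t}.
\end{align*}
The key point is that the second-to-first-argument ratio of $\Lambda(\tau,\tau\theta)$ is exactly $\theta = x/t$, which does not depend on $\tau$. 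Since $\tau > 1$ throughout, Proposition \ref{S4L1} gives
\begin{align*}
\Lambda(\tau,\tau\theta) = \tau^{-3} Q_1(\theta) + Q_2(\tau,\theta).
\end{align*}

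The core of the argument is therefore to establish two uniform bounds in $\theta$: namely $|Q_1(\theta)| \le C$ for all $\theta > 0$, and $|Q_2(\tau,\theta)| \le C\tau^{-4}$ for all $\theta > 0$ and $\tau > 1$. For $Q_1$, Proposition \ref{S4L2} provides $Q_1(\theta) \to 2c_1 B(1)/W'(0)$ as $\theta \to 0$ and $Q_1(\theta) = \mathcal{O}(\theta^{-3})$ as $\theta \to \infty$; together with the continuity of $Q_1$ on $(0,\infty)$ visible from the contour-integral representation \eqref{S4L1E2}, this yields a global bound. For $Q_2$, Proposition \ref{S3L3} gives $|Q_2(\tau,\theta)| \le C\tau^{-4}$ as $\theta \to 0$ (using $|b_1(\tau)| \le C\tau^{-4-\delta}$ together with the remainder estimates) and $|Q_2(\tau,\theta)| \le C\tau^{-4}\theta^{-5}$ as $\theta \to \infty$; for $\theta$ in any compact subset of $(0,\infty)$, a direct estimate of \eqref{S4L1E3} with $\beta_2 > 3$ shifted arbitrarily close to $4$ gives $|Q_2(\tau,\theta)| \le C \tau^{-4}$.

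Granted these uniform bounds, the calculation closes in one line:
\begin{align*}
\int_1^\infty |\Lambda(\tau,\tau\theta)|\,\frac{d\tau}{\tau}
\le |Q_1(\theta)|\int_1^\infty \tau^{-4}\,d\tau + C\int_1^\infty \tau^{-5}\,d\tau
\le C,
\end{align*}
which is even stronger than the claimed $C(1+t^2)$.

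The main obstacle I anticipate is the uniform-in-$\theta$ control of $Q_2$. Propositions \ref{S4L2} and \ref{S3L3} are stated as asymptotic expansions at $\theta \to 0$ and $\theta \to \infty$ separately, so the intermediate regime $\theta \in [\theta_0,\theta_1]$ requires going back to the contour integral \eqref{S4L1E3}, choosing the $s$-contour and the $\sigma$-contour to obtain the right power of $\tau$ while absorbing $\theta$ into a harmless bounded factor. Once that matching is done, the rest is book-keeping.
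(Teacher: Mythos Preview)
Your proof is correct and uses essentially the same ingredients as the paper (Propositions \ref{S4L1}, \ref{S4L2}, \ref{S3L3}), and like the paper you actually obtain the stronger bound $C$ rather than $C(1+t^2)$. The paper's argument is organized slightly differently: instead of the substitution $\tau=t/y$ and the decomposition $\Lambda=\tau^{-3}Q_1+Q_2$, it first extracts from those same propositions the single pointwise bound $|\Lambda(\tau,\xi)|\le C(\max(\tau,\xi))^{-3}$ for $\tau>1$, and then integrates directly in $y$ with a case split $t<x$ versus $x<t$. This packaging avoids your ``main obstacle'' of checking the $\theta$-uniform bound on $Q_2$ for intermediate $\theta$, since that information is already absorbed into the $(\max)^{-3}$ estimate. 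Your observation that $\theta=x/t$ is frozen under the substitution is a clean way to see why the large-$\tau$ self-similar structure controls the whole integral.
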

\begin{proof}
[\upshape\bfseries{Proof of Lemma \ref{S4LG1}}]
Since $y\in (0, t)$, $t/y>1$ and  by Proposition  \ref{S4L1} and Proposition  \ref{S4L2},
$$
\left|\Lambda\left(\frac {t} {y}, \frac {x} {y}\right)\right|\le C \left(\max\left(\frac {t} {y}, \frac {x} {y}\right)\right)^{-3}.
$$
Then,
\begin{align*}
&\forall x>0,\, \forall t\in (0, x),\,\,\,\int _0^t\left|\Lambda\left(\frac {t} {y}, \frac {x} {y} \right)\right|\frac {dy} {y}\le \int _0^t\left(\frac {x} {y}\right)^{-3}\frac {dy} {y} 
=\frac {t^3} {3x^3}\le 1/3.\\
&\forall t>0,\, \forall x\in (0, t),\,\,\int _0^t \left|\Lambda\left(\frac {t} {y}, \frac {x} {y} \right)\right|\frac {dy} {y}\le  \int _0^t  \left(\frac {t} {y}\right)^{-3}\frac {dy} {y}=\frac {1} {3}
\end{align*}
\end{proof}
It remains now to estimate the two last integrals at the right hand side of  (\ref{S3.3E1}), and the last one at the right hand side of (\ref{S3.3E2}). To this end we will be using a  function $\delta (z)$, defined and continuous on $z\ge 0$ such that,
\begin{equation}
\label{S4Edelta}
\delta\,\,\hbox{is decreasing},\,\,\delta (u)<1\,\,\hbox{for all}\,\,u>0,\,\,\delta (1)=\frac {1} {2},\,\,\delta (u)=\frac {e^{1-u}} {2},\,\forall u\ge \frac {1} {2}.
\end{equation}

\subsection{The domain  $0<t<x$.}
Consider first the domain where $0<t<y<x$ where $0<\frac {t} {y}<1<\frac {x} {y}$. In order to use the estimate on $\Lambda$, this domain is still  subdivided. 
\begin{lem}
Define
\begin{align*}
H_2(z)=z(1+\delta (z))\,\,\hbox{and}\,\,\,
H_1(z)=z(1-\delta (z)),\,\,\,\forall z>0
\end{align*}
These two functions are monotone increasing. Moreover
\begin{align}
&\forall z>0,\,\,H_1(z)<z \label{S3.3E4.00}\\
&\forall z>3/2,\,\,\,H_2^{-1}(z)>1 \label{S3.3E4.0}\\
&\forall z>0, H^{-1}_2(z)<z \label{S3.3E4.1}\\
&\forall x>0,\,\,\forall t\in (0, 2x/3), \frac {2x} {3}<tH_2^{-1}\left(\frac {x} {t} \right)\label{S3.3E4}
\end{align}

\end{lem}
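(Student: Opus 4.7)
The plan is to verify the five assertions separately, exploiting the explicit formula $\delta(u) = e^{1-u}/2$ valid on $[1/2,\infty)$ together with the defining properties of $\delta$ ($\delta$ decreasing, $\delta(1)=1/2$, $0<\delta<1$) to control the remaining range.

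First I would establish monotonicity of $H_1$ and $H_2$ on $[1/2,\infty)$ by direct differentiation. Substituting the explicit form gives
\begin{equation*}
H_1'(z) = 1 + \tfrac{1}{2}(z-1)e^{1-z}, \qquad H_2'(z) = 1 + \tfrac{1}{2}(1-z)e^{1-z}.
\end{equation*}
Setting $w = z-1$, the perturbation terms equal $\pm \tfrac{1}{2} w e^{-w}$; the function $w\mapsto w e^{-w}$ attains its maximum $1/e$ at $w=1$ and its minimum on $[-1/2,\infty)$ at $w=-1/2$, giving values bounded in absolute value by $e^{1/2}/2 < 1$. Hence both $H_1'$ and $H_2'$ stay strictly positive. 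On $[0,1/2]$, $H_1$ is monotone because $\delta$ is decreasing, and $H_2$ is monotone by continuity matching at $z=1/2$; any reasonable extension of $\delta$ assumed by the authors keeps the derivative positive there as well.

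The inequality $H_1(z) < z$ for $z > 0$ is immediate from $\delta(z) > 0$ (positivity of $\delta$ on $[1/2,\infty)$ is explicit, and on $(0,1/2)$ it follows from $\delta$ being decreasing with $\delta(1/2) = e^{1/2}/2 > 0$). The claim $H_2^{-1}(z) > 1$ for $z > 3/2$ reduces via monotonicity to $H_2(1) < z$, and $H_2(1) = 1 + \delta(1) = 3/2$ so the strict inequality holds. Likewise, $H_2(z) = z + z\delta(z) > z$ since $\delta(z) > 0$, giving $H_2^{-1}(z) < z$ by monotonicity.

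The final assertion \eqref{S3.3E4} is the most substantive but reduces cleanly. Writing $z = x/t$, the hypothesis $t \in (0,2x/3)$ translates to $z > 3/2$, and the inequality $2x/3 < t\,H_2^{-1}(x/t)$ becomes $H_2^{-1}(z) > 2z/3$. Applying the monotone function $H_2$ to both sides, this is equivalent to
\begin{equation*}
z \;>\; H_2\!\left(\tfrac{2z}{3}\right) \;=\; \tfrac{2z}{3}\bigl(1 + \delta(2z/3)\bigr),
\end{equation*}
i.e.\ $\delta(2z/3) < 1/2$. Since $z > 3/2$ gives $2z/3 > 1$, and $\delta$ is strictly decreasing with $\delta(1) = 1/2$, this holds.

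\textbf{Main obstacle.} The only delicate point is the monotonicity of $H_2$: because $\delta$ is decreasing, $z\delta(z)$ need not be increasing a priori, so one must quantify that the exponential decay of $\delta$ is slow enough not to overwhelm the linear factor $z$. This is precisely what the bound $|we^{-w}| \le 1/e$ in the derivative computation accomplishes. Everything else is an algebraic rearrangement or a direct appeal to the values $\delta(1) = 1/2$ and $H_2(1) = 3/2$.
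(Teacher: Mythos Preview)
Your proposal is correct and follows essentially the same route as the paper: both reduce \eqref{S3.3E4} to the inequality $\delta(2z/3)<1/2$ for $z>3/2$ (you via the substitution $z=x/t$, the paper directly with $2x/(3t)$), and both dispatch \eqref{S3.3E4.0} and \eqref{S3.3E4.1} from $H_2(1)=3/2$ and $H_2(z)>z$. You actually go further than the paper by supplying the derivative bound for monotonicity of $H_1,H_2$ on $[1/2,\infty)$, which the paper simply asserts; your mild hand-waving about $H_2$ on $(0,1/2)$ is harmless since $\delta$ is not fully specified there and the lemma is only invoked for arguments at least $1$.
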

\begin{proof}
Since the function $H_2$ is strictly increasing, its inverse  $H_2^{-1} $ is well defined. The choice  $\delta (1)=1/2$ makes $H_2(1)=3/2$ then $H_2^{-1}(3/2)=1$. By monotonicity it follows that  $H_2^{-1}(z)>H_2^{-1}(3/2)=1$ for all $z>3/2$ and this proves (\ref{S3.3E4.0}). Since $H_2(z)>z$ it follows that $z>H_2^{-1}(z)$ and this shows   (\ref{S3.3E4}).

Since $\delta (1)=1/2$, we have 
$$
\frac {2} {3}\left(1+\delta (1) \right)=1
$$
and the function $\delta (z)$ is  strictly decreasing because so is $\rho (z)$. Therefore $\delta (z)<1/2$ for all $z>1$,
and, for all $t\in (0, 2x/3)$
\begin{align*}
H_2\left(\frac {2x} {3t} \right)=\frac {2x} {3t}\left(1+\delta  \left(\frac {2x} {3t}\right) \right)<\frac {2x} {3t}\left(1+\delta (1) \right)= \frac {x} {t}.
\end{align*}
Since $H_2$ is strictly increasing, so is $H_2^{-1}$, $\frac {2x} {3t}\le H_2^{-1}\left( \frac {x} {t}\right)$
and this proves (\ref{S3.3E4}). 
\end{proof}

\noindent

\begin{lem}
\label{S4ELtx}
For all $t>0$, $x>0$ such that $t<x$,
\begin{align}
&\int _t^{x} \left| \Lambda\left(\frac {t} {y}, \frac {x} {y}\right)\right| \frac {dy} {y}\le C\left(1+t+\Phi _1+\Psi _1+\tilde \Phi _2 \right)  \label{S4ELtx1}\\
&\int _x^\infty \left| \Lambda\left(\frac {t} {y}, \frac {x} {y} \right)\right| \frac {dy} {y}\le C\left(1+\Phi _3+\Psi _3 \right) \label{S4ELtx1B}
\end{align}
where,
\begin{align}
\Phi _1(x, t)&= t\int _{\frac {2x} {3}}^{tH_2^{-1}\left(\frac {x} {t} \right)} \frac {1} {y}\left|\frac {x} {y}-1 \right|^{-1}\frac {dy} {y},\,\,\forall  t\in (0, 2x/3),\label{S4ELtx2}\\
\Psi_1 \left(x, t \right)&= \int _{tH_2^{-1}\left(\frac {x} {t} \right)}^{x}\frac {t} {y}\left| \frac {x} {y}-1\right|^{-1+\frac {2t} {y} } \frac {dy} {y} \,\,\forall  t\in (0, 2x/3) \label{S4ELtx3}\\
\tilde \Phi_2 \left(x, t \right)&=  \int _{t}^{x} \frac {t} {y}\left| \frac {x} {y}-1\right|^{-1+\frac {2t} {y} } \frac {dy} {y} ,\,\,\,\forall t\in (2x/3, x),\label{S4ELtx4}
\end{align}
\begin{align}
\Psi _3\left(x, t\right)&=\int _x^{tH_1^{-1}\left(\frac {x} {t} \right)}  \frac {t} {y}\left| \frac {x} {y}-1\right|^{-1+\frac {2t} {y} } \frac {dy} {y},\,\,\forall t\in (0, x)\label{S4ELtx5}
\\
\Phi_3\left(x, t\right)&=\int _{tH_1^{-1}\left(\frac {x} {t} \right)}^{2x}\frac {t} {y} \left| 1-\frac {x} {y}\right|^{-1}\frac {dy} {y},\forall t\in (0, x)\label{S4ELtx6}
\end{align}
\end{lem}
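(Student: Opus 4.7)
The plan is to change viewpoint to $\tau=t/y$, $\xi=x/y$ so that $\Lambda(\tau,\xi)$ is always evaluated in the small-time regime $\tau<1$ treated in Section \ref{S4smallt}, and then to split each $y$-interval according to the position of $\xi$ with respect to $1$, applying in every sub-region the sharpest pointwise estimate among Propositions \ref{S4P9}, \ref{S3.3.4P1} and Corollary \ref{S3Cor18}. The functions $H_1, H_2$ (built from $\delta$) are engineered so that the crossover between the regime where Proposition \ref{S3.3.4P1} is sharp (i.e.\ $|\xi-1|$ not exponentially small) and the regime where Corollary \ref{S3Cor18} becomes available (i.e.\ $|\xi-1|e^{y/t}$ bounded) takes place at $y=y_\ast:=tH_2^{-1}(x/t)$ on the side $y<x$ and at $y=y_{\ast\ast}:=tH_1^{-1}(x/t)$ on the side $y>x$. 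The key algebraic identity is
\[
(x/y-1)\,e^{y/t}\Big|_{y=tw}\;=\;\delta(w)\,e^{w}\;=\;\tfrac{e}{2},\qquad w=H_2^{-1}(x/t),
\]
together with its analogue at $w=H_1^{-1}(x/t)$, both of which follow from the specific choice $\delta(u)=e^{1-u}/2$ in (\ref{S4Edelta}).

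For the first inequality (focussing on $y<x$, where $\xi>1$): when $t<2x/3$, decompose the $y$-range as $(t,2x/3)\cup(2x/3,y_\ast)\cup(y_\ast,x)$. On $(t,2x/3)$, where $\xi\ge 3/2$, Proposition \ref{S4P9} applied with its $C\xi^{-5}\tau^7$ term produces a $y$-integral of size at most $C(t/x)^5\le C$. On $(2x/3,y_\ast)$, inequality (\ref{S3.3E4}) guarantees the interval is non-empty and forces $\xi-1\ge\delta(H_2^{-1}(x/t))>0$, so Proposition \ref{S3.3.4P1} applies with $|\Lambda|\le C\tau/|\xi-1|$ and integration produces exactly $\Phi_1$. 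On $(y_\ast,x)$, the identity above shows $(\xi-1)e^{y/t}\le e/2$ throughout, so Corollary \ref{S3Cor18} applies and its bound integrates to $\Psi_1$. When $2x/3\le t<x$, the first subinterval is empty and $y/t<3/2$ throughout $(t,x)$, so $(\xi-1)e^{y/t}< e^{3/2}/2$ is uniformly bounded and Corollary \ref{S3Cor18} applies on all of $(t,x)$, producing $\tilde\Phi_2$.

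For the second inequality (all $y>x$, so $\xi<1$), split $(x,\infty)=(x,y_{\ast\ast})\cup(y_{\ast\ast},2x)\cup(2x,\infty)$. The first piece is handled by Corollary \ref{S3Cor18} via the $H_1$-analogue of the identity (yielding $\Psi_3$), the second by Proposition \ref{S3.3.4P1} (yielding $\Phi_3$). On the tail $(2x,\infty)$ one has $\xi<1/2$ and no pointwise estimate from Section \ref{S4smallt} is directly available; shifting the $s$-contour in the Mellin representation (\ref{S5CInvME2B})--(\ref{S5CInvME2}) for $\Lambda(\tau,\cdot)$ to the left past the simple pole of $B$ at $s=0$ yields $|\Lambda(\tau,\xi)|\le C\tau^2$ uniformly for $\xi\in(0,1/2)$, whence $\int_{2x}^\infty |\Lambda|\,dy/y\le C(t/x)^2\le C$, which accounts for the bare constant $1$ in the bound.

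The main obstacle is the precise matching at the transition points $y_\ast$ and $y_{\ast\ast}$, where Proposition \ref{S3.3.4P1} is replaced by Corollary \ref{S3Cor18}; this matching is sharp only because of the identity $\delta(w)e^w=e/2$, which is in turn the reason for the specific form of $\delta$ in (\ref{S4Edelta}). A secondary delicate point is the control of the tail $y>2x$, for which no pointwise bound from Section \ref{S4smallt} suffices and one must fall back on a Mellin-contour argument to extract the $O(\tau^2)$ decay that makes the contribution to the integral converge uniformly in $(t,x)$.
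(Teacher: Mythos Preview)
Your decomposition and your use of Proposition~\ref{S4P9}, Proposition~\ref{S3.3.4P1} and Corollary~\ref{S3Cor18} on the respective sub-intervals reproduce the paper's argument essentially verbatim, including the role of $H_1,H_2$ and the crossover at $y_\ast,y_{\ast\ast}$.

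The only point of divergence is the tail $y>2x$ in \eqref{S4ELtx1B}. You are right that Proposition~\ref{S3.3.4P1} is \emph{stated} only for $0<|1-\xi|<1/2$, so it does not literally cover $\xi=x/y<1/2$. The paper nevertheless invokes it there (see \eqref{S4EX8}), tacitly using that the proof of Proposition~\ref{S3.3.4P1} actually yields $|\tilde\Lambda(t,X)|\le Ct/|X|$ for $X$ in any bounded set, hence $|\Lambda(\tau,\xi)|\le C\tau$ uniformly for $\xi\in(0,1/2)$; then $\int_{2x}^\infty |\Lambda(t/y,x/y)|\,dy/y\le C\int_{2x}^\infty (t/y)\,dy/y\le Ct/x\le C$. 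Your Mellin-contour alternative is a legitimate route, but the specific claim $|\Lambda(\tau,\xi)|\le C\tau^2$ is stronger than what a single contour shift past $s=0$ immediately delivers (the pole structure of $s^{-2}U(t,s)$ at $s=0$ is higher order), and in any case $O(\tau)$ already suffices. So the paper's approach here is both simpler and already available from the earlier propositions, once one notes that the restriction $|1-\xi|<1/2$ in Proposition~\ref{S3.3.4P1} is inessential.
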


\begin{proof}
[\upshape\bfseries{Proof of Lemma \ref{S4ELtx}}]
We  show (\ref{S4ELtx1}) first and  start assuming $t\in (0, 2x/3)$. By (\ref{S3.3E4}),
\begin{equation}
\int _t^x\left| \Lambda\left(\frac {t} {y}, \frac {x} {y}; 1 \right)\right| \frac {dy} {y}=\int _t^{\frac {2x} {3}}(\cdots)dy+\int _{\frac {2x} {3}}^{tH_2^{-1}\left(\frac {x} {t} \right)}(\cdots)dy+
\int _{tH_2^{-1}\left(\frac {x} {t} \right)}^{x}(\cdots)dy. \label{S4EEE1}
\end{equation}
In the first integral of the right hand side of  (\ref{S4EEE1}), since $y<2x/3$, by Proposition \ref{S4P9}
\begin{equation*}
\left|\Lambda \left(\frac {t} {y}, \frac {x} {y}\right)\right|\le C_1\left( \frac {x} {t}\right)^{-1-\beta _1'}\left( \frac {t} {y}\right)^{6}+
C_2\left( \frac {x} {t}\right)^{-6}\left( \frac {t} {y}\right)^2,
\end{equation*}
and then,

\begin{align}
\label{S4EX1}
\int _t^{\frac {2x} {3}}\left|\Lambda\left(\frac {t} {y}, \frac {x} {y} \right)\right|\frac {dy} {y}&\le C_1t^6 \int _t^{\frac {2x} {3}}y^{-6}dy+
C_2t^2\int _t^{\frac {2x} {3}}y^{-2}dy \le Ct.
\end{align}

In the second integral of the right hand side of  (\ref{S4EEE1}), simple computations yield,
\begin{align*}
&y\in \left( \frac {2x} {3},  t H_2^{-1}\left( \frac {x} {t}\right)\right)\Longrightarrow \frac {t} {y}H_2(y/t)<\frac {x} {y}<\frac {3} {2}\Longrightarrow  \delta \left( \frac {y} {t}\right)< \frac {x} {y}-1<\frac {1} {2}.
\end{align*}
Since $x>3t/2$ we have $y/t>1$. On the other hand,  $x/t$ may take values  arbitrarily large, and then $H_2^{-1}\left( \frac {x} {t}\right)$ and $y/t$ too. We deduce that $\delta (y/t)\in (0, 1/2)$
and by Proposition \ref{S3.3.4P1},
\begin{equation}
\int _{\frac {2x} {3}}^{tH_2^{-1}\left(\frac {x} {t} \right)}\left|\Lambda \left(\frac {t} {y}, \frac {x} {y} \right)\right|\frac {dy} {y}\le C\Phi _1(x, t).\label{S4EX2}
\end{equation}

In the third integral of the right hand side of (\ref{S4EEE1}), since $t H_2^{-1}\left( \frac {x} {t}\right)<y$, it follows that $t H_2^{-1}\left( \frac {x} {t}\right)<y $, from where
$\frac {x} {t}< H_2\left( \frac {y} {t}\right)=\frac {y} {t}\left(1+\delta  \left( \frac {y} {t}\right)\right) $. Then
$\frac {x} {y}<1+\delta  \left( \frac {y} {t}\right)$
and, since $x/y>1$ also, 
\begin{equation}
0<\frac {x} {y}-1<\delta  \left( \frac {y} {t}\right). \label{S4EX3.2}
\end{equation}
We notice now that since $x/t>3/2$ and  $\frac {3} {2}<\frac {x} {t}=u(1+\delta (u))\le 2u$, we also have $u=H_2^{-1}(x/t)>3/4$. 
Then  $y/t$ varies on the half line $(3/4, \infty)$ and $\delta (y/t)$ varies on $(0, \delta (3/4))$.
We deduce from (\ref{S4EX3.2}), using  Corollary \ref{S3Cor17}, that for some constant $C>0$,
\begin{align}
&\left|\Lambda\left(\frac {t} {y}, \frac {x} {y} \right)\right|\le C\frac {t} {y}\left| \frac {x} {y}-1\right|^{-1+\frac {2t} {y}}.\label{S4EX3}
\end{align}
It follows from (\ref{S4EX1}), (\ref{S4EX2}) and (\ref{S4EX3})  that for $0<t< 2x/3$,
\begin{align}
\label{S4E224}
\int _t^x\left| \Lambda\left(\frac {t} {y}, \frac {x} {y} \right)\right|\frac {dy} {y}\le C\left(t+ \Phi_1 \left(x, t \right)
+ \Psi_1 \left(x, t \right)\right).
\end{align}

Suppose now that  $t\in (2x/3, x)$.  We first deduce that since  $x/t <3/2$ and $H_2^{-1}$ is increasing, $H_2^{-1}(x/t)<H_2^{-1}(3/2)=1$ and then 
$tH_2^{-1}(x/t)<t$. Since $y\in (t, x)$ it follows that $y>tH_2^{-1}(x/t)$ and therefore,
\begin{align*}
H_2(y/t)\equiv \frac {y} {t}(1+\delta (y/t))>\frac {x} {t}\Longrightarrow  1+\delta (y/t)>\frac {x} {y}\Longleftrightarrow \frac {x} {y}-1<\delta (y/t).
\end{align*}
Then, for all $0<t<y<x$, we  have $x/y>1$ and,
$0<\frac {x} {y}-1<\delta (y/t)$. 
By Corollary \ref{S3Cor18},
 and  (\ref{S4ELtx4}) we deduce, when $t\in (2x/3, x)$,
\begin{align}
\label{S4E221}
\int _t^x \left| \Lambda\left(\frac {t} {y}, \frac {x} {y} \right)\right| \frac {dy} {y}\le \tilde C\Phi_2 \left(x, t \right).
\end{align}
and (\ref{S4ELtx1}) follows from (\ref{S4E224}) and   (\ref{S4E221}).

We prove  (\ref{S4ELtx1B}) now. To this end we write,

\begin{align}
\label{S4E21}
\int _x^\infty (\cdots )\frac {dy} {y}=\int _x^{tH_1^{-1}\left(\frac {x} {t} \right)}(\cdots )\frac {dy} {y}+\int _{tH_1^{-1}\left(\frac {x} {t} \right)}^{2x} {(\cdots )}\frac {dy} {y}+\int _{2x}^\infty {(\cdots )}\frac {dy} {y}
\end{align}
In the first term at the right hand side of  (\ref{S4E21}) $x<y< tH_1^{-1}\left(\frac {x} {t} \right)$, then $ 0<1-\frac {x} {y}<\delta  \left( \frac {y} {t}\right) $ from where, by Corollary \ref{S3Cor17} and (\ref{S4ELtx5})
\begin{align}
\int _x^{tH_1^{-1}\left(\frac {x} {t} \right)}\left|\Lambda\left(\frac {t} {y}, \frac {x} {y}; 1 \right)\right|\frac {dy} {y}\le C\Psi _3\left(x, t\right),\, 0<t<x.\label{S4EX6}
\end{align}
In the second integral at the right hand side of  (\ref{S4E21}), $ tH_1^{-1}\left(\frac {x} {t} \right)<y<2x$ and so $\delta  \left( \frac {y} {t}\right)<1-\frac {x} {y}<\frac {1} {2}$
and by  (\ref{S4ELtx6}) and Proposition \ref{S3.3.4P1},
\begin{align}
\int _{tH_1^{-1}\left(\frac {x} {t} \right)}^{2x} \left| \Lambda\left(\frac {t} {y}, \frac {x} {y} \right)\right|\frac {dy} {y}\le C\Phi_3\left(x, t\right)\,\,0<t<x.
\label{S4EX7}
\end{align}
In the last integral at the right hand side of  (\ref{S4E21}), since $y>2x$, by Proposition \ref{S3.3.4P1},
\begin{align}
\int _{2x}^\infty \left|\Lambda\left(\frac {t} {y}, \frac {x} {y} \right)\right|\frac {dy} {y}\le
Ct \int _{2x}^\infty  \frac {1} {|1-x/y|} \frac {dy} {y^2}\le Ct\int _{2x}^\infty \frac {dz} {y^2}\le C. \label{S4EX8}
\end{align}
The estimate   (\ref{S4ELtx1B})  follows now  by (\ref{S4EX6})--(\ref{S4EX8}).
\end{proof}
\subsection{The domain  $0<x<t$.}
\label{S4.2}
We estimate now  the last integral at the right hand side of (\ref{S3.3E2})
\begin{lem}
\label{S4ELtz}
For all $t>0$ and $x\in (0, t)$,
\begin{align}
&\forall t>2x,\,\,\int _t^\infty \left| \Lambda\left(\frac {t} {y}, \frac {x} {y} \right)\right|\frac {dy} {y}\le C \label{S4ELtz1}\\
&\forall t\in (x, 2x),\,\,\int _t^\infty \left| \Lambda\left(\frac {t} {y}, \frac {x} {y} \right)\right|\frac {dy} {y}\le C(1+\Phi  _3+\Psi  _4) \label{S4ELtz2}
\end{align}
where,
\begin{equation}
\Psi _4=\int _t^{tH_1^{-1}\left(\frac {x} {t} \right)}  \frac {t} {y}\left| \frac {x} {y}-1\right|^{-1+\frac {2t} {y} } \frac {dy} {y},\,\,\forall t \in (x, 2x).\label{S4ELtz3}\\
\end{equation}
\end{lem}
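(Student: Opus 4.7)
Proof proposal. The plan is to follow the strategy established in Lemma \ref{S4ELtx}: split the integration domain $(t, \infty)$ according to the position of $x/y$ relative to $1$, and in each subdomain invoke the appropriate pointwise estimate on $\Lambda$ from Section \ref{S4smallt}. Throughout the argument $y > t > x$, so both $t/y$ and $x/y$ lie in $(0, 1)$, placing us in the regime of small first argument covered by that section.

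For Case 1, $t > 2x$, every $y > t$ satisfies $x/y \le x/t < 1/2$, keeping $x/y$ uniformly away from the singularity $z=1$; accordingly the singular contributions $\Psi_4, \Phi_3$ do not appear. A uniform bound will follow once one establishes the small-argument pointwise estimate
\begin{equation*}
|\Lambda(\tau, z)| \le C(\tau^2 + z), \qquad \tau \in (0,1),\ z \in (0, 1/2),
\end{equation*}
because then the substitution $\tau = t/y$, $z = x/y$ yields
\begin{equation*}
\int_t^\infty |\Lambda(t/y, x/y)|\,\frac{dy}{y} \le Ct^2\int_t^\infty \frac{dy}{y^3} + Cx\int_t^\infty \frac{dy}{y^2} \le C + C\frac{x}{t} \le C'.
\end{equation*}
The pointwise estimate is obtained by the Mellin-inversion and contour-shifting machinery of the proof of Proposition \ref{S4P9}, but deforming the $s$-contour in (\ref{S5CInvME2}) toward \emph{smaller} values of ${\mathscr Re}(s)$: shifting past the triple pole of $U(t,s)\,s^{-2}$ at $s=0$ contributes $\mathrm{Res}_{s=0} U(t, s)$, which is computed by a further $\sigma$-contour deformation through the simple poles of $\Gamma(\sigma)/B(\sigma)$ at $\sigma = -2, -3, -4, -5$, producing the leading $O(t^2)$; the next pole of $B(s)$ at $s=-1$ contributes an $O(x)$ term; and the tail integral along ${\mathscr Re}(s) = c'' \in (\sigma_0^*, -1)$ is $O(x^{|c''|})$, uniformly bounded in $t \in (0,1)$ via Proposition \ref{S3PX1}.

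For Case 2, $x < t < 2x$, one has $x/t \in (1/2, 1)$, and $x/y$ can approach $1$ from below as $y \downarrow t$. Imitating the three-piece decomposition of $\int_x^\infty$ in Lemma \ref{S4ELtx}, I split
\begin{equation*}
(t, \infty) = \bigl(t,\, tH_1^{-1}(x/t)\bigr) \cup \bigl(tH_1^{-1}(x/t),\, 2x\bigr) \cup (2x, \infty),
\end{equation*}
and verify $t < tH_1^{-1}(x/t) < 2x$ from $x/t \in (1/2, 1)$, the monotonicity of $H_1^{-1}$, and the fact that $\delta(2x/t) < \delta(1) = 1/2$. On the first subinterval, $y < tH_1^{-1}(x/t)$ forces $H_1(y/t) < x/t$, hence $0 < 1 - x/y < \delta(y/t)$, placing us in the scope of Corollary \ref{S3Cor17} (via Corollary \ref{S3Cor18}); this gives $|\Lambda(t/y, x/y)| \le C(t/y)|x/y - 1|^{-1+2t/y}$, whose integral is precisely $C\Psi_4$. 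On the middle subinterval, $\delta(y/t) < |1 - x/y| < 1/2$ lies in the regime of Proposition \ref{S3.3.4P1}, yielding $C\Phi_3$. On the last subinterval $y > 2x$, $|1 - x/y| > 1/2$ keeps $x/y$ away from $1$, and the Case 1 small-$(\tau, z)$ estimate applies to produce a uniform $O(1)$ contribution.

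The principal obstacle is the Case 1 pointwise bound on $\Lambda(\tau, z)$ for $\tau, z$ both small, since Section \ref{S4smallt} does not state it explicitly: it covers $x > 3/2$ via Proposition \ref{S4P9}, $0 < |x-1| < 1/2$ via Proposition \ref{S3.3.4P1}, and merely relies on the continuity statement (\ref{S5CInvME0300}) for $x \in [0, 1/2]$ without a quantitative decay rate in $(\tau, z)$. Deriving $|\Lambda(\tau, z)| \le C(\tau^2 + z)$ requires a full reprise of the contour-shifting bookkeeping in the spirit of Propositions \ref{S4L2} and \ref{S4P9}, but with the $s$-contour pushed leftward instead of rightward and with particular care at the triple pole at $s = 0$; the computation is lengthy but routine, and once in hand the remainder of the lemma reduces to the direct splittings and substitutions described above.
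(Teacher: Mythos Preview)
Your treatment of Case~2 ($x<t<2x$) is essentially identical to the paper's: the same three-piece splitting
\[
(t,\infty)=\bigl(t,\,tH_1^{-1}(x/t)\bigr)\cup\bigl(tH_1^{-1}(x/t),\,2x\bigr)\cup(2x,\infty),
\]
the same verification that $t<tH_1^{-1}(x/t)<2x$, and the same invocation of Corollary~\ref{S3Cor18} on the first piece (producing $\Psi_4$) and Proposition~\ref{S3.3.4P1} on the middle piece (producing $\Phi_3$).

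Where you diverge is in Case~1 ($t>2x$) and in the tail piece $(2x,\infty)$ of Case~2. You propose to derive a fresh pointwise estimate $|\Lambda(\tau,z)|\le C(\tau^2+z)$ for $\tau,z$ both small via a leftward shift of the $s$-contour past the poles of $B(s)$ at $s=0,-1$; this is correct and in the spirit of the proof of (\ref{S4P9E1C}). The paper, however, dispatches both places in a single line by invoking Proposition~\ref{S3.3.4P1}: although that proposition is stated for $0<|z-1|<\tfrac12$, the paper applies its conclusion $|\Lambda(\tau,z)|\le C\tau/|z-1|$ in the complementary range $z<\tfrac12$ as well (where $|z-1|>\tfrac12$, so the bound reads simply $|\Lambda|\le 2C\tau$); the same extension already appears at (\ref{S4EX8}) and (\ref{S4EX11}). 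With that bound one gets directly
\[
\int_t^\infty\Bigl|\Lambda\Bigl(\tfrac{t}{y},\tfrac{x}{y}\Bigr)\Bigr|\frac{dy}{y}\le Ct\int_t^\infty\frac{1}{|1-x/y|}\,\frac{dy}{y^2}\le 2Ct\int_t^\infty\frac{dy}{y^2}=2C,
\]
and no contour bookkeeping is needed. Your route is self-contained and yields a sharper small-$(\tau,z)$ expansion, but at the cost of replaying the full residue analysis; the paper's route is a one-line shortcut, but it leans on the tacit extension of Proposition~\ref{S3.3.4P1} beyond its stated hypothesis.
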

\begin{proof}
[\upshape\bfseries{Proof of Lemma \ref{S4ELtz}}]
If $t>2x$ then, $x/y<1/2$ and Proposition \ref{S3.3.4P1} gives (\ref{S4ELtz1}).

For  $t\in (x, 2x)$, $\frac {x} {t}>\frac {1} {2}\equiv H_1(1)$ and $t< tH^{-1}_1\left(\frac {x} {t} \right)$ by the monotonicity of $H_1$. On the other hand, 
$$
H_1\left( \frac {2x} {t}\right)=\frac {2x} {t}\left( 1-\delta  \left( \frac {2x} {t}\right)\right)\ge \frac {2x} {t}\left( 1-\delta (1)\right)=\frac {x} {t}
$$
(where use has been made of  $2x/t\ge 1$), and then, $tH^{-1}_1\left(\frac {x} {t} \right)<2x$.
Therefore,
\begin{align}
\label{S4E23}
\int _t^\infty  (\cdots) dy=\int_ t^{tH^{-1}_1\left(\frac {x} {t} \right)} (\cdots) dy+\int_{tH^{-1}_1\left(\frac {x} {t} \right)}^{2x} (\cdots) dy+\int_ {2x}^{\infty} (\cdots) dy.
\end{align}
In the first term at the right hand side of (\ref{S4E23}) $0<1-\frac {x} {y}<\delta\!\left(\frac {y} {t}\right)$ because $y\in \left(t,tH^{-1}_1\left(\frac {x} {t} \right)\right)$,
\begin{align}
\int_ t^{tH^{-1}_1\left(\frac {x} {t} \right)}  \left|\Lambda\left(\frac {t} {y}, \frac {x} {y} \right)\right|\frac {dy} {y}\le C \Psi _4\left(x, t \right). \label{S4EX9}
\end{align}
by (\ref{S4ELtz3}) and Corollary \ref{S3Cor18}.
In the second integral of the right hand side of (\ref{S4E23}) 
\begin{align*}
y\in \left(H^{-1}_1\left(\frac {x} {t} \right), 2x\right)\Longrightarrow  \delta  \left( \frac {y} {t}\right)<1-\frac {x} {y}<\frac {1} {2}.
\end{align*}
By Proposition  \ref{S3.3.4P1} and (\ref{S4ELtx6})
\begin{align}
\int_{tH^{-1}_1\left(\frac {x} {t} \right)}^{2x} \left| \Lambda\left(\frac {t} {y}, \frac {x} {y} \right)\right|\frac {dy} {y}\le 
C\Phi _3\left(x, t \right).
\label{S4EX10}
\end{align}
In the third integral of the right hand side of (\ref{S4E23})  $y>2x$
then by Proposition \ref{S3.3.4P1},
\begin{align}
\int_{2x} ^\infty \left| \Lambda\left(\frac {t} {y}, \frac {x} {y} \right)\right|\frac {dy} {y}\le C\label{S4EX11}
\end{align}
and  (\ref{S4ELtz2}) follows from (\ref{S4E23})--(\ref{S4EX11}) for $t\in (x, 2x)$.
\end{proof}
\subsection{Estimates  of  the functions $\Phi _\ell$ and $\Psi _\ell$.}
\begin{lem}
\label{S4LFIPS}
There exists a constant $C>0$ such that,
\begin{equation}
\label{S4LFIPSE}
\Phi _1+\Psi _1+\tilde \Phi _2+\Phi _3+\Phi _4+\Psi _4\le C
\end{equation}
\end{lem}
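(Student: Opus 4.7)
The plan is to rescale variables. In each of the integrals $\Phi_1,\Psi_1,\tilde\Phi_2,\Phi_3,\Psi_3,\Psi_4$, the substitution $v=y/t$ combined with the parameter $\xi:=x/t$ reduces the integral to one depending on the single variable $\xi$, because $dy/y=dv/v$ and $x/y=\xi/v$. For instance,
\begin{align*}
\Phi_1(x,t) &= \int_{2\xi/3}^{H_2^{-1}(\xi)}\frac{dv}{v(\xi-v)}, \\
\Psi_1(x,t) &= \int_{H_2^{-1}(\xi)}^{\xi}v^{-1-2/v}(\xi-v)^{-1+2/v}\,dv,
\end{align*}
and similar expressions arise for the remaining functions. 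Since $\xi$ varies in a bounded interval for $\tilde\Phi_2$ ($\xi\in(1,3/2)$), for $\Psi_4$ ($\xi\in(1/2,1)$), and for $\Phi_3,\Psi_3$ ($\xi\in(0,1)$), while $\xi\in(3/2,\infty)$ for $\Phi_1,\Psi_1$, it suffices to establish a bound on each resulting one-parameter integral that is uniform in $\xi$ on its range.

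For the $\Phi$-type integrals (no twisting exponent), the only potential issue is the simple singularity $1/|\xi-v|$. The distance between $\xi$ and the integration interval is controlled by the very definition of $H_2^{-1}$, $H_1^{-1}$: writing $v_*=H_2^{-1}(\xi)$ one has $\xi-v_*=v_*\,\delta(v_*)$, and using $\delta(u)=e^{1-u}/2$ for $u\ge 1/2$, one obtains $\xi-v_*\asymp\xi e^{-\xi}$ for large $\xi$. Then
\[
\Phi_1\le\frac{3}{2\xi}\log\frac{\xi/3}{\xi-v_*}\lesssim\frac{1}{\xi}\bigl(\xi+O(\log\xi)\bigr)=O(1),
\]
and a symmetric argument based on $H_1^{-1}(\xi)-\xi=H_1^{-1}(\xi)\,\delta(H_1^{-1}(\xi))$ handles $\Phi_3$.

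For the $\Psi$-type integrals, the twisting exponent $-1+2/v$ is essential: it makes the singularity at $v=\xi$ integrable, with effective exponent $-1+2/\xi$. Setting $w=\xi-v$,
\[
\Psi_1\sim\xi^{-1-2/\xi}\int_0^{\xi\delta(\xi)}w^{-1+2/\xi}\,dw=\frac{\xi^{-2/\xi}}{2}\bigl(\xi\,\delta(\xi)\bigr)^{2/\xi}.
\]
Substituting $\delta(\xi)=e^{1-\xi}/2$ shows the right-hand side converges to $e^{-2}/2$ as $\xi\to\infty$ and stays bounded on $(3/2,\infty)$. For $\tilde\Phi_2,\Psi_3,\Psi_4$, where $\xi$ is confined to a bounded interval, the exponent $2/v$ is uniformly bounded away from zero, and the integral is a continuous function of $\xi$ on a compact interval, hence bounded by its maximum.

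The main difficulty will be the large-$\xi$ regime for $\Psi_1$ (and the analogous piece of $\Psi_3$): one must track precisely the cancellation between the factor $\xi/2$ coming from $\int_0^{\xi\delta(\xi)} w^{-1+2/\xi}\,dw$ and the exponentially small factor $(\xi\,\delta(\xi))^{2/\xi}$ to conclude that the product is $O(1)$ rather than divergent. Once this cancellation is extracted with explicit constants and the sub-regimes $\xi\in(3/2,2)$ and $\xi\ge 2$ are reconciled, summing the contributions yields the uniform bound $C$ in (\ref{S4LFIPSE}).
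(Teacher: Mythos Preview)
Your rescaling $v=y/t$, $\xi=x/t$ is exactly the reduction the paper performs, and your treatment of $\Phi_1$ matches the paper's almost verbatim: compute the logarithm explicitly, then use $\xi-H_2^{-1}(\xi)=H_2^{-1}(\xi)\,\delta(H_2^{-1}(\xi))\asymp \xi e^{-\xi}$ to see that $\xi^{-1}\log(\xi-v_*)^{-1}$ stays bounded. The same goes for $\Phi_3$.

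There is one error in your range bookkeeping. You assert that for $\Phi_3,\Psi_3$ the parameter $\xi$ lies in $(0,1)$, and then invoke compactness. But $\Psi_3$ is defined in (\ref{S4ELtx5}) for $t\in(0,x)$, i.e.\ $\xi=x/t>1$, with no upper bound; likewise $\Phi_3$ in (\ref{S4ELtx6}) has $\xi\in(1,\infty)$ (and is reused in Lemma~\ref{S4ELtz} for $\xi\in(1/2,1)$). So the ``continuous on a compact interval'' argument does not apply to $\Psi_3$; it needs the same large-$\xi$ analysis as $\Psi_1$. You seem to realize this in your final paragraph, but the two passages contradict each other.

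On the $\Psi$-type integrals the paper takes a shorter route than your asymptotic expansion. Rather than keep the variable exponent $2/v$ and extract the cancellation you describe, the paper freezes the exponent by monotonicity: in $\Psi_1$ one has $y<x$, hence $2t/y>2t/x$, and since $0<x/y-1<1$ this gives
\[
\Bigl(\tfrac{x}{y}-1\Bigr)^{-1+2t/y}\le \Bigl(\tfrac{x}{y}-1\Bigr)^{-1+2t/x}.
\]
With the exponent now constant, the substitution $\rho=y/x$ yields $\Psi_1\le (t/x)\int_{1/2}^1(1-\rho)^{-1+2t/x}\rho^{-1-2t/x}\,d\rho$, and the factor $t/x$ exactly cancels the $x/(2t)$ coming from integrating $(1-\rho)^{-1+2t/x}$. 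For $\Psi_3,\Psi_4$ the analogous bound replaces $2t/y$ by $2/H_1^{-1}(\xi)$. This sidesteps the delicate tracking of $(\xi\,\delta(\xi))^{2/\xi}$ that you flag as the main difficulty; once you correct the ranges, you may find it simpler to adopt this monotonicity step.
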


\begin{proof}
[\upshape\bfseries{Proof of Lemma \ref{S4LFIPS}}]
(i) Estimate of $\Phi_1$. 
By definition, for $x>0$ and $t\in (0, 2x/3)$,
\begin{align}
\Phi _1(x, t)&=\frac {Ct} {x}\int _{\frac {2} {3}}^{\frac {t} {x}H_2^{-1}\left(\frac {x} {t} \right)} |1-r|^{-1} dr=\frac {-t} {x}\left( \log \left(1-\frac {t} {x}H_2^{-1}\left(\frac {x} {t} \right) \right)+\log 3 \right).\label{S6EPhi1.1}
\end{align}
Then, for all  $\varepsilon >0$,   $\Phi _1(x, t)$ is  bounded for all $(t, x)$ such that $0<t<x$ and  $\frac {t} {x} H_2^{-1}\left(\frac {x} {t} \right)\in [0, 1-\varepsilon ]$. Assume now that 
$\frac {t} {x} H_2^{-1}\left(\frac {x} {t} \right) \to 1$, and denote $u=H_2^{-1}(x/t)$. Since,
\begin{align}
\label{S4EZX1}
\frac {t} {x} H_2^{-1}\left(\frac {x} {t} \right)=\frac {u} {H(u)}=\frac {1} {1+\delta (u)}
\end{align}
if $\frac {t} {x} H_2^{-1}\left(\frac {x} {t} \right)\to 1$ it follows that $\delta (u)\to 0$, then $u\to \infty$, 
\begin{align*}
\frac {t} {x} H_2^{-1}\left(\frac {x} {t} \right)=\frac {1} {1+\frac {e^{1-u}} {2}}=1-\frac {e^{1-u}} {2}+\mathcal O\left( e^{-2u}\right) ,\,\,\hbox{as}\,\,u\to \infty
\end{align*}
and
\begin{align}
\label{S4EZX2}
\frac {t} {x} H_2^{-1}\left(\frac {x} {t} \right)\underset{u\to \infty}{=}1+\mathcal O\left( e^{-u}\right),\qquad u=H_2^{-1}\left(\frac {x} {t} \right)\underset{u\to \infty}{=}\frac {x} {t} \left(1 +\mathcal O\left( e^{-u}\right)\right).
\end{align}
Using (\ref{S4EZX1}), (\ref{S4EZX2}) and the definition of $\delta $, for $\rho >0$ as small as desired and $u\to \infty$,
\begin{align*}
\frac {t} {x} H_2^{-1}\left(\frac {x} {t} \right)=
\frac {1} {1+e^{-\frac {x} {t}}(1+\mathcal O\left( e^{-(1-\rho)u}\right))}
=\frac {1} {1+e^{-\frac {x} {t}}}\left(1+\mathcal O\left(e^{-(2-\rho )u} \right) \right)
\end{align*}
and it follows that
\begin{align*}
\log \left(1-\frac {t} {x} H_2^{-1}\left(\frac {x} {t} \right)\right)\underset{u\to \infty}{=}-\frac {x} {t}+\mathcal O\left(e^{-\frac {x} {t}}\right).
\end{align*}
We deduce the existence of a constant $C>0$ such that  for all $0<t<2x/3$, 
\begin{equation}
\Phi_1 (x, t)\le C.  \label{S6EPhi1.2}
\end{equation}

\noindent
(ii) Estimate of $\Psi_1$. Since $t\in (0, 2x/3)$ and $y>tH_2^{-1}\left(\frac {x} {t} \right)$ then $x/t<H_2(y/t)<2y/t$. Using that $y<x$,  
also we deduce $0<\left( \frac {x} {y}-1\right)<1$. Since  $1/y>1/x$, 
\begin{align*}
\Psi_1 (x, t)&\le  t \int _{tH_2^{-1}\left(\frac {x} {t} \right)}^{x}
\left( \frac {x} {y}-1\right)^{-1+\frac {2t} {x}} \frac {dy} {y^2}= tx^{-1}  \int _{\frac {t} {x}H_2^{-1}\left(\frac {x} {t} \right)}^1
(1-\rho )^{-1+\frac {2t} {x}} \rho ^{-1-\frac {2t} {x}} d\rho 
\end{align*}
By (\ref{S3.3E4}),  $2H_2^{-1}\left(\frac {x} {t} \right)>\frac {4x} {3t}$, then $\frac {t} {x}H_2^{-1}\left(\frac {x} {t} \right)>\frac {1} {2}$ and,
\begin{equation}
\label{S4EZX7}
\Psi_1 (x, t)\le 
tx^{-1}  \int _{\frac {1} {2}}^1
(1-\rho )^{-1+\frac {2t} {x}} \rho ^{-1-\frac {2t} {x}} d\rho=C.
\end{equation}\\
\noindent
(iii) Estimate of $\tilde \Phi_2$.  When $t\in (2x/3, x)$ and $y\in (t, x)$, $0<\frac {x} {y}<1$ and then, by (\ref{S4ELtx4})
\begin{align}
\tilde \Phi _2(x, t)\le t\int _t^x \left( \frac {x} {y}-1\right)^{-1+\frac {2t} {x}}\frac {dy} {y^2}=\frac {t} {x}\int  _{ \frac {t} {x}}^1(1-r)^{-1+\frac {2t} {x}}r^{-1-\frac {2t} {x}}dr \nonumber\\
\le \frac {t} {x}\int  _{\frac {2} {3}}^1(1-r)^{-1+\frac {2t} {x}}r^{-1-\frac {2t} {x}}dr=2^{-\frac {2t} {x}-1}\le 2^{-4/3}. \label{S4ELtx34}
\end{align}

\noindent
(iv) Estimate of $\Phi_3$.  By definition, for $0<t<x$,
\begin{equation}
\Phi_3(x, t)=\frac {t} {x} \int _{\frac {t} {x}H_1^{-1}\left(\frac {x} {t} \right)}^{2} (r-1)^{-1}\frac {dr} {r}= -\frac {t } {x} \log \left(\frac {t} {x}H_1^{-1}\left(\frac {x} {t} \right) -1\right). \label{S6Phi3.3}
\end{equation}
because, if $v=H_1^{-1}\left(\frac {x} {t} \right)$ then $\frac {x} {t}=H_1(v)=v(1-\delta (v))$, and $
\frac {t} {x}H_1^{-1}\left(\frac {x} {t} \right)=\frac {1} {1-\delta (v)}>1$.\\
The same arguments as in the estimate of the right hand side of (\ref{S6EPhi1.1}), show the existence of a constant  $C>0$ such that  for all $0<t<x$, 
\begin{equation}
\Phi_3(x, t)\le C. \label{S6EPhi3.4}
\end{equation}
(v) Estimate of $\Psi_3$. 
For all $y$  in the domain of integration of $\Psi_3$,  $y<tH_1^{-1}\left(\frac {x} {t} \right)$, and then
$\frac {2t} {y}>\frac {2} {H_1^{-1}\left(\frac {x} {t} \right)}$.
Since $y>x$ also,  we have  $\left(1- \frac {x} {y}\right)\in (0, 1)$ and we deduce from (\ref{S4ELtx5}),
\begin{equation*}
\Psi _3\left(x, t\right)\le  t\int  _{x }^{tH_1^{-1}\left(\frac {x} {t} \right)} \left(1- \frac {x} {y}\right)^{-1+\frac {2} {H_1^{-1}\left(\frac {x} {t} \right)} } \frac {dy} {y^2}
= \frac {t} {x}  \int  _{1}^{\frac {t} {x}H_1^{-1} \left(\frac {x} {t} \right)}  \frac {(r-1)^{-1+\frac {2} {H_1^{-1}\left(\frac {x} {t} \right)} }} {r^{1+\frac {2} {H_1^{-1}\left(\frac {x} {t} \right)} }}dr
\end{equation*}
We use now that, because $\delta (x/t)<1/2$,  $z<H_1(2z)$ and so $\frac {t} {x}H_1^{-1}\left(\frac {x} {t} \right)<2$, to obtain,
\begin{align}
\label{S4EZX9}
\Psi _3\left(x, t\right)\le \frac {t} {x}  \int  _{1}^2\frac {(r-1)^{-1+\frac {2} {H_1^{-1}\left(\frac {x} {t} \right)} }} {r^{1+\frac {2} {H_1^{-1}\left(\frac {x} {t} \right)} }}dr
=\frac {t} {x}H_1^{-1}(x/t)2^ {-1-\frac {2t} {H_1^{-1}(x/t)} }\le C.
\end{align}
(vi) Estimate of $\Psi_4$. By definition, $x<t<y<tH^{-1}_1\left(\frac {x} {t} \right)<2x$, for all $y$ in the domain of integration. Therefore, as for $\Psi _3$,  we  have  $\frac {2t} {y}>\frac {2} {H_1^{-1}\left(\frac {x} {t} \right)}$ and $\left(1- \frac {x} {y}\right)\in (0, 1)$.
Arguing as for $\Psi _3$, we deduce from (\ref{S4ELtz3}), for all $t\in (x, 2x)$,
\begin{align}
\Psi _4(x, t)&\le t \int_ t^{tH^{-1}_1\left(\frac {x} {t} \right)}\left(1- \frac {x} {y}\right)^{-1+\frac {2} {H^{-1}_1\left(\frac {x} {t} \right)}} \frac {dy} {y^2} \le  
t \int _1^2 \frac {(r-1)^{-1+\frac {2} {H_1^{-1}\left(\frac {x} {t} \right)} }} {r^{1+\frac {2} {H_1^{-1}\left(\frac {x} {t} \right)} }} dr\nonumber\\
&= tx^{-1}H_1^{-1}(x/t)2^ {-1-\frac {2t} {H_1^{-1}(x/t)} }\le C.\label{S4EZX10}
\end{align}
Lemma \ref{S4LFIPS} follows from (\ref{S6EPhi1.2})--(\ref{S4EZX10})
\end{proof}
\begin{proof}
[\upshape\bfseries{Proof of Proposition \ref{S3C2P1}}]
Proposition \ref{S3C2P1} follows from Lemmata \ref{S4LG1}--\ref{S4LFIPS}
\end{proof}
It is now possible to define the solution $u$ of the Cauchy problem.
\begin{theo}
\label{S3C2}
(i) For any   $f_0\in L^1 (0, \infty)$, 
\begin{equation}
\int _0^\infty\int _0^\infty  \left|\Lambda \left(\frac {t} {y}, \frac {x} {y}\right) f_0(y)\right| \frac {dy} {y}dx<\infty,\forall t>0.
\end{equation}
The function defined for all $t>0, x>0$ as 
\begin{equation}
\label{S3C2E1}
u(t, x)=\int _0^\infty \Lambda \left(\frac {t} {y}, \frac {x} {y}\right)f_0(y)\frac {dy} {y}
\end{equation}
is such that $u\in L^\infty ((0, \infty); L^1 (0, \infty))\cap C((0, \infty); L^1(0, \infty))$ and there exists $C>0$,
\begin{equation}
\label{S3C2E1T}
\forall t>0,\,\,\,\,||u(t)||_1 \le C||f_0||_1.
\end{equation}
(ii) For every $f_0\in L^\infty (0, \infty)$ the function $u$ given by (\ref{S3C2E1}) is  well defined, it  belongs to $L^\infty ((0, \infty)\times (0, \infty))$ and:
\begin{equation}
\forall t>0,\,\,\,\,||u(t)||_\infty \le C_G||f_0||_\infty.
\end{equation}
\end{theo}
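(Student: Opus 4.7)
The plan is to derive both parts of the theorem as corollaries of two estimates already established: the uniform bound $\|\Lambda(s)\|_1 \le C/(1+s^2)$ from Corollary \ref{S4C426}, which in particular yields $M := \sup_{s>0}\|\Lambda(s)\|_1 < \infty$, and the crucial estimate $\int_0^\infty |G(t,x;y)|\,dy \le C_G$ of Proposition \ref{S3C2P1}. The bulk of the work has already been done in the proof of that proposition; what remains is mainly bookkeeping with Fubini's theorem and dominated convergence.

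For part (i), I would start by applying Tonelli to the double integral and changing variables $z = x/y$ in the inner $x$-integral at fixed $y$:
\begin{equation*}
\int_0^\infty\!\int_0^\infty \big|\Lambda(t/y,x/y)\,f_0(y)\big|\,\frac{dx\,dy}{y}
= \int_0^\infty |f_0(y)|\,\|\Lambda(t/y)\|_1\,dy \le M\,\|f_0\|_1.
\end{equation*}
This gives both the integrability claim and, via Fubini, the fact that $u(t,\cdot)\in L^1(0,\infty)$ with $\|u(t)\|_1\le M\,\|f_0\|_1$, uniformly in $t>0$, which is (\ref{S3C2E1T}).

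For the $L^1$-continuity in $t$, I would estimate, by the same change of variables,
\begin{equation*}
\|u(t_1)-u(t_2)\|_1 \le \int_0^\infty |f_0(y)|\,\|\Lambda(t_1/y)-\Lambda(t_2/y)\|_1\,dy.
\end{equation*}
For each fixed $y>0$, the integrand tends to $0$ as $t_2 \to t_1$ by the continuity (\ref{S4C426E1}) of $s\mapsto \Lambda(s)$ in $L^1(0,\infty)$ at $s = t_1/y$; and it is dominated by $2M\,|f_0(y)| \in L^1(0,\infty)$. Dominated convergence then concludes.

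Finally, part (ii) is an immediate consequence of Proposition \ref{S3C2P1}: for any $t>0$ and $x>0$,
\begin{equation*}
|u(t,x)| \le \|f_0\|_\infty \int_0^\infty |\Lambda(t/y,x/y)|\,\frac{dy}{y}
= \|f_0\|_\infty \int_0^\infty |G(t,x;y)|\,dy \le C_G\,\|f_0\|_\infty,
\end{equation*}
which simultaneously gives well-definedness of the integral, its membership in $L^\infty((0,\infty)\times(0,\infty))$, and the claimed bound. Since all the substantive analysis is packed into Proposition \ref{S3C2P1} and Corollary \ref{S4C426}, I do not anticipate any genuine obstacle beyond verifying the Fubini-type interchanges.
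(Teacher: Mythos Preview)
Your proof is correct and follows essentially the same approach as the paper: part (i) via Tonelli and the change of variables $z=x/y$ together with the $L^1$ bound on $\Lambda(s)$ from Corollary \ref{S4C426}, and part (ii) directly from Proposition \ref{S3C2P1}. The only organizational difference is that the paper postpones the $L^1$-continuity argument to the proof of Theorem \ref{TheoMain2}, but the dominated convergence reasoning you give is identical to what appears there.
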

\begin{proof}
[\upshape\bfseries{Proof of Theorem \ref{S3C2}}]
The case  (i) is an easy consequence of Corollary \ref{S4C426}.
\begin{align*}
\int _0^\infty |u(t, x)|dx & \le\int _0^\infty\int _0^\infty \left|f_0(y) \Lambda \left(\frac {t} {y}, \frac {x} {y}\right)\right|\frac {dy} {y}dx\\
&=\int _0^\infty |f_0(y)| \int _0^\infty \left| \Lambda \left(\frac {t} {y}, z \right)\right|dzdy \le C\int _0^\infty \frac {|f_0(y)|dy} {1+(t/y)^2}.
\end{align*}
The case (ii) follows from Proposition \ref{S3C2P1}
\end{proof}
\begin{proof}[\upshape\bfseries{Proof of Theorem \ref{TheoMain2}}]
 Property (\ref{TheoMain2-1})  has been proved in  Theorem \ref{S3C2}.
For all $t>0, t'>0$,

\begin{align*}
&\int _0^\infty |u(t, x)-u(t', x)|dx\le \int _0^\infty |f_0(y)| \int _0^\infty \left|\Lambda\left(\frac {t} {y}, \frac {x} {y} \right)-\Lambda\left(\frac {t'} {y}, \frac {x} {y} \right)\right|dx \frac {dy} {y}.\\
\hskip -0.2cm \hbox{Since:}\quad \qquad &\lim _{ t'\to t }\int _0^\infty \left|\Lambda\left(\frac {t} {y}, \frac {x} {y} \right)-\Lambda\left(\frac {t'} {y}, \frac {x} {y} \right)\right|dx=0,\,\,\forall y>0,\\
&\frac {|f_0(y)|} {y}\int _0^\infty \left|\Lambda\left(\frac {t} {y}, \frac {x} {y} \right)-\Lambda\left(\frac {t'} {y}, \frac {x} {y} \right)\right|dx\le |f_0(y)|\in L^1
\end{align*}
by  dominated convergence Theorem $u\in C(0, \infty; L^1(0, \infty))$.  On the other hand, for all $\varphi \in \mathscr D(0, \infty)$,
\begin{equation*}
\int _0^\infty u(t, x)\varphi (x)dx=\int _0^\infty f_0(y)\int _0^\infty \Lambda\left(\frac {t} {y}, \frac {x} {y}\right)\varphi (x)dx\frac {dy} {y}.
\end{equation*}
By Corollary \ref{S5CSol}, for all $y>0$ fixed,
\begin{align*}
\lim _{ t\to 0 }\int _0^\infty \Lambda\left(\frac {t} {y}, \frac {x} {y}\right)\varphi (x)dx=\lim _{ t\to 0 }\int _0^\infty \Lambda \left(\frac {t} {y}, z\right)\varphi (yz)ydz
=y\varphi (y)
\end{align*}
and since, for some positive constant $C$,
\begin{align*}
\left|\int _0^\infty \Lambda \left(\frac {t} {y}, \frac {x} {y}\right)\varphi (x)dx\right|=\left|\int _0^\infty \Lambda\left(\frac {t} {y}, z\right)\varphi (yz)ydz\right|\le C
\end{align*}
property  (\ref{TheoMain2-3})  follows by the Lebesgue's convergence Theorem. Standard arguments show that $u$ is a weak solution of (\ref{S4E1}). 

If we suppose $f_0\in L^1(0, \infty)\cap L^\infty(0, \infty)$ then,  $u\in L^\infty((0, \infty)\times (0, \infty))$ and estimate  (\ref{TheoMain2-4}) holds true,   as it has been proved in  Theorem \ref{S3C2}.

On the other hand,  for $t>0,x>0$,
\begin{align}
\int _0^\infty (u(t, y)-u(t, x))K(x, y)dy=&\int _0^\infty \int _0^\infty f_0(z)\left( \Lambda \left( \frac {t} {z}, \frac {y} {z}\right)- \Lambda \left( \frac {t} {z}, \frac {x} {z}\right) \right)\frac {dz} {z}K(x, y)dy \nonumber \\
=\int _0^\infty &f_0(z)\left(\int _0^\infty\left( \Lambda \left( \frac {t} {z}, u\right)- \Lambda \left( \frac {t} {z}, \frac {x} {z}\right) \right)K\left(\frac {x} {z}, u\right) du\right)\frac {dz} {z^2} \nonumber\\
=&\int _0^\infty L\left(\Lambda \left(\frac {t} {z} \right) \right)\left(\frac {x} {z} \right)f_0(z) \frac {dz} {z^2}. \label{S5FE21E1}
\end{align}
By Proposition \ref{S3ELfg}, for all $t>0$ and $x>0$, $z\not =x$,

$$
 L\left(\Lambda \left(\frac {t} {z} \right) \right)\left(\frac {x} {z} \right)=\frac {\partial \Lambda} {\partial t} \left( \frac {t} {z}, \frac {x} {z}\right)
$$
and then, for all $t>0, x>0$,
\begin{align}
\label{S5FE21E2}
\int _0^\infty (u(t, y)-u(t, x))K(x, y)dy=\int _0^\infty f_0(z) \frac {\partial \Lambda} {\partial t} \left( \frac {t} {z}, \frac {x} {z}\right)\frac {dz} {z^2}.
\end{align}
By (\ref{S3L3DE2X}), (\ref{S3L3DE2Y}) in Proposition \ref{S3PDL}, for $x>0$, $t>0$ and $t>z$,
\begin{equation}
\label{S5MO1}
\left|  \frac {\partial \Lambda} {\partial t} \left( \frac {t} {z}, \frac {x} {z}\right) \right|\le C\, \frac{z^4}{ \max(t^4, x^4)}.
\end{equation}
When  $x>0$, $t>0$ and $t<z$ we have three different  cases. By (\ref{S4P9E1B}) and (\ref{S4P9E1C}) in Proposition  \ref{S4P9}, for all $\varepsilon >0$ small there exists $C_\varepsilon >0$ such that
\begin{align}
&x>\frac {3z} {2}\Longrightarrow \left|  \frac {\partial \Lambda} {\partial t} \left( \frac {t} {z}, \frac {x} {z}\right) \right|\le 
 C_\varepsilon \left( \frac {x} {z}\right)^{-3+\varepsilon }\left(\frac {t} {z} \right)^6\left(\left( \frac {t} {z}\right)^{2-\varepsilon }+\left( \frac {x} {z}\right)^{-2-\varepsilon } \right)\nonumber\\
&\hskip 4.3cm  \le  C_\varepsilon  x^{-3+\varepsilon }t^6z^{-3-\varepsilon } \label{S5MO2}\\
\label{S5MO3}
&x<\frac {t} {2}<\frac {z} {2}\Longrightarrow \left|  \frac {\partial \Lambda} {\partial t} \left( \frac {t} {z}, \frac {x} {z}\right) \right|\le C\,\frac {xt^4} {z^5}.
\end{align}
By  (\ref{S3.E46B}) in Proposition  \ref{S4P2},
\begin{align}
\left|\frac {x} {z}-1 \right|<\frac {1} {2}\Longrightarrow  \left|  \frac {\partial \Lambda} {\partial t} \left( \frac {t} {z}, \frac {x} {z}\right) \right| &\le C\left| \frac {x} {z}-1\right|^{-1+\frac {2t} {z}}
\left( 1+\frac {2t} {z}\log\left( \frac {x} {z}\right)\right)\nonumber \\
&\le C\left| \frac {x} {z}-1\right|^{-1+\frac {2t} {z}}.\label{S5MO4}
\end{align}
We observe,
\begin{align*}
\int  _{ \frac {2x} {3} }^{2x} \left| \frac {x} {z}-1\right|^{-1+\frac {2t} {z}}\frac {dz} {z^2}\le x^{-1}\int  _{ 2/3 }^2\left| \rho -1\right|^{-1+\frac {t} {x}}\rho ^{-1-\frac {2t} {x}}d\rho \equiv \Theta (t, x).
\end{align*}
Then, for $x>0$, $t_2>t_1>0$ fixed and $t\in (t_1, t_2)$,
\begin{align*}
\left|f_0(z)\frac {\partial \Lambda} {\partial t} \left( \frac {t} {z}, \frac {x} {z}\right)\frac {1} {z^2}\right| &\le 
C|f_0(z)|  \left(\frac {\1 _{ z\le t_2 }} {\max\{t^4, x^4\}}+\1 _{ t_1<z<2x/3 }t_2^{1-\varepsilon }x^{-3+\varepsilon } 
+t_1^{-3}x \1 _{t_1<z }\right)+\\
&\hskip 2.5cm +C||f_0|| _{ L^\infty(2x/3, 2x) }z^{-2}\left| \frac {x} {z}-1\right|^{-1+\frac {2t_1} {z}} \1 _{ 2x/3<z<2x }
\end{align*}
and then,
\begin{equation}
\label{S5FE21E3}
\frac {\partial } {\partial t}\int _0^\infty f_0(z) \Lambda \left(\frac {t} {z}, \frac {x} {z} \right)\frac {dz} {z}=\int _0^\infty f_0(z) \partial _t \Lambda \left(\frac {t} {z}, \frac {x} {z} \right)\frac {dz} {z^2}.
\end{equation}
It follows from (\ref{S5FE21E1})--(\ref{S5FE21E3}) that $u$ satisfies (\ref{S3EMK2})  for all $t>0, x>0$. We also deduce from (\ref{S5FE21E2})--(\ref{S5MO4}) that $L(u)\in L((0, \infty)\times (0, \infty))$ and 
\begin{align*}
\left|L(u(t))(x)\right|\le \frac {C||f_0|| _{ L^1(0, t) }} {\max\{t^2, x^4\}}+
C_\varepsilon ||f_0||_1 \left(t^{1-\varepsilon }x^{-3+\varepsilon }\1 _{ t<2x/3 }+xt^{-3}\1 _{ 2x<t }\right)+\\
+\Theta(t, x)\1 _{ t<2x }||f_0|| _{ L^\infty(2x/3, 2x) }.
\end{align*}
and since $\Theta$ is a bounded function, (\ref{S5FE21-2}) follows. 
\end{proof}

\begin{proof}
[\upshape\bfseries{Proof of Proposition \ref{S4EM1PN}}]
When $t>0$ is fixed and $x\to 0$ we are in the region where $2x<t$ and we write, using the definition  (\ref{SISolu}) of $u$,
\begin{align*}
&u(t, x)=I_1+I_2+I_3\\
&I_1=\int _0^x \Lambda \left(\frac {t} {y}, \frac {x} {y} \right)  f_0(y) \frac {dy} {y},\,\,\,I_2=\int _x^t  \Lambda  \left(\frac {t} {y}, \frac {x} {y} \right)  f_0(y) \frac {dy} {y}\\
&I_3=\int _t^\infty  \Lambda  \left(\frac {t} {y}, \frac {x} {y} \right)  f_0(y) \frac {dy} {y}
\end{align*}
Since  in the two first integrals of the right hand side $t/y>1$, it follows  by  (\ref {S4L1E1}), (\ref{S4L2E1}) and (\ref{S3L3E1})
\begin{align*}
\Lambda \left(\frac {t} {y}, \frac {x} {y} \right)&=\left(\frac {t} {y}\right)^{\!\!\!-3}Q _1\left(\frac {x} {t} \right)+Q_2\left(\frac {t} {y}, \frac {x} {t} \right)\\
Q _1\left(\frac {x} {t} \right)&=\frac {2c_1B(1)} {W'(0)}+\mathcal O _{ \varepsilon  }\left( \frac {x} {t}\right),\,\,\frac {x} {t}\to 0\\
Q_2\left(\frac {t} {y}, \frac {x} {t} \right)&=c_2\left(\frac {t} {y} \right)^{\!\!\!-4}+b_1\left(\frac {t} {y} \right)+\mathcal O\left(\left(\frac {t} {y}\right)^{\!\!\!-4}\left|\frac {x} {t}\right|^{1-\delta } \right),\,\frac {x} {t}\to 0,\,\,\frac {t} {y}>1,
\\
&=c_2\left(\frac {t} {y} \right)^{\!\!\!-4}+b_1\left(\frac {t} {y} \right)+t^{-4}y^4\mathcal O\left(\left|\frac {x} {t}\right|^{1-\delta } \right),\,\frac {x} {t}\to 0,\,\,\frac {t} {y}>1.
\end{align*}
Therefore,
\begin{align*}
I_1+I_2&=\int _0^t  \Lambda  \left(\frac {t} {y}, \frac {x} {y} \right)  f_0(y) \frac {dy} {y}=\frac {2c_1B(1)} {W'(0)}t^{-3}\int _0^tf_0(y)y^2dy\left(1+\mathcal O\left(\frac {x} {t} \right) \right)+\\
&+c_2t^{-4}\int _0^t f_0(y)y^3dy+\int _0^t f_0(y)b_1\left(\frac {t} {y} \right)\frac {dy} {y}+t^{-4}\mathcal O\left(\left|\frac {x} {t}\right|^{1-\delta } \right)\int _0^t f_0(y)y^3 dy
\end{align*}
and,
\begin{align*}
\lim _{ x\to 0 }\int _0^t  \Lambda  \left(\frac {t} {y}, \frac {x} {y} \right)  f_0(y) \frac {dy} {y}=\ell(f_0; t)
\end{align*}
On the other hand,  $2x<t$ and $y>t$ in $I_3$ imply  $x/y<1/2$. By  Proposition \ref{S3.3.4P1},
\begin{align*}
\left| \Lambda\left(\frac {t} {y}, \frac {x} {y} \right)\right|\frac {1} {y}\le \frac { Ct} {y^2\left|\log\left(\frac {x} {y}\right)\right|} 
\end{align*}
Since,
\begin{align*}
&\lim _{ x\to 0 }\frac { t|f_0(y)|} {y^2|\log(x/y)|} =0,\,\,\forall y>0,\\
&\frac { t|f_0(y)|} {y^2|\log(x/y)|} \le  \frac { t|f_0(y)|} {y^2\log(2)} \in L^1(t, \infty)
\end{align*}
it follows by the Lebesgue's convergence Theorem,
$$
\lim _{ x\to 0 }\int _t ^\infty  \Lambda  \left(\frac {t} {y}, \frac {x} {y} \right)  f_0(y) \frac {dy} {y}=0.
$$
We then deduce (\ref{S4L25E1}) where $b_1$ is the function given in (\ref{S3EBB1}) and 
\begin{align}
A_1=-\frac {2} {\sqrt{2\pi }W(1)W'(2)W'(0)},\,\,\,
A_2=\frac {6} {\sqrt{2\pi }W'(0)W(3)W(1)}Res\left(\frac {1} {W(s)}, s=2 \right) \label{S4Ecata}
\end{align}

\end{proof}

\begin{rem}
The values of the constants $A_1$ and $A_2$ in Proposition \ref{S4EM1PN} may be easily evaluated with Mathematica:
\begin{align*}
W(1)\approx -0.369004;\,\,\,\,&W(3)\approx 1.91418;\,\,W'(2)=-W'(0)=\frac {\pi ^2} {12}.\\
A_1\approx & -3.19648,\,\,\,A_2\approx 4.1203.
\end{align*}
On the other hand,
\begin{align*}
&b_1(r)=A_3r^{-\sigma _1-1}+\mathcal O(r)^{-12},\,\,r\to \infty,\,\,\,\,\sigma _1\in (7.04, 7.06),\\
&A_3=-\frac {B(1)B(\sigma _1)} {W'(0)}\Gamma (\sigma _1+1)Res\left(\frac {1} {W(s)}, s=\sigma _1 \right).
\end{align*}
\end{rem}

\begin{rem}
If $(y^2+y^{4+\delta })f_0\in L^1(0, \infty)$,
\begin{align*}
\ell(f_0; t)&\sim A_1t^{-3}\int _0^\infty f_0(y)y^2dy+A_2t^{-4}\int _0^\infty f_0(y)y^3dy +\mathcal O (t)^{-4-\delta },\,\,t\to \infty
\end{align*}
If on the other hand, $f_0(y)\sim y^{-\rho }$ as $y\to 0$ for some $\rho \in (0, 1)$, by (\ref{S4L25E2}),
\begin{align*}
\ell(f_0; t)&=\int _0^1 f_0(t z)\left(A_1z^3+A_2z^4+b_1\!\!\left(\frac {1} {z} \right) \right)\frac {dz} {z}\\
&\sim t^{-\rho  }\int _0^1 \left(A_1z^3+A_2z^4+b_1\!\!\left(\frac {1} {z} \right) \right) z^{-\rho }\frac {dz} {z},\,\,t\to 0.
\end{align*}

\end{rem}

\section{Appendix}
\setcounter{equation}{0}
\setcounter{theo}{0}

\subsection{The Proof of Proposition \ref{S3PX1}}
Based on the expression (\ref{S5EUX1}) of $U(t)$ it closely follows that of Proposition 8.1 in \cite{EV}
\begin{align*}
U(t, s)=-\frac {B(s)} {\sqrt{2\pi}}\int  _{ \mathscr Re(\sigma )=\beta }\frac {t^{-(\sigma -s)}\Gamma (\sigma-s) } { B(\sigma )}d\sigma,\qquad\beta \in (0, 2),\,\, \beta -1<c<\beta
\end{align*}
(similar to (5.1) in \cite{EV}). As in (8.34) of \cite{EV}, this may be written,
\begin{align}
U(t, s)&=-\frac {B(s)} {\sqrt{2\pi}}\int  _{ \mathscr Re(Y )=\beta -\mathscr Re(s ) }\frac {t^{-Y}\Gamma (Y) } { B(s+Y )}dY =\int  _{ \mathscr Re(\sigma )=\beta  }e^{\psi (s, \sigma , t)}A(Y)dY \label{SAE1}
\end{align}
where
\begin{align}
\Psi (s, Y, t)= \int_{ {\mathscr Re}(\rho ) =\beta  } \log\left( -W(\rho )\right)\Theta(\rho -s, Y)d\rho-Y\log t-Y+\left( Y-\frac {1} {2}\right)\log Y,  \label{SAE2}
\end{align}
with $\Theta$ defined in (\ref{S3ETheta}), and
\begin{align}
A(Y)=\frac {\Gamma (Y)} {\sqrt{2\pi }e^{-Y}Y^{Y-1/2}}.  \label{SAE3}
\end{align}
The function $A$ defined in (\ref{SAE3}) is  the same as in (8.5) of \cite{EV}. The function $\Psi $ defined in (\ref{SAE2}) is similar to  (8.4) in \cite{EV}, the only difference lies in the function $W$ instead of $\Phi $. 

The proof of the estimates (\ref{S3PX1E1}), (\ref{S3PX1E2}) of Proposition \ref{S3PX1} follows then the same arguments as in \cite{EV} with only minor differences.
For $s$ in bounded sets, contour deformation and method of residues in the integrals (\ref{SAE1}), (\ref{SAE2}).
For $|s|$ large, these arguments are combined with the stationary phase Theorem applied to $\Psi (s, Y, t)$ as a function of $Y$,  where $s$ and $t$ are fixed. The variable $Y$ is scaled as $Y=2Z \log|s|$, according to the behavior of $W(s)$ as $\mathscr I m(s)\to \infty$, for $\mathscr Re(s)$ in a fixed  bounded interval and the result follows from the following. If we define,
\begin{align}
&\tilde F(s, \zeta )= \int_{ {\mathscr Re}(\rho ) =\beta  } \log\left( -W(\rho )\right)\Theta(\rho -s, \zeta )d\rho \label{SA2EqtildeF}\\
&F(s, Z)= \int_{ {\mathscr Re}(\rho ) =\beta  } \log\left( -W(\rho )\right)\Theta(\rho -s, 2Z \log|s|)d\rho,=\tilde F(s, 2Z \log|s| )\label{SA2EqF}
\end{align}

\begin{lem}
\label{SALemma14.1}
For any constant $C>0$, there exists a constant $L>0$ and $s_0\in \CC$, both depending on $C$, such that, for all $s\in \mathcal T_L\cap B _{ s_0 }(0)^c$ the function $F$ may be extended analytically for $Z\in D(s, C)\cap B _{\frac{|\log|s||} {8} }(0)$ where
\begin{equation*}
D_1(s, C)=\left\{s\in \CC, \mathscr Re(s)<0,\,\,|\mathscr Re(s)|\le C|\mathscr Im(s)+\frac {|\log|s||} {8}| \right\}
\end{equation*}
There also exists a constant $C'>0$, that depends on $C$, such that, for all $Z\in D_1(s, C)\cap B _{ \frac { |\log|s||} {8} }(0)$ and $s\in \mathcal T_L\cap B _{ s_0 }(0)^c$,
\begin{equation}
\label{SA1Lemma14.1E1}
\left|F(s, Z)+Z\log (-W(s))\log|s| \right|\le C'\left( Z^2+\mathcal O\left(\frac {1} {\log |s|} \right)\right).
\end{equation}
\end{lem}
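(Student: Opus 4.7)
The strategy rests on the identity (\ref{S3ETheta1}), which writes $\tilde F(s,\zeta)$ as a logarithmic ratio
$$\tilde F(s,\zeta)=\log\!\left(\frac{B(s)}{B(s+\zeta)}\right),$$
so that $F(s,Z)=\log B(s)-\log B(s+2Z\log|s|)$ for the branch of the logarithm consistent with the integral definition. The whole lemma is thereby converted into statements about the meromorphic function $B$ and its translations, which are available through Propositions \ref{S3PW17}, \ref{S3PBP}, \ref{S3PBP24} and \ref{S3PBP24B} together with the functional equation (\ref{S5EBX5}).

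For the analytic extension I would fix $L$ and $|s_0|$ large enough so that, for every $s\in \mathcal T_L\cap B_{s_0}(0)^c$ and every $Z\in D_1(s,C)\cap B_{|\log|s||/8}(0)$, the translated point $w:=s+2Z\log|s|$ remains in the region $\{|\mathscr Im w|>R,\ |\mathscr Re w|\le M\}$ for some $R,M>0$. By Propositions \ref{S3PBP24} and \ref{S3PBP24B}, $B(w)$ is bounded both above and below there (essentially by constants times $\log|\mathscr Im w|$), hence non-vanishing, so $\log B(s+2Z\log|s|)$ is single-valued and analytic in $Z$ on the claimed set. The validity of the displacement of the $\rho$-contour in the integral definition of $F$, needed to match this log-ratio form on the extended $Z$-domain, is a standard Cauchy argument using the poles of $\Theta(\rho-s,\cdot)$ in the $\rho$-variable.

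The estimate then comes from a second order Taylor expansion
$$F(s,Z)=-2Z\log|s|\,\frac{B'(s)}{B(s)}+R(s,Z),\qquad |R(s,Z)|\le C|Z|^{2}(\log|s|)^{2}\sup\bigl|(\log B)''(w)\bigr|,$$
with the supremum taken on a segment from $s$ to $s+2Z\log|s|$. To identify the leading coefficient I would analyze $B'(s)/B(s)$ via the differentiated functional equation $(\log B)'(s+1)-(\log B)'(s)=W'(s)/W(s)$ combined with the integral representation (\ref{S5EtheB}) of $B$ and the asymptotics (\ref{S3PW17E1}), (\ref{S3PW17E3}); this extracts the correct multiple of $\log(-W(s))$, with a remainder of order $1/\log|s|$. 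Together the two contributions produce $F(s,Z)+Z\log(-W(s))\log|s|$ up to $O(Z^2+1/\log|s|)$, as claimed.

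The hard part is pinning down the asymptotic of $B'(s)/B(s)$ with sufficient precision uniformly over the strip, since the discrete functional equation furnishes only a first-order heuristic. Following the template of the analogous Lemma~14.1 in \cite{EV}, I would handle this by a direct contour shift in the integral for $F(s,Z)$: expanding $\Theta(\rho-s,2Z\log|s|)$ in powers of $Z$, isolating the simple-pole contribution via a residue at $\rho=s$, and controlling the remaining analytic integral using the slow divergence $\log(-W(\rho))=O(\log\log|\rho|)$ from Proposition \ref{S3PW17} together with the boundedness estimates of Propositions \ref{S3PBP24} and \ref{S3PBP24B}. This produces both the stated leading term and the two remainder contributions.
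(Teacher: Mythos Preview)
The paper does not actually prove this lemma: it is stated in the Appendix and the reader is referred to the argument of Lemma~14.1 in \cite{EV}, with the remark that only the function $W$ (in place of $\Phi$) differs. Your final paragraph---expanding $\Theta(\rho-s,2Z\log|s|)$, isolating the residue contribution at $\rho=s$, and bounding the remaining integral via the $\log\log$ growth of $\log(-W(\rho))$---is precisely that method, so in substance your proposal agrees with the paper.

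Your preliminary Taylor-expansion route through $B'/B$ is a legitimate alternative framing (the identification $\tilde F(s,\zeta)=\log B(s)-\log B(s+\zeta)$ via (\ref{S3ETheta1}) is correct), but it does not really shortcut anything: to obtain $(\log B)'(s)=\tfrac12\log(-W(s))+O(1/\log|s|)$ and to bound $(\log B)''$ by $O((\log|s|)^{-2})$ uniformly on the segment from $s$ to $s+2Z\log|s|$ (needed so that $|Z|^{2}(\log|s|)^{2}\sup|(\log B)''|=O(Z^{2})$), you end up differentiating the integral representation (\ref{S5EtheB}) and performing exactly the contour analysis of the $\Theta$-kernel that the direct approach uses. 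Note also that as $|Z|$ ranges up to $|\log|s||/8$, the displacement $2Z\log|s|$ can have real part of order $(\log|s|)^{2}$, so the translated point $s+2Z\log|s|$ leaves any fixed vertical strip; Propositions~\ref{S3PBP24} and~\ref{S3PBP24B} alone do not cover this, and one must iterate the functional equation (\ref{S5EBX5}) or work directly with the integral for $\tilde F$, which again is the \cite{EV} argument.
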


\noindent
Due to the slow decay of the function $U(t, s)$ as $|s|\to \infty$, the following is also needed
\begin{lem} 
\label{SA2Lemma9.3E57}
There  exists a constant $C'>0$ such that,  for all $s\in \mathcal T_L\cap B _{ s_0 }(0)^c$, and $\zeta $ such that   $Z=\zeta / \sqrt{|s|} \in D_1(s, C)\cap B _{ \frac { |\log|s||} {8} }(0)$,
\begin{align}
\label{SA2partialU}
&\left|\frac {\partial \tilde F} {\partial s} \left(s, \zeta  \right) \right|\le \frac {C|\zeta |^2} {|s|^2\log |s|}+Ce^{-a'|s|}
\end{align}
\end{lem}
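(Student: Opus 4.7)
The plan is to follow the template of the analogous estimate in \cite{EV} (their Lemma~9.3 is the direct counterpart), exactly as the author has done for Proposition~\ref{S3PX1} via Lemma~\ref{SALemma14.1}. The basic input is the exponential decay of $\Theta(\cdot,\zeta)$ and of its $\sigma$-derivatives as $|\mathscr Im(\rho)|\to\infty$, together with the slow growth $\log(-W(\rho))=O(\log\log|\rho|)$ and the $O(1/(|\rho|\log|\rho|))$ decay of $W'(\rho)/W(\rho)$ provided by Proposition~\ref{S3PW17}; these combine to make the integral defining $\tilde F$ and its $s$-derivative absolutely convergent with vanishing boundary terms.

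First I would bring $\partial_s$ inside the integral defining $\tilde F$ and integrate by parts twice along $\mathscr Re(\rho)=\beta$, transferring both $\sigma$-derivatives from $\Theta(\rho-s,\zeta)$ onto $\log(-W(\rho))$. Boundary terms vanish by the above, yielding the workable identity
\[
\frac{\partial\tilde F}{\partial s}(s,\zeta)=\int_{\mathscr Re(\rho)=\beta}\frac{W'(\rho)}{W(\rho)}\,\Theta(\rho-s,\zeta)\,d\rho.
\]
Then I would use the identity $\Theta(\sigma,0)\equiv 0$, immediate from \eqref{S3ETheta}, to Taylor expand $\Theta(\rho-s,\zeta)=\zeta\,\partial_Y\Theta(\rho-s,0)+\tfrac{\zeta^2}{2}\,\partial_Y^2\Theta(\rho-s,\tilde\zeta)$ for some $\tilde\zeta$ between $0$ and $\zeta$. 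The quadratic remainder is bounded directly using the decay of $W'/W$ from Proposition~\ref{S3PW17} and the localization of $\partial_Y^2\Theta(\rho-s,\cdot)$ near $\mathscr Im(\rho)=\mathscr Im(s)$, producing the main term $C|\zeta|^2/(|s|^2\log|s|)$.

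For the linear-in-$\zeta$ contribution, I would deform the contour $\mathscr Re(\rho)=\beta$ and collect residues at the singularities of $W'/W$, all of which lie on the real axis by Proposition~\ref{S3PW}. At each real pole $\rho_0$, the residue involves $\partial_Y\Theta(\rho_0-s,0)$, which satisfies $|\partial_Y\Theta(\rho_0-s,0)|\le C|e^{-2i\pi(\rho_0-s)}|=Ce^{-2\pi|\mathscr Im(s)|}\le Ce^{-a'|s|}$ for $s\in\mathcal T_L\cap B_{s_0}(0)^c$; this is the origin of the claimed exponentially small term.

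The main obstacle is the summation of this residue series: the zeros $\{\sigma_n\}$ and poles $\{s_n\}$ of $W$ accumulate at $+\infty$ along the positive real axis, each contributing a term of size $O(e^{-2\pi|\mathscr Im(s)|})$, so the convergence of the sum must be extracted from the interleaving pattern $\sigma_n\in(s_{n+1}-1,s_{n+1})$ supplied by Proposition~\ref{S3PW}. Grouping consecutive pairs $(\sigma_n,s_n)$ and using the asymptotics \eqref{S3PW17E1}--\eqref{S3PW17E3} produces a telescoping/oscillatory cancellation of order $1/|\rho_0|^2$ that renders the residue sum absolutely convergent with the required $e^{-a'|s|}$ bound, and a symmetric argument treats the sequence $\{\sigma_n^*,s_n^*\}$ on the negative real axis. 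This is the delicate bookkeeping carried out in \cite{EV}; adapting it here consists of checking that the pole/zero data of $W$ from Proposition~\ref{S3PW} plays exactly the role of the analogous data for $\Phi$ in \cite{EV}, after which the arguments underlying Lemma~\ref{SALemma14.1} transfer without essential change.
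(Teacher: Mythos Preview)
Your approach diverges from the paper's at the crucial step, and the divergence contains a real gap.

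The paper does \emph{not} Taylor expand $\Theta$ in $\zeta$. After deriving the same representation
\[
\frac{\partial\tilde F}{\partial s}(s,\zeta)=\int_{\mathscr Re(\rho)=\beta}\frac{W'(\rho)}{W(\rho)}\,\Theta(\rho-s,\zeta)\,d\rho
\]
(obtained by one integration by parts, not two), the paper splits the $\rho$–integral according to the distance $|\rho-s|$: a near part $|\rho-s|\le |s|/4$, a far part on the same half–plane, and the opposite half–plane. On the near part one uses $W'/W=O(1/(|s|\log|s|))$ and a direct bound on $\int|\Theta(\sigma,\zeta)|\,d\sigma$ (borrowed from \cite{EV}); on the far parts one uses the exponential decay of $\Theta(\rho-s,\zeta)$ in $|\mathscr Im(\rho-s)|$ to produce the $e^{-a'|s|}$ term. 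No contour deformation and no residue series appear.

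Your treatment of the linear term is where the argument breaks. The integral
\[
\zeta\int_{\mathscr Re(\rho)=\beta}\frac{W'(\rho)}{W(\rho)}\,\partial_Y\Theta(\rho-s,0)\,d\rho
\]
is \emph{not} exponentially small in $|s|$. The kernel $\partial_Y\Theta(\rho-s,0)=-\partial_\rho\bigl[\tfrac{1}{1-e^{-2i\pi(\rho-s)}}\bigr]$ is exponentially concentrated near $\mathscr Im(\rho)=\mathscr Im(s)$ and has total mass $1$ along the vertical line; hence the integral is essentially $W'(s)/W(s)=O(1/(|s|\log|s|))$, so the linear contribution is $O(|\zeta|/(|s|\log|s|))$. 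Your proposed contour deformation cannot convert this into a residue sum over the \emph{real} poles of $W'/W$: the vertical line $\mathscr Re(\rho)=\beta$ is not a closed contour, pushing it to $\mathscr Re(\rho)\to\pm\infty$ does not make the integral vanish (the integrand along any shifted vertical line still has its $O(1)$ mass near $\mathscr Im(\rho)=\mathscr Im(s)$), and if you move horizontally you also cross the poles of $\partial_Y\Theta$ at $\rho=s+n$, whose residues are of size $O(1/(|s|\log|s|))$, not $e^{-a'|s|}$. The interleaving of zeros and poles of $W$ along the real axis is irrelevant here; the exponential term in the lemma comes from the tail of $\Theta$ when $|\rho-s|\gtrsim|s|$, not from any residue calculus.

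In short: abandon the Taylor-in-$\zeta$ decomposition and instead split the $\rho$–integral by proximity to $s$, as in the paper.
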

\begin{proof}
By (\ref{SA2EqF})
\begin{align*}
\frac {\partial \tilde F} {\partial s} (s, \zeta ) &=  \int_{ {\mathscr Re}(r ) =\beta-  {\mathscr Re}(s )} \frac {\partial } {\partial s}\left(\log\left( -W(r+s )\right) \right)\Theta(r, \zeta )dr\\
&= -\int_{ {\mathscr Re}(r ) =\beta-  {\mathscr Re}(s )} \frac {W'(r+s)} {W(r+s)}\Theta(r, \zeta )dr= 
\int_{ {\mathscr Re}(\rho  ) =\beta} \frac {W'(\rho )} {W(\rho )}\Theta(\rho -s, \zeta )d\rho 
\end{align*}
By (\ref{S3PW17E1}) and (\ref{S3PW17E3}),
\begin{align*}
\frac {W'(\rho )} {W(\rho )}&\underset{|\mathscr Im \rho |\to \infty}{=}-\frac {\rho ^{-1}+\mathcal O(|\rho| ^{-2})} {-2\log |\frac {b\rho} {2} |+\frac {2i(u-1)} {v}+\mathcal O\left(|\rho |^{-2} \right)}
\underset{|\mathscr Im \rho |\to \infty}{=}-\frac {1+\mathcal O(|\rho |^{-1})} {\rho\left(2\log |\frac {b\rho} {2} | \right) }
\end{align*}
The proof now follows the lines of Lemma 14.1 in \cite{EV}. Suppose that $\mathscr Im (s)>>1$ and denote $\zeta =Z\sqrt {|s|}$,

\begin{align*}
&\left|\frac {\partial \tilde F} {\partial s} (s, Z\sqrt {|s|}) \right|\le C\int_{ {\mathscr Re}(\rho  ) =\beta}\left|\frac {W'(\rho )} {W(\rho )}\right| \left|\Theta(\rho -s, Z\sqrt {|s|})\right| |d\rho| \\
&\le C\int\limits_{\substack{ {\mathscr Re}(\rho  )=\beta, {\mathscr Im}(\rho  ) >0  \\  |s-\rho |\le \frac {|s|} {4}} } \left|\frac {W'(\rho )} {W(\rho )}\right| \left|\Theta(\rho -s, Z\sqrt {|s|} )\right| |d\rho|+\\
&+C\int\limits_{\substack{ {\mathscr Re}(\rho  )=\beta, {\mathscr Im}(\rho  ) >0  \\  |s-\rho |\ge \frac {|s|} {4}} } \left|\frac {W'(\rho )} {W(\rho )}\right| \left|\Theta(\rho -s, Z\sqrt {|s|} )\right| |d\rho|\\
&+C\int\limits_{\substack{ {\mathscr Re}(\rho  )=\beta, {\mathscr Im}(\rho  ) <0 } } \left|\frac {W'(\rho )} {W(\rho )}\right| \left|\Theta(\rho -s, Z\sqrt {|s|} )\right| |d\rho|=I_1+I_2+I_3
\end{align*}
First,
\begin{align*}
I_1\le C\hskip -0.6cm \int\limits_{\substack{ {\mathscr Re}(\rho  )=\beta,  \mathscr Im \rho >0  \\  |s-\rho |\le \frac {|s|} {4}} }\hskip -0.7cm \frac { \left|\Theta(\rho -s, Z\sqrt {|s|} )\right| |d\rho|} {2\rho \log |\rho | }&\le 
\frac {C} {|s|\log |s|} \int\limits _{\substack{ {\mathscr Re}(\sigma )=\beta-{\mathscr Re}(s   ) \\ \mathscr Im \sigma  >-\mathscr Im s,  |\sigma  |\le \frac {|s|} {4}} }\hskip -0.5cm \left|\Theta(\sigma , Z\sqrt {|s|} )\right|d\sigma \\
&\le \frac {C} {|s|\log |s|}\left(Z^2+e^{-a|s|^{1/2}Z} \right).
\end{align*}
Second,
\begin{align*}
I_2\le C\hskip -0.6cm \int\limits_{\substack{ {\mathscr Re}(\rho  )=\beta,  \mathscr Im \rho >0  \\  |s-\rho |\ge \frac {|s|} {4}} }\hskip -0.7cm \frac { \left|\Theta(\rho -s, Z\sqrt {|s|} )\right| |d\rho|} {2\rho \log |\rho | }&\le 
\int\limits_{\substack{ {\mathscr Re}(\rho  )=\beta,  \mathscr Im \rho >0  \\ \mathscr |Im \rho |\le |s|, |s-\rho |\ge \frac {|s|} {4}} }\hskip -0.7cm \frac { \left|\Theta(\rho -s, Z\sqrt {|s|} )\right| |d\rho|} {2\rho \log |\rho | }+\\
+&\hskip -1cm \int\limits_{\substack{ {\mathscr Re}(\rho  )=\beta,  \mathscr Im \rho >0  \\ \mathscr |Im \rho |\ge |s|, |s-\rho |\ge \frac {|s|} {4}} }\hskip -0.7cm \frac { \left|\Theta(\rho -s, Z\sqrt {|s|} )\right| |d\rho|} {2\rho \log |\rho | }=I _{ 2,1 }+I _{ 2,2 }
\end{align*}
where,

\begin{align*}
&I _{ 2,1 } \le 
\int\limits_{\substack{ {\mathscr Re}(\rho  )=\beta,  \mathscr Im \rho >0  \\ \mathscr |Im \rho |\le |s|, |s-\rho |\ge \frac {|s|} {4}} }\hskip -0.7cm \frac { \left|\Theta(\rho -s, Z\sqrt {|s|} )\right| |d\rho|} {2\rho \log |\rho | }
\le C e^{-a|s|}\hskip -0.6cm  \int\limits_{\substack{ {\mathscr Re}(\rho  )=\beta,  \mathscr Im \rho >0  \\ \mathscr |Im \rho |\le |s|, |s-\rho |\ge \frac {|s|} {4}} }\hskip -0.7cm \frac { |d\rho|} {2\rho \log |\rho | }\le C e^{-a'|s|}\\
&I _{ 2,2 }\le \!\!\!\!\!\! \int\limits_{\substack{ {\mathscr Re}(\rho  )=\beta,  \mathscr Im \rho >0  \\ \mathscr |Im \rho |\ge |s|, |s-\rho |\ge \frac {|s|} {4}} }\hskip -0.7cm \frac { \left|\Theta(\rho -s, Z\sqrt {|s|} )\right| |d\rho|} {2\rho \log |\rho | } \le  
\frac {C} {|s|\log |s|} \!\!\!\!\!\! 
\int\limits_{\substack{ {\mathscr Re}(\rho  )=\beta,  \mathscr Im \rho >0  \\ \mathscr |Im \rho |\ge |s|, |s-\rho |\ge \frac {|s|} {4}} }\hskip -0.7cm e^{-a|s-\rho |} |d\rho|\le  \frac {Ce^{-a'|s|}} {|s|\log |s|}.
\end{align*}
\end{proof}

\subsection{Proof of Proposition \ref{S4P2}.} The proof of Proposition \ref{S4P2} is similar to that of Proposition 9.2 in \cite{EV}. However, some small modification is needed because of the slow decay of $U(t, s)$ as $|s|\to \infty$.
An estimate for $\frac {\partial } {\partial s}(\exp {(\tilde F)})\left(\frac {\sigma } {\rho (t)}, \zeta \right) $ similar to (\ref{SA1Lemma14.1E1}) is our first step.
\begin{lem}
\label{SA2Lemma9.3} 
For all $\varepsilon _0>0$ there exists a positive constant $C$ such that, for all  $M>\varepsilon _0$, for all  $\sigma $ such that $\mathscr Re (\sigma /\rho (t) )$ lies in compact subsets of $(0,2)$ and $\varepsilon _0\le |\sigma |\le M $,  and for all $\zeta $ such that $0<|\mathscr Re \zeta |<1$ and
\begin{equation}
\label{SA2Lemma9.3E3} 
|\mathscr Im (\zeta )|=o\left(t^{-1}\right),\,\,t\to 0.
\end{equation}
the following estimate holds,
\begin{equation}
\label{SA2Lemma9.3E2} 
\left| \frac {\partial \tilde F} {\partial s} \left(\frac {\sigma } {\rho (t)}, \zeta \right) e^{\tilde F\left(\frac {\sigma } {\rho (t)}, \zeta \right)}t^{-\zeta }-  \frac {\zeta \rho (t) e^{-\zeta \log\left( 2\log\left|\frac {b\sigma } {\rho (t)}\right|\right) }} {2 \sigma \log |\frac {b\sigma } {2\rho (t)} |  }t^{-\zeta } \right|
\le h_M(t)
\end{equation}
\begin{equation}
\label{SA2Lemma9.3E1} 
h_M(t)=C\left(\rho (t)^2o(t^{-1})+e^{-a' \varepsilon _0 /\rho (t)} \right)e^{\mathcal O\left(t\log M \right)},\,\,\hbox{as}\,t\to 0.
\end{equation}
Moreover, there is $\delta _0>0$, that depends on $\varepsilon _0$ and $M$, such that for all$\zeta $ such that $0<|\mathscr Re (\zeta )|<1$, $|\mathscr Im  (\zeta )|\le \delta _0/t^2$, for $\mathscr Re (\sigma /\rho (t) )$ in compact subsets of $(0, 2)$ and $\varepsilon _0\le |\sigma |\le M $,
\begin{align}
\left| \frac {\partial \tilde F} {\partial s} \left(\frac {\sigma } {\rho (t)}, \zeta  \right)e^{\tilde F\left( \frac {\sigma } {\rho (t)}, \zeta \right)} t^{-\zeta }\right|&\le  
C(1+|\zeta| )\left(t\rho (t)^2t^{-4}+Ce^{-a' \varepsilon _0 /\rho (t)}\right)e^{\mathcal O\left(t\log M \right)}
\label{SA2Lemma9.3E4} 
\end{align}
where the constant $C$ may depend on $\delta _0$ and $\varepsilon _0$ but not on $M$.
\end{lem}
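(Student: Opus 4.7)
The plan is to combine the two preceding lemmas with a careful extraction of the leading-order term in the contour representation of $\partial_s \tilde F$. Throughout I write $s=\sigma/\rho(t)$; since $|\sigma|\ge \varepsilon_0>0$ and $\rho(t)\to 0$, we have $|s|\to\infty$ uniformly as $t\to 0$, so the asymptotics of $W$ and $W'/W$ from Proposition \ref{S3PW17} become available in this regime.

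To identify the leading asymptotics of $\partial_s \tilde F(s,\zeta)$, I would start from the identity
\begin{equation*}
\frac{\partial \tilde F}{\partial s}(s,\zeta) = \int_{\mathscr{Re}(\rho)=\beta}\frac{W'(\rho)}{W(\rho)}\,\Theta(\rho-s,\zeta)\,d\rho
\end{equation*}
derived in the proof of Lemma \ref{SA2Lemma9.3E57}. Using $W'(\rho)/W(\rho) = -1/(\rho\cdot 2\log|b\rho/2|) + O(|\rho|^{-2}\log^{-1}|\rho|)$ together with the exponential decay of $\Theta(\rho-s,\zeta)$ in $|\mathscr{Im}(\rho-s)|$, the contribution from $|\rho-s|\ge |s|/4$ is already absorbed into the $e^{-a'|s|}$ remainder of Lemma \ref{SA2Lemma9.3E57}. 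On the window $|\rho-s|\le |s|/4$ I would freeze $W'(\rho)/W(\rho)$ at $\rho=s$ (with relative error $O(1/|s|)$), pull the factor $-1/(s\cdot 2\log|bs/2|)$ out of the integral, and evaluate $\int \Theta(\rho-s,\zeta)\,d\rho$ by residues (the same computation already used in Lemma \ref{SA2Lemma9.3E57}) to obtain precisely $\zeta$ up to $2\pi i$ conventions. This yields the leading term $-\zeta/(s\cdot 2\log|bs/2|)$, which, after the substitution $s=\sigma/\rho(t)$, matches the explicit prefactor in (\ref{SA2Lemma9.3E2}).

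Next I would exponentiate Lemma \ref{SALemma14.1}. Setting $Z=\zeta/(2\log|s|)$ so that $\tilde F(s,\zeta)=F(s,Z)$, and using $-W(s)=2\log|bs/2|+O(|s|^{-2})$, the estimate (\ref{SA1Lemma14.1E1}) reads
\begin{equation*}
e^{\tilde F(s,\zeta)} = \exp\bigl(-\zeta\,\log(2\log|bs/2|)\bigr)\,\bigl(1+O(|\zeta|^2/\log|s|)+O(1/\log|s|)\bigr),
\end{equation*}
valid on the narrow range (\ref{SA2Lemma9.3E3}), where $|\zeta|^2/\log|s|$ is small. Multiplying by $t^{-\zeta}$ and substituting $s=\sigma/\rho(t)$ produces the explicit expression on the left of (\ref{SA2Lemma9.3E2}); the remaining error is the Step~1 remainder times $|e^{\tilde F}\,t^{-\zeta}|=e^{\mathcal{O}(t\log M)}$, which under (\ref{SA2Lemma9.3E3}) is precisely the $h_M(t)$ of (\ref{SA2Lemma9.3E1}). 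For the coarser bound (\ref{SA2Lemma9.3E4}) on the wider range $|\mathscr{Im}(\zeta)|\le \delta_0/t^2$, the quadratic-in-$\zeta$ correction to $e^{\tilde F}$ is no longer negligible; instead I would bound $|\partial_s\tilde F|$ directly by Lemma \ref{SA2Lemma9.3E57}, choose $\delta_0$ small enough that $|e^{\tilde F}t^{-\zeta}|\le (1+|\zeta|)\,e^{\mathcal{O}(t\log M)}$ still holds, and convert $|s|^{-2}\log^{-1}|s|$ back into $\rho(t)^2 t^{-4}$ via $s=\sigma/\rho(t)$.

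The main obstacle lies in Step~1: one must carry out the residue computation for $\int \Theta(\rho-s,\zeta)\,d\rho$ exactly, track the $2\pi i$ and logarithmic conventions so that the precise constant in (\ref{SA2Lemma9.3E2}) comes out as stated, and verify that replacing $W'/W$ by its frozen value at $\rho=s$ produces an error that fits into either the $\rho(t)^2 o(t^{-1})$ term or the $e^{-a'\varepsilon_0/\rho(t)}$ term, rather than something larger. A secondary and technically delicate point, crucial for Step~4, is $M$-uniformity: every intermediate error must contribute at worst $e^{\mathcal{O}(t\log M)}$ with constants in the $\mathcal{O}$ independent of $M$, so that the range $\varepsilon_0\le |\sigma|\le M$ can be handled in one stroke.
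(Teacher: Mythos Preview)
Your proposal is essentially the paper's approach: decompose via $|ab - a_0b_0| \le |a_0|\,|b-b_0| + |a-a_0|\,|b|$ (the paper's $A_1 + A_2$), control $|e^{\tilde F}-e^{-\zeta\log(2\log|bs/2|)}|$ through Lemma~\ref{SALemma14.1} together with the computation $|e^{-\zeta\log(2\log|bs/2|)}\,t^{-\zeta}| = e^{\mathcal{O}(t\log M)}$, and use Lemma~\ref{SA2Lemma9.3E57} for the $\partial_s\tilde F$ piece. The coarser bound (\ref{SA2Lemma9.3E4}) is obtained exactly as you describe, by bounding $|e^{\tilde F}\,t^{-\zeta}|$ and $|\partial_s\tilde F|$ directly and choosing $\delta_0$ small.

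One remark on the $A_2$ part. The paper does \emph{not} carry out your freezing-plus-residue step to isolate the leading term $-\zeta/(2s\log|bs/2|)$ of $\partial_s\tilde F$; it simply invokes the upper bound (\ref{SA2partialU}) on $|\partial_s\tilde F|$ and passes directly to the estimate on $A_2$. Your version is actually the more transparent one: freezing $W'/W$ at $\rho=s$ on $|\rho-s|\le |s|/4$ (relative error $O(1/|s|)$) together with $\int_{|\sigma|\le |s|/4}|\Theta(\sigma,\zeta)|\,|d\sigma| = O(|\zeta|)$ yields a remainder $O(|\zeta|/(|s|^2\log|s|)) = O(|\zeta|\,\rho(t)^2 t)$, which under (\ref{SA2Lemma9.3E3}) drops straight into $h_M(t)$. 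Two small corrections: the residue identity you need is $\int_{\mathscr{Re}(\rho)=\beta}\Theta(\rho-s,\zeta)\,d\rho = -\zeta$ (with a minus sign), and this computation is \emph{not} performed in Lemma~\ref{SA2Lemma9.3E57} as your parenthetical suggests---it follows rather by observing that if $\log(-W)$ were a constant $c$ then $B(s)/B(s+\zeta)=e^{-c\zeta}$, forcing $\int\Theta\,d\rho=-\zeta$.
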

\begin{proof}
We write,
\begin{align}
&\left| \frac {\partial \tilde F} {\partial s} \left(\frac {\sigma } {\rho (t)}, \zeta \right) e^{\tilde F\left(\frac {\sigma } {\rho (t)}, \zeta \right)}t^{-\zeta }-  \frac {\zeta \rho (t)e^{-\zeta \log\left( 2\log\left|\frac {b\sigma } {\rho (t)}\right|\right) }} {2 \sigma \log |\frac {b\sigma } {2\rho (t)} |  }t^{-\zeta } \right| \le A_1+A_2 \label{SA2Lemma9.3E20} \\
&A_1\equiv A_1 \left(\frac {\sigma } {\rho (t)}, \zeta \right) =\left| e^{\tilde F\left(\frac {\sigma } {\rho (t)}, \zeta \right)}-  e^{-\zeta \log\left( 2\log\left|\frac {b\sigma } {\rho (t)}\right|\right) } \right| 
\left| \frac {\zeta \rho (t) t^{-\zeta }} {2 \sigma \log |\frac {b\sigma } {2\rho (t)} |  }\right| \label{SA2Lemma9.3E22} \\
&A_2 \equiv A_2 \left(\frac {\sigma } {\rho (t)}, \zeta \right) =\left| \frac {\partial \tilde F} {\partial s} \left(\frac {\sigma } {\rho (t)}, \zeta \right) -  \frac {\zeta \rho (t)} {2 \sigma \log |\frac {b\sigma } {2\rho (t)} |  } \right|
\left|e^{\tilde F\left(\frac {\sigma } {\rho (t)}\zeta \right)}t^{-\zeta }\right| \label{SA2Lemma9.3E23} 
\end{align}
Let us estimate first $A_1$. To this end,
\begin{align}
&\left|e^{F\left( \frac {\sigma } {\rho (t)}, \frac {\zeta } {\log\left| \sigma/\rho (t) \right|}\right)}-e^{-\zeta \log\left( 2\log\left|\frac {b\sigma } {\rho (t)} \right|\right) } \right|
\le C \left|e^{-\zeta \log\left( 2\log\left|\frac {b\sigma } {\rho (t)} \right|\right) }\right| \times \nonumber \\
&\hskip 2cm \times \left| F\left( \frac {\sigma } {\rho (t)}, \frac {\zeta } {\log\left| \sigma/\rho (t) \right|}\right)+\zeta \log\left( 2\log\left| \frac {b\sigma } {\rho (t)}\right|\right) \right|.\label{SA2Lemma9.3E50}
\end{align}
We first notice, since $\mathscr R e(\sigma) /\rho (t)|$ lies in a compact set, $|u|=|\mathscr R e(\sigma)|\le C\rho (t)\le \varepsilon _0/2$ for $t$ small enough and then $|v|=|\mathscr (\sigma )|\ge \varepsilon _0/2$.  We deduce,
\begin{align}
\mathscr Re \left(\zeta  \log\left( 2\log \left|\frac {b\sigma   } {\rho (t)}\right| \right)\right)=(\mathscr Re \zeta) \log\left| 2\log \left|\frac {b\sigma   } {\rho (t)}\right| \right| \nonumber\\
=(\beta _1-\alpha _1)\log \left(\frac {2} {t}+\log |b\sigma |\right),\,\,\,t\to 0 \nonumber\\
=(\beta _1-\alpha _1)\left( \log\frac {2} {t}+\mathcal O\left(t\log M \right)\right)\,\,t\to 0 \label{SA2PCE1}
\end{align}
Since $ \left|t^{-\zeta }\right|=e^{-(\beta _1-\alpha _1)\log t}$,
we have,
\begin{align}
& \left| t^{-\zeta }e^{-\zeta \log\left( 2\log\left|\frac {b\sigma } {\rho (t)} \right|\right) }\right|\le  Ce^{\mathcal O\left(t\log M \right)}. \label{SA2Lemma9.3E51} 
\end{align}
(Notice that, if $|v|$ is in a bounded set, the term $\log|b\sigma |$ is included in $\mathcal O_1(1)$ and if $|v|>>1$ for large $M$ then $\log|b\sigma |>>1$ too and $\log \left(\log|b\sigma |+\frac {2} {t}+\mathcal O_1(1) \right)>\log \left(\frac {2} {t}+\mathcal O_1(1) \right)$.)

On the other hand, by (\ref{SA1Lemma14.1E1}), if $t$ is small enough,
\begin{align*}
\left| F  \left( \frac {\sigma } {\rho (t)}, \frac {\zeta } {\log\left| \sigma/\rho (t) \right|}\right)+\zeta \log\left( 2\log\left| \frac {b\sigma } {\rho (t)}\right|\right) \right|
&\le C\left(\frac {1} {\log|\frac {\rho (t)} {|\sigma |}|}+\left(\frac {|\zeta |} {\log\left| \sigma/\rho (t) \right|}  \right)^2\right)\\
&\le  C\left(\frac {1} {(\log\varepsilon _0+1/t)}+\frac {|\zeta |^2} {(\log\varepsilon _0+1/t)^2} \right).
\end{align*}
We deduce,
\begin{align}
\left|  e^{\tilde F\left(\frac {\sigma } {\rho (t)}, \zeta \right)}t^{-\zeta }-e^{-\zeta \log\left( 2\log\left|\frac {b\sigma } {\rho (t)}  \right|\right) }t^{-\zeta } \right|\le 
C(t+t^2|\zeta |^2)e^{\mathcal O\left(t\log M \right)}.  \label{SA2Lemma9.3E54}
\end{align}
It follows from (\ref{SA2Lemma9.3E22}), (\ref{SA2Lemma9.3E50}) and (\ref{SA2Lemma9.3E54})
\begin{align}
A_1 \left(\frac {\sigma } {\rho (t)}, \zeta \right) \le \left| \frac {\zeta \rho (t)} {2 \sigma \log |\frac {b\sigma } {2\rho (t)} |  }\right| (t+t^2|\zeta |^2)e^{\mathcal O\left(t\log M \right)}\\
\le \left| \frac {t \zeta \rho (t)} {2 \varepsilon _0  }\right| (t+t^2|\zeta |^2)e^{\mathcal O\left(t\log M \right)}
\end{align}
In order to estimate $A_2$ we  first use (\ref{SA2Lemma9.3E54}) and (\ref{SA2Lemma9.3E51} ) to get
\begin{align}
\left|e^{\tilde F\left(\frac {\sigma } {\rho (t)}\zeta \right)}t^{-\zeta }\right| &\le C(t+t^2|\zeta |^2)e^{\mathcal O\left(t\log M \right)}+\left|e^{-\zeta \log\left( 2\log\left|\frac {b\sigma } {\rho (t)} \right|\right) }t^{-\zeta }\right| \nonumber\\
&\le C(1+t+t^2|\zeta| ^2 )e^{\mathcal O\left(t\log M \right)}.\label{SA2JPM27}
\end{align}

Since, from  (\ref{SA2partialU}),
\begin{align}
\label{SA2JPM28}
\left| \frac {\partial \tilde F} {\partial s} \left(\frac {\sigma } {\rho (t)}, \zeta \right)  \right|\le 
\frac {C\rho (t)^2|\zeta |^2} {|\sigma |^2\log |\sigma /\rho (t)|}+Ce^{-a'|\sigma /\rho (t)|}
\end{align}
it follows,
\begin{align*}
A_2 \left(\frac {\sigma } {\rho (t)}, \zeta \right) \le C(1+|\zeta |^2t^2)\left( \frac {C\rho (t)^2|\zeta |^2} {|\sigma |^2\log |\sigma /\rho (t)|}+Ce^{-a'|\sigma /\rho (t)|} \right) e^{\mathcal O\left(t\log M \right)}\\
\le C(1+|\zeta |^2t^2)\left( t\rho (t)^2|\zeta |^2+e^{-a'\varepsilon _0/\rho (t)|} \right) e^{\mathcal O\left(t\log M \right)}
\end{align*}
If we suppose that $|\zeta |=o(t^{-1})$, we deduce,  (\ref{SA2Lemma9.3E2}) with 
\begin{equation}
h_M(t)\underset{t\to 0}{=}C\left(\rho (t)^2o(t^{-1})+e^{-a' \varepsilon _0 /\rho (t)} \right)e^{\mathcal O\left(t\log M \right)}.
\end{equation}
If we only assume $|\zeta |\le \delta _0 t^{-2}$, then, by (\ref{SA2JPM27}),
\begin{align*}
\left|  \frac {\partial \tilde F} {\partial s} \left(\frac {\sigma } {\rho (t)}, \zeta \right) e^{\tilde F\left(\frac {\sigma } {\rho (t)}\zeta \right)}t^{-\zeta }\right| &\le C(1+|\zeta| )\times \\
&\times \left( \frac {C\rho (t)^2t^{-4}} {|\sigma |^2\log |\sigma /\rho (t)|}+Ce^{-a'|\sigma /\rho (t)|}\right)e^{\mathcal O\left(t\log M \right)}\\
&\le C(1+|\zeta| )\left(t\rho (t)^2t^{-4}+Ce^{-a' \varepsilon _0 /\rho (t)}\right)e^{\mathcal O\left(t\log M \right)}
\end{align*}
which proves (\ref{SA2Lemma9.3E4}).
\end{proof}

\begin{lem}
\label{SA2Lemma9.5}
For all positive constant $\varepsilon _0>0$
\begin{align}
&\lim _{ t\to 0 }\rho (t)^{-1}\Bigg |\frac {\partial } {\partial s}U\left(t, \frac {\sigma } {\rho (t)} \right) -H\left(t, \frac {\sigma } {\rho (t)}\right) \Bigg|=0.\label{SA2Lemma9.5E1}\\
&H\left(t, \frac {\sigma } {\rho (t)}\right)=-\frac {1} {2i\pi }\frac {t} {\sqrt{2\pi }}  \int  \limits_{ {\mathscr Re}(\zeta )  =(\beta  _1-\alpha  _1) }
\!\!\! \frac {\zeta \rho (t)e^{\left(- \zeta  \log\left( 2\log \left|\frac {b\sigma   } {\rho (t)}\right| \right)\right)}} {2\sigma \log |\frac {b\sigma } {2\rho (t)} | } t^{-\zeta }\Gamma (\zeta )d\zeta \label{SA2Lemma9.5E1B}\
\end{align}
\begin{align}
&\lim _{ t\to 0 }\rho (t)^{-1}\Bigg |\frac {\partial } {\partial s}U_t\left(t, \frac {\sigma } {\rho (t)} \right) -H_1\left(t, \frac {\sigma } {\rho (t)}\right) \Bigg|=0.\label{SA2Lemma9.5E1Dt}\\
&H_1\left(t, \frac {\sigma } {\rho (t)}\right)=-\frac {1} {2i\pi }\frac {t} {\sqrt{2\pi }}  \int  \limits_{ {\mathscr Re}(\zeta )  =(\beta  _1-\alpha  _1) }
\!\!\! \frac {\zeta \rho (t)e^{\left(- \zeta  \log\left( 2\log \left|\frac {b\sigma   } {\rho (t)}\right| \right)\right)}} {2\sigma \log |\frac {b\sigma } {2\rho (t)} | } t^{-\zeta +1}\Gamma (\zeta +1 )d\zeta \label{SA2Lemma9.5E1BDt}\
\end{align}
uniformly  for $\mathscr R e(\sigma )/\rho (t)$ in  compact subsets of $(0, 2)$ and $|\sigma |\in (\varepsilon _0, M(t))$ for $M(t)> \varepsilon _0$ such that  $\log M(t)\in (0, t^{-\theta})$ for some 
$\theta \in (1, 2)$.
\end{lem}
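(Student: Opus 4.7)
The strategy is to differentiate the integral representation of $U$ under the integral sign, evaluate at $s=\sigma/\rho(t)$, and then invoke Lemma \ref{SA2Lemma9.3} to replace the integrand by its leading term. From (\ref{S5EUX1}) and (\ref{S3ETheta1}), after the change of variable $Y=\sigma-s$,
\begin{equation*}
U(t,s)=\frac{1}{2i\pi\sqrt{2\pi}}\int_{\mathscr Re(Y)=\beta-\mathscr Re(s)}e^{\tilde F(s,Y)}\,t^{-Y}\,\Gamma(Y)\,dY,
\end{equation*}
and, since Stirling's estimate makes $|\Gamma(Y)|$ decay exponentially along vertical lines while $\partial_s\tilde F\cdot e^{\tilde F}\cdot t^{-Y}$ grows at most algebraically in $|\mathscr Im Y|$ by (\ref{SA2Lemma9.3E4}), differentiation under the integral gives
\begin{equation*}
\frac{\partial U}{\partial s}(t,s)=\frac{1}{2i\pi\sqrt{2\pi}}\int_{\mathscr Re(Y)=\beta-\mathscr Re(s)}\frac{\partial\tilde F}{\partial s}(s,Y)\,e^{\tilde F(s,Y)}\,t^{-Y}\,\Gamma(Y)\,dY.
\end{equation*}

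I plan to split this contour into an \emph{inner} piece $|\mathscr Im Y|\le\delta(t)/t$ and an \emph{outer} piece $|\mathscr Im Y|>\delta(t)/t$, where $\delta(t)\to 0$ is chosen slow enough for the regime (\ref{SA2Lemma9.3E3}) of Lemma \ref{SA2Lemma9.3} to apply. On the inner piece the key identity (\ref{SA2Lemma9.3E2}) rewrites the integrand as
\begin{equation*}
\frac{Y\,\rho(t)\,e^{-Y\log(2\log|b\sigma/\rho(t)|)}}{2\sigma\log|b\sigma/(2\rho(t))|}\,t^{-Y}\;+\;R(t,\sigma,Y),
\end{equation*}
with $|R(t,\sigma,Y)|\le h_M(t)$ uniformly. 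Integrated against $\Gamma(Y)/(2i\pi\sqrt{2\pi})$ over the whole vertical line this leading term is, by definition, $H(t,\sigma/\rho(t))$, and the contribution of $R$ is bounded by $h_M(t)\int_{\RR}|\Gamma(\beta-\sigma/\rho(t)+iv)|\,dv$. The hypothesis $\log M(t)=O(t^{-\theta})$ with $\theta<2$ keeps the factor $e^{O(t\log M)}$ in (\ref{SA2Lemma9.3E1}) bounded, so $h_M(t)=o(\rho(t))$ as $t\to 0$.

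For the outer piece $|\mathscr Im Y|>\delta(t)/t$, the growth bound (\ref{SA2Lemma9.3E4}) combined with Stirling's estimate $|\Gamma(Y)|\le C|Y|^{-1/2}e^{-\pi|\mathscr Im Y|/2}$ yields an integral dominated by $\int_{|v|>\delta(t)/t}(1+|v|)e^{-\pi|v|/2}dv$, which decays faster than any power of $t$ and is trivially $o(\rho(t))$; the same bound controls the corresponding tail of $H$. Collecting the two pieces yields (\ref{SA2Lemma9.5E1}). The claim (\ref{SA2Lemma9.5E1Dt}) is obtained by first differentiating in $t$, which introduces the extra factor $-Y\,t^{-Y-1}$; using $Y\,\Gamma(Y)=\Gamma(Y+1)$ and the elementary identity $t^{-Y-1}=t\cdot t^{-Y+1}\cdot t^{-1}$ recasts the integrand in the form required by (\ref{SA2Lemma9.5E1BDt}), after which the same inner/outer splitting applies verbatim.

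The main obstacle is the quantitative matching of three scales: the cutoff $\delta(t)$ must be small enough for (\ref{SA2Lemma9.3E3}) to hold (so that Lemma \ref{SA2Lemma9.3} applies with bounded constants), yet large enough that the outer tail is negligible; simultaneously, the growth window $\log M(t)=o(t^{-1})$ must be tight enough that $h_M(t)=o(\rho(t))$ \emph{uniformly} both in $\sigma$ over the compact range for $\mathscr Re(\sigma)/\rho(t)$ and in the growing window $\varepsilon_0\le|\sigma|\le M(t)$. The proof closely parallels Proposition 9.3 of \cite{EV}; the only genuinely new input is that the slower decay of $W$ compared to the kernel $\Phi$ of \cite{EV} (see (\ref{S3PW17E1})) forces the auxiliary estimate (\ref{SA2partialU}) of Lemma \ref{SA2Lemma9.3E4} with the extra logarithmic loss, which is accommodated by the condition $\theta<2$ in the statement.
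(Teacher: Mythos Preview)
Your overall strategy matches the paper's: differentiate (\ref{S5EUX1}) under the integral, isolate the leading term of $\partial_s\tilde F\,e^{\tilde F}\,t^{-\zeta}$ via Lemma \ref{SA2Lemma9.3}, and control tails. However, your two-region split has a genuine gap.

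You propose to handle the entire outer region $|\mathscr Im\,\zeta|>\delta(t)/t$ by invoking (\ref{SA2Lemma9.3E4}). But that estimate is only stated for $|\mathscr Im(\zeta)|\le\delta_0/t^2$; Lemma \ref{SA2Lemma9.3} gives you nothing for the far tail $|\mathscr Im\,\zeta|>\delta_0/t^2$. The paper's proof therefore uses a \emph{three}-region split $J_1+J_2+J_3$, with cutoffs at $1/(t^2|\log t|)$ and at $\delta_0/t^2$. On $J_1$ it applies the pointwise approximation (\ref{SA2Lemma9.3E2}); on $J_2$ it uses the cruder bound (\ref{SA2Lemma9.3E4}); but on $J_3$ it abandons Lemma \ref{SA2Lemma9.3} altogether and goes back to the identity $e^{\tilde F(s,\zeta)}=B(s)/B(s+\zeta)$, combining the uniform lower bound on $|B|$ from Propositions \ref{S3PBP} and \ref{S3PBP24B} with (\ref{SA2partialU}) and Stirling to get a direct estimate. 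This third piece is the one you are missing; without it your outer integral is simply not controlled.

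A second, minor point: the identity you wrote for the $U_t$ case, ``$t^{-Y-1}=t\cdot t^{-Y+1}\cdot t^{-1}$'', is false ($t\cdot t^{-Y+1}\cdot t^{-1}=t^{-Y+1}$). The correct bookkeeping is that $\partial_t(t^{-\zeta})=-\zeta t^{-\zeta-1}$ and $\zeta\Gamma(\zeta)=\Gamma(\zeta+1)$, which directly produces the factor $t^{-\zeta-1}\Gamma(\zeta+1)$ appearing in (\ref{S5EUXA1Dt}); the extra power of $t$ in the prefactor of (\ref{SA2Lemma9.5E1BDt}) should be tracked separately, and the argument then parallels the $U$ case with the same three-region decomposition.
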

\begin{proof}
From (\ref{S5EUX1}), 
\begin{equation}
\label{S5EUXA1}
U(t, s)=-\frac {1} {\sqrt{2\pi}}\int  _{ \mathscr Re(\zeta  )=\beta - \mathscr Re(s)  \ } e^{\tilde F(s, \zeta )} t^{-\zeta }\Gamma (\zeta )d\zeta ,\,\,\,\forall \beta \in (0, 2);\,\, \beta -1<c<\beta,
\end{equation}
It follows,
\begin{align*}
\frac {\partial U} {\partial s}(t, \sigma /\rho (t))=\frac {1} {\sqrt{2\pi }} \int  _{ {\mathscr Re}(\zeta )  =\beta _1-\alpha _1} \frac {\partial } {\partial s}\left
(e^{\tilde F(\sigma /\rho (t), \zeta )} \right)\Gamma (\zeta ) t^{-\zeta }d\zeta\\
=\frac {1} {\sqrt{2\pi }} \int  _{ {\mathscr Re}(\zeta )  =\beta _1-\alpha _1}  \frac {\partial \tilde F} {\partial s} (\sigma /\rho (t), \zeta ) e^{\tilde F(\sigma /\rho (t), \zeta )}\Gamma (\zeta ) t^{-\zeta }d\zeta.
\end{align*}
and, we may then write,

\begin{align}
\frac {\partial } {\partial s}U\left(t, \frac {\sigma } {\rho (t)} \right)&=\frac {1} {\sqrt{2\pi }} \int  _{ {\mathscr Re}(\zeta )  =\beta _1-\alpha _1}
\frac {\partial \tilde F} {\partial s} (\sigma /\rho (t), \zeta ) e^{\tilde F(\sigma /\rho (t), \zeta )} \Gamma (\zeta ) t^{-\zeta }d\zeta\nonumber\\
&=\frac {1} {\sqrt{2\pi }} \int\limits  _ {\substack{ {\mathscr Re}(\zeta )=\beta _1-\alpha _1 \\ \mathscr Im \zeta \le  \frac {1} {t^2|\log t|}} }(\cdots)d\zeta +
\frac {1} {\sqrt{2\pi }} \int\limits  _ {\substack{ {\mathscr Re}(\zeta )=\beta _1-\alpha _1 \\ \frac {1} {t^2|\log t|}\le \mathscr Im \zeta \le  \frac {\delta _0} {t^2}} }(\cdots)d\zeta+ \nonumber\\
&+\frac {1} {\sqrt{2\pi }} \int\limits  _ {\substack{ {\mathscr Re}(\zeta )=\beta _1-\alpha _1 \\  \mathscr Im \zeta \ge  \frac {\delta _0} {t^2}} }(\cdots)d\zeta =J_1+J_2+J_3.\label{SA2Lemma9.5E2}
\end{align}
We now write,
\begin{align}
J_1&=\frac {1} {\sqrt{2\pi }} \int\limits  _ {\substack{ {\mathscr Re}(\zeta )=\beta _1-\alpha _1 \\ \mathscr Im \zeta \le \frac {1} {t^2 |\log t|} } }
\left| \frac {\partial \tilde F} {\partial s} (\sigma /\rho (t), \zeta ) e^{\tilde F(\sigma /\rho (t), \zeta )}-  \frac {\zeta \rho (t)e^{-\zeta \rho (t) \log\left( 2\log\left|\frac {b\sigma } {\rho (t)}\right|\right) }} {2 \sigma\log |\frac {b\sigma /\rho (t)} {2} |  } \right| \Gamma (\zeta ) t^{-\zeta }d\zeta+\nonumber\\
&+\frac {1} {\sqrt{2\pi }}  \int  \limits_{ {\mathscr Re}(\zeta )  =(\alpha _1-\beta _1) }
\!\!\! \frac {\zeta \rho (t) e^{-\zeta \log\left( 2\log\left|\frac {b\sigma } {\rho (t)}\right|\right) }} {2 \sigma \log |\frac {b\sigma } {2\rho (t)} |  }t^{-\zeta }\Gamma (\zeta )d\zeta-\nonumber\\
&-\frac {1} {\sqrt{2\pi }}  \int  \limits_{\substack{ {\mathscr Re}(\zeta )=\beta _1-\alpha _1 \\ \mathscr Im \zeta \ge  \frac {1} {t^2|\log t|}} }
\!\!\! \frac {\zeta \rho (t) e^{-\zeta \log\left( 2\log\left|\frac {b\sigma } {\rho (t)}\right|\right) }} {2 \sigma \log |\frac {b\sigma } {2\rho (t)} |  } t^{-\zeta }\Gamma (\zeta )d\zeta=J _{ 1, 1 }+J _{ 1, 2 }+J _{ 1, 3 } \label{SA2Lemma9.5E3}
\end{align}
In the third integral in the right hand side of (\ref{SA2Lemma9.5E3}) we use (\ref{SA2PCE1}) and
\begin{equation*}
\left|t^{-\zeta }\right|=e^{-(\beta _1-\alpha _1)\log t},\,\,\,\,|\Gamma (\zeta )|\le Ce^{-\frac {\pi |\zeta |} {2}}
\end{equation*}
and obtain
\begin{align*}
&\left| \zeta  e^{\left(- \zeta  \log\left( 2\log \left|\frac {b\sigma   } {\rho (t)}\right| \right)\right)} t^{-\zeta }\Gamma (\zeta )  \right| \le Ce^{\mathcal O\left(t\log M \right)}|\zeta| e^{-\frac {\pi |\zeta |} {2}}\\
&\rho (t)^{-1}|J _{ 1, 3 }|\le Ce^{\mathcal O\left(t\log M \right)}\int  _{ |\zeta |\ge \frac {1} {t^2|\log t|} }|\zeta| e^{-\frac {\pi |\zeta |} {2}}|d\zeta \le Ce^{\mathcal O\left(t\log M \right)}e^{-\frac {\pi } {4t^2|\log t|}}
\end{align*}
from where it follows that $\rho (t)^{-1}J _{ 1, 3 }\to 0$ as $t\to 0$ uniformly  for $\mathscr R e(\sigma )/\rho (t)$ in  compact subsets of  $(0, 2)$, $|\sigma |\in (\varepsilon _0, M)$, $\log M\in (0, t^{-\theta})$.

The first integral in the right hand side of (\ref{SA2Lemma9.5E3}) is estimated using Lemma (\ref{SA2Lemma9.3}). By (\ref{SA2Lemma9.3E1}),
\begin{align*}
|J _{ 1,1 }|\le h_M(t)\hskip -0.5cm  \int\limits  _ {\substack{ {\mathscr Re}(\zeta )=\beta _1-\alpha _1 \\ \mathscr Im \zeta \le \frac {1} {t^2 |\log t|} } } \hskip -0.3cm\left|\Gamma (\zeta ) t^{-\zeta }\right| |d\zeta|
\end{align*}

The term $J_2$ is bounded using (\ref{SA2Lemma9.3E4}),
\begin{align*}
|J _{ 2 }|\le C \left(t\rho (t)^2t^{-4}+Ce^{-a' \varepsilon _0 /\rho (t)}\right)e^{\mathcal O\left(t\log M \right)}\hskip -0cm \int _ {\substack{ {\mathscr Re}(\zeta )=\beta _1-\alpha _1 \\ \frac {1} {t^2|\log t|}\le \mathscr Im \zeta \le  \frac {\delta _0} {t^2}} }
(1+|\zeta| )e^{\frac {-\pi |\zeta |} {2}}|d\zeta|
\end{align*}
and therefore,
\begin{align*}
\lim _{ t\to 0} \rho (t)^{-1}|J _{ 2 }|=0
\end{align*}
uniformly fo $|\sigma |\in (\varepsilon _0, M)$, $\log M\in (0, t^{-\theta})$.

In order to bound  $J_3$ we use the properties of the function  $B(s)$  in Proposition \ref{S3PBP} and Proposition \ref{S3PBP24B}.
It follows from  Proposition \ref{S3PBP} that for $\mathscr Re (s)\in (0, 2)$, $|B(s)|>0$. Then, for all constant $R>0$ there exists $C_R>0$ such that
$$
|B(s)|\ge C_R\,\,\,\forall s,\,|s|\le R.
$$
On the other hand, by Proposition (\ref{S3PBP24B}),
$$
|B(s)|\ge C|\log |s||,\,\,\,\forall s,\, |s|\ge R
$$
It  follows that, for $\mathscr R e(s)$ on any compact subset of $(0, 2)$, the function $|B(s)|$ is uniformly bounded from below by a positive constant.
Then, for $|\zeta |\ge \delta _0/t^2$, and $t$ small,
\begin{align*}
&\left|\frac {\partial \tilde F} {\partial s} (\sigma /\rho (t), \zeta ) e^{\tilde F(\sigma /\rho (t), \zeta )} \Gamma (\zeta ) t^{-\zeta }\right|=
\left|\frac {\partial \tilde F} {\partial s} (\sigma /\rho (t), \zeta )\right|\left|\frac {B\left(\frac {\sigma  } {\rho (t)}\right) } {B\left(\frac {\sigma } {\rho (t)} +\zeta \right )} \Gamma (\zeta ) 
t^{-\zeta }\right| \\
&\le 
C(1+|\zeta| )\left(\frac {\rho (t)^2t^{-4}} {|\sigma |^2\log |\sigma /\rho (t)|}+e^{-a'|\sigma /\rho (t)|}\right)|\log|\sigma /\rho (t)|| e^{-\frac {|\pi ||\zeta |} {2}}e^{-(\beta _1-\alpha _1)\log t}\\
&\le C(1+|\zeta| )\left(\frac {\rho (t)^2t^{-4}} {\varepsilon _0^2(t^{-1}+\log \varepsilon _0) }+e^{-a'|\sigma /\rho (t)|}\right)(\log M+t^{-1})e^{-\frac {|\pi ||\zeta |} {2}}e^{(\beta _1-\alpha _1)|\log t|}\\
&\le C(1+|\zeta| )\left(\rho (t)^2t^{-4} +e^{-a' \varepsilon _0 /\rho (t)}\right)(\log M+t^{-1})e^{-\frac {|\pi |} {4 t^2}}e^{(\beta _1-\alpha _1)|\log t|} e^{-\frac {|\pi ||\zeta |} {4}}.
\end{align*}
and
\begin{align*}
|J_3|&\le C\left(\rho (t)^2t^{-4} +e^{-a' \varepsilon _0 /\rho (t)}\right)(\log M+t^{-1})e^{-\frac {|\pi |} {4 t^2}} \int\limits _ {\substack{ {\mathscr Re}(\zeta )=\beta _1-\alpha _1 \\  \mathscr Im \zeta \ge  \frac {\delta _0} {t^2}} }(1+|\zeta| )e^{-\frac {|\pi ||\zeta |} {4}}d\zeta\\\
&\le C\left(\rho (t)^2t^{-4} +e^{-a' \varepsilon _0 /\rho (t)}\right)(\log M+t^{-1})e^{-\frac {|\pi |} {4 t^2}}
\end{align*}
Therefore,
\begin{align*}
\lim _{ t\to 0 }\rho (t)^{-1}|J_3|=0.
\end{align*}
uniformly fo $|\sigma |\in (\varepsilon _0, M)$, $\log M\in (0, t^{-\theta})$.

It may be proceeded in a similar way with $U_t$ since,
\begin{align}
\label{S5EUXA1Dt}
U_t(t, s)=\frac {1} {\sqrt{2\pi}}\int\limits  _{ \mathscr Re(\zeta  )=\beta - \mathscr Re(s)  \ } e^{\tilde F(s, \zeta )} t^{-\zeta -1}\Gamma (\zeta+1 )d\zeta ,\,\,\,\forall \beta \in (0, 2);\,\, \beta -1<c<\beta
\end{align}
and then,
\begin{align*}
\frac {\partial U_t} {\partial s}(t, \sigma /\rho (t))=\frac {1} {\sqrt{2\pi }} \int  _{ {\mathscr Re}(\zeta )  =\beta _1-\alpha _1}  \frac {\partial \tilde F} {\partial s} (\sigma /\rho (t), \zeta ) e^{\tilde F(\sigma /\rho (t), \zeta )}\Gamma (\zeta+1 ) t^{-\zeta -1 }d\zeta.
\end{align*}
from where (\ref{SA2Lemma9.5E1Dt})  follows with the same arguments that gave (\ref{SA2Lemma9.5E1}).
\end{proof}

\begin{lem}
\label{SA2Lemma9.6}
\begin{align*}
&H\left(t, \frac {\sigma } {\rho (t)} \right)
=-\frac {t \rho (t) } {2\sigma  }\exp\left(-2t\log \left|\frac {b\sigma } {\rho (t)} \right|  \right).\\
&H_1\left(t, \frac {\sigma } {\rho (t)} \right)= \frac {\partial H} {\partial t}\left(t, \frac {\sigma } {\rho (t)} \right)
\end{align*}
\end{lem}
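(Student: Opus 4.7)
Both identities reduce to evaluating the contour integrals defining $H$ and $H_1$ by the classical Mellin--Barnes inversion formula
\begin{equation*}
\frac{1}{2\pi i}\int_{\mathscr Re(\zeta)=c}\Gamma(\zeta+1)\,\mu^{-\zeta}\,d\zeta \;=\; \mu\,e^{-\mu}\qquad (c>-1,\;\mu>0),
\end{equation*}
valid on the contour $\mathscr Re(\zeta)=\beta_1-\alpha_1\in(-1,0)$ used throughout this Subsection.

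For $H\!\left(t,\sigma/\rho(t)\right)$, I would proceed in three steps. First, collapse the non-Gamma $\zeta$-dependence in the integrand of~(\ref{SA2Lemma9.5E1B}) via the identity
\begin{equation*}
\exp\!\bigl(-\zeta\log(2\log|b\sigma/\rho(t)|)\bigr)\,t^{-\zeta}\;=\;\bigl(2t\log|b\sigma/\rho(t)|\bigr)^{-\zeta}=:\mu^{-\zeta}.
\end{equation*}
Second, use $\zeta\Gamma(\zeta)=\Gamma(\zeta+1)$ so that the contour integral becomes $\int\Gamma(\zeta+1)\mu^{-\zeta}d\zeta=2\pi i\,\mu e^{-\mu}$ by the Mellin--Barnes identity above. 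Third, substitute $\mu=2t\log|b\sigma/\rho(t)|$ and observe that the ratio $\mu/\log|b\sigma/(2\rho(t))|=2t\cdot\log|b\sigma/\rho(t)|/\log|b\sigma/(2\rho(t))|$ equals $2t$ up to a correction of order $1/\log|\sigma/\rho(t)|$, which is absorbed into the asymptotic errors controlled by Lemma~\ref{SA2Lemma9.3} and Lemma~\ref{SA2Lemma9.5}. This produces the announced closed form $-\tfrac{t\rho(t)}{2\sigma}\exp(-2t\log|b\sigma/\rho(t)|)$.

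For the identity $H_1=\partial H/\partial t$, I would note that the integrands of $H$ and $H_1$ differ only by replacing $t^{-\zeta}\Gamma(\zeta)$ with the factor coming from differentiating $U$ once in $t$; after one use of $\zeta\Gamma(\zeta)=\Gamma(\zeta+1)$, this replacement coincides with $-\partial_t\bigl(t^{-\zeta}\Gamma(\zeta)\bigr)$. Differentiation under the integral sign --- justified by the uniform exponential decay of $\Gamma(\zeta)$ along the contour (cf.\ Proposition~\ref{S3PX1} and the estimates used in Lemma~\ref{SA2Lemma9.3}) --- then yields $H_1=\partial_t H$ modulo the slowly varying $t$-dependence of $\rho(t)$ and of the logarithms, which again contributes only lower-order terms absorbed into the error budget already present in the Subsection. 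As a consistency check one may instead compute $H_1$ directly by the same Mellin--Barnes procedure, now using
\begin{equation*}
\frac{1}{2\pi i}\int_{\mathscr Re(\zeta)=c}\zeta\,\Gamma(\zeta+1)\,\mu^{-\zeta}\,d\zeta=\mu(\mu-1)e^{-\mu}
\end{equation*}
(obtained from $\mu e^{-\mu}$ by one $\mu$-differentiation), and match termwise with $\partial_t$ applied to the closed form for $H$ just obtained.

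The main obstacle is essentially bookkeeping: reconciling the $\log 2$ discrepancy between $\log|b\sigma/\rho(t)|$ and $\log|b\sigma/(2\rho(t))|$ together with the $\sqrt{2\pi}$ normalization factors, with the clean form of the final answer. No new analytic input is needed beyond the Mellin--Barnes identity displayed above.
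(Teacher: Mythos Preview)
Your approach is essentially the paper's: the Mellin--Barnes identity you invoke \emph{is} the residue sum the paper writes out explicitly. The paper closes the contour to the left, picks up the poles of $\Gamma(\zeta)$ at $\zeta=-n$, and recognizes the resulting series $\sum_n \tfrac{(-1)^n}{n!}\,n\,\mu^n=-\mu e^{-\mu}$ (with $\mu=2t\log|b\sigma/\rho(t)|$), which is exactly your formula $\int\Gamma(\zeta+1)\mu^{-\zeta}\,d\zeta=2\pi i\,\mu e^{-\mu}$. One minor slip: the contour sits at $\mathscr Re(\zeta)=\beta_1-\alpha_1\in(0,1)$, not $(-1,0)$, but this still lies in the domain of validity of the Mellin--Barnes identity.

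There is one genuine discrepancy in spirit. The lemma is stated as an \emph{exact} identity, and the paper's proof is a direct residue calculation with no error terms. Your plan instead absorbs the mismatch between $\log|b\sigma/\rho(t)|$ and $\log|b\sigma/(2\rho(t))|$ (and, for $H_1$, the $t$--dependence hidden in $\rho(t)$ and in the logarithms) into the asymptotic error budget of Lemmas~\ref{SA2Lemma9.3}--\ref{SA2Lemma9.5}. That would yield only an asymptotic version of the lemma, not the exact one. The paper sidesteps this for $H_1$ by computing its residue sum directly and matching term by term with $\partial_t$ of the closed form for $H$, rather than differentiating under the integral sign; this avoids having to track $\partial_t\rho(t)$ and $\partial_t\log|b\sigma/\rho(t)|$. (The $\log 2$ issue you flag is real and appears to be a typo in the paper's own formulas; the computation goes through cleanly if the two logarithms are taken to be the same.) For a clean exact proof, follow the paper: compute both residue sums explicitly and compare.
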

\begin{proof}
The integral in (\ref{SA2Lemma9.5E1B}) can  be computed adding the residues of the integrand at the poles $\zeta =-n$ of the Gamma function,
\begin{align*}
H\left(t, \frac {\sigma } {\rho (t)} \right)
=&\frac {\rho (t) } {2\sigma  \log |\frac {b\sigma /\rho(t)} {2} |} \sum _{ n=0 }^\infty \frac {(-1)^nt^n} {n!} n \exp \left( n  \log\left( 2\log \left|\frac { b\sigma   } {\rho (t)}\right|  \right)\right)\\
= &-\frac {t \rho (t) } {2\sigma  }\exp\left(-2t\log \left|\frac {b \sigma  } {\rho (t)} \right|  \right).
\end{align*}
On the other hand,
\begin{align*}
H_1\left(t, \frac {\sigma } {\rho (t)} \right)
=&\frac {\rho (t) } {2\sigma  \log |\frac {b\sigma /\rho(t)} {2} |} \sum _{ n=0 }^\infty \frac {(-1)^nt^n} {n!} (n+1) \exp \left( (n+1)  \log\left( 2\log \left|\frac { b\sigma   } {\rho (t)}\right|  \right)\right)\\
=&\frac {\exp\left(-2t\log \left|\frac {b \sigma  } {\rho (t)} \right|  \right)\rho (t)} {2\sigma }- 2\log \left|\frac { b\sigma   } {\rho (t)}\right|\frac {t\,\exp\left(-2t\log \left|\frac {b \sigma } {\rho (t)} \right|  \right)\rho (t)} {2\sigma }\\
=& \frac {\partial H} {\partial t}\left(t, \frac {\sigma } {\rho (t)} \right).
\end{align*}
\end{proof}
\begin{prop}
\begin{equation*}
\mathscr M^{-1}( H(t))(X)=-\frac {2t} {\pi }\Gamma (-2t)\sin (\pi t) |X|^{2t}{\rm sign}(X ).
\end{equation*}
\end{prop}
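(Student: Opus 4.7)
The plan is to reduce the inverse Mellin transform of $H(t)$ to a classical Fourier integral and to evaluate it directly from the closed-form expression $H(t,s)=-\frac{t}{2s}|bs|^{-2t}$ furnished by Lemma~\ref{SA2Lemma9.6}. Writing the inversion as a Bromwich integral,
$$
\mathscr M^{-1}(H(t))(X)=\frac{1}{2i\pi}\int_{c-i\infty}^{c+i\infty}e^{-sX}H(t,s)\,ds,
$$
I will shift the contour to $c=0$: this is legitimate because $H(t,\cdot)$ decays like $|s|^{-1-2t}$ at infinity (for $t>0$) and the only singularity crossed is the simple pole at $s=0$, whose contribution vanishes in the principal-value sense consistent with the odd factor $\mathrm{sign}(X)$ on the right-hand side.

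Once on the imaginary axis, the substitution $s=iv$ together with the elementary simplifications $-\frac{1}{2iv}=\frac{i}{2v}$, $|biv|=b|v|$ and $\frac{|v|^{-2t}}{v}=\mathrm{sign}(v)|v|^{-2t-1}$ turns the inversion into
$$
\mathscr M^{-1}(H(t))(X)=\frac{itb^{-2t}}{4\pi}\int_{-\infty}^{\infty}\mathrm{sign}(v)\,|v|^{-2t-1}e^{-ivX}\,dv.
$$
I then split this into $\int_0^\infty$ and $\int_{-\infty}^0$, rescale $u=|X|v$, and invoke the classical identity
$$
\int_0^\infty u^{\alpha-1}e^{-iu}\,du=\Gamma(\alpha)e^{-i\pi\alpha/2},\qquad 0<\mathrm{Re}(\alpha)<1,
$$
obtained by rotating the contour defining $\Gamma$ from $(0,\infty)$ to $-i(0,\infty)$. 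Taking $\alpha=-2t$ and combining the two half-line contributions, the real exponentials on opposite sides of $0$ cancel while the imaginary parts add, yielding
$$
\int_{-\infty}^{\infty}\mathrm{sign}(v)\,|v|^{-2t-1}e^{-ivX}\,dv=2i\,\mathrm{sign}(X)\,\Gamma(-2t)\sin(\pi t)\,|X|^{2t},
$$
which when substituted back produces the claimed formula after collection of the prefactors.

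The principal technical obstacle is the rigorous interpretation of the Fourier integral: for $t>0$ the density $|v|^{-2t-1}$ is not locally integrable at the origin, so the identity must be read in the sense of tempered distributions. The cleanest route is to first establish the computation in the strip $-\tfrac{1}{2}<\mathrm{Re}(t)<0$, where the integrand is absolutely integrable both near $0$ and at infinity, and then extend it to the range of interest by analytic continuation in $t$, both sides being holomorphic in a common neighbourhood; alternatively, the odd factor $\mathrm{sign}(v)$ supplies a principal-value cancellation at $v=0$ that makes the contour limit $c\downarrow 0$ directly meaningful. A final bookkeeping step reconciles the constant $b^{-2t}$ arising from $|biv|^{-2t}$, together with the factors implicit in the rescaling $s=\sigma/\rho(t)$ used throughout Lemmas~\ref{SA2Lemma9.3}--\ref{SA2Lemma9.6}, with the coefficient $-\frac{2t}{\pi}$ stated in the proposition.
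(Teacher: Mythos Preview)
Your approach is essentially the same as the paper's: both substitute the closed form $H(t,s)=-\tfrac{t}{2s}\,|bs|^{-2t}$ obtained in Lemma~\ref{SA2Lemma9.6}, pass to the imaginary axis, and evaluate the resulting odd Fourier integral via $\int_0^\infty u^{-2t-1}\sin u\,du=\Gamma(-2t)\sin(-\pi t)$. You are if anything more careful than the paper about the distributional interpretation, correctly suggesting analytic continuation from $-\tfrac12<\mathrm{Re}\,t<0$ where the integrand is absolutely integrable. One minor wording point: the passage to $c=0$ is not a contour shift in the holomorphic sense, since $|bs|^{-2t}$ is not analytic in $s$; it is the limit $c\downarrow 0$, which your principal-value remark already captures and which is exactly what the paper does.

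One genuine caveat concerns your final sentence. Once you write $H$ directly as a function of $s$, the auxiliary scale $\rho(t)$ has already been absorbed and there is no ``factor implicit in the rescaling $s=\sigma/\rho(t)$'' left to invoke; nothing in that change of variables produces a compensating power of $b$. The paper's own proof is equally loose at exactly this point---the factor $b^{-2t}$ and a stray constant are silently dropped between its penultimate and final displayed lines---so the precise numerical prefactor in the stated formula should be verified independently rather than ``reconciled'' by an argument that isn't there. Since $b^{-2t}\to 1$ as $t\to 0$, this does not affect the only place the proposition is used, namely the limit~\eqref{S3.E46}, but you should not claim a cancellation mechanism that does not exist.
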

\begin{proof}
If we call $X=\rho (t)Y$,
\begin{align*}
\mathscr M^{-1}( H(t))(X)&=\frac {1} {2i\pi }\int  _{ {\mathscr Re}(s) =\alpha _1  }H(t, s)
e^{-s \rho (t) Y } ds\\
&=\frac {1} {2i\pi \rho (t)}\int  _{ {\mathscr Re}(\sigma ) =\alpha _1 \rho (t)  }H\left(t, \frac {\sigma } {\rho (t)}\right) e^{-\sigma  Y } d\sigma  \\
&=\frac {t} {4i\pi}\int  _{ {\mathscr Re}(\sigma) =\alpha _1 \rho (t) }\sigma ^{-1}\exp\left(-2t\log \left|\frac {b\sigma  } {\rho (t)} \right| \right)  
e^{-\sigma Y } d\sigma
\end{align*}
we  deform the integration contour to ${\mathscr Re}(\sigma) =0$, and change variables $bv \to v $,
\begin{align*}
\frac {t} {4i\pi }\int _{ {\mathscr Re}(\sigma) =0 }\!\!\!\!\!\!\!\!\sigma ^{-1}e^{\left(-2t\log \left|\frac {bv } {\rho (t)} \right| \right) }
e^{-iv Y } d\sigma=\frac {t} {4\pi }\int  _{\RR}  v ^{-1}e^{\left(-2t\log \left|\frac {v } {\rho (t)} \right| \right) }
e^{-iv \frac {Y} {b} } dv
\end{align*}
Then, after  the change of variables $v=\rho (t)w$, $dv=\rho (t)dw$,
\begin{align*}
\frac { t} {4\pi }\int  _{ \RR} v^{-1}\exp\left(-2t\log \left|\frac {v } {\rho (t)} \right| \right)  e^{-iv \frac {Y} {b} }dv&=
\frac {t} {4\pi  }\int  _{ \RR} v^{-1}\exp\left(-2t\log \left|w \right| \right)  e^{-iw \frac {\rho (t)Y} {b} }dw\\
&=-\frac {2t} {\pi }\Gamma (-2t)\sin (\pi t) |X|^{2t}{\rm sign}(X )
\end{align*}
\end{proof}

\begin{proof}
[\upshape\bfseries{Proof of Proposition \ref{S4P2}}]
We use (\ref{S3ELXZ})  to write the left hand side of (\ref{S3Cor17E1}) as,
\begin{align*}
 t^{-1}|X |^{1-2t}\tilde\Lambda (t, X)&= t^{-1}|X |^{1-2t}X^{-1}\left(X\tilde\Lambda (t, X)\right)\\
&=\frac {1} {2i\pi }t^{-1}|X |^{1-2t}X^{-1} \int  _{ \mathscr Rr(s)=\alpha _1 }\frac {\partial U} {\partial s}(t, s)e^{-sX}ds.
\end{align*}
For $X=\rho (t)Y$,

\begin{align}
\int  _{ \mathscr Rr(s)=\alpha _1 }\frac {\partial U} {\partial s}(t, s)e^{-sX}ds&=\frac {1} {2i\pi \rho (t) }\int \limits _{{\mathscr Re}(\sigma)  =\alpha _1 \rho (t) }\frac {\partial U} {\partial s}\left(t, \frac {\sigma } {\rho (t)}\right)e^{-\sigma Y }d\sigma \\
\int \limits _{{\mathscr Re}(\sigma)  =\alpha _1 \rho (t) }\frac {\partial U} {\partial s}\left(t, \frac {\sigma } {\rho (t)}\right)e^{-\sigma Y }d\sigma&=I_1+I_2+I_3\label{SA2PropB1A}\\
I_k=\frac {1} {2i\pi  }\int\limits  _ {\substack{ {\mathscr Re}(\sigma  )=\alpha _1 \rho (t)\\ \sigma \in D_k } } &\frac {\partial U} {\partial s}\left(t, \frac {\sigma } {\rho (t)}\right)  e^{-\sigma Y }d\sigma\\
D_1=B _{ \varepsilon _0 }(0),&\,D_2= B_{M(t)}(0)\setminus B _{ \varepsilon _0 }(0),\, D_3=B_{M(t)}(0)^c
\end{align}
where $\log M(t)=t^{-3/2}$. On $D_1$ and $D_3$ we use (\ref{S3PX1E2}) of Proposition \ref{S3PX1},
\begin{align*}
\left|\frac {\partial  U} {\partial s}\left(t, \frac {\sigma } {\rho (t)}\right) \right|& \le C_Tte^{-2t\log (|b\sigma /\rho (t)|)}\left(1+\left|\frac {\sigma } {\rho (t)}\right|\right)^{-1}\nonumber \\
&\le Cte^{-2t \log |bv|}e^{2t\log (\rho (t))}\left(1+\left|\frac {\sigma } {\rho (t)}\right|\right)^{-1}\le Ct\rho (t)|\sigma |^{-2t-1}.
\end{align*}
from where,
\begin{align}
&|I_1|\le Ct\rho (t) \varepsilon _0 \label{SA2PropB1E1}\\
&|I_3|\le C\rho (t) M(t)^{-2t}. \label{SA2PropB1E2}
\end{align}

On $D_2$
\begin{align*}
&I_2=I _{ 2, 1 }+I _{ 2,2 }\\
&I _{ 2, 1 }=\frac {1} {2i\pi  }\int\limits  _ {\substack{ {\mathscr Re}(\sigma  )=\alpha _1 \rho (t)\\ \sigma \in D_2 } } \left(\frac {\partial U} {\partial s}\left(t, \frac {\sigma } {\rho (t)}\right)-H\left(t, \frac {\sigma } {\rho (t)}\right) \right)e^{-\sigma Y }d\sigma\\
&I _{ 2,2 }=\frac {1} {2i\pi }\int\limits  _ {\substack{ {\mathscr Re}(\sigma  )=\alpha _1 \rho (t)\\ \sigma \in D_2 } } H\left(t, \frac {\sigma } {\rho (t)}\right)e^{-\sigma Y }d\sigma
\end{align*}
The first integral is estimated as
\begin{align*}
|I _{ 2, 1 }|\le \frac {1} {2i\pi  }\int\limits  _ {\substack{ {\mathscr Re}(\sigma  )=\alpha _1 \rho (t)\\ \sigma \in D_2 } } \left|\frac {\partial U} {\partial s}\left(t, \frac {\sigma } {\rho (t)}\right)-H\left(t, \frac {\sigma } {\rho (t)}\right) \right| |d\sigma|.
\end{align*}
and by Lemma \ref{SA2Lemma9.5}
\begin{equation}
\label{SA2PropB1E3}
\lim _{ t\to 0 }\rho(t) ^{-1}|I _{ 2, 1 }|=0.
\end{equation}
We write the second as

\begin{align}
\left|I _{ 2, 2 }- \frac {1} {2i\pi }\int\limits  _ {\substack{ {\mathscr Re}(\sigma  )=\alpha _1 \rho (t) } } H\left(t, \frac {\sigma } {\rho (t)}\right)e^{-\sigma Y }d\sigma\right|&\le
C\left| \int\limits  _  {\substack{ {\mathscr Re}(\sigma  )=\alpha _1 \rho (t)\\ \sigma \in D_1 } } H\left(t, \frac {\sigma } {\rho (t)}\right)e^{-\sigma Y }d\sigma\right|+\nonumber\\
+&C\left| \int\limits  _  {\substack{ {\mathscr Re}(\sigma  )=\alpha _1 \rho (t)\\ \sigma \in D_3 } } H\left(t, \frac {\sigma } {\rho (t)}\right)e^{-\sigma Y }d\sigma\right|
\label{SA2PropB1E4}
\end{align}
and the explicit expression of $H(t)$ gives,

\begin{align}
&\left| \int\limits  _  {\substack{ {\mathscr Re}(\sigma  )=\alpha _1 \rho (t)\\ \sigma \in D_1 } } H\left(t, \frac {\sigma } {\rho (t)}\right)e^{-\sigma Y }d\sigma\right| \le Ct\rho (t)\varepsilon_0\label{SA2PropB1E5}\\
&\left| \int\limits  _  {\substack{ {\mathscr Re}(\sigma  )=\alpha _1 \rho (t)\\ \sigma \in D_3 } } H\left(t, \frac {\sigma } {\rho (t)}\right)e^{-\sigma Y }d\sigma\right|\le 
Ct\rho (t)M(t)^{-2t} \label{SA2PropB1E6}
\end{align}
by similar calculations as those giving  (\ref{SA2PropB1E1}) and (\ref{SA2PropB1E2}).

It follows from (\ref{SA2PropB1A}) and (\ref{SA2PropB1E1})--(\ref{SA2PropB1E6}) that for all $\varepsilon _0>$ there exists $\tau $ small enough such that, for all $t\in (0, \tau )$ and all $Y\ge 0$, 
\begin{equation*}
t^{-1}\rho (t)^{-1}\left(|I_1|+|I_3|+|I _{ 2, 1 }|+\left|I _{ 2,2 }-\rho (t)(\mathscr M^{-1}(H(t))(\rho (t)Y)\right|\right)\le C\left(\varepsilon _0+t^{-1}M(t)^{-2t}\right)
\end{equation*}
and then, uniformly on $Y\in \RR$,
\begin{align}
\lim _{ t\to 0 }t^{-1}\rho (t)^{-1}\left(|I_1|+|I_3|+|I _{ 2, 1 }|+\left|I _{ 2,2 }-\rho (t)(\mathscr M^{-1}(H(t))(\rho (t)Y)\right| \right)=0,\label{SA2PropB1E7}
\end{align}

Therefore, since for $X=\rho (t)Y$
\begin{align*}
&\int  _{ \mathscr Re(s)=\alpha _1 }\frac {\partial U} {\partial s}(t, s)e^{-sX}ds=\rho (t)^{-1}(I_1+I_2+I_3)\\
&=\rho (t)^{-1}(I_1+I_3+I _{ 2,1 }+\left(I _{ 2,2 }-\rho (t)(\mathscr M^{-1}(H(t))(X)\right))+(\mathscr M^{-1}(H(t))(X)
\end{align*}

\begin{align*}
&t^{-1}X^{-1}|X|^{1-2t}\int  _{ \mathscr Re(s)=\alpha _1 }\frac {\partial U} {\partial s}(t, s)e^{-sX}ds=t^{-1}X^{-1}|X|^{1-2t}\rho (t)^{-1}\left(I_1+I_3+I _{ 2,1 }+\right.\\
&\left.+\left(I _{ 2,2 }-\rho (t)\mathscr M^{-1}(H(t)\right)(X)\right)+t^{-1}X^{-1}|X|^{1-2t}\mathscr M^{-1}(H(t))(X)
\end{align*}
and by (\ref{SA2PropB1E7})  we deduce, 
\begin{align*}
\lim _{ t\to 0 }t^{-1}X^{-1}|X|^{1-2t}&\int  _{ \mathscr Re(s)=\alpha _1 }\frac {\partial U} {\partial s}(t, s)e^{-s\rho (t)Y}ds=\\
=&\lim _{ t\to 0 }t^{-1}X^{-1}|X|^{1-2t}\mathscr M^{-1}(H(t))(X)=1.
\end{align*}
uniformly for $X$ in bounded subsets of $\RR$.
\end{proof}
\subsection{Linearization.}
When $R(p, p_1, p_2)\!-\!R(p_1, p, p_2)\!-\!R(p_2, p_1, p)$ is written in terms of the new function $\Omega $ defined in (\ref{S2Elinzn2}) and only 
linear terms with respect to $\Omega $ are kept, the resulting equation is

\begin{align}
n_0(1+n_0)\frac {\partial \Omega (t)} {\partial t}=&\, L _{ I_3 }(\Omega (t)) \label{S1ERRR0}\\
L _{ I_3 }(\Omega (t))=&\int _0^\infty \left( \mathscr U(k, k')\Omega (t, k')-\mathscr V(k, k')\Omega (t, k)\right)k'^2dk', \label{S1ERRR}\\
\frac{1}{8 n_\mathrm{c} a^2 m^{-2}}\mathscr{U}(k, k') &= 
\Big[ \frac{m \theta(k-k')}{k k'}  \nonumber \\
 & \times 
 n_0(\omega(k))[1+n_0(\omega(k'))][1+n_0(\omega(k)-\omega(k'))] + (k \leftrightarrow k')  \Big] \nonumber \\ 
 &-\frac{m}{k k'} \, n_0(\omega(k)+\omega(k'))[1+n_0(\omega(k))][1+n_0(\omega(k'))] , \label{S1ERRRU}\\
\frac{1}{8 n_\mathrm{c} a^2 m^{-2}}\mathscr{V}(k, k') &= 
 \frac{m \theta(k-k')}{k k'}
 n_0(\omega(k))[1+n_0(\omega(k'))][1+n_0(\omega(k)-\omega(k'))] \nonumber\\
 &+\frac{m \theta(k'-k)}{k k'}n_0(\omega(k'))[1+n_0(\omega(k))][1+n_0(\omega(k')-\omega(k))]  \label{S1ERRRV}
 \end{align}

The functions  $\mathscr U(k, k')$ and $\mathscr V(k, k')$ have a non integrable singularity along the diagonal $k=k'$.  However, these singularities cancel each other when the two terms are combined as in (\ref{S1ERRR}) as far as it is assumed that, for all $t>0$, $\Omega (t)\in C^\alpha (0, \infty)$ for some $\alpha >0$.  But the integrand $\left( \mathscr U(k, k')\Omega (t, k')-\mathscr V(k, k')\Omega (t, k)\right)$ can not be split as for the linearized  Boltzmann equations for classical particles (\cite{CIP}). However, an explicit calculation shows that, for all $k>0$,
\begin{equation}
\label{S1ERRE}
L _{ I_3 }(\omega )(k)=\int _0^\infty \left( \mathscr U(k, k') k'^2-\mathscr V(k, k')k^2\right)k'^2dk'=0
\end{equation}
from where we deduce, for all $k>0$,
\begin{align*}
\int _0^\infty &\left( \mathscr U(k, k') \frac {k'^2} {k^2}\Omega (t, k)-\mathscr V(k, k')\Omega (t, k)\right)k'^2dk'
=\frac {\Omega (t, k) } {k}L _{ I_3 }(\omega )(k)=0.
\end{align*}
We may then write,
\begin{align*}
L _{ I_3 }(\Omega (t))&=\int _0^\infty \left( \mathscr U(k, k')\Omega (t, k')-\mathscr V(k, k')\Omega (t, k)\right)k'^2dk'\\
&=\int _0^\infty  \mathscr U(k, k')\left( \frac {\Omega (t, k')} {k'^2}-  \frac {\Omega (t, k)} {k^2}\right)k'^2k'^2dk'.
\end{align*}
In terms of the new variables (\ref{S3EMK2}), for $u$ a regular function, the simplified equation (\ref{S3EMK2})  may be written as follows,

\begin{align}
\int _0^\infty (u(y)-u(x))K(x, y)dy)&=\int _0^\infty \int_x^y\frac {\partial  u} {\partial z}(z)dzK(x, y)dy\nonumber\\
=-\int _0^x \frac {\partial  u} {\partial z}(z)& \int _0^zK(x, y)dydz+\int _x^\infty \frac {\partial  u} {\partial z}(z) \int _z^\infty K(x, y)dydz \label{S4EHL007}
\end{align}
and this gives equation (\ref{S4E1}) with
\begin{align}
H\left(\frac {x} {z} \right)&=\1 _{ z>x }\int _z^\infty K(x, y)dy-\1 _{ 0<z<x }\int _0^zK(x, y)dy\label{S4EHL0}
\end{align}
where an explicit integration gives (\ref{S4EHL}).

\section*{Acknowledgements} The research of the author is supported by grants MTM2017-82996-C2-1-R of MINECO and IT1247-19 of the Basque Government. The hospitality of IAM of the University of Bonn, and its support through SFB 1060 are gratefully acknowledged. The author  thanks Pr. M. Valle at the UPV/EHU   for enlightening discussions and  comments.

 \end{document}